%% file: geometric-groupoids.tex
\documentclass[12pt]{amsart}
\input{preamble.tex}

\title[Derived smooth and Banach higher groupoids]
{Derived Smooth and Banach Higher Groupoids:\\
 Representability and Descent}
\author{Qingyun Zeng}
\date{September 23, 2026; revision 4}
\keywords{geometric infinity-groupoid, category of fibrant objects,
  derived Banach geometry, structured infinity-topos, hyperdescent,
  represented simplicial model, noncommutative prestack}

\begin{document}

\begin{abstract}
We study homotopy theories of higher groupoids in
specified geometric categories, distinguishing geometric
representability from realization and descent.
For ordinary smooth and Banach-open sheaves, finite matching
and germwise Kan calculus give full Brown categories of
fibrant objects. A crossing-axes example obstructs the
unchanged full structure on ordinary geometric higher
groupoids. We prove the empty-compatible ordinary smooth
geometric iCFO directly in the open-covering-family topos,
and its split-Banach version on a fixed chart-compatible
plot site with local kernel-product closure. These results include
all finite unique-horn bounds and their union. A point-only
site counterexample shows why an arbitrary Banach plot site
does not suffice for the geometric assertion.

For represented derived models, finite matching and an
explicit outer-prism filtration give the structural Brown
calculus. Nuiten's finite-geometric realization theorem
then gives the finite derived-smooth CFO and its
localization. We prove the enriched realization base-change
theorem on the original small Kan-enriched site, giving
the outer-unbounded hypercomplete CFO. Its geometric horn
conditions occur in positive degrees, alongside Reedy model
fibrancy in every degree.

For Banach domains, structured charts, valuewise
hypercompletion and open-local algebraic representatives
give geometric finite limits through structured spectra
and strict-open gluing. Coherent split-open
refinement and a separate, set-sized enriched localization
argument establish hypersheaf representability. Combined with
realization base change, they give the represented Brown CFO.
The same hypercompleted weak-equivalence class is used
at every outer bound. A square-zero structured test detects
nonordinary self-intersections of the chosen Banach charts,
including infinite-dimensional charts.

For connective associative affines, homotopy split horns
give pointwise prestack constructions. Pridham's finite
effectivity theorem and the pointwise diagonal-fibration
lemma give the corresponding CFOs for his Artin and
Deligne--Mumford classes. Absolute horns ensure Kan
diagonal objects, while relative horns control fibrations.
The associative theory uses quasi-isomorphisms of algebras
and realization in presheaves.
\end{abstract}

\maketitle
\clearpage
\tableofcontents

\input{introduction.tex}

\input{foundations.tex}
\input{groupoids.tex}
\input{derived-geometry.tex}
\input{represented-criterion.tex}
\input{banach-geometry.tex}
\input{noncommutative.tex}
\input{comparisons.tex}

\appendix
\input{enriched-realization.tex}
\input{split-open-hyperdescent.tex}

\printbibliography
\end{document}

%% file: preamble.tex
\usepackage{mathpazo}
\usepackage{avant}
\usepackage{amsmath,amssymb,tensor}
\usepackage{mathrsfs}
\usepackage{tikz-cd}
\usepackage{booktabs,longtable,array}
\usepackage[backend=biber,style=alphabetic,maxbibnames=99]{biblatex}
\usepackage[hidelinks]{hyperref}
\hypersetup{
  pdftitle={Derived Smooth and Banach Higher Groupoids: Representability and Descent},
  pdfauthor={Qingyun Zeng}
}
\AtBeginBibliography{%
  \setlength{\emergencystretch}{3em}%
  \raggedright
  \interlinepenalty=10000
}

\newtheorem{thm}{Theorem}[section]
\newtheorem{cor}[thm]{Corollary}
\newtheorem{lem}[thm]{Lemma}
\newtheorem{prop}[thm]{Proposition}

\newtheorem{maintheorem}{Theorem}

\theoremstyle{definition}
\newtheorem{defn}[thm]{Definition}
\newtheorem{example}[thm]{Example}

\newtheorem{construction}[thm]{Construction problem}
\theoremstyle{remark}
\newtheorem{rem}[thm]{Remark}
\newtheorem{question}[thm]{Question}
\numberwithin{equation}{section}

\newcommand{\R}{\mathbb R}

\newcommand{\Z}{\mathbb Z}

\newcommand{\CA}{\mathcal A}

\newcommand{\CC}{\mathcal C}

\newcommand{\CM}{\mathcal M}

\newcommand{\CT}{\mathcal T}
\newcommand{\E}{\mathcal E}

\newcommand{\Hsm}{\mathcal H_{\mathrm{sm}}^\wedge}
\newcommand{\KSh}{\mathcal K(\E)}

\newcommand{\Aff}{\mathsf{Aff}}
\newcommand{\dMfd}{\mathsf{dMfd}}
\newcommand{\dLie}{\mathsf{dLie}}
\newcommand{\dSt}{\mathsf{dSt}}
\newcommand{\Sh}{\mathsf{Sh}}
\newcommand{\PSh}{\mathsf{PSh}}
\newcommand{\sSet}{\mathsf{sSet}}
\newcommand{\Set}{\mathsf{Set}}
\newcommand{\LieGpd}{\mathsf{Lie}_{\infty}\mathsf{Grpd}}

\newcommand{\y}{\mathsf y}
\newcommand{\op}{\mathrm{op}}

\DeclareMathOperator{\Map}{Map}
\DeclareMathOperator{\Hom}{Hom}

\DeclareMathOperator{\Spec}{Spec}

\DeclareMathOperator{\Ex}{Ex}
\DeclareMathOperator{\Sd}{Sd}
\DeclareMathOperator{\id}{Id}
\DeclareMathOperator{\ev}{ev}

\DeclareMathOperator{\Tot}{Tot}

\DeclareMathOperator{\Fun}{Fun}
\DeclareMathOperator*{\colim}{colim}

%% file: introduction.tex
\section{Introduction}
\label{sec:introduction}

Higher groupoids provide geometric presentations of
homotopy-coherent equivalence relations. Their geometric
levels are important: they retain smooth parameters,
derived intersections, or associative algebra tests
which an abstract homotopy type alone does not record.
This produces a basic tension. The category of geometric
objects may not contain the pullbacks required by a
homotopy theory, whereas passing immediately to all
sheaves or prestacks loses the assertion that the
levels of a presentation are geometric.

This paper studies that tension in several specified settings.
The common method is
\[
 \text{geometric limits and covers}
 \longrightarrow
 \text{represented horns and paths}
 \longrightarrow
 \text{homotopy theory}.
\]
The resulting homotopy structures depend on both the geometry
and the realization functor. Ordinary smooth geometric groupoids
admit an empty-compatible incomplete Brown structure; the
split-Banach version requires compatibility of the fixed
plot site with geometric atlases and pullbacks. Ordinary
sheaves and the represented models considered below admit
full Brown structures. The finite derived-smooth application
uses Nuiten's realization theorem, while the associative
application combines Pridham's finite effectivity theorem
with a pointwise diagonal argument. Enriched realization
base change and split-open hyperdescent supply the
unbounded hypercomplete derived-smooth and Banach
constructions. This common method separates geometry,
model structure and realization while keeping the
resulting geometries distinct.

\subsection{The geometric obstruction and its repairs}

Write $cM$ for the constant simplicial manifold on $M$.
Every map between constant simplicial manifolds satisfies the
positive relative horn condition. Consequently a full Brown
structure with that unchanged fibration class would require a
geometric pullback of
\[
 c\R^2\xrightarrow{xy}c\R\xleftarrow{0}c*.
\]
The crossing-axes functor is not represented by a
smooth manifold; the same obstruction holds for
the full Banach manifold category
(Proposition~\ref{prop:crossing-axes} and
Proposition~\ref{prop:ordinary-banach-obstruction}).
This explains why the ordinary geometric iCFO
and the independent full sheaf-theoretic CFO
are different results.
The geometric Banach theorem requires atlases from the
already chosen plot site and closure under the relevant
kernel-product pullbacks. Without that compatibility,
the cubic/product cospan on a point-only site obstructs
even the mandatory acyclic pullback
(Proposition~\ref{prop:ordinary-insufficient-site}).

For a derived repair, finite homotopy limits must
exist as actual geometric objects, and the functor
of points must retain them. Even then, a full
ordinary category of homotopy-coherent diagrams
does not automatically satisfy Brown's ordinary
axioms. We use strict Reedy-fibrant presheaf models,
with weak representability retained through
model equivalences. Finite weighted matching
and an explicit prism filtration give their
geometric pullbacks and outer paths.
The final acyclic-pullback axiom is supplied by
the appropriate realization theorem, not by
levelwise right properness alone.

\subsection{Main results}

Theorem~\ref{thm:geometric-icfo} proves the ordinary smooth
iCFO by finite horn constructions, prescribed-point
submersion locality, a geometric acyclic-boundary upgrade,
and represented outer paths. Constant geometric tests
identify every existing categorical fibration pullback
with the constructed levelwise pullback.
Theorem~\ref{thm:ordinary-banach-icfo} verifies the same
inputs for an explicitly chart-compatible ordinary
split-Banach geometry. Both results include every finite
unique-horn bound, including zero, and their union, while
retaining all simplicial degrees. Their geometric fibration
conditions concern positive relative horns.

\begin{maintheorem}[Represented homotopy structures]
\label{main:represented}
Proposition~\ref{prop:geometric-realization-criterion}
gives full Brown categories of represented higher groupoids
from finite-limit representability, stable geometric horn
classes, and its two stated realization hypotheses.
Using Nuiten's finite realization theorem and Pridham's
finite effectivity theorem, it applies to finite
derived-smooth presentations with ordinary realization
and to the specified pointwise associative prestack models.

Theorem~\ref{lem:derived-hypercomplete-base-change}
proves $\mathbf{PB}_\tau(\mathcal T)$ on the original
small Kan-enriched site. It gives the outer-unbounded
derived-smooth CFO with hypercompleted realization
in Theorem~\ref{thm:derived-reedy-hypercomplete}.
The constructions preserve finite outer bounds and the
unions of finite-bound subcategories whenever the
realization hypotheses hold on those categories.
\end{maintheorem}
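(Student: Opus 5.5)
This main theorem assembles results proved later, so the plan is to reduce each clause to the cited proposition and check its hypotheses.

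\textbf{Step 1: the abstract criterion.} I would state Proposition~\ref{prop:geometric-realization-criterion} in its general form. Take a geometric category with finite-limit representability and a class of geometric horn fibrations that is stable under pullback and composition. Work in strict Reedy-fibrant represented simplicial presheaves.

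Two of Brown's axioms come from the geometry. Finite weighted matching objects are computed levelwise as finite limits of representables. Hence geometric pullbacks along fibrations exist and remain represented.

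Path objects come from an explicit outer-prism filtration of $\Delta^n\times\Delta^1$. Each filtration stage is a pushout along a horn inclusion. So the relative matching maps of the cotensor with $\Delta^1$ are composites of pulled-back horn fillers, and these are geometric fibrations by stability.

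The remaining axioms need the two realization hypotheses. Realization must preserve the relevant pullbacks, and it must detect weak equivalences. With these, the 2-out-of-3 property and the acyclic-pullback axiom transfer from the target model structure, for example via right properness of Reedy or hypercomplete local structures. I would stress that levelwise right properness alone does not suffice here, so this step genuinely uses the realization hypothesis.

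\textbf{Step 2: the finite applications.}
\begin{itemize}
\item \emph{Derived-smooth.} I would check finite-limit representability for derived manifolds and stability of derived submersion horn classes. The realization hypothesis is then Nuiten's theorem that finite geometric realizations compute the correct homotopy pullbacks in derived stacks.
\item \emph{Associative.} Homotopy split horns give the stable class. Pridham's finite effectivity theorem supplies realization of his Artin and Deligne--Mumford classes. The pointwise diagonal-fibration lemma supplies the missing acyclic-pullback compatibility.
\end{itemize}

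\textbf{Step 3: the unbounded case.} Theorem~\ref{lem:derived-hypercomplete-base-change} gives $\mathbf{PB}_\tau(\mathcal T)$ on the original small Kan-enriched site. Pullbacks along geometric fibrations then commute with hypercompleted realization, which is exactly the acyclic-pullback hypothesis. Feeding this into Step 1 yields Theorem~\ref{thm:derived-reedy-hypercomplete}.

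\textbf{Step 4: bounds and unions.} I would verify that each construction (matching pullbacks, prism paths) preserves finite outer bounds. Since horn-filler uniqueness above degree $k$ is pullback-stable, finite-bound subcategories are closed under these constructions, and so is their union. Wherever the realization hypotheses restrict to these subcategories, the criterion applies to them.

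\textbf{Main obstacle.} The hardest step is the prism filtration in Step 1. One must show that the outer-path matching maps are genuinely geometric fibrations, so that each relative matching object is representable, rather than merely Kan fibrations after realization. My first approach is induction on the standard shuffle decomposition of the prism, using stability of the horn class at each stage.
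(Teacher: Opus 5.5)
\textbf{Overall.} Your reduction has the same architecture as the paper: finite matching and the relative-horn calculus of Lemma~\ref{lem:derived-reedy-calculus}, outer prism paths, and then the two realization hypotheses of Proposition~\ref{prop:geometric-realization-criterion}, supplied by Nuiten's theorem, by $\mathbf{PB}_\tau(\mathcal T)$, and by the pointwise diagonal lemma. However, the associative clause is missing a key idea.

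\textbf{The associative clause.} Pridham's finite effectivity theorem does not ``supply realization'' for the Artin/DM models. It concerns trivial \emph{finite} hypergroupoids, whereas Theorem~\ref{thm:nc-geometric-cfo} allows unrestricted outer amplitude. Its actual role is Lemma~\ref{lem:nc-geometric-sections}. Take a single horn map $u\colon\Spec B\to\Spec A$ in $\mathsf C_{\mathrm{Art}}$. The Reedy-replaced \v{C}ech nerve over the constant diagram on $\Spec A$ is a trivial Artin $1$-hypergroupoid, so its realization is $\y\Spec A$. Evaluating then shows that $\Map(B,R)\to\Map(A,R)$ is $\pi_0$-surjective for every connective $R$, and lifting $1_A$ at $R=A$ gives a homotopy section. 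Only this step places the Artin/DM horns in the homotopy-split class. Then evaluated relative horns are $\pi_0$-surjective, and by path lifting in the Reedy fibration they are vertex-surjective. That is exactly the input of Lemma~\ref{lem:enriched-pointwise-diagonal}, used both for fibrations and for terminal maps. You must also prove closure of $\mathsf C_{\mathrm{Art}}$ and $\mathsf C_{\mathrm{DM}}$ under composition and base change (Lemma~\ref{lem:nc-geometric-stability}). The phrase ``homotopy split horns give the stable class'' covers only the split-horn model.

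\textbf{The first realization hypothesis.} In Step~1 you misstate it. Detecting weak equivalences is the definition of $W$, not a hypothesis. The actual hypothesis is that realization inverts levelwise model equivalences and the constant-path maps $X\to PX$. Your plan only shows that the endpoint map is a fibration; it never shows $s\in W$. In the applications this comes from $e_0j=\id$ together with the outer homotopy $je_0\Rightarrow\id$ given by $\min\colon[1]\times[1]\to[1]$, which is an ordinary simplicial homotopy on diagonals. Acyclic pullback, by contrast, needs no right properness. Hypothesis~(2) identifies the realization of the strict pullback with the pullback of realizations, and equivalences are pullback-stable.

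\textbf{The prism filtration.} The method you propose for your main obstacle fails. The top shuffles alone do not attach along horns to $U=(\Lambda^i[m]\times\Delta[1])\cup(\Delta[m]\times\partial\Delta[1])$. For $m=2$, $i=1$, each of the three shuffles has two facets outside $U$, so none can be attached first. Lemma~\ref{lem:derived-prism-filtration} proceeds in three stages:
\begin{enumerate}
\item Attach $m-1$ interior diagonal $m$-simplices $D_s$, each along the horn missing its base-$i$ facet $C_s$. One $D_s$ is omitted: for internal $i$ because $C_i=C_{i+1}$; for $i=0$ because $C_1$ already lies in the top; for $i=m$ because $C_m$ already lies in the bottom.
\item Attach $P_i$ along the horn missing the omitted $D_d$.
\item Attach each other $P_j$ along the horn missing $B_j$.
\end{enumerate}

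\textbf{Step 3.} You still need to check that smooth-surjective relative horns are local surjections. You also need to convert $\operatorname{diag}$ to $L^\wedge|{-}|$ via \eqref{eq:derived-diagonal-realization}. The real depth of the unbounded clause is $\mathbf{PB}_\tau(\mathcal T)$ itself, which you use as a black box.
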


The finite derived-smooth localization theorem
uses Nuiten's geometric presentation result.
For unbounded presentations, the essential image and
localization form a further comparison problem, separate
from the ordinary-sheaf acyclic-pullback condition
$\mathbf{AP}_\infty$.
For associative affines the geometric classes
are Pridham's homotopy Artin and
Deligne--Mumford classes, with their split
classical affine truncation condition.
Derived sections use the finite Cech effectivity theorem
in addition to classical splitting.

\begin{maintheorem}[Banach geometry and represented CFOs]
\label{main:banach}
For the Banach pregeometry with open embeddings as admissibles,
the structured-spectrum and strict-open-gluing theorems recalled
below give a valuewise-hypercomplete geometric gluing class
with finite limits. Its functor of points is fully faithful
in the ordinary strict-open sheaf category.

Both clauses of $\mathbf{SO}(\mathcal C)$ hold, so these
represented functors are hypersheaves. With generated-smooth
maps admitting target-open homotopy sections as the horn class,
$\mathbf{PB}_\tau(\mathcal C)$ gives the represented full Brown CFO.
Every outer bound uses the same hypercompleted
realization weak equivalences. Geometric horn conditions
are imposed in positive degrees; the ambient Reedy
requirement applies in every degree, including zero.
\end{maintheorem}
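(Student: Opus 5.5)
The plan is to assemble Main Theorem~\ref{main:banach} from four ingredients, in this order: geometric finite limits, hypersheaf representability, the horn class, and then Proposition~\ref{prop:geometric-realization-criterion}.

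\textbf{Step 1: geometric finite limits.} I start from the Banach pregeometry whose admissible maps are the open embeddings. Structured charts are Banach domains with their structure sheaves, hypercompleted valuewise. Given a finite diagram of geometric objects, I form its limit in structured $\infty$-topoi. Locally on open covers, I compute this limit by replacing each chart with an open-local algebraic representative, taking the structured spectrum of the corresponding finite limit of representatives, and gluing along strict opens. The strict-open-gluing theorem guarantees that the glued object again lies in the geometric gluing class. The structured-spectrum theorem identifies its functor of points with the finite limit of functors of points. Full faithfulness of the functor of points into the ordinary strict-open sheaf category then follows as in the standard Yoneda-type argument for structured spaces: maps are determined locally on charts, and charts map fully faithfully.

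\textbf{Step 2: hypersheaf representability.} Here I verify the two clauses of $\mathbf{SO}(\mathcal C)$.
\begin{enumerate}
\item \emph{Coherent split-open refinement.} Every hypercover by strict opens of a represented object is refined by one that is split levelwise by opens. This comes from the appendix on split-open hyperdescent.
\item \emph{Enriched localization.} Localizing the enriched presheaf category at these refinements is a set-sized left Bousfield localization whose local objects are exactly the hypersheaves.
\end{enumerate}
Together, the clauses show that each represented functor satisfies descent for split hypercovers, hence is a hypersheaf. This is the step I expect to be the main obstacle. Cofinality of split-open refinements among all hypercovers needs a coherent choice of refinements across all simplicial degrees. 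Paracompactness is not available for general Banach domains, so I would first try to construct these refinements degreewise by a Reedy induction using only finite intersections of chosen opens. The set-size of the generating class must then be checked by bounding the charts by a fixed small plot site.

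\textbf{Step 3: horn class and realization.} I take as horn class the generated-smooth maps that admit target-open homotopy sections. I then check that this class is stable under the pullbacks constructed in Step 1 and under composition, and that it contains the isomorphisms. Stability under pullback holds because homotopy sections pull back along open restrictions.

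The realization hypotheses of Proposition~\ref{prop:geometric-realization-criterion} are of two kinds:
\begin{enumerate}
\item Horn maps must realize to effective epimorphisms in the hypercompleted topos. This holds because each horn map has a section locally on target opens.
\item Acyclic pullbacks must be preserved. This is exactly $\mathbf{PB}_\tau(\mathcal C)$, which I deduce from the enriched realization base-change theorem (Theorem~\ref{lem:derived-hypercomplete-base-change}, whose argument transfers to the Banach site once Step 2 supplies hyperdescent).
\end{enumerate}
Proposition~\ref{prop:geometric-realization-criterion} then yields the represented full Brown CFO.

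\textbf{Step 4: bookkeeping on degrees and outer bounds.} Geometric horn conditions are imposed only for positive-degree relative horns. Degree-zero conditions come solely from ambient Reedy fibrancy, which is checked in every degree by finite weighted matching. Finally, the weak equivalences are always the maps inverted by hypercompleted realization, which is independent of any outer bound. The restricted CFOs on finite-bound subcategories and on their union are therefore full subcategories closed under the constructions and share one weak-equivalence class.
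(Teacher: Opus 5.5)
Your decomposition follows the paper's: geometric limits via open-local algebraic representatives and strict-open gluing, then $\mathbf{SO}(\mathcal C)$, the cover interface, and the represented-horn criterion with $\mathbf{PB}_\tau$. However, Step~2 has a genuine gap.

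The two clauses of $\mathbf{SO}(\mathcal C)$ only \emph{reduce} hyperdescent to locality against the set $\mathscr S$ of normalized split-open augmentations. They say nothing about whether a represented functor $h_Y$ is $\mathscr S$-local. Your sentence that ``together, the clauses show that each represented functor satisfies descent for split hypercovers'' is therefore a non sequitur. Effective strict-open gluing only gives \v{C}ech descent, whereas a split-open hypercover is an infinite simplicial diagram.

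The missing idea is exactly where valuewise hypercompletion is used. For a chart target $Y=\operatorname{Chart}(D)$ and a test $X\in\mathcal C$, chart evaluation gives $h_Y(W)\simeq\Map_{\mathcal X_X}(a_W,\mathcal O_X(D))$. Replacing the open labels of a generator by the subterminals $a_W$ turns it into an internal hypercover of $1_X$ in the test's underlying topos, and identifies the whole cosimplicial descent diagram. Descent then holds because the \emph{structure value} $\mathcal O_X(D)$ is hypercomplete, even though the topos need not be. This must then be propagated in two stages (Lemma~\ref{lem:banach-internal-open-hypercover}, Theorem~\ref{thm:banach-representability}):
\begin{itemize}
\item from charts to all of $\mathcal C$, through finite limits and strict opens, using $h_W=h_C\times_\Omega1$;
\item to all of $\mathcal G$, through monomorphic-cover locality, since atlases of $\mathcal G$-objects are not bounded by $\alpha$.
\end{itemize}
Also, the refinement clause must start from an \emph{arbitrary semi-representable} augmented hypercover of $h_X$, whose levels are coproducts of representables $h_T$ with non-open maps $T\to X$. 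Refining hypercovers that are already by strict opens, as you state it, would not identify $L_{\mathscr S}$ with the hypersheaf localization.

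Step~3 misplaces two further points.
\begin{itemize}
\item The first hypothesis of Proposition~\ref{prop:geometric-realization-criterion} is not that horns realize to effective epimorphisms; that is the input to $\mathbf{PB}_\tau$. It is that $L^\wedge|X|$ inverts levelwise model equivalences and the constant outer-path maps $X\to PX$. You never verify this. The paper does so with the diagonal-realization dictionary and the contraction given by $\min:[1]\times[1]\to[1]$.
\item $\mathbf{PB}_\tau(\mathcal C)$ does not depend on Step~2. Theorem~\ref{lem:derived-hypercomplete-base-change} holds on any small Kan-enriched site once the relative horns are local surjections, which Lemma~\ref{lem:banach-cover-interface} supplies. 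Hyperdescent is needed instead for the representability data. Without it, $h\colon\mathcal G\to\Sh(\mathcal C)^\wedge$ would fail to be fully faithful, so ``levels weakly represented by $\mathcal G$'' and ``horns in $\mathrm{Cov}_{\mathrm{gen}}$'' would not be well defined in $\CM_{\mathrm{hyp}}$.
\end{itemize}
Similarly, in Step~1 the limit should not be presupposed in $\mathrm{Str}_T$. The paper builds it by representing $\Phi=h_X\times_{h_Z}h_Y$ on $\mathrm{Str}_T^{\mathrm{vh}}$ with the local-representability lemma. The universality of the local pieces $a(v\times_u w)$ there comes from the coreflection $H$ applied to the spectrum adjunction.
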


The geometric-limit construction is
Theorem~\ref{thm:banach-geometric-limits}.
The hypersheaf and Brown applications are
Theorems~\ref{thm:banach-representability}
and~\ref{thm:banach-cfo}.
The geometric category is generated from the chosen
Banach charts through valuewise hypercompletion, finite
limits and strict-open gluing. Open-local algebraic
representatives of hyp-affine maps supply the local
pullbacks used in this construction.

For a Banach space $H$ included in the chosen test category, let
$\operatorname{Chart}(H)$ be its structured chart
\eqref{eq:banach-chart}. The derived nature is visible in the
self-intersection at the origin,
\[
 D_H=\operatorname{Chart}(*)\times^h_{\operatorname{Chart}(H)}
                         \operatorname{Chart}(*).
\]
Ordinary chart tests see a point, whereas the square-zero structured
test of \eqref{eq:banach-square-zero} sees $H_{\mathrm{add,disc}}$,
the underlying additive group regarded as a discrete space
(Proposition~\ref{prop:banach-derived-intersection}).
In particular this construction is not obtained
by adjoining an ordinary classifying sheaf.

\subsection{A dictionary of conclusions}

Table~\ref{tab:geometry-dictionary} collects the distinct conclusions.

\begin{table}[tbp]
\caption{Homotopy structures in the geometries considered here.}
\label{tab:geometry-dictionary}
\centering
\begin{tabular}{@{}>{\raggedright\arraybackslash}p{0.24\textwidth}
                  >{\raggedright\arraybackslash}p{0.30\textwidth}
                  >{\raggedright\arraybackslash}p{0.40\textwidth}@{}}
\toprule
Setting & Homotopy structure & Essential distinction\\
\midrule
Ordinary smooth or chart-compatible split-Banach geometric groupoids
& iCFO, including finite unique-horn bounds and their union
& Fixed-site atlas and kernel-product hypotheses in the Banach case;
  crossing axes obstruct a full CFO when that cospan is admitted\\
\addlinespace
Locally Kan smooth or Banach-open sheaves
& Full Brown CFO
& Germwise weak maps; geometric representability is not automatic\\
\addlinespace
Finite derived-smooth represented models
& Full CFO and finite-geometric localization via Nuiten's theorem
& Ordinary-sheaf realization; finite outer degree does not truncate
  derived tests\\
\addlinespace
Unbounded derived-smooth represented models
& Full CFO by enriched realization base change
& Hypercompleted realization; the unbounded essential image is a
  further comparison problem\\
\addlinespace
The represented Banach model
& Hypersheaf representation by split-open hyperdescent;
  full CFO by enriched realization base change
& One hypercompleted realization class at every outer bound\\
\addlinespace
Associative Artin/DM prestacks
& Full CFOs using finite effectivity and the pointwise diagonal lemma;
  finite bounds included
& Objectwise realization and absolute horns;
  quasi-isomorphism-based affine tests\\
\bottomrule
\end{tabular}
\end{table}

\subsection{Sources and organization}

The sheaf-theoretic construction follows Brown and the
smooth sheaf presentation of Nikolaus--Schreiber--Stevenson
\cite{Brown1973,NSS2015}.
Rogers--Zhu prove their ordinary geometric iCFO theorem
under their covering hypotheses
\cite{RogersZhu2020}. We adapt their finite-horn,
submersion-locality and path constructions to the
open-covering-family topos, including the empty cover.
The ordinary Banach application additionally uses fixed-site
atlases and split-submersion pullbacks.
Geometric higher-groupoid frameworks also occur
in \cite{BehrendGetzler2017}.
Nuiten's derived-smooth geometry and finite
localization theorem are used with their precise
grading and geometric hypotheses
\cite{NuitenThesis2018}.
The represented-hypergroupoid calculus and the
associative Artin/DM classes follow Pridham
\cite{Pridham2013,PridhamNC2023}.
Structured spectra and effective open gluing
come from Lurie's geometry framework
\cite{LurieDAGV2009}.

The emphasis here is the explicit Banach
construction and the separation of geometric
closure from realization in the represented models.

Section~\ref{sec:foundations} fixes the
presentation conventions. Section~\ref{sec:groupoids}
proves the ordinary geometric iCFOs, their finite-bound
versions, the site and full-CFO obstructions, and the
independent sheaf repair.
Section~\ref{sec:derived} supplies the derived-smooth
case and the matching/path calculations, distilled
in Section~\ref{sec:represented-criterion}.
Sections~\ref{sec:banach} and
\ref{sec:noncommutative-geometry} construct the
Banach and associative cases. Finally,
Section~\ref{sec:comparisons} discusses comparisons
and further applications.
Appendices~\ref{app:enriched-realization}
and~\ref{app:split-open-hyperdescent} give the
enriched lifting, replacement, augmented refinement
and localization proofs.

\subsection*{Acknowledgments}

This paper grew out of part of the author's Ph.D. thesis work.
The author is grateful to Jonathan Block for his advice and support,
to David White for discussions on the homotopy theory of algebras
over operads, and to Sylvain Lavau and Jim Stasheff for discussions
on $L_\infty$-algebroids and singular foliations.

%% file: foundations.tex
\section{Conventions and the homotopy-theoretic problem}
\label{sec:foundations}

\subsection{Geometric objects, presentations, and ambient models}

We distinguish three levels of structure. A geometric category
contains the spaces on which a particular geometry is defined.
A higher groupoid is a simplicial presentation whose levels and
horn maps satisfy geometric conditions. An ambient sheaf or
presheaf model category supplies ordinary objects and morphisms
on which the axioms of homotopy theory can be imposed.
Passing between these levels requires a theorem; the existence
of an ambient model structure does not make a subcategory of
geometric presentations a category of fibrant objects.

Throughout, $\mathcal S$ denotes the infinity-category of spaces.
An infinity-category, its ordinary homotopy category, and an
ordinary category presenting it are not identified.
The notation $\mathcal D[W^{-1}]$ means infinity-localization;
a statement about its mapping spaces is stronger than a
Gabriel--Zisman localization assertion.
All constructions are made in compatible universes. A test
site is small relative to the ambient presheaf universe, and
the specified geometric finite constructions are retained
within the chosen size bounds.

For an infinity-site $(\CT,\tau)$ we distinguish
\[
 \PSh(\CT),\qquad
 \Sh_\tau(\CT),\qquad
 \Sh_\tau(\CT)^\wedge.
\]
These are respectively presheaves, ordinary space-valued
infinity-sheaves, and hypersheaves. In the middle category,
descent means covering-family descent. The final superscript
indicates hypercompletion. The associative examples use the
presheaf category.

Site and envelope symbols are local to their sections.
The ordinary smooth site $\mathcal C$ and sheaf topos $\E$
in Section~\ref{sec:groupoids} are not the Banach test
site and geometric envelope denoted by the same letters
in Section~\ref{sec:banach}.

\subsection{Brown and incomplete Brown structures}

\begin{defn}\label{def:brown-conventions}
A category of fibrant objects consists of an ordinary category
with classes $W$ of weak equivalences and $F$ of fibrations
satisfying the following conditions.
Isomorphisms belong to both classes, $W$ has two-out-of-three,
and $F$ is closed under composition. A terminal object and
finite products exist, and every map to the terminal object
is a fibration. Pullbacks along fibrations exist and are
fibrations; the pullback of a map in $F\cap W$ is again in
$F\cap W$. Finally, each diagonal has a factorization
\[
 X\xrightarrow{s}PX\xrightarrow{(e_0,e_1)}X\times X,
 \qquad s\in W,\quad(e_0,e_1)\in F.
\]
We use ``full Brown CFO'' to emphasize existence of all
pullbacks along fibrations, not closure under every
ordinary finite limit.
\end{defn}

These are Brown's axioms \cite[Section~1]{Brown1973}.
The path objects in this paper are functorial when the
presentation supplies the indicated cotensors.
These axioms specify weak equivalences, fibrations and paths;
a Quillen model structure additionally specifies cofibrations
and their axioms.

In the incomplete Brown convention, finite products and path
objects are still required. Every pullback along an acyclic
fibration must exist, and its projection must again be an
acyclic fibration. Pullbacks along general fibrations need
not exist; whenever such a pullback exists, its projection
must be a fibration \cite[Definition~2.1]{RogersZhu2020}.
These are distinct existence and stability requirements.
Theorem~\ref{thm:geometric-icfo} proves the required
empty-compatible adaptation directly in the ordinary
open-covering-family topos. Its categorical pullback
argument separates stability of existing fibration
pullbacks from existence of all acyclic-fibration
pullbacks. The ordinary split-Banach version,
Theorem~\ref{thm:ordinary-banach-icfo}, requires explicit
compatibility of the fixed plot site with geometric
atlases and the local kernel-product pullback models.

\subsection{Two simplicial directions}

A represented model will be an ordinary strict simplicial
object in a simplicial presheaf model category. Its outer
index records the groupoid presentation. Its inner
simplicial direction records derived mapping spaces.
An outer bound does not bound the inner homotopy groups
of its values on derived tests.

For a finite simplicial set $K$, write $\{K,X\}$ for the
outer weighted limit of such a diagram. For an outer
Reedy-fibrant object, this ordinary limit computes the
homotopy weighted limit. A geometric positive relative
horn of $f:X\to Y$ is the associated map
\[
 X_m\longrightarrow
 Y_m\times^h_{\{\Lambda^i[m],Y\}}
                    \{\Lambda^i[m],X\},\qquad m\ge1.
\]
The required class of geometric maps will be specified
in each setting. Geometric horn conditions begin in degree
one. For fibrations in the represented CFOs, the degree-zero
map $f_0$ is subject only to the ambient Reedy requirement,
which also applies in higher degrees.

The outer path object is
\[
 (PX)_m=\{\Delta[m]\times\Delta[1],X\}.
\]
This outer cotensor differs from a levelwise internal model
cotensor, whose endpoint map can fail the geometric horn
condition, as illustrated after
Proposition~\ref{prop:derived-outer-path}.

\subsection{Grading and algebraic conventions}

Derived smooth functions and connective associative
algebras use nonnegative \emph{homological} grading,
equivalently nonpositive cohomological grading.
A geometric Chevalley--Eilenberg algebra in positive
cohomological degrees is a different object.
Changing signs on the labels without also changing
the differential does not identify these conventions.

The Banach construction uses space-valued local structures.
The associative examples use unital algebras over a
characteristic-zero field $k$. Their quasi-isomorphism
theory is distinct from Morita localization.

%% file: groupoids.tex
\section{Smooth higher groupoids and fibrant presentations}
\label{sec:groupoids}

\subsection{The ordinary smooth site}

Let $\CC$ be a small ordinary site of Euclidean open sets
$U\subseteq\R^d$, $d\ge0$, with all smooth maps and the usual
open-covering families. Include the empty covering family
over the empty object. Set
\[
 \E=\Sh_{\Set}(\CC),\qquad
 s\E=\Fun(\Delta^\op,\E).
\]
This is an ordinary sheaf topos and its category of simplicial
objects. It is not a category of derived charts.

Euclidean opens form a dense smooth site. Smooth Yoneda
$\y M(U)=C^\infty(U,M)$ embeds finite-dimensional smooth
manifolds fully faithfully and preserves existing ordinary
limits. For $r\ge1$ an integer, let $D^d_{1/r}$ be the open
Euclidean ball of radius $1/r$ centered at $0$. The germ points
\[
 p_d^*F=\colim_{r\to\infty}F(D^d_{1/r}),\qquad d\ge0,
\]
are jointly conservative
\cite[Proposition~4.12 and Lemma~4.13, p.~44]{NSS2015}.
Each inverse-image functor preserves finite limits and
colimits. At $d=0$, the functor is evaluation at the point.
The entire family is needed; ordinary point-valued
surjectivity is not the same as local smooth lifting.

The family detects epimorphisms as well as isomorphisms:
apply the points to the image subobject and use
conservativity.
These facts concern the actual covering-family topology.
Sheafification does not change a germ: a local gluing or
equality at a point is represented on one smaller neighborhood.
Filtered colimits of sets commute with finite limits, and this
description also makes germs commute with sheaf colimits.
Germwise equality is local equality of sections, and germwise
existence gives local lifts. Translation and restriction to
balls identify every pointed Euclidean-domain germ with one
of the specified $p_d$; no further point family is needed.

\subsection{Matching objects and local fibrations}

For a finite simplicial set $K$, let $M_KX$ be the internal
object of maps from $K$ to $X\in s\E$.

\begin{lem}[Finite matching]\label{lem:finite-matching}
The object $M_KX$ is a finite limit, and for every point $p$
\[
 p^*M_KX\cong\Hom_{\sSet}(K,p^*X).
\]
\end{lem}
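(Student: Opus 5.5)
We write $M_KX$ as the end, equivalently the weighted limit, of $X$ weighted by $K$. The standard formula is
\[
 M_KX=\int_{[n]\in\Delta}X_n^{K_n}
 \cong \lim_{(\Delta[n]\to K)\in(\Delta\downarrow K)^{\op}}X_n .
\]
Here $\Delta\downarrow K$ is the category of simplices of $K$. This limit is taken in $\E$, objectwise on $\CC$ (sheaves are closed under limits in presheaves), so for each $U\in\CC$ we have $M_KX(U)=\Hom_{\sSet}(K,X(U))$. This agrees with the internal-hom description, since $\Hom_{s\E}(\y U\times K,X)\cong\Hom_{\sSet}(K,X(U))$.

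The first step is finiteness. The category of simplices of a finite $K$ is infinite because of degenerate simplices, so we reduce to nondegenerate ones. By the Eilenberg--Zilber lemma, a map $K\to X(U)$ is determined by its values on nondegenerate simplices, subject to the face relations. Concretely, $K$ is a finite iterated pushout of boundary inclusions $\partial\Delta[n]\to\Delta[n]$, attaching finitely many nondegenerate cells. Since $M_{(-)}X$ sends colimits in $K$ to limits, we get $M_{\Delta[n]}X=X_n$. Each attaching pushout becomes a pullback
\[
 M_{K'}X=M_KX\times_{M_{\partial\Delta[n]}X}X_n .
\]
Here $M_{\partial\Delta[n]}X$ is itself a finite limit by induction on dimension, since $\partial\Delta[n]$ has fewer nondegenerate simplices. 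Alternatively, $M_KX$ is the limit over the finite poset of nondegenerate simplices of the subdivision-free face category, using the equalizer of $\prod_{\sigma\text{ nondeg}}X_{\dim\sigma}$ along the faces. In either form we obtain a finite limit, and $M_\emptyset X=*$ is the terminal object. This is where the empty case must be handled: the empty limit is the terminal sheaf.

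The second step is the point formula. Each $p^*=p_d^*$ preserves finite limits, as recalled above. Applying $p^*$ to the finite limit diagram $D_K$ that presents $M_KX$ therefore gives $p^*M_KX\cong\lim p^*D_K$. That limit is the same finite limit built from the simplicial set $p^*X$, with levels $p^*X_n$ and the induced face and degeneracy maps, because $p^*$ is a functor and $p^*X$ is just $p^*$ applied levelwise. Running the same cell-attachment identification in $\Set$, this finite limit is $\Hom_{\sSet}(K,p^*X)$.

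The main obstacle is bookkeeping: we must check that the cell-attachment limit really computes maps out of $K$ and not something stricter. Degeneracies impose no extra conditions, but this requires the Eilenberg--Zilber uniqueness, and it must be used uniformly both in $\E$ (objectwise in $U$) and in $\Set$ after applying $p^*$. The attaching-pushout induction handles this cleanly, so that is the route I would take. There is a second point to check: naturality of the iso in $K$ and $X$ follows from the naturality of $p^*$ commuting with finite limits. For the alternative colimit description, one can also argue germwise directly, since filtered colimits commute with finite limits in $\Set$.
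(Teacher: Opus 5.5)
Your proposal is correct and follows essentially the paper's argument. The skeletal cell-attachment filtration turns $M_KX$ into an iterated pullback starting from $M_{\Delta[n]}X=X_n$ and $M_\varnothing X=1$, and left exactness of $p^*$ carries this finite-limit presentation to the corresponding one computing $\Hom_{\sSet}(K,p^*X)$.

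Your alternative description is imprecise for general $K$: a face-only equalizer indexed by a poset of nondegenerate simplices does not suffice. Faces of nondegenerate simplices can be degenerate, so degeneracy maps of $X$ enter the relations, and the indexing category need not be a poset. The cell-attachment route you actually adopt does not rely on it.
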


\begin{proof}
Build $K$ by finitely many skeletal cell attachments along
boundaries. Mapping out replaces these pushouts by
pullbacks, beginning with $M_{\Delta[n]}X=X_n$ and
$M_\varnothing X=1$. Only finite limits are used.
Apply the left-exact inverse-image functor.
\end{proof}

\begin{defn}
An object $X$ is locally Kan if
\[
 X_n\longrightarrow M_{\Lambda^i[n]}X
\]
is epi for every $n\ge1$ and $0\le i\le n$. A map $f:X\to Y$
is a local fibration if every relative horn map
\begin{equation}\label{eq:relative-horn}
 X_n\longrightarrow
 M_{\Lambda^i[n]}X
 \times_{M_{\Lambda^i[n]}Y}Y_n
\end{equation}
is epi for these same indices. Let $F$ denote this class of local
fibrations, let $W$ be the stalkwise weak homotopy equivalences,
and let $\KSh$ be the full category of locally Kan simplicial sheaves.
\end{defn}

There is no degree-zero horn condition. By
Lemma~\ref{lem:finite-matching}, local Kan objects have
exactly Kan stalks, and local fibrations have exactly Kan
fibration stalks. The empty simplicial sheaf is locally Kan;
its map to the terminal object is a fibration, but not a
weak equivalence.

\begin{thm}[Brown--NSS]\label{thm:sheaf-cfo}
With the preceding classes, $\KSh$ is a full Brown category
of fibrant objects. Functorial paths are
\[
 PX=X^{\Delta[1]},\qquad
 (X^K)_n=M_{\Delta[n]\times K}X.
\]
\end{thm}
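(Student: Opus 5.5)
The strategy is to reduce every Brown axiom to a statement about stalks, using the jointly conservative germ points $p_d^*$ together with Lemma~\ref{lem:finite-matching}. Each $p_d^*$ is left exact and commutes with sheaf colimits, and the family detects epimorphisms. By the lemma, $p^*M_KX\cong\Hom_{\sSet}(K,p^*X)$, so a map is a local fibration exactly when each stalk map is a Kan fibration. An object is locally Kan exactly when each stalk is a Kan complex. By definition, $W$ consists of the maps that are stalkwise weak equivalences. Each axiom thus becomes a classical statement about Kan complexes, applied to the family of stalks, together with a check that the relevant construction stays inside $s\E$ and commutes with $p^*$.

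The steps are as follows.
\begin{enumerate}
\item Isomorphisms lie in $F\cap W$. Two-out-of-three for $W$ holds stalkwise. Closure of $F$ under composition holds because Kan fibrations compose.
\item The terminal object is locally Kan, and a finite product $X\times Y$ is computed levelwise. Its stalks are products of Kan complexes, hence Kan. Every $X\to 1$ is a local fibration because the relative horn map is just the horn map of $X$. The empty sheaf is handled by the same argument.
\item Let $f\colon X\to Y$ be a fibration and $g\colon Z\to Y$ any map in $\KSh$. The levelwise pullback $Z\times_YX$ exists in the topos $s\E$. Its stalks are $p^*Z\times_{p^*Y}p^*X$, so it lies in $\KSh$ and its projection is a stalkwise Kan fibration. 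Since $\KSh$ is a full subcategory, this object is also the categorical pullback there.
\item For acyclic fibrations, the stalks are Kan complexes, so pullbacks of stalkwise trivial Kan fibrations are trivial Kan fibrations. Hence $F\cap W$ is stable under pullback. Weak equivalence between Kan complexes is detected combinatorially, so no fibrant replacement is needed.
\item For paths, set $(X^K)_n=M_{\Delta[n]\times K}X$. This is a finite limit, so $X^K$ is a sheaf, and by the lemma $p^*(X^K)\cong(p^*X)^K$, the usual simplicial cotensor. The classical facts that $X^{\Delta[1]}$ is Kan, that $X\to X^{\Delta[1]}$ is a weak equivalence, and that $X^{\Delta[1]}\to X\times X$ is a Kan fibration then transfer stalkwise. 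Functoriality is clear from the construction.
\end{enumerate}

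The main obstacle is not the Kan-complex combinatorics but the legitimacy of the stalkwise reduction. Three points need care. First, $p^*$ must commute with the internal cotensors $M_{\Delta[n]\times K}X$; here finiteness of $\Delta[n]\times K$ is essential, since it keeps these objects finite limits. Second, epimorphisms in $\E$ must be detected by germ points for the actual covering-family topology, including the empty cover. The empty object has empty germs, so it does not break conservativity. Third, the path-object statements must be arranged so that every construction used is a finite limit or a cotensor by a finite simplicial set, never an infinite one such as $\Ex^\infty$. The third point is why I would prove acyclicity of $F\cap W$ pullbacks via the trivial-fibration lifting characterization on stalks rather than via homotopy groups of sheaves.
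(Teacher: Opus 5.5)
Your proposal is correct and takes essentially the same route as the paper. Both arguments reduce every Brown axiom to stalks, using the jointly conservative, left-exact germ points and the finite matching lemma; both form fibration and acyclic-fibration pullbacks levelwise in $s\E$, and both transfer the classical cotensor facts for $X^{\Delta[1]}$ stalkwise, which is legitimate because the weights $\Delta[n]\times\Delta[1]$ are finite.
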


\begin{proof}
The terminal sheaf and finite products have Kan stalks.
Every map $X\to1$ is a fibration. Isomorphisms, composition
of fibrations and two-out-of-three for weak equivalences
are detected at stalks.

For a diagram $X\to Y\leftarrow Z$ in which $X\to Y$ is a
local fibration and the three objects are locally Kan,
form the levelwise sheaf pullback $Q=X\times_Y Z$. Its
stalk is the pullback of a Kan fibration over a Kan
complex. Thus $Q$ is locally Kan and $Q\to Z$ is a local
fibration. If $X\to Y$ was also weak, each stalk is a
trivial Kan fibration, and its pullback is again trivial.
This proves both existence and stability for all
fibration and acyclic-fibration pullbacks.

The weights $\Delta[n]\times\Delta[1]$ are finite, so
\[
 p^*(PX)\cong(p^*X)^{\Delta[1]}.
\]
The standard cotensor theorem for simplicial sets makes
\[
 X\xrightarrow{s}PX\xrightarrow{(e_0,e_1)}X\times X
\]
a stalkwise weak equivalence followed by a local
fibration. The middle object is locally Kan, and the
construction is functorial. These are all Brown axioms.
\end{proof}

This is the established stalkwise construction
\cite[Section~1, pp.~420--421]{Brown1973}, explicitly
\cite[Proposition~4.18, p.~45]{NSS2015} in the smooth
setting. Here ``full'' means that every pullback
\emph{along a fibration} exists; closure of $\KSh$
under all ordinary finite limits is a stronger requirement.

\begin{prop}[Acyclic boundary criterion]\label{prop:boundary-criterion}
A map between locally Kan simplicial sheaves lies in
$F\cap W$ precisely when
\[
 X_n\longrightarrow
 M_{\partial\Delta[n]}X
 \times_{M_{\partial\Delta[n]}Y}Y_n
\]
is epi for every $n\ge0$.
\end{prop}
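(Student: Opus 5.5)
The plan is to reduce everything to stalks, exactly as in the proof of Theorem~\ref{thm:sheaf-cfo}, and then apply the classical characterization of trivial Kan fibrations. Let $f:X\to Y$ be a map of locally Kan simplicial sheaves. By definition, $f\in F$ means the relative horn maps \eqref{eq:relative-horn} are epi, and $f\in W$ means every stalk $p^*f$ is a weak homotopy equivalence. Every inverse-image functor $p^*$ preserves finite limits. Lemma~\ref{lem:finite-matching} therefore identifies
\[
 p^*\bigl(M_{\partial\Delta[n]}X\times_{M_{\partial\Delta[n]}Y}Y_n\bigr)
 \cong
 \Hom(\partial\Delta[n],p^*X)\times_{\Hom(\partial\Delta[n],p^*Y)}p^*Y_n .
\]
The point family detects epimorphisms, as noted at the start of the section. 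Hence the boundary map is epi for all $n\ge0$ if and only if, for every $p$, the map $p^*f$ has the right lifting property against every $\partial\Delta[n]\hookrightarrow\Delta[n]$. The same reasoning shows that $f\in F$ if and only if every $p^*f$ is a Kan fibration.

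So the statement reduces to the following classical fact about simplicial sets: a map between Kan complexes has the right lifting property against all boundary inclusions if and only if it is a Kan fibration and a weak homotopy equivalence. I will cite the standard result that the trivial fibrations of the Kan--Quillen model structure are exactly the maps with this lifting property.

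For the forward direction, the lifting property against boundary inclusions implies lifting against all monomorphisms, and in particular against horn inclusions. It also makes $p^*f$ a weak equivalence. For the converse, a Kan fibration that is a weak equivalence is a trivial fibration, and so it lifts against boundaries. The Kan hypothesis on the stalks of $X$ and $Y$ is not needed for this equivalence, but it is harmless.

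The degree-zero case is the step to watch. The condition at $n=0$ says that $X_0\to Y_0$ is epi; it is stalkwise surjectivity on vertices, since $M_{\partial\Delta[0]}=1$. This is exactly the boundary lift for $n=0$, so it fits the simplicial-set criterion. The remark that the empty sheaf maps to $1$ by a fibration that is not a weak equivalence is consistent with this: that map fails the $n=0$ condition, as it should. I expect no real obstacle. The only care needed is to state precisely that epimorphisms of sheaves are detected by the germ points, including the point $d=0$ and the empty covering family over the empty object, and to use the cofinality of germ points established earlier rather than arbitrary point-valued surjectivity.
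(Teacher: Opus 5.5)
Your proposal is correct and matches the paper's proof. Both arguments use finite matching and the left-exact germ functors to pass to stalks, use the fact that the points detect epimorphisms, and then apply the simplicial-set characterization of trivial Kan fibrations by lifting against all boundary inclusions.
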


\begin{proof}
At every stalk this is the characterization of trivial
Kan fibrations by lifting against all boundary inclusions.
Finite matching commutes with stalks, and the points
detect epimorphisms.
\end{proof}

At $n=0$ the condition is $X_0\to Y_0$ epi. These are
internal acyclic Kan maps, also called internal hypercovers.
Their definition is internal to the sheaf topos; geometric
hypercovers additionally require represented levels and
matching maps.

\subsection{Internal Kan replacement}

Define
\[
 (\Ex_\E X)_n=M_{\Sd\Delta[n]}X,\qquad
 RX=\Ex^\infty_\E X=\colim_{r\ge0}\Ex_\E^rX.
\]
The transition maps are induced by the last-vertex maps.
The colimit is taken in the sheaf category.

\begin{prop}\label{prop:internal-ex}
There is a natural weak equivalence $\eta_X:X\to RX$
with locally Kan target. The functor $R$ preserves
stalkwise weak equivalences and finite limits in $s\E$.
\end{prop}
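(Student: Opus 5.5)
The plan is to reduce everything to stalks, using Lemma~\ref{lem:finite-matching} together with the fact that each $p_d^*$ preserves finite limits and filtered colimits. First we compute the stalks of $\Ex_\E$. Since $\Sd\Delta[n]$ is a finite simplicial set, Lemma~\ref{lem:finite-matching} gives
\[
 p^*(\Ex_\E X)_n\cong\Hom_{\sSet}(\Sd\Delta[n],p^*X)=(\Ex\,p^*X)_n,
\]
naturally in $n$ and in $X$. Under this identification, the last-vertex map $X\to\Ex_\E X$ goes to Kan's map $p^*X\to\Ex\,p^*X$. The colimit defining $RX$ is a sequential colimit in $\E$, computed levelwise, and inverse images commute with colimits. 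Hence $p^*RX\cong\Ex^\infty p^*X$, and $p^*\eta_X$ is Kan's natural map $p^*X\to\Ex^\infty p^*X$.

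Next we take the three claims in turn.
\begin{itemize}
\item \emph{$\eta_X$ is a weak equivalence.} Kan's theorem says $K\to\Ex^\infty K$ is a weak homotopy equivalence for every simplicial set $K$. So $\eta_X$ is a stalkwise weak equivalence, i.e.\ it lies in $W$.
\item \emph{$RX$ is locally Kan.} $\Ex^\infty K$ is always a Kan complex. By the remark after the definition, local Kan objects are exactly those with Kan stalks, and Lemma~\ref{lem:finite-matching} applies to $RX$ as well, so $RX$ is locally Kan.
\item \emph{$R$ preserves stalkwise weak equivalences.} If $f$ is a stalkwise weak equivalence, two-out-of-three in the naturality square for $\eta$ shows that $\Ex^\infty p^*f$ is a weak equivalence at each point.
\end{itemize}

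For preservation of finite limits, we check the statement on sections rather than on stalks. Each $\Ex_\E$ is built levelwise from the limit functors $M_{\Sd\Delta[n]}$, which are right adjoints (internal homs out of the finite object $\Sd\Delta[n]$). So $\Ex_\E$ commutes with all limits in $s\E$, and in particular with finite ones.

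The main obstacle is the passage to $\Ex^\infty_\E$: we need the sequential colimit in $\E$ to commute with finite limits. In a Grothendieck topos filtered colimits are left exact, since sheafification is left exact and filtered colimits of presheaves of sets commute with finite limits. We could instead argue on stalks, because the points are jointly conservative and detect isomorphisms. Under the identifications above, the comparison map
\[
 R(\lim D)\to\lim R D
\]
becomes, at each point, the comparison $\Ex^\infty\lim p^*D\to\lim\Ex^\infty p^*D$ for simplicial sets. That map is an isomorphism because $\Ex$ preserves limits and filtered colimits of sets commute with finite limits. The care needed here is that the naturality of the stalk identifications must hold in the diagram variable, so that the comparison map itself, and not just its source and target, corresponds; I would check this directly from the construction of the isomorphism in Lemma~\ref{lem:finite-matching}.
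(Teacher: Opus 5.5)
Your proposal is correct and follows the paper's proof essentially step for step. Finite matching gives $p^*RX\cong\Ex^\infty(p^*X)$. Kan's theorem, naturality of $\eta$ and two-out-of-three then give the weak-equivalence, locally Kan and preservation assertions. Finite-limit preservation follows because each $\Ex_\E$ stage preserves limits and filtered colimits in a Grothendieck topos are exact; your alternative stalkwise check of the limit comparison is also valid but not needed.
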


\begin{proof}
Since $\Sd\Delta[n]$ is finite, points commute with
each $\Ex_\E$ stage. They also preserve the colimit, so
\[
 p^*RX\cong\Ex^\infty(p^*X).
\]
The classical simplicial-set Kan replacement theorem
proves the first assertion. Naturality of $\eta$ and
two-out-of-three prove preservation of weak maps.
Each finite stage preserves finite limits, and filtered
colimits in a Grothendieck topos are exact. Thus $R$
preserves finite limits in the ambient category $s\E$.
\end{proof}

Write $\Hsm=\Sh_\infty(\CC)^\wedge$ for the hypercomplete
infinity-topos of space-valued sheaves on the ordinary smooth site
$\CC$ fixed above.
\begin{thm}\label{thm:smooth-localization}
Inclusion of locally Kan simplicial sheaves induces an
equivalence of infinity-localizations
\[
 \KSh[W^{-1}]\simeq(s\E)[W^{-1}]
 \simeq\Hsm.
\]
\end{thm}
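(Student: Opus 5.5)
The proof has two halves: first compare the full simplicial sheaf category with its locally Kan subcategory, then identify the localization of $s\E$ with the hypercomplete topos $\Hsm$.

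\emph{First equivalence.} Let $i:\KSh\to s\E$ be the inclusion, and use the internal Kan replacement $R$ of Proposition~\ref{prop:internal-ex}. Its target is locally Kan and it preserves $W$, so it descends to a functor $\bar R:(s\E)[W^{-1}]\to\KSh[W^{-1}]$. The natural weak equivalence $\eta_X:X\to RX$ supplies both comparisons. On $s\E$ it is a natural transformation $\id\Rightarrow iR$ with components in $W$. On $\KSh$, for locally Kan $X$, it is a natural weak equivalence $X\to RiX$ between locally Kan objects. Both therefore become natural equivalences after $\infty$-localization. This uses the standard fact that a natural transformation with componentwise weak equivalences induces a homotopy between the localized functors; concretely, one applies the localization functor to $\id\Rightarrow iR$ regarded as a functor out of $\Delta[1]$. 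Hence $i$ and $\bar R$ are inverse equivalences. This step is routine. It also does not need Theorem~\ref{thm:sheaf-cfo}, although that theorem gives an alternative route through Brown's description of the homotopy category.

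\emph{Second equivalence.} I would pass through the local (Jardine) model structure on simplicial sheaves, or equivalently on simplicial presheaves on $\CC$. Sheafification is a Quillen-equivalence-inducing functor and preserves stalks, so the local weak equivalences are exactly the stalkwise ones. The key input is that a map is a local weak equivalence iff it is a weak equivalence at every point of a conservative family of points. For the family $p_d^*$, this follows from the joint conservativity recalled above together with the germ description of local homotopy sheaves. Specifically, $\pi_n$-sheaf isomorphisms are detected germwise because, after applying $R$, the homotopy sheaves are computed by finite matching objects (Lemma~\ref{lem:finite-matching}). Then Lurie's comparison (HTT~6.5.2 / the Dugger--Hollander--Isaksen identification of local weak equivalences with $\infty$-connective maps) identifies the $\infty$-localization of simplicial presheaves at local weak equivalences with the hypercompletion of $\Sh_\infty(\CC)$, that is, with $\Hsm$. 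Finally, $s\E\subset \Fun(\Delta^\op,\PSh(\CC))$ is a reflective inclusion whose unit is a local weak equivalence, which gives $(s\E)[W^{-1}]\simeq\Hsm$ by the same argument used in the first part.

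\emph{Main obstacle.} The hard step is matching the stalkwise class $W$, defined through the particular germ points $p_d$, with Jardine's local weak equivalences, i.e.\ the $\infty$-connective maps. This requires two facts. First, the topos $\E$ must have enough points given exactly by the $p_d$, which is where the translation-and-ball remark is used. Second, for a locally Kan $X$ the homotopy sheaves $\pi_n(X,x)$ must have stalks $\pi_n(p^*X,p^*x)$, including over varying basepoints $x\in X_0(U)$. I would handle this by writing $\pi_n$ as a quotient of a finite matching object and using that each $p^*$ is exact and commutes with sheaf colimits. The empty cover is harmless here: it only forces $F(\varnothing)=*$, which is compatible with all of the above.
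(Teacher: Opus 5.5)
Your proposal is correct and is essentially the paper's proof. The first equivalence comes from the relative functors $i$ and $R$ and the componentwise weak natural maps $\id\to iR$ and $\id\to Ri$, exactly as you argue; for the second equivalence the paper simply cites the standard local-model presentation theorem (Nikolaus--Schreiber--Stevenson, compare Dugger--Hollander--Isaksen), which you unpack into three steps: the germwise class $W$ agrees with Jardine's local weak equivalences via the conservative points $p_d$, Lurie identifies that localization with $\Hsm$, and sheafification reflects simplicial presheaves onto $s\E$.
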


\begin{proof}
Inclusion $i$ and $R$ are relative functors. The natural
maps $\id\to iR$ and $\id\to Ri$ have weak components,
so induce inverse equivalences after localization.
The last identification is the standard local-model
presentation theorem
\cite[Observation~3.77 and Proposition~4.18]{NSS2015};
compare \cite[Theorem~6.2]{DHI2004}.
\end{proof}

The operator $R$ supplies local Kan fillers; hyperdescent
fibrant replacement additionally enforces descent for
hypercovers. Locally Kan objects and fibrations
are generally broader than local-model fibrant objects
and model fibrations \cite[Remark~3.56]{NSS2015}.
Hyperdescent uses a homotopy \emph{limit}.

\begin{example}[Kan replacement does not add all descent objects]
Let $G$ be the locally constant sheaf $\Z/2$, and let
$BG$ be its degreewise nerve. Every $BG(U)$ is Kan and
has one vertex. Let $U$ be an annulus and cover it by three
contractible angular sectors with connected pairwise overlaps and empty triple
intersection. A double-cover cocycle on the three cyclic overlaps
can have transition-sign product $-1$, whereas every coboundary
has product $+1$. The Cech homotopy limit therefore has two
components, while $BG(U)$ has one.

The same obstruction remains after $R$. The levels
are locally constant sheaves, and on connected cover
members and intersections $R$ has the corresponding
$\Ex^\infty B(\Z/2)$ values. Its degree-zero sheaf is
still terminal. The missing torsor objects have not
been added.

This example uses a site containing the annulus.
A presentation on only contractible Cartesian test
objects can already be fibrant; its derived value on
an annulus is not degreewise ordinary sheaf extension.
\end{example}

\subsection{Geometric presentations and the incomplete structure}

An ordinary geometric Lie infinity-groupoid is a
simplicial smooth manifold with represented horn
matching objects and surjective-submersion horn maps.
Here manifolds have the usual finite-dimensional Hausdorff
and second-countability conventions, and include the empty
manifold. All simplicial degrees are retained.
Write $\LieGpd$ for their ordinary category with simplicial smooth
maps. Geometric fibrations satisfy the analogous represented
relative horn conditions in positive degrees.
Weak equivalences are detected by the full family of germs
$p_d$, including $d=0$. There is no condition on the
degree-zero map of an arbitrary geometric fibration.

\paragraph{Geometric covers and the open-cover topology.}
A geometric cover in this subsection is a surjective
submersion, not merely an epimorphism in $\E$.
Such covers compose, are stable under finite products,
and have ordinary manifold pullbacks along arbitrary
smooth maps. These pullbacks remain Hausdorff and
second-countable. A submersion has local smooth sections
through every prescribed source point; conversely such
sections make its derivative surjective at each point,
so the submersion theorem applies. Surjectivity together
with this prescribed-point property characterizes a
geometric cover. In particular a geometric cover induces
a sheaf epimorphism. The empty isomorphism is a cover,
but $\emptyset\to *$ is not.

We use covering-family descent in $\E$, with finite
geometric constructions formed from manifold products
and submersion pullbacks. Smooth Yoneda is fully faithful by
gluing the smooth maps supplied on atlas charts.
Moreover, if $\y q:\y W\to\y V$ is epi, lifting the
germ of a target chart gives a local smooth section of
$q$ over $V$. This section need not pass through a
chosen point of $W$. For example, the map
$\R\amalg\R\to\R$ which is the identity on one component
and constant zero on the other has a global section,
but is not a submersion on its second component.

\begin{lem}[Finite geometric horn calculus]
A finite composite of pushouts of positive horn inclusions
$S\hookrightarrow T$ induces a represented surjective
submersion $M_T\y X\to M_S\y X$ whenever $X$ is a
geometric higher groupoid and $M_S\y X$ is represented.
\label{lem:ordinary-geometric-matching}

Let $n\ge1$, let $Y$ be a geometric higher groupoid, and let
$\mathcal X\to\y Y$ be a simplicial sheaf map.
Assume that $\mathcal X_m$ is represented for $m<n$
and that its relative horn maps in dimensions
$1\le m<n$ are represented surjective submersions.
Then
\[
 H_{n,i}(\mathcal X/Y)
 =M_{\Lambda^i[n]}\mathcal X
   \times_{M_{\Lambda^i[n]}\y Y}\y Y_n
\]
is represented for every $0\le i\le n$.
The degree-$n$ source $\mathcal X_n$ may be unrepresented.
\end{lem}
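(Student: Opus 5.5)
Both parts reduce to induction along a finite cell filtration, using two facts about geometric covers recorded above. First, surjective submersions have ordinary manifold pullbacks along arbitrary smooth maps, and the pullback projection is again a surjective submersion. Second, smooth Yoneda preserves these existing limits, so a sheaf pullback along a represented submersion of represented objects is itself represented.

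For the first part, I would write $S\hookrightarrow T$ as a finite sequence $S=S_0\subset S_1\subset\dots\subset S_r=T$. Each step is a pushout
\[
 S_{j+1}=S_j\amalg_{\Lambda^i[n]}\Delta[n],\qquad n\ge1.
\]
Mapping out converts each pushout into a pullback square
\[
 M_{S_{j+1}}\y X\cong M_{S_j}\y X\times_{M_{\Lambda^i[n]}\y X}\y X_n ,
\]
exactly as in Lemma~\ref{lem:finite-matching}. The right vertical map $\y X_n\to M_{\Lambda^i[n]}\y X$ is, by the definition of a geometric higher groupoid, a represented surjective submersion $X_n\to M_{\Lambda^i}X$. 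Its bottom map is $M_{S_j}\y X\to M_{\Lambda^i[n]}\y X$, and its source is represented by induction, starting from the hypothesis on $M_S\y X$. Hence the pullback is a manifold, and $M_{S_{j+1}}\y X\to M_{S_j}\y X$ is a surjective submersion. Composing the finitely many steps gives the claim, since covers compose.

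For the second part, I would first observe that $H_{n,i}(\mathcal X/Y)$ is a pullback of $\y Y_n\to M_{\Lambda^i[n]}\y Y$ along $M_{\Lambda^i[n]}\mathcal X\to M_{\Lambda^i[n]}\y Y$. The first map is a represented surjective submersion because $Y$ is geometric; here I apply the first part to $\Lambda^i[n]\subset\Delta[n]$, using that $M_{\Lambda^i[n]}\y Y$ is represented by hypothesis on geometric groupoids. So it suffices to show that $M_{\Lambda^i[n]}\mathcal X$ is represented. The horn $\Lambda^i[n]$ has dimension $n-1$, so only $\mathcal X_m$ with $m<n$ enter, and the degree-$n$ source never appears. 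This is why it may be unrepresented.

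The main step, which I expect to be the real obstacle, is a relative version of the first part for skeleta of dimension $<n$. I would filter $\Lambda^i[n]$ starting from a single vertex, through the partial relative matching objects
\[
 M_{A}\mathcal X\times_{M_A\y Y}M_{B}\y Y,
\]
adding one nondegenerate simplex of dimension $m\le n-1$ at a time. I would arrange the order so that each addition is a horn filling, that is, a pushout along $\Lambda^{k}[m]\hookrightarrow\Delta[m]$. This is the standard fact that $\Delta[0]\to\Lambda^i[n]$ is anodyne through horns of dimension $<n$. Each filling step then becomes a pullback along the relative horn map of $\mathcal X_m\to\y Y_m$ in dimension $m<n$, which is a represented surjective submersion by hypothesis. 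Equivalently, I would prove by induction on $A$ that $M_A\mathcal X\to M_A\y Y$ is represented along such a filtration. The delicate points are two. First, the horn fillings used must all be of dimension $\ge1$ and $<n$. Second, the intermediate objects must be represented, so that the pullbacks are of represented objects along submersions. The base case $A=\Delta[0]$ gives $\mathcal X_0$, which is represented. Once $M_{\Lambda^i[n]}\mathcal X$ is represented, a final submersion pullback gives $H_{n,i}$.
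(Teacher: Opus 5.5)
Your first part is exactly the paper's argument. Your second part follows the paper's strategy: a cone filtration of $\Lambda^i[n]$ from a vertex, horn attachments only in dimensions $1\le m\le n-1$, and pullbacks along the relative horn submersions of $\mathcal X$. As written, however, the base case and the inductive step refer to different objects, and neither reading closes without one more ingredient.

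Suppose you induct on the absolute objects $M_A\mathcal X$ with base case $\mathcal X_0$. An attachment along $\Lambda=\Lambda^k[m]$ gives $M_{A'}\mathcal X=M_A\mathcal X\times_{M_\Lambda\mathcal X}\mathcal X_m$. This is a pullback along the \emph{absolute} horn map of $\mathcal X$, which the hypotheses do not control. There is also no natural map $M_A\mathcal X\to H_{m,k}(\mathcal X/Y)$, because the $\y Y_m$-simplex is missing. Insert it first:
\[
 M_{A'}\mathcal X\cong
 \bigl(M_A\mathcal X\times_{M_\Lambda\y Y}\y Y_m\bigr)
 \times_{H_{m,k}(\mathcal X/Y)}\mathcal X_m .
\]
This is a pullback along the horn submersion of $Y$, followed by a pullback along the relative horn submersion.

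Suppose instead you induct on $M_A\mathcal X\times_{M_A\y Y}M_B\y Y$ with $B$ fixed. Then each step is a single pullback along the relative horn map, but the base case is $\mathcal X_0\times_{\y Y_0}M_B\y Y$, not $\mathcal X_0$. Nothing is assumed about $\mathcal X_0\to\y Y_0$, so representability of this base case needs $M_B\y Y\to\y Y_0$ to be a surjective submersion. Your first part supplies this, because $\{v\}\hookrightarrow B$ is a finite composite of positive horn pushouts.

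The paper takes this second route with $B=\Delta[n]$. It tracks $R^n_S=M_S\mathcal X\times_{M_S\y Y}\y Y_n$ starting from $X_0\times_{Y_0}Y_n$, using the vertex submersion $Y_n\to Y_0$ proved in the second paragraph of its proof. It ends directly at $R^n_{\Lambda^i[n]}=H_{n,i}(\mathcal X/Y)$, so no final pullback is needed. The first route avoids vertex submersions at the cost of two pullbacks per step.

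Finally, ``anodyne'' is weaker than what you use; you need an actual finite horn-cell filtration. One choice: attach $i*\tau$ along the horn missing $\tau$, for the nonempty proper faces $\tau$ of the face opposite $i$, in increasing dimension. Each such step adds two nondegenerate simplices, not one.
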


\begin{proof}
For a single attachment $T=S\amalg_{\Lambda^j[m]}\Delta[m]$,
matching in $\E$ gives
\[
 M_T\y X
 =M_S\y X\times_{M_{\Lambda^j[m]}\y X}\y X_m.
\]
The new leg is a represented submersion, so this is an
available manifold pullback. Finite iteration proves the
first assertion. The empty weight gives $M_\varnothing X=1$.
Representability for other finite weights requires the
specified geometric pullbacks.

Every nonempty face inclusion $\Delta[k]\subseteq\Delta[n]$
is a finite positive-horn extension. Choose a vertex $v$
of the initial face. Process the nonempty faces $\sigma$
of the opposite face not already in the initial face in
increasing dimension, attaching $v*\sigma$ along the horn
missing $\sigma$. All other facets are already present;
the attachment adds $\sigma$ and its cone together.
Likewise $\Lambda^i[n]$ is obtained from its vertex $i$
by coning the proper faces of the opposite simplex.
Only horn dimensions below $n$ occur, and for $n=1$
the horn is already that vertex. Thus every face and
vertex map of a geometric higher groupoid is a
surjective submersion. This starts at its represented
degree-zero object, not at a cover of the point.

For the partial-source assertion, put
\[
 R_S^n=M_S\mathcal X\times_{M_S\y Y}\y Y_n
 \qquad(S\subseteq\Delta[n]).
\]
At a vertex $v$ it is represented by
$X_0\times_{Y_0}Y_n$: the available submersion is
$Y_n\to Y_0$, and no condition on $X_0\to Y_0$ is used.
If $S'$ is obtained by a dimension-$m$ horn attachment
with $m<n$, then
\[
 R_{S'}^n
 =R_S^n\times_{H_{m,j}(\mathcal X/Y)}X_m.
\]
The map to the relative horn target records the source
horn and the corresponding face of the fixed $Y_n$-simplex.
Its new leg is a known represented submersion.
The cone filtration of $\Lambda^i[n]$ proves the
assertion using only source levels below $n$.
\end{proof}

\begin{lem}[Open-local submersion descent]
\label{lem:ordinary-submersion-locality}
Let $p:U\to V$ and $q:W\to V$ be smooth maps, with
$\y q$ epi. Suppose there is an epimorphism
\[
 a:\y Z\longrightarrow\y U\times_{\y V}\y W
\]
whose composite $b:Z\to W$ is a surjective submersion.
Then $p$ is a surjective submersion, and the displayed
sheaf pullback is represented by the ordinary manifold
pullback.
\end{lem}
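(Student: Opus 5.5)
Three steps: first show $p$ is a surjective submersion, then that the manifold pullback $P=U\times_V W$ exists, then that its Yoneda image is the sheaf pullback. Throughout I write $a':Z\to U$ for the other composite, the projection of $a$ to $\y U$; by full faithfulness of smooth Yoneda it is a smooth map, with $p\circ a'=q\circ b$.

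\emph{Surjectivity of $p$.} Take $v\in V$. Since $\y q$ is epi, lifting the germ of a chart at $v$ gives a local smooth section of $q$ near $v$, so some $w\in W$ has $q(w)=v$. Surjectivity of $b$ gives $z\in Z$ with $b(z)=w$. Then $a'(z)\in U$ satisfies
\[
 p(a'(z))=q(b(z))=v .
\]

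\emph{Submersivity of $p$.} I use the prescribed-point characterization recorded above: it suffices that every $u\in U$ admits a local smooth section of $p$ through $u$. Fix $u$, choose $w$ with $q(w)=p(u)$ as before, and look at the point $(u,w)$ of the sheaf pullback $\y U\times_{\y V}\y W$, evaluated at the point object. Since $a$ is epi, its germ at $d=0$ is surjective (evaluation at the point is the $d=0$ germ functor). Hence some $z\in Z$ has $a(z)=(u,w)$, so $a'(z)=u$ and $b(z)=w$. Because $b$ is a submersion, there is a local section $\sigma$ of $b$ near $w$ with $\sigma(w)=z$. Then $a'\circ\sigma$ is defined near $w$, sends $w$ to $u$, and satisfies $p\circ a'\circ\sigma=q$.

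This shows that $q$ factors locally through $p$ at $w$ via a map hitting $u$. To get an actual section of $p$ over a neighbourhood of $v=p(u)$, I would compose with a local section $\tau$ of $q$ through $w$. Such a $\tau$ need not exist: $q$ is only a sheaf epimorphism, and the $\R\amalg\R$ example shows its sections need not pass through a prescribed point.

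This is the main obstacle. My first attempt is to avoid sections of $q$ and argue on derivatives. From $p\circ(a'\sigma)=q$ near $w$ we get $\operatorname{im}dq_w\subseteq\operatorname{im}dp_u$. Then choose $w$ better: an epi $\y q$ supplies some local section $\tau$ of $q$ near $v$, and I would take $w=\tau(v)$ at the start rather than an arbitrary preimage. Since $q\circ\tau=\id$ near $v$, we have $\operatorname{im}dq_{\tau(v)}=T_vV$, so $dp_u$ is surjective. Since $u$ was arbitrary, $p$ is a submersion everywhere, and with surjectivity it is a surjective submersion.

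\emph{Representability of the pullback.} Because $p$ is a surjective submersion, the ordinary manifold pullback $P=U\times_V W$ exists, and it is Hausdorff and second countable as noted above. Smooth Yoneda preserves existing ordinary limits, so
\[
 \y P\cong\y U\times_{\y V}\y W .
\]
The epimorphism $a$ then corresponds to a smooth map $Z\to P$; no further hypothesis on $a$ is needed for this step.
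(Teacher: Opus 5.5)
Your proof is correct and, once you choose $w=\tau(v)$ for a local section $\tau$ of $q$ near $v$, it is the paper's argument: lift the exact pair $(u,w)$ through the point germ of $a$, then take a local section $\sigma$ of $b$ through that lift. The only difference is that you finish by comparing derivatives at $w$. The paper instead writes down the local section $a'\sigma\tau$ of $p$ through $u$ directly. Your choice $w=\tau(v)$ already makes that section available, so the earlier worry that no such $\tau$ exists does not arise.
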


\begin{proof}
Full faithfulness gives the other component $h:Z\to U$,
with $ph=qb$. Epimorphy of $\y q$ gives target-local
smooth sections by lifting chart germs, and also
surjectivity on points. Since $b$ is surjective,
$ph=qb$ makes $p$ surjective.

Fix any $u\in U$ and put $v=p(u)$. Choose a local section
$t:O_v\to W$ of $q$, and let $w=t(v)$. The dimension-zero
germ makes $a$ surjective on point-valued sections, so
the exact pair $(u,w)$ lifts to $z\in Z$ with
$h(z)=u$ and $b(z)=w$. Choose a local section
$s:O_w\to Z$ of $b$ through that $z$. Shrinking $O_v$
so that $t(O_v)\subseteq O_w$, we obtain
\[
 phst=qbst=qt=\id,\qquad hst(v)=u.
\]
Thus $p$ has a section through every prescribed source
point, and is a submersion. Its ordinary pullback now
exists and represents the sheaf pullback. Representability
thus follows from the submersion property just established.
If $V$ is empty, all three sources are empty and the
conclusion is the empty isomorphism, not a cover of a
nonempty target.
\end{proof}

\begin{prop}[Geometric acyclic-fibration criterion]
\label{prop:ordinary-geometric-acyclic}
A map $f:X\to Y$ of geometric higher groupoids is a
geometric fibration and a germwise weak equivalence
if and only if every relative boundary object
\[
 B_k=M_{\partial\Delta[k]}\y X
       \times_{M_{\partial\Delta[k]}\y Y}\y Y_k
\]
is represented and every boundary projection
$q_k:\y X_k\to B_k$ is a represented surjective
submersion, for all $k\ge0$. In particular
$B_0=\y Y_0$ and $q_0=f_0$.
\end{prop}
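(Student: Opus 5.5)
The plan is to prove the two directions separately, passing through the sheaf-level criterion of Proposition~\ref{prop:boundary-criterion}. Under smooth Yoneda, $\y X$ and $\y Y$ are locally Kan: horn maps of a geometric higher groupoid are surjective submersions and hence sheaf epimorphisms. A geometric fibration is a local fibration for the same reason. Germwise weak equivalences are exactly $W$. So $f$ is a geometric fibration and a weak equivalence precisely when $\y f\in F\cap W$ and, in addition, the positive relative horn maps of $f$ are represented surjective submersions.

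\emph{Sufficiency.} Assume every $B_k$ is represented and every $q_k$ is a represented surjective submersion. Each $q_k$ is then epi, so Proposition~\ref{prop:boundary-criterion} gives $\y f\in F\cap W$. It remains to show that each relative horn map $\y X_n\to H_{n,i}(\y X/Y)$ is a represented surjective submersion.
\begin{itemize}
\item By the partial-source clause of Lemma~\ref{lem:ordinary-geometric-matching}, applied inductively in $n$ with $\mathcal X=\y X$, the target $H_{n,i}$ is represented. This uses only the horn maps in dimensions below $n$, which the induction provides.
\item Factor the horn map as $\y X_n\xrightarrow{q_n}B_n\to H_{n,i}$. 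The second map forgets the missing face and is the pullback of the relative matching map for the single missing $(n-1)$-face. That map is essentially $q_{n-1}$ pulled back along an available leg, so it is a represented surjective submersion. Concretely, $B_n\cong H_{n,i}\times_{B_{n-1}'}\y X_{n-1}$, where $B'_{n-1}$ is the boundary datum of the missing face over $Y$; the new leg comes from $q_{n-1}$.
\item Composites of surjective submersions are surjective submersions, which finishes sufficiency.
\end{itemize}

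\emph{Necessity.} Assume $f$ is a geometric fibration and germwise weak. Then $\y f\in F\cap W$, so every $q_k$ is a sheaf epimorphism by Proposition~\ref{prop:boundary-criterion}. I argue by induction on $k$ that $B_k$ is represented and that $q_k$ is a surjective submersion.
\begin{itemize}
\item For $k=0$, $q_0=f_0$ and $B_0=\y Y_0$. Representability is clear, and I must show $f_0$ is a submersion. Apply Lemma~\ref{lem:ordinary-submersion-locality} with $p=f_0$, $q=\id$, $Z=X_0$; the epimorphism $\y q_0$ is given and $b=\id$. I expect this to be the main obstacle, because the lemma needs an epi from a represented $Z$ whose composite to $W$ is a surjective submersion. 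With $W=Y_0$, the choice $Z=X_0$ makes $b=f_0$ itself, which is circular. I would instead use $W=X_0\times_{Y_0}Y_1$ mapping to $Y_0$ via $d_0$ of $Y_1$. This object is represented because $Y_1\to Y_0$ is a submersion, and its projection to $Y_0$ is epi since $f_0$ is epi. I would then take $Z=X_1$ with the relative horn map $X_1\to X_0\times_{Y_0}Y_1$, which is a surjective submersion because $f$ is a geometric fibration. Composing with $d_1$ gives a map $X_1\to X_0$ whose composite with $f_0$ agrees with the composite of $X_1\to Y_1\to Y_0$. Combined with the prescribed-point sections of $Y_1\to Y_0$ through degenerate simplices, the germwise lifting of $q_0$ should then yield sections of $f_0$ through every prescribed point. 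This is exactly the pattern in the proof of Lemma~\ref{lem:ordinary-submersion-locality}, and I would adapt that argument directly if the literal hypotheses do not match.
\item For $k\ge1$, $B_k$ is the pullback $H_{k,i}\times_{B'_{k-1}}\y X_{k-1}$ described above. Its new leg is a represented submersion by the inductive hypothesis on $q_{k-1}$, and $H_{k,i}$ is represented by Lemma~\ref{lem:ordinary-geometric-matching}, so $B_k$ is represented.
\item To show $q_k$ is a surjective submersion for $k\ge1$, apply Lemma~\ref{lem:ordinary-submersion-locality} with $p=q_k$, $V=B_k$, $W=U=\y X_k$, $q=q_k$, and $Z=X_k$, using the horn factorisation. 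The composite $X_k\to H_{k,i}$ is a surjective submersion, and $B_k\to H_{k,i}$ is a surjective submersion. By the prescribed-point form of submersions, this two-step situation, together with $q_k$ being epi, lets the lifting argument of that lemma run on $B_k$.
\end{itemize}
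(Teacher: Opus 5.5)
Your sufficiency direction is correct and matches the paper. Relative horn targets are built by pulling back the $q_m$. The relative horn map factors as $q_n$ followed by the base change of $q_{n-1}$ that forgets the missing face. Weakness follows from the boundary criterion.

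\textbf{The gap is in necessity for $k\ge1$.} Applying the open-local submersion descent lemma with $p=q=q_k$, $U=W=Z=X_k$ does not meet its hypotheses. You would need an epimorphism from a represented object onto $\y X_k\times_{B_k}\y X_k$ whose second component is a surjective submersion, and the level-$k$ data give none. The diagonal is epi only when $q_k$ is mono, so this is the circularity you yourself diagnosed at $k=0$.

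Your fallback ``two-step situation'' does not suffice either: $\pi q_k$ and $\pi\colon B_k\to H_{k,i}$ surjective submersions and $q_k$ epi together do not make $q_k$ a submersion. Take the composite $\R\amalg\R\to\R\to *$, whose first map is the identity on one component and constant on the other. It satisfies all three conditions, yet its first map is not a submersion. Since $B_{k+1}$ is available only once $q_k$ is known to be submersive, your induction stalls already at $q_1$ and $B_2$.

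\textbf{The fix is the level shift you found at $k=0$, used at every level:} the degree-$k$ conclusion comes from the positive $(k+1)$st horn.
\begin{enumerate}
\item Split $\partial\Delta[k+1]$ into $\Lambda^0[k+1]$ and its missing $d_0$-face. This gives a cartesian sheaf square $B_{k+1}\cong \y X_k\times_{B_k}H_{k+1,0}$. Here $\beta\colon H_{k+1,0}\to B_k$ records the boundary of the missing face together with $d_0y$.
\item The map $\beta$ is epi. Indeed $\beta\alpha=q_k r$, where $r$ is epi because $rq_{k+1}=d_0$ is a face submersion, and $q_k$ is epi. Your degenerate-simplex argument also works.
\item Apply the lemma with $p=q_k$, $q=\beta$, epi witness $a=q_{k+1}$ (epi by the boundary criterion), and $b=\alpha q_{k+1}$. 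This $b$ is the degree-$(k+1)$ relative zeroth horn map, a surjective submersion by the fibration hypothesis.
\item The lemma gives, simultaneously, that $q_k$ is a surjective submersion and that $B_{k+1}$ is represented.
\end{enumerate}

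This is the paper's proof, and your $k=0$ step is its first instance, with two slips. The epi witness is $q_1$, not $q_0$. With $W=X_0\times_{Y_0}Y_1$ formed along the initial vertex, the component $X_1\to X_0$ must be $d_0$, not $d_1$, for $f_0h=qb$ to hold. With these corrections the lemma applies literally at $k=0$, and no adaptation is needed.
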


\begin{proof}
Suppose first that $f$ is a geometric fibration and is weak.
Geometric horn submersions give sheaf epimorphisms, so
$\y X,\y Y$ are locally Kan and $\y f$ is a local
fibration. Its germs are therefore trivial Kan fibrations.
Finite matching and Proposition~\ref{prop:boundary-criterion}
make every $q_k$ an epimorphism, before representability
of $B_k$ is known.

For $k\ge1$, set $H_k=H_{k,0}(X/Y)$, represented by the
fibration hypothesis. Decomposing the boundary into its
zeroth horn and missing face gives the cartesian sheaf square
\[
\begin{tikzcd}
 B_k\arrow[r,"\alpha_k"]\arrow[d,"r_k"']
   &H_k\arrow[d,"\beta_k"]\\
 \y X_{k-1}\arrow[r,"q_{k-1}"']
   &B_{k-1}.
\end{tikzcd}
\]
Here $\alpha_k$ forgets the missing face and $r_k$ records
it. For a horn $(x_1,\ldots,x_k;y)$, with $x_j$ its
$d_j$-face, $\beta_k$ records the boundary of the missing
$d_0$-face and $d_0y$. For $k\ge2$ its source boundary is
$(d_0x_{r+1})_{0\le r\le k-1}$, by
$d_r d_0=d_0d_{r+1}$; for $k=1$ it is just $d_0y\in Y_0$.
In particular
\[
 \alpha_kq_k=\text{the relative zeroth horn map},
 \qquad r_kq_k=d_0,\qquad
 \beta_k\alpha_k=q_{k-1}r_k.
\]
The face map $d_0:X_k\to X_{k-1}$ is a submersion by
Lemma~\ref{lem:ordinary-geometric-matching}, hence epi.
Thus $r_k$ is epi. Since $q_{k-1}$ is already epi,
the last identity makes $\beta_k$ epi as well.
These epimorphism calculations take place in the sheaf topos,
before establishing representability of $B_k$.

Start with $B_0=\y Y_0$. Once $B_{k-1}$ is represented,
apply Lemma~\ref{lem:ordinary-submersion-locality} to
$q_{k-1}$ and $\beta_k$. Their sheaf pullback is $B_k$,
with epi witness $q_k$ and submersion composite
$\alpha_kq_k$. The lemma proves simultaneously that
$q_{k-1}$ is a surjective submersion and that $B_k$
is represented. This establishes every boundary condition.
The degree-$n$ conclusion uses the positive $(n+1)$st
horn; the case $k=1$ derives submersivity of $f_0$.
Thus degree-zero submersivity follows from acyclicity.

Conversely, build relative matching objects over a fixed
$Y_n$-simplex by finite boundary attachments, starting
with the empty weight and the represented object $Y_n$.
Each dimension-$m$ attachment pulls back $q_m$, including
$q_0$ when a vertex is added. This represents the relative
horn. The two further boundary attachments
$\Lambda^i[n]\hookrightarrow\partial\Delta[n]
\hookrightarrow\Delta[n]$ attach the missing
$(n-1)$-face and the $n$-simplex, making the relative
horn projection a composite of submersions.
At every germ all relative boundary maps are surjective,
so the simplicial-set boundary criterion makes $f$ a
trivial Kan fibration there, and in particular weak.
\end{proof}

\begin{thm}[Empty-compatible ordinary geometric iCFO]
\label{thm:geometric-icfo}
The ordinary geometric Lie infinity-groupoids, including
the everywhere-empty diagram, with the specified germwise
weak equivalences and positive relative-horn fibrations,
form an incomplete category of fibrant objects in the
sense of \cite[Definition~2.1, p.~1134]{RogersZhu2020}.
Its acyclic fibrations are exactly the geometric
hypercovers of Proposition~\ref{prop:ordinary-geometric-acyclic}.
Functorial outer paths are represented levelwise by
\[
 (PX)_n=M_{\Delta[n]\times\Delta[1]}\y X.
\]
\end{thm}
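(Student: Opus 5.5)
We verify the axioms of \cite[Definition~2.1]{RogersZhu2020} one at a time, working throughout inside the sheaf topos $\E$ via smooth Yoneda and then showing that every object we construct is represented.

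\textbf{Elementary axioms.}
Isomorphisms are geometric fibrations and germwise weak equivalences. Two-out-of-three for $W$ holds because weak equivalences are detected at the germs $p_d$.

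For composition of fibrations, take geometric fibrations $X\to Y\to Z$. The relative horn target $H_{n,i}(X/Z)$ is represented: it is the pullback of the represented surjective submersion $\y X_n\to H_{n,i}(X/Y)$ composed with a pullback of $H_{n,i}(Y/Z)$. We can make this precise by applying the partial-source part of Lemma~\ref{lem:ordinary-geometric-matching} to $\y X\to\y Z$. The horn map then factors as a composite of surjective submersions, which are stable under pullback and composition.

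The terminal object $c*$ and finite products come from manifold products. Maps to $c*$ are fibrations, since the horn maps are exactly the Kan horn submersions. The everywhere-empty diagram is handled by the empty weight $M_\varnothing=1$ and the empty isomorphism being a cover. Lemma~\ref{lem:ordinary-submersion-locality} was phrased to cover this case.

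\textbf{Pullbacks.}
Suppose $f:X\to Y$ is a fibration, $g:Z\to Y$ is arbitrary, and a categorical pullback $P$ exists in $\LieGpd$. We first compare $P$ with the levelwise sheaf pullback $Q=\y X\times_{\y Y}\y Z$. Mapping constant simplicial manifolds $cU$ into $P$ shows $P_n(U)=\Hom(cU,P)$. Here we use that $\Hom(cU,X)=X_0(U)$, together with the shifted décalage tests $cU\otimes\Delta[n]$ if needed. A cleaner route is to use the constant-test identification $\Hom(cM,X)=\Hom(M,X_0)$ and degreewise cotensors $X^{\Delta[n]}$ where these are represented. Either way, this identifies $\y P$ with $Q$.

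Next we show $P\to Z$ is a fibration. Its relative horn objects $H_{n,i}(P/Z)$ are pullbacks of $H_{n,i}(X/Y)$ over $\y Z_n$. The horn map is the pullback of the represented submersion $X_n\to H_{n,i}(X/Y)$ along a map from a represented object. It is therefore a surjective submersion, once representability of $H_{n,i}(P/Z)$ is checked inductively by Lemma~\ref{lem:ordinary-geometric-matching}.

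When $f$ is also weak, we must prove existence. Proposition~\ref{prop:ordinary-geometric-acyclic} makes every $q_k$, including $f_0$, a surjective submersion. We construct $Q$ level by level:
\[
 Q_k=B_k(Q/Z)\times_{B_k(X/Y)}X_k .
\]
Here $B_k(Q/Z)$ is represented by the converse half of Proposition~\ref{prop:ordinary-geometric-acyclic}, via finite boundary attachments pulling back $q_m$. Thus every $Q_k$ is an honest manifold pullback along a submersion, and its boundary projections are pullbacks of the $q_k$. The proposition then makes $Q\to Z$ a geometric acyclic fibration. Yoneda shows $Q$ is the categorical pullback in $\LieGpd$. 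Since $Q$ is geometric, the comparison above shows any other categorical pullback agrees with it.

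\textbf{Paths.}
The weight $\Delta[n]\times\Delta[1]$ admits a filtration by positive horn pushouts over $\Delta[n]\times\partial\Delta[1]$: this is the standard prism anodyne decomposition. Lemma~\ref{lem:ordinary-geometric-matching} then makes $(PX)_n$ represented and $PX\to X\times X$ a composite of submersions on relative horns; this is the relative version via the same filtration relative to $\partial\Delta[1]$. Germwise, Theorem~\ref{thm:sheaf-cfo} makes $s:X\to PX$ weak.

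\textbf{Main obstacle.}
We expect the main obstacle to be the acyclic-pullback existence step, specifically representability of $B_k(Q/Z)$ in degree $0$ and its interaction with the empty object. This is exactly where the submersivity of $f_0$ from Proposition~\ref{prop:ordinary-geometric-acyclic} is essential. We also expect care in the identification of existing categorical pullbacks with $Q$ through constant tests.
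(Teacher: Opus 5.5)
Your argument for the stability axiom (that an \emph{existing} categorical pullback $P$ along an arbitrary fibration has fibration projection) has a genuine gap at the identification $\y P\cong Q$.

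\begin{itemize}
\item Constant tests see only degree zero: $\Hom(cU,P)=\Hom(U,P_0)$, not $(\y P_n)(U)$.
\item For nonempty $U$ and $n\ge1$, the proposed tests $cU\otimes\Delta[n]$ are not objects of $\LieGpd$. Since $\Delta[n]$ is not Kan, their horn maps are not surjective. So the universal property of a pullback formed in $\LieGpd$ says nothing about maps out of them.
\item The cotensor route fails for the same reason: the adjunction $\Hom(W,P^{\Delta[n]})\cong\Hom(W\otimes\Delta[n],P)$ again leaves $\LieGpd$.
\end{itemize}

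The missing idea is to use constant tests \emph{only} to show that $P_0$ represents the manifold pullback $X_0\times_{Y_0}Z_0$. Then, for an arbitrary fibration $f$ whose degree-zero manifold pullback exists, construct a levelwise represented pullback by horn induction. The partial-source part of Lemma~\ref{lem:ordinary-geometric-matching} represents $H_{n,i}(\mathcal P/Z)$ before $\mathcal P_n$ is known. The cartesian square
\[
 \mathcal P_n\cong H_{n,i}(\mathcal P/Z)\times_{H_{n,i}(f)}\y X_n
\]
then exhibits $\mathcal P_n$ as a manifold pullback along the relative horn submersion of $f$. Full faithfulness makes this a categorical pullback, and uniqueness identifies it with $P$.

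Your boundary-level construction $Q_k=B_k(Q/Z)\times_{B_k(X/Y)}X_k$ cannot replace this step. For a non-acyclic fibration, $f_0$ and the $q_k$ need not be submersions, and $B_k(X/Y)$ need not be represented. For acyclic existence, however, it is a valid alternative to the paper's route, which uses only submersivity of $f_0$ followed by the same horn induction. You should add that the composition argument applies to $Q\to Z\to c*$ before $Q$ is known to be an object.

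The path step also rests on a false claim. The inclusion $\Delta[n]\times\partial\Delta[1]\hookrightarrow\Delta[n]\times\Delta[1]$ is not anodyne: its source has two components while its target is connected. So it admits no horn-pushout filtration and does not give representability of $(PX)_n$.

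The inclusion that matters is
\[
 U_{n,i}=(\Lambda^i[n]\times\Delta[1])\cup(\Delta[n]\times\partial\Delta[1])\hookrightarrow\Delta[n]\times\Delta[1].
\]
Lemma~\ref{lem:ordinary-geometric-matching} needs an explicit finite composite of horn pushouts for it, not merely the knowledge that it is anodyne. This is the prism filtration of Lemma~\ref{lem:derived-prism-filtration}.

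Representability of the levels then follows in either of two ways:
\begin{itemize}
\item inductively, since $(PX)_0=X_1$ and $H_{n,i}(e)=M_{U_{n,i}}\y X$ is represented by the partial-source lemma; or
\item directly, as in the paper, from the staircase description of $(PX)_n$ as an iterated manifold pullback along the face submersions $d_j:X_{n+1}\to X_n$.
\end{itemize}
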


\begin{proof}
\emph{Products and elementary axioms.}
The terminal object is $c*$ and finite products are
levelwise manifold products. Matching commutes with
products, so their horn maps are products of submersions.
Isomorphisms are acyclic fibrations, and weak equivalences
have two-out-of-three at each prescribed germ.
Every object maps fibrationally to $c*$ by its defining
positive horn conditions. This includes the empty diagram
and does not require $X_0\to *$ to be a geometric cover.

\emph{Composition.}
For $X\xrightarrow{f}Y\xrightarrow{g}Z$, induct on
the relative horn degree. Lemma~\ref{lem:ordinary-geometric-matching}
represents $H_{n,i}(gf)$ from the lower horn conditions.
There is a cartesian sheaf square
\[
\begin{tikzcd}
 H_{n,i}(f)\arrow[r]\arrow[d]
   &Y_n\arrow[d]\\
 H_{n,i}(gf)\arrow[r]
   &H_{n,i}(g).
\end{tikzcd}
\]
The left map is therefore a base change of the relative
horn submersion for $g$. Composing it with
$X_n\to H_{n,i}(f)$ proves the next condition for $gf$.
This argument needs only the final target to be a
higher groupoid; it also applies to a simplicial manifold
not yet known to be an object, provided the two arrows
have the stipulated relative horns.

\emph{Pullbacks from degree zero.}
Let $X\xrightarrow{f}Y\leftarrow Z$ have a fibration leg.
First suppose the ordinary manifold pullback
$X_0\times_{Y_0}Z_0$ exists. Form the levelwise sheaf
pullback $\mathcal P=\y X\times_{\y Y}\y Z$.
Its degree zero is represented. Assuming the lower
levels and lower relative horns have been constructed,
the partial-source assertion of
Lemma~\ref{lem:ordinary-geometric-matching} represents
$H_{n,i}(\mathcal P/Z)$ without assuming that
$\mathcal P_n$ is represented. Sheaf pullback identities give
\[
\begin{tikzcd}
 \mathcal P_n\arrow[r]\arrow[d]
   &\y X_n\arrow[d]\\
 H_{n,i}(\mathcal P/Z)\arrow[r]
   &H_{n,i}(f),
\end{tikzcd}
\qquad
 H_{n,i}(\mathcal P/Z)
 =M_{\Lambda^i[n]}\y X
    \times_{M_{\Lambda^i[n]}\y Y}\y Z_n .
\]
The right map is a represented submersion. Its manifold
pullback represents $\mathcal P_n$ and the left map.
One horn index constructs this level; the same square for
every index proves all relative horn conditions.
Full faithfulness supplies the face and degeneracy maps
and their identities. Thus $P\to Z$ is a geometric
fibration, and the composition argument applied to
$P\to Z\to c*$ makes $P$ a higher groupoid.
Its represented sheaf universal property gives its
categorical pullback property.

\emph{Every existing categorical fibration pullback.}
Suppose instead that a pullback $Q$ is initially given
only in the geometric higher-groupoid category.
For every admitted manifold $V$, the constant diagram
$cV$ is an object: positive horns are nonempty and
connected, with identity horn maps. Moreover
\[
 \Hom(cV,Q)=\Hom(V,Q_0).
\]
Indeed the unique maps $[n]\to[0]$ in $\Delta$ force
the higher components to be total degeneracies of the
degree-zero component, and those components define
a simplicial map. Testing the categorical universal
property against every $cV$ shows that $Q_0$ represents
the ordinary degree-zero manifold pullback.
The preceding construction now produces a levelwise
represented pullback $P$. Uniqueness gives $Q\cong P$,
so its projection is a fibration and its Yoneda image
is the sheaf pullback. The constant tests therefore
establish the categorical stability axiom for $Q$.

\emph{All acyclic-fibration pullbacks.}
If $f$ is acyclic,
Proposition~\ref{prop:ordinary-geometric-acyclic} makes
$f_0$ a surjective submersion. Every degree-zero base
change therefore exists, and the preceding induction
gives every required categorical pullback.
For each prescribed germ $p$,
\[
 p^*P=p^*X\times_{p^*Y}p^*Z.
\]
Its projection is a pullback of a trivial Kan fibration,
hence is trivial again. This proves acyclicity as well
as geometric existence.

\emph{Represented outer paths.}
Write $a^\epsilon=(a,\epsilon)$. The staircase simplices
of $\Delta[n]\times\Delta[1]$ are
\[
 P_j=[0^0,\ldots,j^0,j^1,\ldots,n^1],
 \qquad 0\le j\le n.
\]
For $1\le j\le n$, the intersection with the preceding
simplices is
$D_j=[0^0,\ldots,(j-1)^0,j^1,\ldots,n^1]$.
It is the $d_j$-face of both $P_{j-1}$ and $P_j$:
one deletes $(j-1)^1$ in the first and $j^0$ in the second.
Consequently the path level is the iterated manifold
pullback described by
\[
 \left\{(a_0,\ldots,a_n)\in X_{n+1}^{\,n+1}:
 d_ja_{j-1}=d_ja_j,\quad 1\le j\le n\right\}.
\]
Each new leg is the face submersion
$d_j:X_{n+1}\to X_n$ from
Lemma~\ref{lem:ordinary-geometric-matching}. Thus these
levels exist; at $n=0$ the level is $X_1$.
Full faithfulness gives the whole simplicial manifold.

Write $e=(\operatorname{ev}_0,\operatorname{ev}_1):
PX\to X\times X$. Induct on its positive relative
horns, using Lemma~\ref{lem:ordinary-geometric-matching}
and the target higher groupoid $X\times X$.
The relative horn target identifies with $M_{U_{n,i}}\y X$,
where
\[
 U_{n,i}=(\Lambda^i[n]\times\Delta[1])
          \cup(\Delta[n]\times\partial\Delta[1]).
\]
Explicitly it is the pullback of matching on
$\Lambda^i[n]\times\Delta[1]$ and on
$\Delta[n]\times\partial\Delta[1]$, over matching on
$\Lambda^i[n]\times\partial\Delta[1]$.

The filtration of
Lemma~\ref{lem:derived-prism-filtration} uses
$n-1$ horn attachments in dimension $n$ and
$n+1$ in dimension $n+1$ and is purely simplicial.
For completeness, put $D_0$ equal to the top simplex
and $D_{n+1}$ to the bottom simplex. Let $C_s$ be
$D_s$ with its unique base-$i$ vertex removed, and
let $B_j$, for $j\ne i$, be $P_j$ with its unique
base-$i$ vertex removed. Set $d=i$ if $i>0$, and
$d=1$ otherwise. First attach the $D_s$ with
$1\le s\le n$, $s\ne d$, missing $C_s$.
Their other facets omit a base coordinate other than
$i$ and lie in $U_{n,i}$. The missing facets are new
and distinct: the sole internal coincidence is
$C_i=C_{i+1}$, while for $i=0$ the excluded $C_1$
is already top and for $i=n$ the excluded $C_n$
is already bottom.
Next attach $P_i$, missing $D_d$, and then each
$P_j$, $j\ne i$, missing $B_j$.
At each step all other facets are present and the
missing facet is new. A nondegenerate chain outside
$U_{n,i}$ uses both time labels, has base image
$[n]$ or $[n]\setminus\{i\}$, and repeats at most
one base coordinate. These are exactly the
$P_j,D_s,B_j,C_s$, proving exhaustion by actual
horn pushouts. For $n=1$ the first stage is empty
and the remaining two attachments are the triangles
of the square. For the absolute prism inclusion,
first attach the top and bottom simplices along
their $i$th horns.

The finite geometric horn calculus therefore makes
$(PX)_n\to H_{n,i}(e)$ a surjective submersion.
Thus $e$ is a fibration, and composition with the
terminal map makes $PX$ an object. Its degree-zero
component is
\[
 e^{(0)}=(d_1,d_0):X_1\longrightarrow X_0\times X_0.
\]
The map $e^{(0)}$ can be a diagonal, as on a constant
diagram; the fibration condition concerns positive
relative horns.
The notation $\operatorname{ev}_0:PX\to X$
instead denotes evaluation at the zero endpoint.

Finally, finite matching gives
$p^*(PX)\cong(p^*X)^{\Delta[1]}$.
The constant-path inclusion $j:X\to PX$ has
$\operatorname{ev}_0$ as a retraction. The monotone map
$\min:[1]\times[1]\to[1]$ supplies the simplicial
homotopy $j\operatorname{ev}_0\Rightarrow\id$.
Thus $j$ is weak at every prescribed germ.
The representing universal properties make the path
construction functorial. This proves all seven iCFO axioms.
\end{proof}

\begin{cor}[Finite unique-horn bounds]
\label{cor:ordinary-geometric-finite}
For each finite $N\ge0$, the full subcategory whose
absolute horn maps are isomorphisms of represented
manifolds in every degree $m>N$ is an iCFO with the
restricted classes. The union over finite $N$ is
also an iCFO. All simplicial degrees remain present;
the union does not include individually outer-unbounded
objects.
\end{cor}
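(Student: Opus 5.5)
The plan is to show that each finite-bound subcategory $\LieGpd_{\le N}$ is closed under every construction used in the proof of Theorem~\ref{thm:geometric-icfo}. The iCFO axioms then restrict immediately, because weak equivalences and fibrations are the restricted classes and all universal properties can be tested inside the ambient category. Concretely, I would check four things: the terminal object $c*$ and finite products lie in the subcategory; the pullback $P=X\times_Y Z$ along a fibration $X\to Y$ lies in it whenever $X,Y,Z$ do; the path object $PX$ lies in it whenever $X$ does; and an existing categorical pullback inside the subcategory agrees with the constructed $P$. The last point follows from the constant-test argument, since every $cV$ has unique horn fillers in all degrees $m\ge1$ and hence lies in the $N=0$ subcategory, so in every subcategory. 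Products and $c*$ are immediate, because matching commutes with products and a product of isomorphisms is an isomorphism.

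For pullbacks, I would use the germ points. An absolute horn map of represented manifolds is an isomorphism precisely when it is an isomorphism of sheaves, which can be detected at all $p_d$ by the dense-site discussion and Lemma~\ref{lem:finite-matching}. At each germ, $p^*X$, $p^*Y$ and $p^*Z$ are Kan complexes with unique horn fillers above degree $N$, and $p^*X\to p^*Y$ is a Kan fibration. Uniqueness of fillers for $p^*P=p^*X\times_{p^*Y}p^*Z$ in degree $m>N$ then follows: given a horn in $P$, its images in $X$ and $Z$ have unique fillers, and those fillers have a common image in $Y$ because the $Y$-filler is unique. Existence of fillers is already part of the fact that $P$ is a higher groupoid. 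For the path object, $p^*(PX)\cong(p^*X)^{\Delta[1]}$, and a horn $\Lambda^i[m]\to(p^*X)^{\Delta[1]}$ corresponds to a map out of $\Lambda^i[m]\times\Delta[1]$. Filling it goes through the prism filtration of Lemma~\ref{lem:derived-prism-filtration}, whose attachments have dimensions $m$ and $m+1$, both exceeding $N$. Uniqueness at each stage therefore gives uniqueness of the filler. The acyclic-fibration pullbacks and the paths then lie in the subcategory, and the remaining axioms are inherited from Theorem~\ref{thm:geometric-icfo}.

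For the union, I would note that pairwise constructions involving objects with bounds $N_1,N_2,N_3$ land in bound $\max N_i$. The union is therefore closed under the same constructions, and the same inheritance argument applies. No outer-unbounded object arises, because each construction uses only finitely many objects.

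I expect the main obstacle to be the subtle step in the pullback case: making sure the isomorphism condition is about \emph{represented} manifolds and not only germwise. I would settle it by observing that a map of manifolds whose Yoneda image is a sheaf isomorphism is a diffeomorphism, by full faithfulness of smooth Yoneda. I would also check the boundary case $N=0$, where the filtration uses the lowest dimensions: for $m=1$ the prism attachments have dimensions $1$ and $2$, both positive, so uniqueness still applies.
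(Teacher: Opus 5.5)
Your argument is correct, and its skeleton matches the paper's. Both show that the bound-$N$ subcategory is closed under the products, fibration pullbacks and outer paths of Theorem~\ref{thm:geometric-icfo}. Both use the constant tests $cV$, which lie in every bound, to identify an initially given categorical pullback, and both take a common maximum bound for the union.

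The difference is in how the bound is verified. The paper works with represented manifolds throughout:
\begin{itemize}
\item between objects of bound $N$, the relative horns above $N$ are automatically isomorphisms;
\item the absolute horn of $X\times_YZ$ factors as a base change of a relative horn of $X\to Y$, followed by a base change of an absolute horn of $Z$;
\item the relative horns of the endpoint map $PX\to X\times X$ are composites of base changes of horn maps of $X$ in dimensions $m$ and $m+1$, hence isomorphisms above $N$, and these are then composed with the horn of $X\times X$.
\end{itemize}
Since base changes and composites of diffeomorphisms are diffeomorphisms, nothing further is needed.

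You instead pass to the germs $p_d$. There you use the simplicial fact that unique fillers survive strict pullback and the cotensor $(-)^{\Delta[1]}$. You then return to manifolds through joint conservativity of the germs and full faithfulness of smooth Yoneda.

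Your route is more transparent simplicially, but it depends on a conservative point family and on this Yoneda upgrade. The paper's represented factorization is what it reuses verbatim, as ``isomorphism versions of the same horn and prism constructions,'' in Theorem~\ref{thm:ordinary-banach-icfo}. Your argument would also work there, using Proposition~\ref{prop:ordinary-banach-site} and the restricted Yoneda argument.

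The paper also records explicitly that at $N=0$ every object is isomorphic to $cX_0$, that weak maps between such objects are diffeomorphisms, and that the degree-zero endpoint map of $PX$ is the permitted diagonal. Your check that the $m=1$ prism uses only dimensions $1$ and $2$ covers the same boundary case for the axioms.
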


\begin{proof}
Between two objects of bound $N$, relative horns above
$N$ are automatically isomorphisms: identify simplices
with their absolute horn objects. Products preserve
the bound. The pullback construction does also, because
its relative horns and the absolute horns of its target
are isomorphisms above $N$. For a degree-$m$ endpoint
horn the prism uses only dimensions $m,m+1$; if $m>N$,
all its restriction maps are isomorphisms. Composing
with the target horn then gives the same bound for $PX$.
Constant test manifolds belong to every finite-bound
subcategory, so the argument for an initially given
categorical pullback applies within it as well.

At $N=0$, every vertex restriction $X_n\to X_0$ is
an isomorphism by the positive-horn cone filtration.
Thus $cX_0\to X$ is an isomorphism. A weak map between
constants is a diffeomorphism, by germ conservativity
and full faithfulness of smooth Yoneda, while its
endpoint-pair path map is the permitted diagonal.
This includes the zero bound directly.
In the union, finitely many input objects have a common
maximum bound; the constructions stay within that bound.
An initially given categorical pullback in the union
is identified with this one by the same constant tests
and uniqueness.
\end{proof}

Whenever the admitted geometry contains the finite-dimensional
crossing-axes cospan, these finite-bound categories are not
full CFOs for the unchanged positive-horn fibration class.
This includes the full smooth category and the all-Banach
choice below, by the crossing-axes propositions.

\paragraph{Relation to the published construction.}
The construction adapts Rogers--Zhu to the
open-covering-family topos, including the empty cover.
Their finite horn calculus
\cite[Lemmas~3.6--3.7, Remark~3.8 and Lemma~3.9,
pp.~1149--1150]{RogersZhu2020}
and submersion locality
\cite[Propositions~6.7 and~6.12, pp.~1161--1166]{RogersZhu2020}
provide the local steps, which we combine with
covering-family epimorphisms. The construction from a
degree-zero pullback and the acyclic existence argument
parallel the distinct arguments in
\cite[Propositions~7.10 and~7.12,
pp.~1173--1176]{RogersZhu2020}.
The path construction is related to
\cite[Proposition~7.1, p.~1167; Proposition~7.2,
pp.~1168--1169; Proposition~7.13 and Lemma~7.14,
pp.~1176--1178; Appendix~A, pp.~1209--1212]{RogersZhu2020}.
Proposition~7.1 concerns stalkwise paths; it is distinct
from the iCFO Theorem~7.1 on pp.~1170--1171.
The adaptation replaces the terminal-cover hypothesis
of their Definition~3.1 by the open-covering-family
arguments above.

\begin{rem}[Singleton descent and family descent]
\label{rem:ordinary-family-topology}
The distinction is visible without a geometric limit
calculation. On the manifold category the constant
presheaf with value $S=\{0,1\}$ satisfies the literal
singleton surjective-submersion equalizer condition,
but fails the empty-cover condition. Even the
empty-normalized presheaf
\[
 F_+(M)=
 \begin{cases}
 S,&M\ne\emptyset,\\
 \{*\},&M=\emptyset,
 \end{cases}
\]
with identity restrictions between nonempty objects,
satisfies that singleton condition: over a nonempty
target the cover and its self-pullback are nonempty.
Yet descent for the two component opens of
$(-2,-1)\amalg(1,2)\subset\R$ would require the
nonbijective diagonal $S\to S^2$ to be a bijection.
The representable discrete two-point manifold instead
allows independent labels on the two components.
Thus empty normalization alone does not identify
singleton descent with the family topology used here.
\end{rem}

\begin{prop}\label{prop:geometric-yoneda}
Levelwise smooth Yoneda
\[
 \y:\LieGpd\longrightarrow\KSh
\]
is fully faithful on underlying categories and
preserves the terminal object, finite products,
specified weak equivalences, geometric fibrations,
and existing fibration pullbacks.
\end{prop}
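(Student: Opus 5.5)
We verify the listed properties one at a time, each reduced to facts already established for smooth Yoneda and for the constructions in Theorem~\ref{thm:geometric-icfo}.

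\emph{Landing in $\KSh$ and full faithfulness.} Let $X$ be a geometric higher groupoid. Lemma~\ref{lem:ordinary-geometric-matching} shows that its horn matching objects are represented, so $\y$ carries them to $M_{\Lambda^i[n]}\y X$. The horn maps are surjective submersions, hence sheaf epimorphisms. Therefore $\y X$ is locally Kan.

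A map of simplicial sheaves $\y X\to\y Y$ is a family of maps $\y X_n\to\y Y_n$ commuting with faces and degeneracies. Smooth Yoneda is fully faithful on manifolds, so each component is $\y$ of a unique smooth map $X_n\to Y_n$. Faithfulness then transports the simplicial identities back to $X$ and $Y$. Hence levelwise $\y$ is fully faithful on the underlying categories.

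\emph{Terminal object, products, weak equivalences.} The terminal object $c*$ goes to the terminal sheaf. Finite products of geometric groupoids are levelwise manifold products, and smooth Yoneda preserves these existing limits. Weak equivalences in $\LieGpd$ are defined to be germwise weak equivalences for the full family $p_d$. These are exactly stalkwise weak equivalences of $\y X\to\y Y$, so they are preserved, and indeed reflected.

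\emph{Fibrations.} For a geometric fibration $f$, Lemma~\ref{lem:ordinary-geometric-matching} represents the relative horn targets $H_{n,i}$. Smooth Yoneda preserves the matching limits, so $\y$ of the relative horn map is the sheaf relative horn map \eqref{eq:relative-horn}. A surjective submersion induces a sheaf epimorphism, so $\y f$ is a local fibration for every $n\ge1$. No degree-zero condition is needed on either side.

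\emph{Fibration pullbacks.} This is the step that needs care, since a categorical pullback in $\LieGpd$ might a priori differ from the sheaf pullback. The ``every existing categorical fibration pullback'' part of the proof of Theorem~\ref{thm:geometric-icfo} handles this. Testing against the constant objects $cV$ shows that any such pullback $Q$ has $Q_0$ equal to the ordinary degree-zero manifold pullback. Uniqueness then identifies $Q$ with the levelwise represented $P$, and $\y P=\y X\times_{\y Y}\y Z$ levelwise by construction. Hence $\y$ sends the pullback square to a pullback square in $s\E$. That square lies in $\KSh$ and is the Brown pullback of Theorem~\ref{thm:sheaf-cfo}.

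The only real subtlety is therefore this identification of categorical with sheaf pullbacks, and it was already settled by the constant-test argument. The rest is bookkeeping with the facts that smooth Yoneda preserves existing limits and that submersions give epimorphisms.
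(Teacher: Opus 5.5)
Your proposal is correct and follows the paper's proof essentially step for step. Both arguments use levelwise full faithfulness of smooth Yoneda, preservation of represented matching limits, the fact that surjective submersions give sheaf epimorphisms, and germwise testing of weak maps. For pullbacks, both use the constant-test argument of Theorem~\ref{thm:geometric-icfo} to identify any existing categorical fibration pullback with the levelwise represented sheaf pullback.
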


\begin{proof}
Atlas charts prove full faithfulness, also for
simplicial diagrams. Yoneda preserves represented
matching limits, the terminal object and finite products.
Surjective
submersions have local smooth sections and hence
give sheaf epimorphisms. The weak maps are tested
at the same germs.
For an existing categorical pullback along a geometric
fibration, the constant-test argument in
Theorem~\ref{thm:geometric-icfo} first identifies its
degree-zero manifold pullback and then identifies
the entire pullback with the levelwise represented
sheaf pullback. Thus Yoneda preserves every existing
categorical fibration pullback.
\end{proof}

These preservation statements give exactness for the iCFO of
Theorem~\ref{thm:geometric-icfo} on underlying categories.
Reflection of geometric fibrations, and the mapping spaces
and essential image after localization, are further
comparison questions.
The finite-limit alternative of
\cite{BehrendGetzler2017} has its own ordinary
descent-category hypotheses.

\begin{prop}[The unchanged manifold category is not a full CFO]
\label{prop:crossing-axes}
The preceding geometric fibration class cannot make
the full category of geometric Lie infinity-groupoids
a Brown CFO with all fibration pullbacks.
\end{prop}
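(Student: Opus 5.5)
We argue by contradiction, using the crossing-axes cospan described in the introduction. Consider the diagram
\[
 c\R^2\xrightarrow{\,c(xy)\,}c\R\xleftarrow{\,c0\,}c*.
\]
All three objects are constant simplicial manifolds. These are geometric Lie infinity-groupoids, because each positive horn of a constant diagram is the constant value itself and the horn maps are identities.

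The first step is to check that $c(xy):c\R^2\to c\R$ is a geometric fibration. For a map $cf:cA\to cB$ between constants, the relative horn object in degree $m\ge1$ is
\[
 M_{\Lambda^i[m]}\y cA\times_{M_{\Lambda^i[m]}\y cB}\y B.
\]
Since horns are nonempty and connected, $M_{\Lambda^i[m]}\y cA=\y A$, so this object is $\y A\times_{\y B}\y B=\y A$. The relative horn map is therefore the identity of $A$, which is represented and a surjective submersion. There is no degree-zero condition. Hence every map between constants is a geometric fibration, as the introduction asserts.

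If the category were a full Brown CFO with this fibration class, a categorical pullback $Q$ of the cospan would exist in $\LieGpd$. We test it against constants. The paper already shows $\Hom(cV,Q)=\Hom(V,Q_0)$ for every manifold $V$, and likewise $\Hom(cV,cA)=C^\infty(V,A)$. The universal property applied to all tests $cV$ then gives a natural bijection
\[
 C^\infty(V,Q_0)\cong\{g\in C^\infty(V,\R^2): xy\circ g=0\}.
\]
So the manifold $Q_0$ represents the crossing-axes functor $\{xy=0\}$ on the manifold category. The same identification appears in the proof of Theorem~\ref{thm:geometric-icfo} under ``every existing categorical fibration pullback.''

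It remains to show that no finite-dimensional Hausdorff second-countable manifold $Q_0$ represents this functor. This is the main obstacle, and it is where the actual geometry enters. Testing with $V=*$ identifies the points of $Q_0$ with the set $\{xy=0\}\subset\R^2$. Testing with intervals $V=(-1,1)$ shows that the bijection $Q_0\to\{xy=0\}$ is a smooth injective map whose smooth curves are exactly the smooth curves in the axes.

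Near the origin, the two axis curves $t\mapsto(t,0)$ and $t\mapsto(0,t)$ lift to smooth curves in $Q_0$ through a single point $q$. Consider the tangent map $T_qQ_0\to T_0\R^2$, induced by the smooth composite $Q_0\to\R^2$. Its image contains both $e_1$ and $e_2$, so $\dim Q_0\ge2$ near $q$. On the other hand, $Q_0$ injects into a set that is a union of two lines. Because $Q_0$ is second countable, Sard's theorem, or invariance of domain applied to a rank-$2$ locally injective smooth map, gives a contradiction: near $q$ the map would be a local diffeomorphism onto an open subset of $\R^2$, but its image lies in $\{xy=0\}$, which has empty interior.

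For this step to go through, one must verify that the points of $Q_0$ near $q$ map close to the origin. Continuity of the composite $Q_0\to\R^2$ ensures this. Once that is checked, the contradiction shows that the required fibration pullback does not exist, which proves Proposition~\ref{prop:crossing-axes}.
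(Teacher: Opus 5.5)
Your proof is correct and follows the paper's argument. All maps between constants are fibrations, and testing a putative pullback $Q$ against constants $cV$ makes $Q_0$ represent $\{xy=0\}$. The two lifted axis curves through the unique point over the origin then make the derivative of $Q_0\to\R^2$ surjective there, so the submersion theorem forces an open image inside $\{xy=0\}$.

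Two small remarks. The appeal to Sard's theorem and second countability is unnecessary: rank is lower semicontinuous, so the map is a submersion, hence open, near $q$, and injectivity then also gives your local-diffeomorphism claim. The closing continuity remark is also not needed, since every image point already lies in $\{xy=0\}$.
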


\begin{proof}
Every map between constant simplicial manifolds is a
geometric horn fibration: a positive-dimensional horn
is nonempty and connected, so the relative horn maps
are identities. A full CFO would have to contain the
pullback of
\[
 c\R^2\xrightarrow{xy}c\R\xleftarrow{0}c*.
\]
If a simplicial manifold $Q$ represented this pullback,
maps $cV\to Q$ would be exactly maps $V\to Q_0$.
Thus $Q_0$ would represent the crossing-axes functor.

The two axis inclusions would lift to smooth curves
through the same point of $Q_0$. The derivative of
$Q_0\to\R^2$ would surject onto the two coordinate
directions there. The submersion theorem would force
its image to contain a neighborhood of the origin,
contradicting $xy=0$.
\end{proof}

The full sheaf-theoretic repair changes the object
category. Replacing ``ordinary limit'' by ``homotopy
limit'' inside an ordinary Brown axiom does not repair
the unchanged geometric category. Also, an internal
Kan presentation is not automatically a Segal groupoid
object in an infinity-category; those are distinct
presentation conventions.

\subsection{The ordinary Banach-open site}
\label{subsec:ordinary-banach}

The preceding ordinary-site construction also has
a Banach version, separate from the derived
geometry of Section~\ref{sec:banach}.
Choose a small Banach-open site, closed under the
open sets and finite products in use, with smooth
maps and open-covering families. Include the empty
object and its empty cover. For a pointed plot
$(U,u)$ define
\[
 p_{U,u}^*F=
 \operatorname*{colim}_{u\in V\subseteq U}F(V),
\]
over open neighborhoods, with restriction maps
as the neighborhoods shrink.

\begin{prop}\label{prop:ordinary-banach-site}
These germ functors preserve finite limits and
colimits and are jointly conservative. Locally
Kan simplicial sheaves on this site form a full
Brown CFO for germwise weak equivalences and
local positive-horn fibrations. Its localization
is the hypercomplete Banach-open infinity-topos.
\end{prop}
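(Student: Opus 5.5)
The plan is to run the smooth argument of Theorem~\ref{thm:sheaf-cfo} and Theorem~\ref{thm:smooth-localization} again, replacing the Euclidean germ points $p_d$ by the pointed-plot germs $p_{U,u}^*$. All the Brown axioms in those proofs are checked stalkwise, so once the germ family is known to behave like a conservative family of topos points, the same proofs apply word for word.

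\emph{Step 1: exactness of germs.} First I show each $p_{U,u}^*$ preserves finite limits and colimits on sheaves.
\begin{itemize}
\item On presheaves, $p_{U,u}^*$ is a filtered colimit over the neighborhood poset, which is cofiltered since two neighborhoods of $u$ intersect in one. Filtered colimits of sets commute with finite limits.
\item Colimits in sheaves are sheafifications of presheaf colimits. The sheafification step does not change germs: a local gluing or a local equality near $u$ is witnessed on one smaller open neighborhood of $u$. This is exactly the argument used for the $p_d$.
\item The empty plot has no points, so the empty covering family imposes nothing stalkwise.
\end{itemize}

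\emph{Step 2: joint conservativity.} Let $\phi:F\to G$ be a sheaf map that is an isomorphism at every $p_{U,u}^*$.
\begin{itemize}
\item \emph{Injectivity.} If two sections of $F(U)$ agree in $G(U)$, they agree in every germ $p_{U,u}^*F$. So they agree on some neighborhood $V_u$ of each $u$. The $V_u$ form an open cover of $U$, and the sheaf condition gives equality on $U$.
\item \emph{Surjectivity.} A section of $G(U)$ lifts germwise, giving local lifts on an open cover. These lifts agree on overlaps by the injectivity just proved, so they glue.
\item \emph{Size.} No smallness or separability issue arises. The argument uses only open neighborhoods inside the given plot, and every plot in the site carries its own points.
\end{itemize}
As in the smooth case, applying the same argument to the image subobject shows the family also detects epimorphisms.

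\emph{Step 3: the Brown structure.} With Steps 1 and 2 in hand, Lemma~\ref{lem:finite-matching} holds verbatim for the $p_{U,u}^*$, since its proof used only left exactness. The proof of Theorem~\ref{thm:sheaf-cfo} then goes through unchanged:
\begin{itemize}
\item levelwise sheaf pullbacks along local fibrations have Kan-fibration stalks;
\item acyclic fibrations pull back to trivial Kan fibrations at each stalk;
\item the path object $X^{\Delta[1]}$ has stalks $(p^*X)^{\Delta[1]}$, so the cotensor factorization holds stalkwise.
\end{itemize}

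\emph{Step 4: localization.} Proposition~\ref{prop:internal-ex} transfers as well. Here $\Ex_\E$ stages are finite limits, and the filtered colimit is preserved by the exact germs. This gives the equivalence $\KSh[W^{-1}]\simeq(s\E)[W^{-1}]$.

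\emph{Main obstacle.} The last step is identifying $(s\E)[W^{-1}]$ with the hypercomplete $\infty$-topos $\Sh_\infty^\wedge$ of the Banach-open site. For the smooth site this was quoted from NSS/DHI. Here I would invoke Jardine's local model structure, via the comparison in \cite[Theorem~6.2]{DHI2004} or Lurie's identification of the Jardine structure with hypercompletion. The one input that needs checking is that the local weak equivalences (sheaf-of-homotopy-groups isomorphisms) coincide with the germwise weak equivalences. This should follow from Step 2 applied to the homotopy sheaves $\pi_n$ at all basepoints, together with the exactness of germs.
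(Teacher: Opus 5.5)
Your proposal is correct and follows the paper's proof essentially step for step. The shared steps are: left exactness from sectionwise finite limits and exact filtered colimits; germ invariance under open-cover sheafification for the colimit assertion; conservativity and epi-detection by gluing germwise local preimages; transfer of Theorem~\ref{thm:sheaf-cfo} and the internal $\Ex^\infty$ argument; and the final identification quoted from the DHI/NSS local presentation theorem. Your one addition is the check that local weak equivalences agree with germwise ones, via stalks of homotopy sheaves for these germ points commuting with $\Ex^\infty$. That only makes explicit an input the paper absorbs into that citation.
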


\begin{proof}
Finite sheaf limits are sectionwise, and filtered
colimits of sets are exact, proving left exactness.
Open-cover sheafification does not change a germ:
a local gluing or equality at $u$ is represented
on a smaller neighborhood. Therefore the germs
of a sheaf colimit are the colimits of the germs,
which proves the colimit assertion.

The family detects both equality and local
existence of sections. Germwise preimages give
preimages on an open cover; germwise uniqueness
makes them agree after restriction, so they glue.
This proves conservativity and detection of
epimorphisms.

Finite simplicial matching objects commute with
the germ functors. Local Kan objects and local
fibrations consequently have Kan and
Kan-fibration germs. The proof of
Theorem~\ref{thm:sheaf-cfo} applies in this
ordinary sheaf topos, including existence of
all fibration and acyclic-fibration pullbacks
and the path object $X^{\Delta[1]}$.

The internal $\Ex^\infty$ construction uses
finite matching and exact filtered sheaf
colimits as before. It gives germwise
equivalent locally Kan models. The local
stalkwise/hypercover presentation theorem
\cite{DHI2004,NSS2015} then identifies the
localization with the hypercomplete target.
\end{proof}

This sheaf-theoretic construction applies to any chosen
plot site with the stated properties. Its geometric
counterpart additionally requires represented matching
objects and compatibility with manifold charts.

\begin{rem}[Fixed-site chart compatibility]
\label{rem:ordinary-banach-chart-scope}
For the ordinary geometric theorem, require the following
compatibility of the already chosen plot site and the
geometric category. The point and empty manifold are admitted;
the plot domains are geometric objects; every admitted
geometric manifold has an atlas by domains in that site,
with all smooth maps between its domains and the open
restrictions used by the atlases available. The geometric
category is closed under finite products and ordinary
pullbacks along split submersions. Those pullbacks again
have admitted atlases, including their local
complemented-kernel times source-chart models.

One concrete choice fixes universes $\mathcal U\in\mathcal V$,
takes all $\mathcal U$-small Hausdorff spaces with compatible
local charts in $\mathcal U$-small real Banach spaces as the
geometric manifolds, and a $\mathcal V$-small full presentation
of all $\mathcal U$-small Banach domains as the plot site.
Model spaces may vary between components. No global
second-countability, separability or paracompactness
condition is imposed in this realization. A restricted
model-space class may be used if it contains zero and is
closed, up to bounded linear isomorphism, under finite
products and complemented closed subspaces, with the
site supplying the corresponding kernel-product chart
domains and every geometric object's atlas.
For any additional global manifold convention, closure
under the particular split-submersion pullbacks remains
an explicit requirement.

The site and geometric category remain fixed throughout
the construction: enlarging the plot site changes the
germwise weak-equivalence tests. Chart compatibility is not implied
by smallness and opens/products closure alone, and is
needed only for the geometric theorem, not for the
arbitrary-site sheaf CFO of
Proposition~\ref{prop:ordinary-banach-site}.
\end{rem}

\begin{thm}[Ordinary split-Banach geometric iCFO]
\label{thm:ordinary-banach-icfo}
For the chart-compatible ordinary Banach geometry of
Remark~\ref{rem:ordinary-banach-chart-scope}, simplicial
Banach manifolds with represented surjective split-submersion
positive horns form an iCFO. Fibrations are specified by the
corresponding positive relative horns.
Weak equivalences are detected by all pointed plot germs
of Proposition~\ref{prop:ordinary-banach-site} on that fixed
site. Acyclic fibrations are exactly the represented
split-submersion hypercovers in every degree, including zero.
The outer path formula of Theorem~\ref{thm:geometric-icfo},
every finite unique-horn bound $N\ge0$, and their finite-bound
union have the same conclusions.
\end{thm}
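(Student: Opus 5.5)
The plan is to rerun the proof of Theorem~\ref{thm:geometric-icfo} and Corollary~\ref{cor:ordinary-geometric-finite} verbatim, replacing the smooth site by the fixed Banach plot site. Surjective submersions become surjective split submersions. The germs $p_d$ are replaced by the pointed plot germs $p_{U,u}$ of Proposition~\ref{prop:ordinary-banach-site}, and the ambient Brown structure becomes the locally Kan sheaves of that proposition. The theorem then reduces to checking that each of the three local inputs survives in the Banach setting: the finite horn calculus, submersion locality, and the acyclic boundary upgrade. The global arguments transfer without change because they are purely categorical or simplicial. These are the product and composition squares, the constant-test identification of existing categorical pullbacks, the staircase and prism filtration for $PX$, and the bound bookkeeping.

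\emph{Step 1: Yoneda and germs.} I will show that Yoneda from the geometric category to sheaves on the fixed site is fully faithful. This uses the atlas clause of Remark~\ref{rem:ordinary-banach-chart-scope}: maps are glued from their restrictions to plot domains of an atlas, and all smooth maps between atlas domains are available in the site. A split submersion has local smooth sections through every prescribed source point, by the local product form $\ker\times$ source chart. It therefore gives a sheaf epimorphism. Conversely, suppose $p$ admits local smooth sections $\sigma$ through every point. Then $Tp\circ T\sigma=\id$ makes $Tp$ surjective with complemented kernel $\operatorname{im}(\id-T\sigma\,Tp)$. The Banach implicit function theorem then gives the split-submersion normal form. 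This is exactly the characterization used in the smooth case. It matters here because, in infinite dimensions, surjectivity of the derivative alone does not give a submersion.

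\emph{Step 2: the three local lemmas.} Lemma~\ref{lem:ordinary-geometric-matching} transfers once each new leg of a horn attachment is a split submersion. The needed pullback exists and is admitted by the closure clause of the Remark, with kernel-product atlas charts. Composites and base changes of split submersions are split submersions, since the splittings compose and pull back. Lemma~\ref{lem:ordinary-submersion-locality} transfers word for word. Lifting the germ of a target chart gives a local section $t$ of $q$. The point-valued lift uses the germ at a point, which the site provides because the point is admitted. The section $s$ of $b$ through $z$ produces sections $hst$ of $p$ through each prescribed point, and Step 1 converts these into the split-submersion property. Proposition~\ref{prop:ordinary-geometric-acyclic} then follows by the same induction on $B_k$. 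Its epimorphism calculations take place in the Banach sheaf topos and use only the conservativity and epi-detection of Proposition~\ref{prop:ordinary-banach-site}. This yields the characterization of acyclic fibrations as split-submersion hypercovers in every degree, including $q_0=f_0$.

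\emph{Step 3: assembly and the expected obstacle.} For pullbacks along fibrations where $X_0\times_{Y_0}Z_0$ exists, and for all acyclic pullbacks, I will use the same degree-by-degree construction. The acyclic case is covered because $f_0$ is a split submersion. The constant tests $cV$ identify any categorical pullback with the constructed one, since $c*$ and every admitted $V$ are objects. The paths and the finite bounds, including $N=0$ and the union, follow exactly as in the proof of Corollary~\ref{cor:ordinary-geometric-finite}; for $N=0$ a weak map between constants is an isomorphism by germ conservativity and full faithfulness. The main obstacle is Step 2's representability. Every intermediate pullback in the horn, boundary and prism filtrations must lie in the fixed geometric category and must carry an atlas from the fixed site. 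Without this, Yoneda would not identify sheaf pullbacks with manifold pullbacks, and the germ tests would change. The point-only counterexample shows this is genuine. I would handle it by proving one auxiliary lemma: the admitted split-submersion pullback is represented, and its Yoneda image is the sheaf pullback, checked chartwise on the kernel-product models that the Remark requires the site to supply. Every later step would invoke only this lemma.
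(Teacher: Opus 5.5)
Your proposal is correct and follows essentially the same route as the paper. Both arguments proceed in the same order:
\begin{enumerate}
\item obtain full faithfulness of restricted Yoneda, and local target sections from epimorphisms, from the fixed-site atlas hypothesis;
\item characterize split submersions by local sections through prescribed points, using the projection $1-T\sigma\,Tp$ and the Banach inverse function theorem;
\item get represented split-submersion pullbacks, with kernel-times-chart models, from the closure clause;
\item rerun the prescribed-point submersion-locality argument;
\item invoke the unchanged inductions of Theorem~\ref{thm:geometric-icfo} and Corollary~\ref{cor:ordinary-geometric-finite}.
\end{enumerate}

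Two minor points. First, the local product form of a split submersion is the kernel times a \emph{target} chart; the ``source chart'' models arise only for its base change along $g\colon Z\to N$. Second, the paper also checks that pullbacks, atlases and the simplicial diagram stay $\mathcal U$-small in the concrete all-Banach realization of Remark~\ref{rem:ordinary-banach-chart-scope}, and your proposal omits this check.
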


\begin{proof}
We verify the geometric inputs of the smooth construction.
A natural transformation of restricted functors of points
gives smooth maps on atlas domains. Naturality makes them
agree on overlaps; they glue, and factorization of any plot
locally through atlas charts identifies the transformation
with the resulting smooth map. Thus restricted Yoneda is
fully faithful. Lifting a target chart germ through an
epimorphism of represented sheaves gives local smooth
target sections. Both assertions use the stipulated atlas
compatibility, not just conservativity of the germ family.

A smooth local section of $f$ through $x$ differentiates
to a bounded right inverse of $Df_x$. Conversely, in
Banach charts let $L=Df_x:E\to F$ and $LR=1$ with $R$
bounded. Put $K=\ker L$ and $\Pi=1-RL:E\to K$. Then
\[
 (\Pi,L):E\longrightarrow K\oplus F,\qquad
 (k,v)\longmapsto k+Rv
\]
are bounded inverse linear maps. The chart map
$u\mapsto(\Pi(u-x),f(u)-f(x))$ has this invertible
derivative. The Banach inverse-function theorem gives
local projection coordinates, and fixing the kernel
coordinate gives a section through $x$.
This is the split convention of
\cite[Definition~1.4.1 and Lemma~1.4.3,
pp.~28--29]{Schmeding2021}, agreeing with the
prescribed-point convention of
\cite[p.~1146]{RogersZhu2020}.

For a map $g:Z\to N$ and a split submersion to $N$,
these projection coordinates describe the pullback locally as
\[
 \{(k,z):(k,g(z))\text{ lies in the projection chart}\}.
\]
It is an open subset of the kernel times a source chart.
These charts prove the ordinary categorical universal
property and the split-submersion property of the
projection to $Z$. The pullback has the subspace topology
in a product of Hausdorff spaces. Its models remain
admitted by the explicit closure hypothesis.
Surjective split submersions compose, have stable finite
products, and induce epimorphisms of plot sheaves.

The prescribed-point proof of
Lemma~\ref{lem:ordinary-submersion-locality} now applies:
chart germs give $t$, the point germ lifts the exact pair
to $z$, and the split submersion supplies $s$ through $z$.
The section $hst$ proves split submersivity without
assuming the sheaf fibre product representable.
Proposition~\ref{prop:ordinary-banach-site} supplies the
finite-limit, colimit and conservative germ properties.
Consequently the finite horn and partial-source inductions,
the geometric boundary upgrade, the categorical
constant-test pullback argument, and the represented
staircase paths of Theorem~\ref{thm:geometric-icfo} use
exactly these geometric properties.
They prove all seven axioms, including every acyclic
pullback. The isomorphism versions of the same horn
and prism constructions prove
Corollary~\ref{cor:ordinary-geometric-finite} here as well.

In the all-$\mathcal U$-small example, each pullback is
a subset of a finite product of $\mathcal U$-sets.
Its topology, atlas and complemented-kernel models remain
$\mathcal U$-small, as does the countable simplicial
diagram. Only finitely many geometric operations occur
in each degree; atlas covers may be arbitrary small families.
\end{proof}

The local-model convention matters. On $H=\ell^2$, the map
$H\amalg H\to H$ equal to the identity on one component
and the left shift on the other is a split submersion.
Its zero fibre is $*\amalg\R$. This is a valid locally
Banach manifold with componentwise models, but not one
globally modeled on a single Banach space. Thus a
globally uniform-model convention does not automatically
have the pullback closure used in the theorem.

\begin{prop}[An insufficient ordinary plot site]
\label{prop:ordinary-insufficient-site}
Smallness and closure of the ordinary plot site under
opens and finite products do not suffice for the
geometric iCFO assertion on all Banach manifolds.
The point-only site gives a failure of mandatory
acyclic-fibration pullback existence, already for
constant diagrams of unique-horn bound zero.
\end{prop}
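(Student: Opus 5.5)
The plan is to exhibit a single cospan of constant diagrams on the point-only site in which one leg is an acyclic fibration but the categorical pullback does not exist. Take the plot site to consist of the point $*$ and the empty object only. It is small and closed under opens and finite products, since $*\times*=*$ and the only opens of $*$ are $*$ and $\emptyset$. Let the geometric category be all Banach manifolds, or any admissible choice containing $\R$ and $\R^2$. With this site, the pointed plot germs of Proposition~\ref{prop:ordinary-banach-site} reduce to evaluation at the point. So a map of constant diagrams $cM\to cN$ is a weak equivalence exactly when it is a bijection of underlying sets.

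The key map is the cubic $c:\R\to\R$, $t\mapsto t^3$, viewed as $cc:c\R\to c\R$. First I check that $cc$ is an acyclic fibration. It is a fibration because, as in the proof of Proposition~\ref{prop:crossing-axes}, every map of constant diagrams has identity positive relative horn maps. It is weak because it is bijective on points. Constants have unique-horn bound zero, so everything lies in the $N=0$ subcategory of Corollary~\ref{cor:ordinary-geometric-finite}. Note that $c$ is not a submersion at $0$. So this acyclic fibration is not a geometric hypercover, which is exactly where the fixed-site atlas compatibility of Remark~\ref{rem:ordinary-banach-chart-scope} fails. The iCFO axiom then demands the pullback of
\[
 c\R\xrightarrow{\;t^3\;}c\R\xleftarrow{\;xy\;}c\R^2 .
\]

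Next I reduce to degree zero. The constant-test argument of Theorem~\ref{thm:geometric-icfo}, namely $\Hom(cV,Q)=\Hom(V,Q_0)$, applies verbatim since it uses only constant diagrams on admitted manifolds. So a categorical pullback $Q$ would force $Q_0$ to represent, on all admitted manifolds $V$, the functor
\[
 V\longmapsto\{(s,x,y)\in C^\infty(V,\R)\times C^\infty(V,\R^2): s^3=xy\}.
\]
Testing with $V=*$ identifies the points of $Q_0$ with the surface $S=\{s^3=xy\}\subset\R^3$, and the map $Q_0\to\R^3$ is smooth and injective. Away from the origin, $S$ is a smooth surface with gradient $(3s^2,-y,-x)\neq0$. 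There, local Yoneda comparison makes $Q_0$ locally diffeomorphic to $S$, so $Q_0$ is two-dimensional near every point and hence near the point $q$ over the origin.

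The main obstacle is the contradiction at $q$. The axis curves $t\mapsto(0,t,0)$ and $t\mapsto(0,0,t)$ are test maps into the functor, so they lift to smooth curves in $Q_0$ through $q$. Hence the derivative of $(x,y):Q_0\to\R^2$ at $q$ is surjective. Since $Q_0$ has dimension two there, the inverse function theorem makes $(x,y)$ a local diffeomorphism near $q$. Then $s=\sqrt[3]{xy}$ would be smooth near the origin of $\R^2$. That is false: along the diagonal it is $t^{2/3}$. Therefore no such $Q$ exists, and the acyclic-fibration pullback axiom fails already for constant diagrams of bound zero. If a dimension argument is needed in the infinite-dimensional setting, I would first use injectivity of $Q_0\to\R^3$ together with the local models away from $q$ to show that the component of $q$ is modelled on $\R^2$.
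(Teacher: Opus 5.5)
Your proposal is correct and follows essentially the paper's proof: the point-only site, the cubic $c\R\to c\R$ base-changed along $xy$, constant tests identifying $Q_0$ with the representing object of $\{s^3=xy\}$, and lifted axis curves making the derivative of $(x,y):Q_0\to\R^2$ surjective at the point $q$ over the origin, ending in a smooth cube root of $xy$. The one difference is your detour through two-dimensionality of $Q_0$ at $q$ in order to use the inverse function theorem; the paper skips it, because a surjection onto $\R^2$ has a bounded right inverse, so the Banach submersion theorem already gives a local section $\sigma$ through $q$. Then the $s$-coordinate composed with $\sigma$ is a smooth cube root of $xy$, and the paper obtains the contradiction from second derivatives rather than from the non-smoothness of $t^{2/3}$. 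This avoids needing opens of $S\setminus\{0\}$ and of $Q_0$ as test objects, as well as the model-space argument you defer to your last sentence.
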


\begin{proof}
Take $\mathcal C_0=\{\emptyset,*\}$ with its open-family
topology. Its sheaf topos is $\Set$, and its only
nonempty pointed germ is evaluation at the point.
Keep all ordinary Banach manifolds as geometric objects.
Every map between constant simplicial manifolds has
canonical identity positive relative horns. Hence
\[
 f:c\R\longrightarrow c\R,\quad t\longmapsto t^3
\]
is a geometric fibration and is weak for these tests,
since the cubic is bijective on points.
Base-change it along
\[
 g:c\R^2\longrightarrow c\R,\quad (x,y)\longmapsto xy.
\]
If a categorical geometric pullback $Q$ existed,
testing against every constant geometric object $cV$,
not merely the site's weak-equivalence tests, would
make $Q_0$ represent the ordinary smooth pullback
functor $t^3=xy$. Write its cone maps as
$a:Q_0\to\R$ and $r:Q_0\to\R^2$, so $a^3=r_1r_2$.
Point tests give a unique point $q_0$ above $(0,0,0)$.
The compatible maps $s\mapsto(0,s,0)$ and
$s\mapsto(0,0,s)$ lift by the categorical universal
property to smooth curves $\gamma_1,\gamma_2$ through
that same $q_0$. Their derivatives satisfy
\[
 Dr_{q_0}\gamma_1'(0)=(1,0),\qquad
 Dr_{q_0}\gamma_2'(0)=(0,1).
\]
Thus $Dr_{q_0}$ is surjective. It has a bounded right
inverse even for Banach $Q_0$, by sending a basis
of the finite-dimensional target to these two vectors.
The submersion theorem supplies a local smooth section
$\sigma:O\subseteq\R^2\to Q_0$ through $q_0$.
Then $h=a\sigma$ is smooth, $h(0)=0$, and $h^3=xy$.
But $D^2(h^3)_0=0$, whereas the mixed second
derivative of $xy$ at zero is $1$, a contradiction.
The contradiction uses only the categorical universal
property of the hypothetical pullback.

On the full Euclidean site the cubic is not weak:
the identity germ at zero has no smooth cube-root
lift. Differentiating $u(s)^3=s$ at zero would give
$0=1$. Thus this failure for an insufficient site
does not contradict the smooth iCFO theorem.
\end{proof}

The following crossing-axes obstruction concerns the full
Banach choice. For a smaller chart class the same
non-fullness conclusion requires admission of that
finite-dimensional cospan; a purely discrete geometry
instead gives the ordinary full Kan CFO.

\begin{prop}\label{prop:ordinary-banach-obstruction}
The unchanged positive geometric horn-fibration
class does not give a full CFO on all such
ordinary geometric Banach higher groupoids.
\end{prop}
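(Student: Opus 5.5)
The plan is to transport the argument of Proposition~\ref{prop:crossing-axes} to the Banach setting, using the all-Banach choice of Remark~\ref{rem:ordinary-banach-chart-scope}. First, every map between constant simplicial Banach manifolds is a geometric fibration. A positive-dimensional horn is nonempty and connected, so $M_{\Lambda^i[n]}\y(cV)\cong\y V$, and every relative horn map is an identity, which is trivially a surjective split submersion. Hence $xy:c\R^2\to c\R$ is a fibration. Here $\R^2$, $\R$ and $*$ are admitted finite-dimensional Banach manifolds. A full Brown CFO would therefore contain a pullback $Q$ of
\[
 c\R^2\xrightarrow{xy}c\R\xleftarrow{0}c*.
\]

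Second, we apply the constant-test argument from the proof of Theorem~\ref{thm:geometric-icfo}. For every admitted Banach manifold $V$, the constant diagram $cV$ is an object, and $\Hom(cV,Q)=\Hom(V,Q_0)$. So the categorical universal property makes $Q_0$ represent, on all Banach manifolds, the functor of smooth maps into $\R^2$ landing in $\{xy=0\}$. Point tests give a unique point $q_0$ over the origin. The two axis curves $s\mapsto(s,0)$ and $s\mapsto(0,s)$ lift to smooth curves $\gamma_1,\gamma_2$ through $q_0$. Writing $r:Q_0\to\R^2$ for the structure map, we get $Dr_{q_0}\gamma_1'(0)=(1,0)$ and $Dr_{q_0}\gamma_2'(0)=(0,1)$.

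Third, this surjective derivative has a bounded right inverse, exactly as in Proposition~\ref{prop:ordinary-insufficient-site}: send the standard basis of the finite-dimensional target to $\gamma_1'(0)$ and $\gamma_2'(0)$. This holds even when $Q_0$ is modeled on an infinite-dimensional Banach space. By the Banach inverse-function argument in the proof of Theorem~\ref{thm:ordinary-banach-icfo}, $r$ is then a split submersion near $q_0$, so it has a local section $\sigma$ through $q_0$ defined on an open neighbourhood of $0\in\R^2$. Its image $r\sigma=\id$ contains a neighbourhood of the origin, which contradicts $xy\circ r=0$.

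The main obstacle is making sure the representability step does not secretly depend on the plot site. The argument tests against the constant geometric objects $cV$, not against site germs, so only the categorical universal property is used, and this is why no chart-compatibility hypothesis is needed. I would also note that, because both the contradiction and the fibration claim involve only constant diagrams, the same cospan obstructs every finite unique-horn bound, including $N=0$, and their union.
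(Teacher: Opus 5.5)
Your proposal is correct and follows essentially the same route as the paper. Maps between constant diagrams are fibrations, so the constant crossing-axes cospan would need a pullback whose degree-zero object $Q_0$ represents $\{xy=0\}$ by constant tests; the two lifted axes then give a surjective derivative with a bounded right inverse, and the Banach submersion theorem forces a neighbourhood of the origin into the image, a contradiction. Your added remarks on site-independence and on the finite unique-horn bounds agree with the paper's discussion around Propositions~\ref{prop:crossing-axes} and~\ref{prop:ordinary-insufficient-site}.
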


\begin{proof}
Use the constant crossing-axes cospan of
Proposition~\ref{prop:crossing-axes}.
If a degree-zero Banach manifold represented
its pullback, the two lifted axes would make
the derivative of its map to $\R^2$ surjective
at their common point.
A bounded surjection to a finite-dimensional
space has a bounded linear right inverse:
choose lifts of a basis and extend linearly.
The Banach submersion theorem therefore
forces the image to contain an open
neighborhood of the origin, contradicting
its containment in $xy=0$.
\end{proof}

%% file: derived-geometry.tex
\section{Derived groupoids and ordinary represented models}
\label{sec:derived}

\subsection{Complete derived smooth affines}

We use Nuiten's connective
homological dg $C^\infty$-rings, equivalently function algebras in
nonpositive cohomological degrees
\cite[Definition~2.2.1 and Proposition~2.2.4, p.~22]{NuitenThesis2018}.
Weak maps are quasi-isomorphisms; algebra-model fibrations surject in
strictly positive homological degrees. In cohomological notation,
a zero-locus chart is
\[
 A=C^\infty(\R^r)[\epsilon_1,\ldots,\epsilon_s],
 \qquad |\epsilon_i|=-1,\qquad d\epsilon_i=f_i.
\]
Reversing the labels on a positive CE algebra does not produce this
category: it would also change the degree of its differential.

Let $\CA$ be the infinity-localization of these rings. An algebra
is complete if $A\to\Gamma\Spec A$ is an equivalence.
Nuiten identifies affine derived manifolds with
$(\CA^{\mathrm{complete}})^\op$
\cite[Corollary~5.1.28, p.~130]{NuitenThesis2018};
the spectrum on all raw algebras is not fully faithful.
His broad derived manifolds carry no a priori finiteness conditions
\cite[p.~121]{NuitenThesis2018}. In particular we impose neither
quasi-smoothness nor finite presentation on all objects of this section.

Choose a compatible size cutoff $\Aff_\kappa$, closed under the
basic opens and finite constructions in use, with the basic-open
covering-family topology. Smooth localization satisfies
\[
 \pi_j(A\{a^{-1}\})
 \cong \pi_0(A)\{a^{-1}\}\otimes_{\pi_0(A)}\pi_j(A).
\]
Open immersions and etale maps retain the entire pulled-back derived
structure sheaf. Smooth maps are locally projections
$\R^r\times V\to V$, and smooth surjections admit local sections
\cite[Definition~5.2.8 and Lemma~5.2.10,
pp.~135--137]{NuitenThesis2018}.
Neither topological surjectivity nor an algebra-model fibration
is a substitute for geometric smoothness.

\subsection{Ambient models and finite geometric presentations}

Write
\[
 \mathcal X=\Sh_\tau(\Aff_\kappa),
 \qquad \widehat{\mathcal X}=\mathcal X^\wedge.
\]
Here $\mathcal X$ means ordinary space-valued infinity-sheaves,
defined by covering-family descent, as in
\cite[Definition~5.1.1 and Section~5.2.1]{NuitenThesis2018}.
The functor of points
\[
 h:\dMfd\longrightarrow\mathcal X,\qquad
 h(M)(U)=\Map_{\dMfd}(U,M),
\]
is fully faithful and preserves derived pullbacks
\cite[Proposition~5.2.3, proof on p.~135]{NuitenThesis2018}.

\begin{thm}[Ambient derived-stack models]\label{thm:derived-ambient}
There are ordinary simplicial presheaf model presentations
$\CM_{\mathrm{sh}}$ and $\CM_{\mathrm{hyp}}$ of
$\mathcal X$ and $\widehat{\mathcal X}$. The fibrant objects of
either, with inherited model weak equivalences and fibrations,
form a full Brown CFO.
\end{thm}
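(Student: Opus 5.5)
We will take $\CM_{\mathrm{sh}}$ and $\CM_{\mathrm{hyp}}$ to be left Bousfield localizations of the projective (or injective) model structure on $\Fun(\Aff_\kappa^\op,\sSet)$. Since $\Aff_\kappa$ is itself an infinity-category, we first choose a small simplicially enriched (Kan-enriched) strictification $\mathcal T$ of the site. The simplicial presheaf category then carries the enriched projective model structure, which presents $\PSh(\Aff_\kappa)$. For $\CM_{\mathrm{sh}}$ we localize at the Cech nerves of basic-open covering families, including the empty family over the empty object. For $\CM_{\mathrm{hyp}}$ we localize additionally at all (bounded, set-sized) hypercovers. Both localizations exist because the projective model structure is left proper, combinatorial and simplicial, and the localizing sets are small by the size cutoff $\kappa$. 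The standard comparison (Lurie, HTT 6.5.2, or Dugger--Hollander--Isaksen adapted to the enriched site) identifies the underlying infinity-categories with $\mathcal X=\Sh_\tau(\Aff_\kappa)$ and with its hypercompletion $\widehat{\mathcal X}$. For $\CM_{\mathrm{sh}}$, the local objects are exactly the presheaves satisfying covering-family descent; for $\CM_{\mathrm{hyp}}$, they are those satisfying hyperdescent.

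The CFO assertion will then be a formal consequence of Brown's observation: the fibrant objects of any model category form a category of fibrant objects. We verify the axioms in turn.
\begin{itemize}
\item The isomorphism and two-out-of-three axioms, and closure of fibrations under composition, are model-category axioms.
\item The terminal presheaf is fibrant, products of fibrant objects are fibrant, and every map to the terminal object is a fibration between fibrant objects.
\item For pullbacks along a fibration $X\to Y$ between fibrant objects, the ordinary pullback exists in presheaves. Its projection is a fibration, so the pullback is fibrant. Pullbacks of acyclic fibrations are acyclic fibrations because acyclic fibrations are characterized by a right lifting property.
\item Paths come from factoring the diagonal as an acyclic cofibration followed by a fibration. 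Alternatively, using simpliciality, they come from the cotensor $X^{\Delta[1]}$ with its endpoint map $X^{\Delta[1]}\to X^{\partial\Delta[1]}=X\times X$, which is a fibration by SM7. This gives functorial paths.
\end{itemize}
Since every pullback along a fibration exists in the ambient presheaf category, the structure is full in the sense of Definition~\ref{def:brown-conventions}.

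The main obstacle is the presentation step rather than the Brown axioms. Two points need care. First, the strictification of the infinity-site must preserve the covering families and the empty cover. Second, the set of localizing maps must be small: all Cech nerves for $\CM_{\mathrm{sh}}$, and a generating set of hypercovers for $\CM_{\mathrm{hyp}}$. Showing that the underlying infinity-category of the hypercover localization is exactly the hypercompletion, rather than some intermediate localization, is the most delicate part. For this we would cite HTT 6.5.2.14 and 6.5.3, together with the fact that hypercompletion is the localization at $\infty$-connective maps, which are generated by hypercovers. We would also note that fibrations of the localized structure are generally stricter than the local fibrations of Section~\ref{sec:groupoids}, which is why the statement uses the inherited model classes.
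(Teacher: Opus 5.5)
Your proposal is correct and takes essentially the paper's approach: choose a small Kan-enriched presentation of $\Aff_\kappa$, realize ordinary sheafification and then hypercompletion as successive left Bousfield localizations of a simplicial presheaf model, and apply Brown's construction to the fibrant objects. The paper differs only in working in the global injective model and getting both localizations abstractly from the accessibility of sheafification and hypercompletion, whereas you name explicit Cech and bounded-hypercover generators, which needs the extra comparison step that you yourself flag.
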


\begin{proof}
Choose a small Kan-enriched category $\CT$ presenting $\Aff_\kappa$.
The global injective model on
$\Fun_{\sSet}(\CT^\op,\sSet)$ has objectwise monomorphism
cofibrations \cite[Proposition~3.6.1]{HAGI2005}
and presents
$\mathcal P=\Fun(\CT_\infty^\op,\mathcal S)$.
Here $\CT_\infty$ is the infinity-category presented by $\CT$.
Realize ordinary sheafification
$a_{\mathrm{sh}}:\mathcal P\to\mathcal X$
as an accessible left Bousfield localization of this model.
Hypercompletion is a further accessible left-exact localization
\cite[pp.~668--669]{LurieHTT2009}.
Choose $\CM_{\mathrm{hyp}}$ as the corresponding further
Bousfield localization of $\CM_{\mathrm{sh}}$. Their cofibrations
are the same, and the identity
$\CM_{\mathrm{sh}}\to\CM_{\mathrm{hyp}}$ is left Quillen
with derived functor $L^\wedge$.
Apply Brown's construction to fibrant objects.
\end{proof}

\begin{lem}[Hypercomplete-model dictionary]
\label{lem:derived-model-dictionary}
The model $\CM_{\mathrm{hyp}}$ can be taken to be the local
injective $\tau$-hypersheaf model of HAG~I on $\CT$.
Its underlying infinity-category is $\widehat{\mathcal X}$
\emph{under} $\mathcal P$, with localization functor
$a_{\mathrm{hyp}}=L^\wedge a_{\mathrm{sh}}$.
For every strict outer simplicial enriched presheaf $Z_\bullet$,
naturally in the entire diagram,
\begin{equation}\label{eq:derived-diagonal-realization}
\begin{split}
 a_{\mathrm{hyp}}(\operatorname{diag}Z_\bullet)
 &\simeq
 \mathop{\operatorname{colim}}\nolimits_{\Delta^\op}^{\widehat{\mathcal X}}
 L^\wedge a_{\mathrm{sh}}(Z_k)\\
 &\simeq
 L^\wedge
 \mathop{\operatorname{colim}}\nolimits_{\Delta^\op}^{\mathcal X}
 a_{\mathrm{sh}}(Z_k).
\end{split}
\end{equation}
\end{lem}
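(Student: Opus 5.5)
We prove the three assertions of Lemma~\ref{lem:derived-model-dictionary} in order: the choice of model, the identification of its underlying infinity-category, and the diagonal formula.

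\emph{The model.} For the first assertion we compare classes rather than constructions. The local injective $\tau$-hypersheaf model of HAG~I on $\CT$ is a left Bousfield localization of the global injective model, whose cofibrations are the objectwise monomorphisms. By \cite{HAGI2005} its local weak equivalences are exactly the maps sent to equivalences by $a_{\mathrm{hyp}}:\mathcal P\to\widehat{\mathcal X}$: the local homotopy-sheaf isomorphisms are detected by hypercovers. The model $\CM_{\mathrm{hyp}}$ of Theorem~\ref{thm:derived-ambient} is also a left Bousfield localization of the same injective model, and its weak equivalences are the maps inverted by $L^\wedge a_{\mathrm{sh}}$. These two models have the same cofibrations. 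A left Bousfield localization is determined by its cofibrations and weak equivalences. So it suffices to show that $L^\wedge a_{\mathrm{sh}}$ and the HAG~I localization invert the same maps. Both are left-exact accessible localizations of $\mathcal P$ whose local objects are the $\tau$-hypersheaves. For $L^\wedge a_{\mathrm{sh}}$ this follows because the hypercomplete objects of $\mathcal X$ are exactly the sheaves satisfying descent for all $\tau$-hypercovers \cite[pp.~668--669]{LurieHTT2009}. Since the localizations have the same local objects, they invert the same maps. This also gives the description of the underlying infinity-category as $\widehat{\mathcal X}$ under $\mathcal P$, with localization functor the composite $a_{\mathrm{hyp}}=L^\wedge a_{\mathrm{sh}}$. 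Identity left Quillen functors then compare the two models.

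\emph{The diagonal formula.} First we work in the presheaf model. For a strict simplicial object $Z_\bullet$ in $\Fun_{\sSet}(\CT^\op,\sSet)$, the diagonal computes the homotopy colimit over $\Delta^\op$ objectwise. This is the classical theorem that $\operatorname{diag}$ of a bisimplicial set presents its geometric realization, applied at each object of $\CT$. It requires no Reedy cofibrancy, because every object of the injective model is cofibrant and $\operatorname{diag}$ is homotopy invariant levelwise. Consequently $\operatorname{diag}Z_\bullet\simeq\colim^{\mathcal P}_{\Delta^\op}Z_k$ in $\mathcal P$, naturally in the diagram. Next we apply the two localizations. The functor $a_{\mathrm{hyp}}$ is a left adjoint, so it preserves colimits; this gives the first equivalence, with colimits computed in $\widehat{\mathcal X}$. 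The functor $a_{\mathrm{sh}}$ is also a left adjoint, so $a_{\mathrm{sh}}\colim^{\mathcal P}Z_k\simeq\colim^{\mathcal X}a_{\mathrm{sh}}Z_k$. Applying the left adjoint $L^\wedge$ to this equivalence gives the second line.

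\emph{Naturality and the main obstacle.} Naturality in the entire diagram comes from functoriality of the objectwise diagonal comparison and of the derived localization functors, applied to $\Fun(\Delta^\op,-)$. The main obstacle is the model-identification step: we must know that the HAG~I hypercover localization coincides with hypercompletion of the covering-family sheaves, rather than with some intermediate localization. The plan is to check that both localizations have the same local objects, namely presheaves satisfying descent for all $\tau$-hypercovers. To verify this, we would use the HTT characterization of the hypercompletion as localization at the $\infty$-connective morphisms. Hypercovers are $\infty$-connective, and every $\infty$-connective morphism is inverted by inverting the hypercovers.
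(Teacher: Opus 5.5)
Your proposal is correct, but the model-identification step takes a different route from the paper. The paper never compares hypercovers. It checks that $L^\wedge a_{\mathrm{sh}}$ is a left exact, $t$-complete localization of $\mathcal P$ that induces exactly $\tau$ on $0$-truncated objects: presheaves of sets on $\operatorname{Ho}(\CT)$ go to $\tau$-sheaves of sets, which hypercompletion leaves unchanged. It then invokes the uniqueness in \cite[Theorem~3.8.3]{HAGI2005}, which recovers such a localization from its topology, and uses \cite[Proposition~3.6.1]{HAGI2005} for the injective representative.

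You instead compare local objects directly. You use HAG's characterization of local fibrant objects by hyperdescent, together with the characterization of hypercomplete sheaves by descent for all hypercovers \cite[Section~6.5.3]{LurieHTT2009}; pp.~668--669 only treat hypercompletion itself, so that citation should change. Your route is more concrete and gives literal equality of the two model structures, hence the identification under $\mathcal P$ for free. Its cost is a dictionary you leave implicit: HAG's semi-representable hypercovers, defined via local epimorphisms on homotopy sheaves over $\operatorname{Ho}(\CT)$, must be matched with $\infty$-topos hypercoverings. This includes the Dugger--Hollander--Isaksen reduction to semi-representable ones. The paper's route avoids this by working only with discrete sheaves.

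Either way, the HAG model exists only if $\tau$ is an $S$-topology on $\operatorname{Ho}(\CT)$, i.e.\ a Grothendieck topology there, invariant under equivalence and stable under derived base change. The paper checks this; you assume it.

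The diagonal formula is the same argument as the paper's, with one correction. ``Requires no Reedy cofibrancy'' should read ``Reedy cofibrancy is automatic'': the latching maps of an outer simplicial presheaf are objectwise inclusions of degenerate simplices. That is what lets the paper use the co-Yoneda identity $\int^{[k]}Z_k\times\Delta[k]\cong\operatorname{diag}Z_\bullet$.
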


\begin{proof}
The basic-open topology is a Grothendieck topology on
$\operatorname{Ho}(\CT)$, hence an $S$-topology by
\cite[Definition~3.1.1]{HAGI2005}.
Basic opens are invariant under equivalence and stable under
derived base change. This uses the enriched site, not a model
category of affines or a discrete replacement of its mapping spaces.

The localization $L^\wedge a_{\mathrm{sh}}$ is left exact and
$t$-complete: the latter term is hypercompleteness in
\cite[Remark~6.5.2.11 and Lemma~6.5.2.12, p.~669]{LurieHTT2009}.
It induces exactly $\tau$ on $0$-truncated objects.
Indeed these objects of $\mathcal P$ are presheaves of sets on
$\operatorname{Ho}(\CT)$; the $0$-truncated objects of $\mathcal X$ are its
$\tau$-sheaves of sets, and hypercompletion leaves them unchanged
\cite[Lemma~6.5.2.9, p.~669]{LurieHTT2009}.
The uniqueness in \cite[Theorem~3.8.3]{HAGI2005},
whose inverse construction recovers the topology on precisely
these objects, therefore identifies this Bousfield localization
with the $\tau$-hypercover localization.
Proposition~3.6.1 of that source gives the injective representative
with the same local equivalences. The identification is under
$\mathcal P$: it identifies the local objects and weak maps,
not merely abstract homotopy categories.

For the realization formula, work first in the \emph{global}
injective presheaf model. Every outer simplicial presheaf is
Reedy cofibrant there: its latching maps are objectwise the
inclusions of degenerate outer simplices. Its realization
with the standard cosimplicial simplex therefore computes
the outer homotopy colimit in $\mathcal P$, and the co-Yoneda
identity gives
\[
 \int^{[k]\in\Delta} Z_k\times\Delta[k]
 \cong \operatorname{diag}Z_\bullet.
\]
Apply the colimit-preserving functor
$a_{\mathrm{hyp}}=L^\wedge a_{\mathrm{sh}}$.
This proves the first equivalence in
\eqref{eq:derived-diagonal-realization}; preservation of
colimits by $L^\wedge$ proves the second. All identifications
are natural, so they also identify the pullback comparison maps
used below.
\end{proof}

These ambient models retain the enriched mapping spaces and
apply without an ordinary conservative point family.
Representability of their levels is an additional condition.

\begin{defn}[Nuiten]\label{def:derived-geometric-groupoids}
For finite $n$, a derived Lie $n$-groupoid is a coherent simplicial
diagram in $\dMfd$ with smooth-surjective homotopy horn maps,
which are equivalences in degrees $k>n$.
A Kan fibration is defined by smooth-surjective relative homotopy
horn maps in positive degrees.
It is a \emph{smooth} Kan fibration if its degree-zero map is
additionally smooth, not necessarily surjective.
A smooth hypercover has smooth-surjective relative homotopy
boundary maps in every degree, including zero.
\end{defn}

\begin{thm}[Nuiten's finite-$n$ presentation]\label{thm:nuiten-derived}
Realization into $\mathcal X$ exhibits the infinity-localization
\[
 \dLie_n[H^{-1}]\simeq\dSt_n\subseteq\mathcal X,
\]
where $H$ denotes smooth hypercovers and $\dSt_n$ is the full
subcategory of derived geometric $n$-stacks.
Levelwise pullbacks along general Kan fibrations remain derived
Lie $n$-groupoids and realize to pullbacks in $\mathcal X$.
Every map $|X|\to|Y|$ is represented by a homotopy-commuting triangle
$X\leftarrow U\to Y$, with $U\to X$ a smooth hypercover and
$U\to Y$ Kan, retaining the chosen target $Y$.
\end{thm}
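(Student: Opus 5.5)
The theorem is Nuiten's, so the plan is to \emph{reduce} each clause to the corresponding statement in \cite{NuitenThesis2018} and to check that the conventions fixed above are his. I would not reprove it. First I would align the inputs. Our affines are his complete connective homological dg $C^\infty$-rings, with the homological grading of Section~\ref{sec:foundations}. The topology is the basic-open covering-family topology. The ambient category $\mathcal X=\Sh_\tau(\Aff_\kappa)$ consists of ordinary, not hypercompleted, infinity-sheaves, as in his Definition~5.1.1 and Section~5.2.1. Derived Lie $n$-groupoids, Kan fibrations and smooth hypercovers in Definition~\ref{def:derived-geometric-groupoids} are his notions, including the convention that relative horn conditions start in positive degree while hypercovers include degree zero. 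The size cutoff $\Aff_\kappa$ should contain his geometric constructions, and the enlargement argument for universes should leave $\dSt_n$ unchanged. The realization $|X|$ is the colimit in $\mathcal X$ over $\Delta^\op$ of $h(X_k)$. Using the fully faithful, pullback-preserving functor of points $h$, this agrees with his geometric realization of a coherent simplicial diagram.

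Next I would treat the three clauses in turn.

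\emph{Localization.} Cite his finite-$n$ presentation theorem. Realization sends smooth hypercovers to equivalences, and the induced functor from the localization is fully faithful with essential image the derived geometric $n$-stacks. The essential-image statement is his inductive definition of geometric $n$-stacks via smooth atlases. Fully faithfulness is where the homotopy-calculus clause enters.

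\emph{Levelwise pullbacks.} For a Kan fibration $f:X\to Y$ and any $Z\to Y$, the homotopy horn objects of $X\times^h_YZ$ relative to $Z$ are base changes of those of $f$. This uses the finite weighted matching of Lemma~\ref{lem:finite-matching} in its homotopy form, since smooth surjections are stable under derived base change. The pullback is therefore again an $n$-groupoid, with equivalences above degree $n$. Its realization is the pullback in $\mathcal X$ because simplicial colimits in an infinity-topos are universal and Kan fibrations are realization fibrations. This is Nuiten's lemma that geometric realization preserves pullbacks along Kan fibrations.

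\emph{Spans.} This is his calculus of fractions. Given $|X|\to|Y|$, one builds a hypercover $U\to X$ inductively by choosing smooth atlases of the relevant homotopy matching objects. These objects are geometric stacks, so atlases exist by the essential-image clause. The map from $U$ to $Y$ is then arranged to be Kan by the same construction relative to $Y$.

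The main obstacle I expect is the \emph{ordinary versus hypercomplete} bookkeeping. I must verify that the theorem holds in $\mathcal X$ rather than in $\widehat{\mathcal X}$. The plan for this is to note that a finite-$n$ smooth hypercover realizes to an effective epimorphism that is $n$-truncated above the relevant degree. By descent along smooth surjections, which admit local sections, such a cover is already an equivalence of sheaves, so no hypercompletion is needed. I would check this against the exact form of his hypotheses before relying on it. A secondary check is that "smooth" in the degree-zero clause means his Definition~5.2.8, not topological surjectivity or an algebra-model fibration.
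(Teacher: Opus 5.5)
Your proposal is correct and takes the paper's route: the paper gives no independent argument and attributes all three clauses to \cite[Definition~5.2.14, Proposition~5.2.17 and Remark~5.2.18]{NuitenThesis2018}, so your alignment of the grading, the topology, the non-hypercompleted $\mathcal X$ and the positive-degree Kan convention is exactly the bookkeeping the citation requires. Your ordinary-versus-hypercomplete check is already part of Nuiten's setting; if you spell it out, the operative point is boundedness, namely that the relative boundary maps of a hypercover between Lie $n$-groupoids are equivalences above degree $n$. The existence of local sections alone does not suffice, since unbounded hypercovers built from such covers need not be effective without hypercompletion.
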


Nuiten proves these assertions in
\cite[Definition~5.2.14, Proposition~5.2.17,
and Remark~5.2.18, pp.~138--139]{NuitenThesis2018}.
The pullback assertion uses Kan fibrations, whose geometric
conditions concern positive relative horns. The universal
infinity-localization identifies mapping spaces as well as
objects. Nuiten's proof specializes Pridham's hypergroupoid
methods. A finite outer bound imposes no bound on the homotopy groups
of values on derived tests.

\begin{lem}[The required finite derived limits]
\label{lem:derived-finite-limits}
Broad $\dMfd$ has finite homotopy limits, preserved by $h$.
\end{lem}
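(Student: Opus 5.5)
The plan is to reduce the statement to two pieces: existence of a terminal object and of derived pullbacks in broad $\dMfd$, and preservation of these by $h$. Since every finite homotopy limit is built from the terminal object and iterated homotopy pullbacks, these two pieces suffice. The terminal object will be $\R^0=\Spec C^\infty(\R^0)$. Its functor of points $h(\R^0)(U)=\Map_{\dMfd}(U,\R^0)$ is contractible for every affine $U$, because $C^\infty(\R^0)=\R$ is initial among connective dg $C^\infty$-rings. Hence $h$ preserves it.

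For pullbacks I will invoke Nuiten's construction of derived intersections in his broad derived manifolds. His derived manifolds are locally $C^\infty$-ringed spaces whose homotopy sheaves are quasi-coherent, with no finiteness conditions \cite[p.~121]{NuitenThesis2018}. The construction proceeds in two stages.
\begin{enumerate}
\item On affines I will form the homotopy pushout $B\otimes^{\mathbb L}_A C$ of complete algebras in $\CA$ and then complete it. The spectrum of this completion is the derived fibre product, because $\Spec$ restricted to $\CA^{\mathrm{complete}}$ is the fully faithful equivalence onto affines \cite[Corollary~5.1.28]{NuitenThesis2018}. The required universal property follows from the adjunction $\Gamma\dashv\Spec$.
\item For general objects I will glue these affine pullbacks along open covers of the inputs. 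This is valid because open immersions retain the whole pulled-back structure sheaf, so the construction is local on all three inputs.
\end{enumerate}
The fact that $h$ preserves these pullbacks is exactly \cite[Proposition~5.2.3]{NuitenThesis2018}, already recalled above. The mapping-space statement then follows from full faithfulness of $h$ together with its preservation of derived pullbacks.

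The main obstacle I expect is the completion step in the affine pullback. The derived tensor product of complete algebras need not itself be complete, and the raw $\Spec$ is not fully faithful. I must therefore check that completion does not alter the functor of points on affine tests. My first approach would be to note that for any complete $D$, $\Map(\Spec D,\Spec E)\simeq\Map(E,\Gamma\Spec D)\simeq\Map(E,D)$. Thus $\Spec$ of the raw tensor product already corepresents the correct functor on complete tests, and completion is harmless. A second point to watch is the size cutoff $\Aff_\kappa$, which must be closed under the pullbacks in use. This is arranged by the choice of cutoff stated earlier.
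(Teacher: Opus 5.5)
Your route is correct in outline, but it differs from the paper's. The paper never builds the fibre product. It observes that every constant diagram $cM$ is a derived Lie $n$-groupoid and that every map $cM\to cN$ is a Kan fibration, since nonempty horns are contractible. It then reads off $M\times^h_NP$, together with its preservation by $h$, from the levelwise-pullback and realization clauses of Theorem~\ref{thm:nuiten-derived}. As an alternative, it takes the categorical pullback $Q_\bullet$ of the constant cospan and identifies $Q_0$ by testing against constants, using $\Map(cT,Q_\bullet)\simeq\Map(T,Q_0)$.

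The paper's argument is a two-line deduction that reuses the constant-test device of Theorem~\ref{thm:geometric-icfo} and Proposition~\ref{prop:crossing-axes}. It is not an independent existence proof, however: the cited presentation theorem already speaks of levelwise fibre products in $\dMfd$. Your construction is the foundational one: a pushout of complete algebras in $\CA$, completion, $\Spec$, then open gluing. It is the smooth analogue of the paper's Banach argument in Theorem~\ref{thm:banach-geometric-limits} (spectra followed by strict-open gluing), and it exhibits the local algebraic models. The price is that you must supply the affine and gluing inputs from Nuiten's framework yourself.

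One step needs repair: your check of the completion step does not address the issue. The identity $\Map(\Spec D,\Spec E)\simeq\Map(E,D)$ for complete $D$ is formal from the adjunction, and it concerns the raw spectrum, not $\Spec\widehat E$. What you actually need is $\Map(\widehat E,D)\simeq\Map(E,D)$ for every complete $D$, i.e.\ that completion is a reflection onto complete algebras. This holds as soon as $\Spec$ inverts the completion map $E\to\widehat E=\Gamma\Spec E$. That idempotency of $\Gamma\dashv\Spec$ belongs to the affine theory behind \cite[Corollary~5.1.28]{NuitenThesis2018}, and you should cite it rather than try to derive it from your display.

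Two smaller remarks:
\begin{enumerate}
\item Keep the pushout a $C^\infty$-pushout in $\CA$, as you state. For instance $C^\infty(\R)\amalg C^\infty(\R)\simeq C^\infty(\R^2)$, so your notation $\otimes^{\mathbb L}$ must not be read as an algebraic derived tensor product.
\item Once existence is settled, preservation by $h$ is formal. The functor $h$ is restricted Yoneda into sheaves, and limits of sheaves are computed objectwise, so you do not need a separate citation for it.
\end{enumerate}
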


\begin{proof}
Every coherent constant diagram $cM$ is a derived Lie
$n$-groupoid. Every map $cM\to cN$ is Kan: a nonempty horn is
contractible, so both its absolute and relative homotopy horn
maps are equivalences. Apply Theorem~\ref{thm:nuiten-derived}
to $cM\to cN\leftarrow cP$. Its levelwise represented pullback
supplies $M\times_N^hP$, and its realization assertion gives
\[
 h(M\times_N^hP)\simeq hM\times_{hN}hP.
\]
Alternatively, the categorical pullback already suffices:
$[0]$ is initial in $\Delta^\op$, giving
\[
 \Map(cT,Q_\bullet)\simeq\Map(T,Q_0).
\]
Testing against every $cT$ identifies $Q_0$ with the desired pullback.
The terminal object is the derived point. Terminal objects and
pullbacks give finite limits.
\end{proof}

This deduction uses a finite constant presentation of each
individual cospan. It therefore also supplies the levelwise
limits needed later for outer-unbounded diagrams.

\begin{thm}[Geometric-stack-model CFO]
\label{thm:derived-geometric-cfo}
Let $\mathcal B_n$ be the full ordinary subcategory of
$(\CM_{\mathrm{sh}})_{\mathrm{fib}}$ whose associated
infinity-sheaves lie in $\dSt_n$. With inherited model weak
equivalences and fibrations it is a full Brown CFO, and
\[
 \mathcal B_n[W_{\mathrm{model}}^{-1}]\simeq\dSt_n.
\]
\end{thm}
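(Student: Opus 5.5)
The proof restricts the full Brown structure of Theorem~\ref{thm:derived-ambient} on $(\CM_{\mathrm{sh}})_{\mathrm{fib}}$ to $\mathcal B_n$, and then identifies the localization through the inclusion $\dSt_n\subseteq\mathcal X$. We rely on one principle throughout. Membership in $\mathcal B_n$ depends only on the associated infinity-sheaf, so $\mathcal B_n$ is replete under model weak equivalences. It is therefore enough to show that each Brown construction, computed in $(\CM_{\mathrm{sh}})_{\mathrm{fib}}$, has realization in $\dSt_n$.

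\emph{Step 1: elementary axioms.} The two-out-of-three property and the classes of isomorphisms are inherited. The terminal fibrant presheaf realizes to the terminal sheaf, which is $h(*)$ and lies in $\dSt_n$ for every $n\ge0$. Maps to the terminal object are model fibrations because every object of $\mathcal B_n$ is fibrant.

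\emph{Step 2: pullbacks and products.} Let $X\to Y\leftarrow Z$ be a diagram in $\mathcal B_n$ with $X\to Y$ a model fibration. The ordinary pullback $Q$ in $(\CM_{\mathrm{sh}})_{\mathrm{fib}}$ exists and is a homotopy pullback, because the model category is right proper on fibrant objects. Since $a_{\mathrm{sh}}$ is left exact, $a_{\mathrm{sh}}Q\simeq|X|\times_{|Y|}|Z|$ in $\mathcal X$. Finite products are the case $Y=*$.

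The main point is that $\dSt_n$ is closed under these pullbacks in $\mathcal X$, and we prove this with Theorem~\ref{thm:nuiten-derived}. Choose derived Lie $n$-groupoids presenting $|X|$, $|Y|$ and $|Z|$. The maps $|X|\to|Y|$ and $|Z|\to|Y|$ are represented by spans $X\leftarrow U\to Y$ and $Z\leftarrow V\to Y$, in which $U\to X$ and $V\to Z$ are smooth hypercovers and $U\to Y$, $V\to Y$ are Kan fibrations; the target $Y$ is retained in both spans. Form the levelwise pullback $U\times_YV$. By Theorem~\ref{thm:nuiten-derived} it is a derived Lie $n$-groupoid, and it realizes to the pullback in $\mathcal X$. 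Because the hypercovers $U\to X$ and $V\to Z$ realize to equivalences, this realization agrees with $|X|\times_{|Y|}|Z|$, so the pullback lies in $\dSt_n$. The argument needs no fibration hypothesis on the geometric side. Stability of acyclic fibrations under pullback is then inherited from the ambient structure.

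\emph{Step 3: path objects.} The functorial path object $PX$ of Theorem~\ref{thm:derived-ambient} is weakly equivalent to $X$, so it lies in $\mathcal B_n$ by repleteness, and the factorization of the diagonal is inherited.

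\emph{Step 4: localization.} Because $\mathcal B_n$ is replete, it is the full subcategory of fibrant objects lying over $\dSt_n$. Hence $\mathcal B_n[W^{-1}]$ is the full subcategory of $(\CM_{\mathrm{sh}})_{\mathrm{fib}}[W^{-1}]\simeq\mathcal X$ spanned by $\dSt_n$. This uses the standard fact that a replete full sub-CFO localizes to the corresponding full subcategory, since mapping spaces are computed by the same Brown path-object calculus. Nuiten's equivalence $\dLie_n[H^{-1}]\simeq\dSt_n$ is needed only in Step 2, to know that objects of $\dSt_n$ are presented geometrically.

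We expect Step 2 to be the main obstacle. Its difficulty is matching Nuiten's ``Kan fibration'' and realization statements with the homotopy pullback in $\mathcal X$ when the presentations of the two maps have been replaced by spans.
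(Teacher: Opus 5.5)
Your proposal is correct and follows essentially the same route as the paper. Both proofs obtain finite-limit closure of $\dSt_n$ from Nuiten's common-target span representation with a Kan leg and his realization-of-pullbacks clause, inherit the remaining Brown axioms (including paths and acyclic pullbacks) by weak-equivalence repleteness, and identify the localization with the full subcategory spanned by $\dSt_n$. The only difference is in that last step: you justify the mapping-space comparison by the Brown span/path-object calculus for a replete sub-CFO, whereas the paper uses acyclic-fibration cofibrant replacements and simplex framings that remain in $\mathcal B_n$.
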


\begin{proof}
Theorem~\ref{thm:nuiten-derived} gives finite homotopy-limit
closure of $\dSt_n$: represent a cospan, use common-target
source refinements to represent its arrows with a Kan leg,
and apply the realization pullback theorem.
The terminal stack is represented by the point.

An ordinary model pullback along a fibration between fibrant
objects computes this homotopy pullback and is fibrant.
Acyclic pullbacks are acyclic by the ambient model axioms.
A model path factorization has fibrant middle object weakly
equivalent to its geometric source, hence still in $\mathcal B_n$.
The remaining Brown axioms are inherited.

A full weak-equivalence-replete subcategory of fibrant models
presents the corresponding full infinity-subcategory.
Indeed an acyclic-fibration cofibrant replacement $QX\to X$
is still fibrant and weakly in that subcategory; contractible
simplex framings stay weakly there as well. Thus all derived
mapping spaces are the ambient ones, not merely their
degree-zero components. This proves the displayed localization.
\end{proof}

\subsection{Ordinary models with represented levels}

The represented-model viewpoint already occurs in
\cite[Definitions~3.1--3.2, Remark~3.3, Lemma~3.4 and
Remark~3.28, pp.~16--21]{Pridham2013}, including the CFO formed by taking the
union over finite bounds.
There the weak maps are specified by $Q$-mapping-space tests;
their realization comparison is
\cite[Theorem~4.10, p.~26]{Pridham2013}.
We give an ordinary presheaf-model version in Nuiten's
unhypercompleted smooth setting $\mathcal X$, with
covering-family descent and broad derived manifolds.
Strong quasi-compactness belongs to the affine hypergroupoid
setting; here the geometric requirement is finite-limit closure.

For a finite simplicial set $K$ and a strict simplicial object
$X$ of $\CM_{\mathrm{sh}}$, write $\{K,X\}$ for its ordinary
outer weighted limit.

\begin{lem}\label{lem:derived-weighted-matching}
If $X$ is Reedy fibrant, $\{K,X\}$ is fibrant and computes
the homotopy weighted matching object. If every $X_k$ is
weakly represented by a derived manifold, so is $\{K,X\}$.
\end{lem}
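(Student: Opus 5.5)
The plan is to induct over a finite cell decomposition of $K$, following the proof of Lemma~\ref{lem:finite-matching}. Write $K$ as an iterated pushout of boundary inclusions $\partial\Delta[m]\hookrightarrow\Delta[m]$, starting from $\varnothing$. For the outer weighted limit this gives $\{\varnothing,X\}=*$ and $\{\Delta[m],X\}=X_m$. Each attachment $K'=K\amalg_{\partial\Delta[m]}\Delta[m]$ produces an ordinary pullback square
\[
 \{K',X\}=\{K,X\}\times_{\{\partial\Delta[m],X\}}X_m .
\]
Since $\{-,X\}$ sends colimits of weights to limits, this is formal. The whole argument then reduces to two points. First, the right leg $X_m\to\{\partial\Delta[m],X\}$ is a fibration between fibrant objects. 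Second, an ordinary pullback along a fibration between fibrant objects computes the homotopy pullback in $\CM_{\mathrm{sh}}$.

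For the first assertion, Reedy fibrancy says precisely that each matching map $X_m\to M_mX=\{\partial\Delta[m],X\}$ is a fibration in $\CM_{\mathrm{sh}}$. I would prove the more general statement that for every inclusion of finite simplicial sets $L\subseteq K$, the restriction $\{K,X\}\to\{L,X\}$ is a fibration. This follows by the same cell induction, since fibrations are stable under pullback and composition. Taking $L=\varnothing$ gives fibrancy of $\{K,X\}$. For the homotopy statement, the standard argument is that a Reedy fibrant diagram is injectively fibrant for the finite weight, so the strict weighted limit is its derived functor. Concretely, each pullback above is along a fibration between fibrant objects, so right properness among fibrant objects makes it a homotopy pullback. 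Inductively, the strict iterated limit agrees with the iterated homotopy limit. That iterated homotopy limit is the homotopy weighted limit, because homotopy weighted limits also convert the cell pushouts of $K$ into homotopy pullbacks. I would also check invariance: a levelwise weak equivalence of Reedy fibrant objects induces a weak equivalence on each $\{K,-\}$, by the cube lemma for pullbacks along fibrations.

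For the representability assertion, pass to underlying $\infty$-sheaves in $\mathcal X$. By the previous paragraph, the image of $\{K,X\}$ is the finite limit of the $h(M_k)$ indexed by the finite category of simplices of $K$, where $X_k$ is weakly equivalent to a fibrant model of $hM_k$. By Lemma~\ref{lem:derived-finite-limits}, $\dMfd$ has finite homotopy limits and $h$ preserves them. The same cell induction therefore represents every stage by a derived manifold, namely an iterated derived fibre product $\{K,-\}$ computed in $\dMfd$. It follows that $\{K,X\}$ is weakly represented.

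The main obstacle I anticipate is the homotopy-invariance step. One must make sure the strict outer weighted limit in the ordinary model category really agrees with the $\infty$-categorical limit in $\mathcal X$. This needs a clean statement that the sheafification localization $a_{\mathrm{sh}}$, restricted to fibrant objects, preserves homotopy pullbacks; $a_{\mathrm{sh}}$ is left exact, which should suffice. It also needs the identification of $\{\partial\Delta[m],X\}$ with the Reedy matching object. I would handle the latter by the usual computation of $M_mX$ as the limit over proper faces, which is exactly the weight $\partial\Delta[m]$.
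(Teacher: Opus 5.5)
Your proposal is correct and follows the paper's proof: build $K$ from the empty weight by boundary-cell attachments, so that each stage is a pullback along a Reedy matching fibration between fibrant objects and hence a homotopy pullback, and then take representability from Lemma~\ref{lem:derived-finite-limits}. One small slip is that the category of simplices of a finite $K$ is not finite, since it contains infinitely many degenerate simplices; nothing depends on this, because your argument runs through the finite cell induction.
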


\begin{proof}
Build $K$ by finitely many boundary-cell attachments.
Weighted limits turn each attachment into a pullback along
a Reedy matching fibration of $X$. All objects involved are
fibrant, so these ordinary pullbacks compute homotopy pullbacks.
Induct from the empty weight. Representability follows from
Lemma~\ref{lem:derived-finite-limits}.
\end{proof}

\begin{rem}\label{rem:generic-weighted-matching}
The cell-induction proof works unchanged for a fully faithful,
finite-limit-preserving functor $h:\mathcal D\to\mathcal X$
from any geometric infinity-category with finite limits.
Replace ``derived manifold'' by ``object of $\mathcal D$''.
This is the form used in the Banach and associative applications;
the existence of the requisite geometric limits and functor of
points is proved separately in each setting.
\end{rem}

\begin{defn}\label{def:derived-reedy-models}
Let $\mathscr R_\infty$ be the full ordinary category of
Reedy-fibrant strict simplicial objects $X$ in
$\CM_{\mathrm{sh}}$ such that each $X_k$ is model weakly
equivalent to $h(M_k)$, and the associated coherent homotopy
horn maps are geometrically smooth surjections.
Let $\mathscr R_n$ impose additionally that these horn maps
are equivalences above finite $n$.

Morphisms are strict natural transformations. A fibration
is a Reedy model fibration whose associated relative
homotopy horn maps are smooth surjections in every positive
degree. The Reedy model-fibration condition applies in every
degree, including zero. Write $W_{\mathrm{sh}}$ for equivalences
under ordinary realization $|X|$ in $\mathcal X$.
\end{defn}

Geometric properties are tested through the fully faithful
image of $h$, before hypercompletion. They are independent
of the choices of weakly representing manifolds.
These presheaf diagrams are realized directly in $\mathcal X$.

\begin{lem}[Strictification]\label{lem:derived-strictification}
Localizing $\mathscr R_n$ at levelwise model equivalences
recovers the coherent infinity-category $\dLie_n$.
The analogous assertion without a finite bound recovers
the full coherent category of outer-unbounded geometric
Kan diagrams.
\end{lem}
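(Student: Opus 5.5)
The plan is to compare two presentations of the same infinity-category: strict Reedy-fibrant diagrams in the simplicial presheaf model, and coherent diagrams $\Delta^\op\to\dMfd$. Let $\mathcal M=\Fun(\Delta^\op,\CM_{\mathrm{sh}})$ carry the Reedy model structure. It exists because $\Delta$ is a Reedy category and $\CM_{\mathrm{sh}}$ is combinatorial; it is simplicial and left proper, being injective-based. The standard rectification theorem (Lurie, HTT~4.2.4.4, applied to the Reedy rather than the injective diagram model, which has the same weak equivalences) identifies $\mathcal M_{\mathrm{fib}}[W_{\mathrm{lev}}^{-1}]$ with $\Fun(\Delta^\op,\mathcal X)$. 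Here the weak equivalences are the levelwise model equivalences, and every Reedy-fibrant object is levelwise fibrant. Restricting to full subcategories that are replete under levelwise weak equivalence gives the corresponding full subcategories. The argument is the one used in the proof of Theorem~\ref{thm:derived-geometric-cfo}: cofibrant replacement and framings stay in the subcategory, so mapping spaces are the ambient ones.

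It then remains to identify the essential image. A coherent diagram $\Delta^\op\to\mathcal X$ lies in the image of $\Fun(\Delta^\op,\dMfd)$ under $h$ exactly when every level lies in the essential image of $h$. This uses that $h$ is fully faithful (Nuiten, Proposition~5.2.3), so postcomposition $\Fun(\Delta^\op,\dMfd)\to\Fun(\Delta^\op,\mathcal X)$ is fully faithful, with essential image detected levelwise. The conditions defining $\mathscr R_n$ are conditions on coherent homotopy horn maps. By Lemma~\ref{lem:derived-weighted-matching}, for a Reedy-fibrant $X$ the strict weighted limits $\{\Lambda^i[k],X\}$ compute the homotopy weighted limits of the underlying coherent diagram. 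They are weakly represented, and $h$ preserves them by Lemma~\ref{lem:derived-finite-limits}. Hence the strict horn maps correspond, up to equivalence, to the coherent horn maps in $\dMfd$. The condition of being a smooth surjection, or an equivalence above $n$, is invariant under equivalence, so $\mathscr R_n$ is replete and maps exactly onto $\dLie_n$ as defined in Definition~\ref{def:derived-geometric-groupoids}. The same argument without the bound handles the outer-unbounded case, since no step used finiteness of $n$. Each horn matching object is a finite weighted limit, and Lemma~\ref{lem:derived-finite-limits} supplies the levelwise limits for every individual cospan.

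The main obstacle is checking that the localization of the strict subcategory at levelwise equivalences really presents the full subcategory with the correct mapping spaces, and not merely the right homotopy category. I would handle this by the framing argument. Take a Reedy-cofibrant replacement $QX\to X$ inside $\mathcal M$. It is a levelwise equivalence between Reedy-fibrant objects after fibrant replacement, so it stays in $\mathscr R_n$ by repleteness. Then use simplicial frames on Reedy-fibrant targets to compute derived mapping spaces. This step also requires that fibrant replacement in the Reedy structure preserves levelwise weak representability, which holds because it is a levelwise weak equivalence.
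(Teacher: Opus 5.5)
Your proposal is correct and follows essentially the same route as the paper. Both arguments use the same four steps:
\begin{itemize}
\item strictification via HTT~4.2.4.4;
\item invariance of weak representability and of the homotopy horn conditions under levelwise model equivalence, so that Reedy fibrant replacement stays in the subcategory and $\mathscr R_n$ is replete;
\item cofibrant replacements and simplex framings, giving the ambient derived mapping spaces as in Theorem~\ref{thm:derived-geometric-cfo};
\item full faithfulness of $h$ on coherent diagram categories.
\end{itemize}
One tidying point: take the cofibrant replacement $QX\to X$ to be an acyclic Reedy fibration, as the paper does in Theorem~\ref{thm:derived-reedy-finite}. Then $QX$ is automatically Reedy fibrant and no further fibrant replacement is needed.
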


\begin{proof}
Strictification for suitably small diagrams in a combinatorial
simplicial model category is
\cite[Proposition~4.2.4.4, p.~258]{LurieHTT2009}.
Reedy fibrant replacement is levelwise a model equivalence;
it retains weak representability and the geometric properties
of homotopy horn maps. Thus the indicated coherent diagrams
have ordinary models here.

The subcategory is full and replete among Reedy-fibrant
objects for levelwise equivalences. Cofibrant replacements
and simplex framings compute the same derived mapping
spaces while remaining weakly in that subcategory, as in
the proof of Theorem~\ref{thm:derived-geometric-cfo}.
Finally the fully faithful $h$ induces a fully faithful
functor on coherent diagram categories.
\end{proof}

\begin{lem}[Fibration calculus]\label{lem:derived-reedy-calculus}
The categories $\mathscr R_\infty$ and $\mathscr R_n$
have terminal objects and finite products. Their fibrations
compose, and ordinary pullbacks along them exist in the same
category and are fibrations.
\end{lem}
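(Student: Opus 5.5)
The plan is to follow the pattern of Theorem~\ref{thm:geometric-icfo}, replacing manifold pullbacks by the finite homotopy limits of Lemma~\ref{lem:derived-finite-limits}. The weighted-matching Lemma~\ref{lem:derived-weighted-matching} lets us test every horn and matching object through representing derived manifolds.

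\emph{Terminal object and products.} The terminal object is the constant diagram on a fibrant model of $h(*)$. Finite products are formed levelwise in $\CM_{\mathrm{sh}}$. A product of Reedy-fibrant objects is Reedy fibrant, since Reedy matching objects commute with products. Each level $X_k\times Y_k$ is fibrant and weakly equivalent to $h(M_k\times N_k)$, because $h$ preserves finite limits. The relative and absolute homotopy horn maps are products of smooth surjections, and smooth surjections are stable under finite products by the local-projection description. Horn equivalences above $n$ are also preserved, so both $\mathscr R_n$ and $\mathscr R_\infty$ are closed. The map $X\to *$ is a Reedy fibration whose relative horns are the absolute horns, so it is a fibration.

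\emph{Composition.} Reedy fibrations compose. For the horn condition I would use the cartesian square of homotopy horn objects from the composition step of Theorem~\ref{thm:geometric-icfo}. In the Reedy-fibrant setting this is an ordinary pullback of fibrant objects along a Reedy matching fibration, so it is a homotopy pullback. The relative horn map of $gf$ then factors as the horn map of $f$ followed by a base change of the horn map of $g$. Smooth surjections are stable under derived base change and composition, so no induction on representability is needed here. Every horn object is already weakly represented by Lemma~\ref{lem:derived-weighted-matching}.

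\emph{Pullbacks along fibrations.} Given a fibration $f:X\to Y$ and any $g:Z\to Y$, form $P=X\times_YZ$ levelwise in $\CM_{\mathrm{sh}}$.
\begin{itemize}
\item Since $f$ is a Reedy fibration, so is $P\to Z$, and $P$ is Reedy fibrant.
\item Each $f_k$ is a model fibration between fibrant objects, so $P_k$ computes the homotopy pullback $X_k\times^h_{Y_k}Z_k$. This is weakly represented by $h(M_k\times^h_{N_k}L_k)$ by Lemma~\ref{lem:derived-finite-limits}.
\item Weighted limits commute with pullbacks, so $H_{m,i}(P/Z)\simeq H_{m,i}(f)\times^h_{Y_m}Z_m$ is a homotopy pullback. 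The relative horn map of $P\to Z$ is therefore the derived base change of that of $f$, hence a smooth surjection.
\item Absolute horns of $P$ follow by composing with $Z\to *$, using the composition step.
\item For $\mathscr R_n$, above $n$ both the relative horn of $P/Z$ (a base change of an equivalence) and the absolute horn of $Z$ are equivalences.
\end{itemize}
The universal property is the ordinary one in the full subcategory, since $P$ belongs to it.

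\emph{Main obstacle.} The delicate point is that the ordinary horn objects must compute the homotopy horn objects of the coherent diagram, and that ordinary pullbacks of them stay homotopy pullbacks. I would handle this uniformly by using only Reedy fibrancy: every weight is attached along matching fibrations (Lemma~\ref{lem:derived-weighted-matching}). I would also check directly that relative weighted limits $Y_m\times_{\{\Lambda^i[m],Y\}}\{\Lambda^i[m],X\}$ are ordinary pullbacks along the fibration $\{\Lambda^i[m],X\}\to\{\Lambda^i[m],Y\}$. That map is a fibration because $f$ is a Reedy fibration.

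Finally, I would verify that smooth surjectivity is invariant under model equivalence and stable under derived base change in broad $\dMfd$. This should follow from Nuiten's local-projection and local-section description of smooth surjections, together with preservation of pullbacks by $h$.
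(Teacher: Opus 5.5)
Your proposal is correct and follows essentially the paper's argument. The relative horn of a composite factors as $\lambda_p$ followed by a base change of $\lambda_q$ via $R_\Lambda(qp)=\{\Lambda,X\}\times_{\{\Lambda,Y\}}R_\Lambda(q)$. The Reedy pullback's relative horns are base changes of those of the fibration, and its absolute horns and the finite bound follow by composing with the absolute horns of $Z$.

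One precision: the terminal object must be the literal terminal diagram, constant on the terminal presheaf, whose level is weakly $h(\mathrm{pt})$. A constant diagram on an arbitrary fibrant model of the point is in general neither strictly terminal nor Reedy fibrant, since its first matching map is a diagonal.
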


\begin{proof}
The terminal diagram is represented by the point.
Products preserve Reedy fibrancy and geometric horn maps.
For $p:X\to Y$, set
\[
 R_\Lambda(p)=Y_k\times_{\{\Lambda,Y\}}\{\Lambda,X\},
 \qquad \Lambda=\Lambda^i[k].
\]
For $q:Y\to Z$, write $\lambda_p:X_k\to R_\Lambda(p)$ and
$\lambda_q:Y_k\to R_\Lambda(q)$ for the relative horn maps.
The relative horn map of $qp$ factors as $\lambda_p$ followed
by the base change of $\lambda_q$, using the identity
\[
 R_\Lambda(qp)
 =\{\Lambda,X\}\times_{\{\Lambda,Y\}}R_\Lambda(q).
\]
Smooth surjections and Reedy fibrations are stable under
composition and base change.

For a fibration $p:X\to Y$ and any $Z\to Y$, the ordinary
Reedy pullback $X\times_Y Z$ is Reedy fibrant and computes
the levelwise homotopy pullback. Its levels are weakly
represented by Lemma~\ref{lem:derived-finite-limits}.
Weighted matching commutes with these limits; hence its
relative horn maps over $Z$ are the base changes of those
of $p$. Composition with the absolute horn maps of $Z$
proves the absolute Kan condition. Above finite $n$, the
absolute horn maps of $X,Y,Z$ are equivalences, so the
relative and pullback horn maps are equivalences too.
\end{proof}

\subsection{Outer paths with explicit finite horn control}

\begin{lem}[Prism filtration]\label{lem:derived-prism-filtration}
For $m\ge1$ and $0\le i\le m$, the inclusion
\[
 U=(\Lambda^i[m]\times\Delta[1])
   \cup(\Delta[m]\times\partial\Delta[1])
   \longrightarrow\Delta[m]\times\Delta[1]
\]
is a composite of $m-1$ horn attachments of dimension $m$
and $m+1$ of dimension $m+1$.
The absolute inclusion
$\Lambda^i[m]\times\Delta[1]\hookrightarrow\Delta[m]\times\Delta[1]$
has a filtration with two further horn attachments of dimension $m$.
\end{lem}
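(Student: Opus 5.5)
The plan is to use the standard staircase triangulation and make the filtration explicit, following the count already announced in the proof of Theorem~\ref{thm:geometric-icfo}. Write $a^\epsilon=(a,\epsilon)$. The nondegenerate $(m+1)$-simplices of $\Delta[m]\times\Delta[1]$ are the staircases $P_j=[0^0,\ldots,j^0,j^1,\ldots,m^1]$ for $0\le j\le m$. Their interior walls are $D_j=P_{j-1}\cap P_j$ for $1\le j\le m$. Put $D_0=\Delta[m]\times\{1\}$ and $D_{m+1}=\Delta[m]\times\{0\}$.

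First I classify the nondegenerate simplices not in $U$. Such a simplex is a chain in $[m]\times[1]$. It must use both time labels, since otherwise it lies in $\Delta[m]\times\partial\Delta[1]$. Its base image must contain every vertex of $[m]$ except possibly $i$, since otherwise it lies in $\Lambda^i[m]\times\Delta[1]$. A chain repeats at most one base coordinate. Hence the simplices outside $U$ fall into four families:
\begin{itemize}
\item the staircases $P_j$;
\item the walls $D_s$ with $1\le s\le m$;
\item the faces $B_j$ for $j\ne i$, obtained from $P_j$ by deleting its unique base-$i$ vertex;
\item the faces $C_s$, obtained from $D_s$ by deleting its unique base-$i$ vertex.
\end{itemize}
For the $C_s$ I only keep those that still use both time labels. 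Among these there is the single coincidence $C_i=C_{i+1}$, and $C_1$ (for $i=0$) and $C_m$ (for $i=m$) already lie in $U$ as faces of the top or bottom simplex. Proving that this list is exhaustive and has exactly these coincidences is a direct chain count, and I will carry it out first.

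Next I attach simplices in a fixed order. Put $d=i$ if $i>0$ and $d=1$ if $i=0$.
\begin{itemize}
\item \emph{Stage 1.} For each $1\le s\le m$ with $s\ne d$, attach $D_s$ along its horn missing $C_s$. Its other facets delete a base vertex other than $i$, so they lie in $\Lambda^i[m]\times\Delta[1]\subseteq U$. This gives $m-1$ attachments of dimension $m$.
\item \emph{Stage 2.} Attach $P_i$ along the horn missing $D_d$.
\item \emph{Stage 3.} Attach each $P_j$ with $j\ne i$ along the horn missing $B_j$. Ordering these outward from $i$ guarantees that the walls $D_j$ and $D_{j+1}$ shared with earlier staircases are already present.
\end{itemize}
Stages 2 and 3 give $m+1$ attachments of dimension $m+1$.

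At each step two things must be checked. All non-missing facets must already be present. The missing facet must not yet be present; this is where the $C_s$ coincidence analysis matters, and it ensures each step is an honest horn pushout rather than a boundary filling. After the last step every simplex in the classification has appeared, so the union is all of $\Delta[m]\times\Delta[1]$. The case $m=1$ is a sanity check: Stage 1 is empty and the two triangles of the square are attached.

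For the absolute inclusion, start from $\Lambda^i[m]\times\Delta[1]$. Attach the top simplex $\Delta[m]\times\{1\}$ and the bottom simplex $\Delta[m]\times\{0\}$ along their $i$th horns $\Lambda^i[m]\times\{\epsilon\}$. These are two further dimension-$m$ horn attachments, and they reach $U$; then apply the relative filtration. I expect the main obstacle to be the bookkeeping in Stages 1 and 3. One must verify that the facets used are present exactly when needed and that no missing facet was created earlier, especially around the boundary cases $i=0$, $i=m$ and the coincidence $C_i=C_{i+1}$. I would first write this out for $m=2$ with all three values of $i$, and then generalize the ordering.
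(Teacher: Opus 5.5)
Your proposal is correct and follows the paper's proof essentially verbatim. It uses the same staircase simplices $P_j$, walls $D_s$, faces $B_j,C_s$, the same choice of $d$, the same three-stage attachment order with the same counts, the same exhaustion by chain classification, and the same two preliminary horn attachments for the absolute inclusion. One small simplification: the outward ordering in Stage~3 is unnecessary, because after Stage~2 every wall $D_s$ with $1\le s\le m$ is already present (Stage~1 supplies $s\ne d$ and attaching $P_i$ supplies $D_d$), so the $P_j$ with $j\ne i$ may be attached in any order.
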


\begin{proof}
Write $a^e=(a,e)$. The top prism simplices and diagonal
facets are
\[
 P_j=[0^0,\ldots,j^0,j^1,\ldots,m^1],\qquad
 D_s=[0^0,\ldots,(s-1)^0,s^1,\ldots,m^1].
\]
Here $0\le j\le m$, $1\le s\le m$; put
$D_0=\Delta[m]\times\{1\}$ and
$D_{m+1}=\Delta[m]\times\{0\}$.
For $1\le s\le m$, let $C_s$ be the facet obtained by deleting
the unique vertex of $D_s$ with base coordinate $i$.
For $0\le j\le m$ with $j\ne i$, let $B_j$ be the facet obtained
by deleting the unique vertex of $P_j$ with base coordinate $i$.
Set $d=i$ if $i>0$, and $d=1$ if $i=0$.

First attach $D_s$ for $1\le s\le m$ and $s\ne d$, along the
horn consisting of all its facets except $C_s$.
All other facets omit a base coordinate different from
$i$ and lie in $U$. These $m-1$ missing facets are new
and distinct: for internal $i$ the sole coincidence is
$C_i=C_{i+1}$; for $i=0$ the excluded $C_1$ is already
top, and for $i=m$ the excluded $C_m$ is already bottom.
These are horns of dimension $m$, with index $i$.

Next attach $P_i$, missing $D_d$. The missing vertex
has index $i$ for $i>0$, or index $1$ for $i=0$.
Its other diagonal facet is present; the remaining
facets lie in $U$.
Finally attach every $P_j$, $j\ne i$, missing $B_j$.
The horn index is $i$ for $i<j$ and $i+1$ for $i>j$.
Both diagonal facets are now present, and the other
facets lie in $U$. Each missing $B_j$ is new.

A nondegenerate chain outside $U$ uses both time labels,
has base image $[m]$ or $[m]\setminus\{i\}$, and repeats
at most one base vertex. The four possibilities are
exactly $P_j,D_s,B_j,C_s$. This proves exhaustion by the
stated horn pushouts and gives their counts.
For the absolute inclusion, first attach the bottom
and top simplices along their $i$th horns, obtaining $U$.
\end{proof}

\begin{prop}[Represented outer paths]\label{prop:derived-outer-path}
For $X\in\mathscr R_\infty$, set
\[
 (PX)_k=\{\Delta[k]\times\Delta[1],X\}.
\]
Then $PX$ is an object of $\mathscr R_\infty$,
the endpoint map $PX\to X\times X$ is a fibration,
and the constant-path map $X\to PX$ lies in
$W_{\mathrm{sh}}$. If $X\in\mathscr R_n$, then
$PX\in\mathscr R_n$.
\end{prop}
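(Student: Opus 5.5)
The plan is to follow the ordinary proof of Theorem~\ref{thm:geometric-icfo}, replacing manifold pullbacks along submersions with homotopy weighted limits of Reedy-fibrant diagrams.

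\emph{Well-definedness.} I first check that $PX$ is a Reedy-fibrant strict simplicial object with weakly represented levels. The functor $K\mapsto\{\Delta[\bullet]\times K,X\}$ is the outer cotensor $X^K$. For a Reedy-fibrant $X$ and a monomorphism $K'\hookrightarrow K$, the map $X^K\to X^{K'}$ is a Reedy fibration, by the standard SM7-type property of the Reedy structure on simplicial objects in the simplicial model category $\CM_{\mathrm{sh}}$, used only with outer weights. Taking $K'=\varnothing$ shows that $PX$ is Reedy fibrant. Taking $K'=\partial\Delta[1]$ shows that the endpoint map $e\colon PX\to X\times X$ is a Reedy fibration in every degree, including zero. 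Weak representability of $(PX)_k$ follows from Lemma~\ref{lem:derived-weighted-matching} applied to the finite weight $\Delta[k]\times\Delta[1]$.

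\emph{Geometric horn conditions.} For $m\ge1$, the relative horn target of $e$ is identified with $\{U_{m,i},X\}$, where $U_{m,i}=(\Lambda^i[m]\times\Delta[1])\cup(\Delta[m]\times\partial\Delta[1])$. This holds because weighted limits convert the pushout defining $U_{m,i}$ into a pullback along Reedy fibrations. By Lemma~\ref{lem:derived-prism-filtration}, the map $\{\Delta[m]\times\Delta[1],X\}\to\{U_{m,i},X\}$ is a finite composite of base changes of absolute horn maps $X_d\to\{\Lambda^j[d],X\}$ with $d\in\{m,m+1\}$. Each such horn map is a smooth surjection, since $X\in\mathscr R_\infty$. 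Each base change is an ordinary pullback along a Reedy matching fibration between fibrant objects, so it computes the homotopy pullback. The fully faithful, pullback-preserving functor $h$ (Lemma~\ref{lem:derived-finite-limits}) transports these homotopy pullbacks into $\dMfd$, where smooth surjections are stable under base change and composition. Hence the relative horn maps of $e$ are smooth surjections in all positive degrees, and $e$ is a fibration.

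The absolute Kan condition for $PX$ then follows in the same way as the composition step of Lemma~\ref{lem:derived-reedy-calculus}, by composing $e$ with the fibration $X\times X\to *$. Alternatively, it follows directly from the absolute filtration in Lemma~\ref{lem:derived-prism-filtration}.

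\emph{Finite bound.} If $X\in\mathscr R_n$, then for $m>n$ every attachment has dimension $m$ or $m+1$, both greater than $n$. So each horn map involved is an equivalence, and so are all its base changes. The relative horn maps of $e$ in degree $m$ are therefore equivalences. Composing with the horn maps of $X\times X$, which are equivalences above $n$, gives $PX\in\mathscr R_n$.

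\emph{Weak equivalence.} It remains to show that the constant-path map $s\colon X\to PX$ lies in $W_{\mathrm{sh}}$. I would prove the stronger statement that $s$ is an outer simplicial homotopy equivalence with inverse $\operatorname{ev}_0$. Since $\operatorname{ev}_0 s=\id$, it is enough to use $\min\colon[1]\times[1]\to[1]$ to build a strict outer simplicial homotopy $PX\times\Delta[1]\to PX$ from $s\operatorname{ev}_0$ to $\id$. This homotopy is obtained from the map $\Delta[k]\times\Delta[1]\times\Delta[1]\to\Delta[k]\times\Delta[1]$ induced by $\min$, together with functoriality of weighted limits in the weight.

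A strict outer homotopy becomes, after applying $\operatorname{diag}$, a simplicial homotopy of simplicial presheaves. So $\operatorname{diag}s$ is an objectwise weak equivalence. Lemma~\ref{lem:derived-model-dictionary}, in its $\mathcal X$-version via $a_{\mathrm{sh}}$, identifies $|X|$ with $a_{\mathrm{sh}}(\operatorname{diag}X)$, so $|s|$ is an equivalence.

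I expect this last step to be the main obstacle. The difficulty is to check that $\operatorname{diag}$ carries the outer homotopy to an honest simplicial homotopy. This needs the cotensor identification $\{K\times\Delta[1],X\}\cong\{K,X\}^{\Delta[1]}$ in the outer direction, and care that no inner cotensors are mixed in. If this becomes awkward, the fallback is a levelwise argument: show that each $(PX)_k\to X_k$ is a levelwise model equivalence, because $\operatorname{ev}_0$ restricts along the Reedy acyclic fibration coming from the anodyne inclusion $\Delta[k]\times\{0\}\hookrightarrow\Delta[k]\times\Delta[1]$. Levelwise equivalences then realize to equivalences.
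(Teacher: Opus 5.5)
Your proposal is correct and follows the paper's proof essentially step for step. The matching pieces are: Reedy fibrancy of $PX$ and of $PX\to X\times X$ by finite cell attachments (your SM7 formulation), weakly represented levels by Lemma~\ref{lem:derived-weighted-matching}, relative and absolute horns and the finite bound by the prism filtration, and the $\min$ homotopy passed through $\operatorname{diag}$. The step you flagged as the main obstacle is immediate: $\operatorname{diag}$ sends the outer tensor $A\otimes\Delta[1]$, with $(A\otimes\Delta[1])_k=\coprod_{\Delta[1]_k}A_k$, to $\operatorname{diag}A\times\Delta[1]$, so an outer homotopy becomes an honest simplicial homotopy.

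Discard the fallback, though, because it is false. Outer anodyne weights do not give levelwise acyclic fibrations. For the pair groupoid of $\R$, the degree-zero component of $X\to PX$ is the diagonal $\R\to\R^2$, which is not an equivalence. This is exactly why the paper remarks that the constant-path map need not be a levelwise model equivalence.
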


\begin{proof}
Lemma~\ref{lem:derived-weighted-matching} supplies
weakly represented levels.
The Reedy endpoint matching map is induced by
\[
 (\partial\Delta[k]\times\Delta[1])
 \cup(\Delta[k]\times\partial\Delta[1])
 \longrightarrow\Delta[k]\times\Delta[1].
\]
A finite boundary-cell filtration expresses it as
a composite of pullbacks of Reedy matching fibrations.
The same argument without the endpoint faces proves
Reedy fibrancy of $PX$.

Lemma~\ref{lem:derived-prism-filtration} expresses the
geometric relative and absolute horn maps as finite
composites of base changes of horn maps of $X$.
Thus they are smooth surjections. In outer degree
$k>n$, all attachment dimensions are $>n$, proving
the finite bound.

Let $j:X\to PX$ be the constant-path map and let $e_0:PX\to X$
be evaluation at zero. Then $e_0j=\id$. The monotone
map $\min:[1]\times[1]\to[1]$ supplies an outer
simplicial homotopy $je_0\Rightarrow\id$.
For simplicial presheaves, the objectwise bisimplicial
diagonal computes realization before sheafification;
this homotopy gives an ordinary simplicial homotopy
on those diagonals. Hence $|j|$ is an equivalence.
It need not be a levelwise model equivalence.
\end{proof}

This is the outer path construction also used in
\cite[Definition~1.20 and Remark~3.28]{Pridham2013}.
It is not a levelwise internal model cotensor.
For example, the internal path diagonal of the pair
groupoid of $\R$ has relative source-horn map
\[
 \R^2\longrightarrow\R^3,\qquad(s,t)\longmapsto(s,t,t),
\]
which is neither surjective nor a submersion.
Moreover, even a coherent constant line gives the
degree-zero endpoint map $\R\to\R^2$.
Thus geometric degree-zero smoothness cannot be added
to our fibration class while retaining these paths.

\subsection{Finite ordinary realization and unbounded hypercomplete realization}

\begin{thm}[Finite represented CFOs]\label{thm:derived-reedy-finite}
For every finite $n$, $(\mathscr R_n,W_{\mathrm{sh}},F)$
is a full Brown CFO and
\[
 \mathscr R_n[W_{\mathrm{sh}}^{-1}]\simeq\dSt_n.
\]
The union $\mathscr R_{\mathrm{fin}}=\bigcup_{n<\infty}
\mathscr R_n$ is also a full Brown CFO and presents
the full union $\bigcup_{n<\infty}\dSt_n$.
\end{thm}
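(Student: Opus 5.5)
Most of the Brown axioms for $(\mathscr R_n,W_{\mathrm{sh}},F)$ are already supplied by the structural lemmas of this section; the plan is to check the remaining ones and then transport Nuiten's presentation. First the elementary axioms. Isomorphisms lie in $F$ and $W_{\mathrm{sh}}$. Two-out-of-three for $W_{\mathrm{sh}}$ holds because it is defined by equivalences after realization in $\mathcal X$. Terminal objects, finite products, composition of fibrations and existence and stability of pullbacks along fibrations are Lemma~\ref{lem:derived-reedy-calculus}. Every object maps fibrationally to the terminal diagram, since its absolute horn maps are the relative ones and Reedy fibrancy over the point is Reedy fibrancy. Path objects are Proposition~\ref{prop:derived-outer-path}, which also preserves the bound $n$. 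So the only axiom left is that the pullback of an acyclic fibration is acyclic.

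For this I would use Lemma~\ref{lem:derived-model-dictionary} in its unhypercompleted form: $|{-}|$ is $a_{\mathrm{sh}}(\operatorname{diag}-)$, the geometric realization in $\mathcal X$ of the levelwise sheaves. Pass from a strict object of $\mathscr R_n$ to its coherent diagram in $\dLie_n$ via Lemma~\ref{lem:derived-strictification}. Since weighted matching in Reedy-fibrant models computes homotopy matching, a fibration in $F$ becomes a Kan fibration in Nuiten's sense, with smooth-surjective relative homotopy horns in positive degrees. The ordinary Reedy pullback $X\times_Y Z$ computes the levelwise homotopy pullback. Theorem~\ref{thm:nuiten-derived} then says this realizes to $|X|\times_{|Y|}|Z|$ in $\mathcal X$. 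If $|X|\to|Y|$ is an equivalence, so is its base change, so $|X\times_YZ|\to|Z|$ lies in $W_{\mathrm{sh}}$. This is the step I expect to need the most care. One must match the strict realization $a_{\mathrm{sh}}\operatorname{diag}$ with Nuiten's realization of coherent diagrams, naturally enough that the pullback comparison maps correspond. One must also confirm that Nuiten's pullback statement applies to Kan fibrations with no degree-zero condition. I would handle the first through the naturality clause of \eqref{eq:derived-diagonal-realization} together with invariance of $a_{\mathrm{sh}}\operatorname{diag}$ under levelwise equivalences. The second is exactly the form in which Theorem~\ref{thm:nuiten-derived} is stated.

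For the localization I would compose two localizations. First, $\mathscr R_n$ localized at levelwise model equivalences is $\dLie_n$, by Lemma~\ref{lem:derived-strictification}. Levelwise equivalences lie in $W_{\mathrm{sh}}$, so $\mathscr R_n[W_{\mathrm{sh}}^{-1}]\simeq\dLie_n[W'^{-1}]$, where $W'$ is the class of coherent maps inverted by realization. It remains to see that inverting $W'$ agrees with inverting Nuiten's smooth hypercovers $H$. Each smooth hypercover realizes to an equivalence, so $H\subseteq W'$. Conversely, take a map $f\colon X\to Y$ with $|f|$ an equivalence. Theorem~\ref{thm:nuiten-derived} represents $|f|^{-1}$ by a span $Y\leftarrow U\to X$ with $U\to Y$ a smooth hypercover. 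Then $f$ becomes invertible in $\dLie_n[H^{-1}]$, because $\dLie_n[H^{-1}]\to\mathcal X$ is fully faithful and $|f|$ is invertible there. Hence the two localizations coincide and equal $\dSt_n$.

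For the union, given finitely many objects in a diagram, all lie in a common $\mathscr R_N$. The constructions (products, pullbacks, paths) stay in $\mathscr R_N$, and the class $W_{\mathrm{sh}}$ does not depend on $N$, so every axiom is checked inside some $\mathscr R_N$. The localization of the filtered union is the filtered colimit of the full inclusions $\dSt_n\subseteq\dSt_{n+1}$ inside $\mathcal X$. I would justify this by noting that mapping spaces are computed in each $\mathscr R_N$ large enough to contain the two objects and their replacements. This yields the full union $\bigcup_n\dSt_n$.
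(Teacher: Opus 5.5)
Your overall route matches the paper's. The structural lemmas and the outer path give every axiom except acyclic pullback, and Nuiten's realization theorem for pullbacks along Kan fibrations supplies that axiom. The localization is computed in stages through Lemma~\ref{lem:derived-strictification} and Nuiten's localization at smooth hypercovers, and the union is handled by common finite bounds. Your argument for the final stage is correct: $H\subseteq W'$, and every $W'$-map is inverted in $\dLie_n[H^{-1}]$ because realization is fully faithful there.

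There is, however, a missing step in the passage $\mathscr R_n[W_{\mathrm{sh}}^{-1}]\simeq\dLie_n[W'^{-1}]$. Knowing that levelwise equivalences lie in $W_{\mathrm{sh}}$ only gives, by localization in stages, $\dLie_n[\gamma(W_{\mathrm{sh}})^{-1}]$. Here $\gamma(W_{\mathrm{sh}})$ consists of images of \emph{strict} maps, and $W'$ is a priori larger. A general arrow of $\dLie_n$ is represented only by a zigzag of strict maps whose backward legs are levelwise equivalences. When its composite realizes to an equivalence, the individual forward legs need not lie in $W_{\mathrm{sh}}$. Likewise, Nuiten's smooth hypercovers are coherent arrows, so even inverting $H$ from $\gamma(W_{\mathrm{sh}})$ needs an argument.

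The paper closes this with a saturation step. Any coherent arrow $f\colon X\to Y$ is represented by a strict map $g\colon QX\to Y$ after an acyclic Reedy model fibration $q\colon QX\to X$, namely a cofibrant replacement. Moreover, $QX$ is again Reedy fibrant, with levels weakly equivalent to those of $X$ and hence the same homotopy horns and bound, so it lies in $\mathscr R_n$. Then $f\simeq\gamma(g)\gamma(q)^{-1}$ and $|g|\simeq|f|\circ|q|$, so $f\in W'$ exactly when $g\in W_{\mathrm{sh}}$. Thus $W'$ is the equivalence closure of $\gamma(W_{\mathrm{sh}})$, and the two localizations agree. With this step inserted, your proof is complete.
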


\begin{proof}
Lemmas~\ref{lem:derived-reedy-calculus} and
\ref{lem:derived-prism-filtration} and
Proposition~\ref{prop:derived-outer-path}
give all structural Brown axioms and path objects.
Two-out-of-three holds for realization equivalences.
For an acyclic fibration, the ordinary model pullback
realizes to the pullback of its realization by
Theorem~\ref{thm:nuiten-derived}; hence it remains
a weak equivalence. Ambient right properness alone
would not prove this, since $W_{\mathrm{sh}}$ is not
the levelwise weak-equivalence class.

For localization, first apply
Lemma~\ref{lem:derived-strictification}.
Any coherent arrow between these diagrams is represented
by a strict map $QX\to Y$ after an acyclic Reedy
\emph{model} fibration $QX\to X$; the cofibrant
replacement remains Reedy fibrant and weakly geometric.
Consequently the strict realization class saturates
to the coherent realization class.
Localizing in stages and using Nuiten's universal
infinity-localization at smooth hypercovers gives
the displayed equivalence, including mapping spaces.

The inclusions $\mathscr R_n\subseteq\mathscr R_{n+1}$
are automatic, and the classes and path formulas
are independent of $n$. A finite diagram has a common
finite maximum, so the Brown constructions remain
in the union. Localization preserves this filtered
union of relative categories; the finite-stage
localized inclusions are the full inclusions of
the geometric-stack subcategories.
\end{proof}

The union has no fixed global bound, but every one
of its objects still has finite outer amplitude.
It is not $\mathscr R_\infty$.

\begin{thm}[Enriched realization base change]
Let $\mathcal T$ be the small Kan-enriched site under
consideration, and let $a_{\mathrm{hyp}}$ denote its
hypersheaf localization.
\label{lem:derived-hypercomplete-base-change}
A \emph{local surjection} means
a map inducing an epimorphism on the sheaf of connected
components, equivalently an effective epimorphism in the
associated infinity-topos.

Use the global injective enriched-presheaf model or its
ordinary-sheaf or hypersheaf localization. Let $X,Y,Z$
be Reedy-fibrant strict outer diagrams, let $X\to Y$ be
a Reedy model fibration whose canonical relative homotopy
horn maps are local surjections in every positive outer
degree, and let $Z\to Y$ be any map. Then the canonical
comparison, natural in this cospan,
\[
 a_{\mathrm{hyp}}\operatorname{diag}(X\times_Y Z)
 \longrightarrow
 a_{\mathrm{hyp}}\operatorname{diag}X
 \times_{a_{\mathrm{hyp}}\operatorname{diag}Y}
 a_{\mathrm{hyp}}\operatorname{diag}Z
\]
is an equivalence. The pullback on the left is the ordinary
Reedy-model pullback and therefore computes the levelwise
homotopy pullback. Outer amplitude is unrestricted.
The geometric lifting hypotheses concern positive relative
horns; the ambient Reedy-fibration condition applies in
every degree, including zero. Absolute horns of the
diagrams are unrestricted, and the site may lack enough
points. Write $\mathbf{PB}_\tau(\mathcal T)$ for this
base-change property.
\end{thm}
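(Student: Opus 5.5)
The plan is to prove $\mathbf{PB}_\tau(\mathcal T)$ by reducing it to a local form of the Bousfield--Friedlander theorem for bisimplicial objects. Points are not used anywhere: covers are refined instead. First I reduce to a single model. By Lemma~\ref{lem:derived-model-dictionary}, $a_{\mathrm{hyp}}\operatorname{diag}$ is computed in the global injective model, and the local models have the same cofibrations. Reedy fibrancy in a localized model implies Reedy fibrancy in the global injective one, so I may assume $X,Y,Z$ are global-injective Reedy fibrant and that $X\to Y$ is a global Reedy fibration. Then $X\times_YZ$ is the levelwise homotopy pullback in $\mathcal P$. By the local injective $\tau$-hypersheaf model of HAG~I (local equivalences equal isomorphisms of homotopy sheaves in the sense of Dugger--Hollander--Isaksen, valid without enough points), it suffices to show the following. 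The comparison map from $\operatorname{diag}(X\times_YZ)$ to the homotopy pullback of $\operatorname{diag}X\to\operatorname{diag}Y\leftarrow\operatorname{diag}Z$, formed in the hypersheaf model, induces isomorphisms on all sheaves $\tilde\pi_j$ at all local basepoints.

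Second, I pass to sections. For each object $U$ of $\mathcal T$, view $X(U)\to Y(U)$ as a map of bisimplicial sets: outer index $k$, inner Kan direction. Reedy fibrancy gives a vertical Kan fibration in each outer degree and Reedy matching fibrations. The hypothesis says the relative horn maps $\pi_0X_k(U)\to\pi_0 R_{\Lambda^i[k]}(p)(U)$ are surjective only after passing to a $\tau$-cover of $U$. This is the sheafified $\pi_0$-Kan condition of Bousfield--Friedlander. The homotopy-pullback part of their argument then runs unchanged: a levelwise homotopy pullback of Reedy fibrant objects has the levelwise homotopy fibres of $X\to Y$. I will follow the classical proof that the diagonal of such a map is a quasifibration: each fibre of $\operatorname{diag}X\to\operatorname{diag}Y$ over a vertex of $\operatorname{diag}Y$ is compared with the diagonal of the levelwise fibre, and the long exact sequences are matched. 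Every lifting step in that proof is either a vertical Kan lift, available sectionwise, or a $\pi_0$ horn lift, available locally. I will therefore carry out each step after passing to a cover, in the style of Jardine's local lifting arguments. Finitely many refinements occur for each homotopy class, because elements of $\tilde\pi_j$ are represented by finite simplicial data.

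The key intermediate statement is therefore the following. For a local section $y$ of $\operatorname{diag}Y$ over $U$, the diagonal of the levelwise homotopy fibre $F_y$ of $X\to Y$ has, locally on $U$, the same $\tilde\pi_*$ as the homotopy fibre of $a_{\mathrm{hyp}}\operatorname{diag}X\to a_{\mathrm{hyp}}\operatorname{diag}Y$. Pulling back along $Z\to Y$ does not change these fibres. The five lemma on sheaves of long exact sequences then gives the equivalence of the comparison map, which is natural in the cospan. Naturality is inherited from that of \eqref{eq:derived-diagonal-realization}. The argument uses neither the absolute horns of the diagrams nor any outer bound, and the degree-zero map enters only through the Reedy fibration condition, in line with the statement.

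I expect the main obstacle to be this local quasifibration step. Bousfield--Friedlander use a global choice of $\pi_0$-horn fillers to show that the fibres are homotopy constant in the outer direction, and here fillers exist only after refinement. Because no stalks are available, the refinement has to be made compatible with the vertical Kan lifts chosen earlier in the argument. My first attempt will be to prove homotopy constancy of $[k]\mapsto F_{y,k}$ through the face maps $d_i$ using only the relative horn maps in dimensions $k$ and $k+1$, with one cover refinement per horn. Hypercompleteness will then be used to conclude that a map which is locally a $\tilde\pi_*$-isomorphism on every refinement is an equivalence. If this direct route becomes unwieldy, the fallback is to apply Rezk's descent criterion in the hypercomplete $\infty$-topos, with the horn conditions supplying the Segal-type cartesianness up to effective epimorphisms.
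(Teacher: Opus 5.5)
There is a genuine gap at exactly the step you flag. The homotopy constancy you plan to prove is false under these hypotheses.

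Work already with bisimplicial sets, i.e. a one-object site with the trivial topology. Take $Y=Z=*$ and $X=NG$ for a nontrivial discrete group $G$, viewed as constant in the inner direction.
\begin{itemize}
\item $X$ is Reedy fibrant, since every matching map is a map of discrete simplicial sets.
\item Its relative horn maps are surjective.
\item Yet the fibre over the unique vertex is $F_{y,k}=G^k$, and its face maps are not equivalences.
\end{itemize}
Your key intermediate statement is still true here, but not because the fibres are homotopy constant. The hypothesis of the theorem is a Kan-type lifting condition on relative horns. It is not a cartesianness condition on face squares of the kind used in the Puppe--Segal realization lemma or in Rezk's descent criterion. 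So neither your first route nor the fallback applies: the squares of $NG\to *$ are not cartesian, even ``up to effective epimorphisms''.

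Nor does the Bousfield--Friedlander argument ``run unchanged''. Their theorem needs the $\pi_*$-Kan condition on $X$ and $Y$, whereas this statement explicitly leaves absolute horns unrestricted. It compensates with a stronger relative hypothesis: component-surjectivity of the homotopy relative horn maps, rather than a $\pi_0^v$-Kan condition.

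The missing idea is to show directly that $\operatorname{diag}(X\to Y)$ is itself a local Kan fibration. The paper factors a diagonal horn as
$\delta_!\Lambda^i[n]=L\subset A=L\cup(\Delta[n]\boxtimes\{i\})\subset D=\Delta[n]\boxtimes\Delta[n]$.
\begin{itemize}
\item The first inclusion is a single horizontal horn attachment at inner vertex $i$. It is lifted locally by the relative-horn hypothesis, after path lifting promotes component lifts to strict lifts of inner simplices.
\item The second inclusion is a Reedy trivial cofibration, lifted against the Reedy fibration $X\to Y$.
\end{itemize}
Extending this to all finite anodyne inclusions requires bookkeeping of covering sieves and invariance of lifts under change of test arrow.

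One then needs a local right-properness statement: strict pullback along a map with such local lifts becomes a homotopy pullback after $a_{\mathrm{hyp}}$. The paper proves this in three steps:
\begin{itemize}
\item apply an enriched $\Ex^\infty$;
\item compare the strict pullback with the path model $E'\times_{D'}(D')^{\Delta[1]}\times_{D'}C'$;
\item verify a local boundary (Whitehead) criterion for that comparison.
\end{itemize}
Once the diagonal is known to be a local fibration, your fibre comparison follows at once, because $\operatorname{diag}$ commutes with strict pullbacks. Without it, your quasifibration step has no valid mechanism.
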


\begin{proof}
Lemma~\ref{lem:enriched-relative-matching} identifies the
actual relative matching maps with the stated homotopy
ones and promotes local component lifts to strict lifts
of each finite inner simplex. The explicit diagonal-horn
factorization and finite-refinement argument of
Lemma~\ref{lem:enriched-diagonal-local-lifting} give local
strict lifting for $\operatorname{diag}(X\to Y)$
against every finite anodyne inclusion.
Diagonal commutes with the ordinary pullback, so
Lemma~\ref{lem:enriched-local-homotopy-pullback} gives
exactly the displayed comparison. Its proof constructs
the enriched $\Ex^\infty$ action and a boundary-fixed
strict-to-path comparison, and applies the enriched
local Whitehead criterion before hypersheaf left
exactness. These complete arguments are in
Appendix~\ref{app:enriched-realization}.
\end{proof}

The lifting method follows the $\varepsilon$-argument of
\cite[Property~1.7(3) and Proposition~1.19]{Pridham2013}
in its enriched HAG context. The appendix carries out
matching, lifting and local-pullback constructions on
the original small enriched site. Its mapping spaces
retain information beyond $\operatorname{Ho}(\mathcal T)$.
Localization at the final Whitehead step gives
hypersheaf base change. For prestacks,
Lemma~\ref{lem:enriched-pointwise-diagonal} provides
the pointwise diagonal comparison used below.

\begin{thm}[Unbounded hypercomplete represented CFO]
Keep the objects and fibrations of $\mathscr R_\infty$
in Definition~\ref{def:derived-reedy-models}.
\label{thm:derived-reedy-hypercomplete}
Set
\[
 \widehat{\mathsf{Real}}(X)=L^\wedge|X|,
 \qquad
 W_{\mathrm{hyp}}
 =\{f:L^\wedge|f|\text{ is an equivalence}\}.
\]
Then $(\mathscr R_\infty,W_{\mathrm{hyp}},F)$ is a full
Brown CFO, with no bound on an object's outer amplitude.
For a fibration $X\to Y$ and a map $Z\to Y$,
\[
 \widehat{\mathsf{Real}}(X\times_Y Z)
 \simeq
 \widehat{\mathsf{Real}}(X)
 \times_{\widehat{\mathsf{Real}}(Y)}
 \widehat{\mathsf{Real}}(Z).
\]
\end{thm}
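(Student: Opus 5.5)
The plan is to reuse the structural calculus already established for $\mathscr R_\infty$ and change only the weak-equivalence class and the acyclic-pullback argument. Lemma~\ref{lem:derived-reedy-calculus} supplies the terminal object, finite products, composition of fibrations, and existence and fibrancy of ordinary Reedy pullbacks along fibrations inside $\mathscr R_\infty$. Proposition~\ref{prop:derived-outer-path} supplies the outer path object $PX$, the fibrant endpoint map $PX\to X\times X$, and the constant-path map $j$. None of these uses a finite bound. Every object maps fibrationally to the terminal object, because Reedy fibrancy plus the absolute horn conditions are exactly the fibration conditions for $X\to *$. Isomorphisms lie in $W_{\mathrm{hyp}}$. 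Two-out-of-three holds because $W_{\mathrm{hyp}}$ is the preimage of the equivalences under the functor $L^\wedge|\cdot|$.

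Next, $j$ lies in $W_{\mathrm{hyp}}$. Proposition~\ref{prop:derived-outer-path} gives $|j|$ an equivalence in $\mathcal X$, via the objectwise diagonal simplicial homotopy from $\min$, so applying $L^\wedge$ preserves this.

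The main point is the base-change formula, and I would deduce it from Theorem~\ref{lem:derived-hypercomplete-base-change}. Let $p:X\to Y$ be a fibration in $\mathscr R_\infty$. It is a Reedy model fibration, and its relative homotopy horn maps are smooth surjections in every positive degree. The step to check is that a geometric smooth surjection between weakly represented objects is a local surjection on $\pi_0$-sheaves in the basic-open topology. Nuiten's smooth surjections admit local sections (Lemma~5.2.10 of \cite{NuitenThesis2018}). Since $h$ is fully faithful, these sections give effective epimorphisms after $a_{\mathrm{sh}}$, and hence after $a_{\mathrm{hyp}}$. The relative matching objects are computed by Lemma~\ref{lem:derived-weighted-matching}, so the homotopy horn maps are the ones tested geometrically. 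Theorem~\ref{lem:derived-hypercomplete-base-change} then gives
\[
 a_{\mathrm{hyp}}\operatorname{diag}(X\times_YZ)\simeq a_{\mathrm{hyp}}\operatorname{diag}X\times_{a_{\mathrm{hyp}}\operatorname{diag}Y}a_{\mathrm{hyp}}\operatorname{diag}Z .
\]
By \eqref{eq:derived-diagonal-realization} in Lemma~\ref{lem:derived-model-dictionary}, $a_{\mathrm{hyp}}\operatorname{diag}Z_\bullet\simeq L^\wedge|Z_\bullet|=\widehat{\mathsf{Real}}(Z_\bullet)$, naturally, and this identifies the displayed formula. One point needs care: the theorem is stated for the enriched site $\mathcal T=\CT$ presenting $\Aff_\kappa$, and $a_{\mathrm{hyp}}=L^\wedge a_{\mathrm{sh}}$ must be the same localization as there. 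This is the identification under $\mathcal P$ in Lemma~\ref{lem:derived-model-dictionary}.

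Finally, pullbacks of acyclic fibrations remain acyclic. If $p$ is moreover in $W_{\mathrm{hyp}}$, then $\widehat{\mathsf{Real}}(p)$ is an equivalence, and its pullback in the $\infty$-topos $\widehat{\mathcal X}$ is an equivalence. By the base-change formula this pullback is $\widehat{\mathsf{Real}}(X\times_YZ\to Z)$. I expect the only real obstacle to be the local-surjection verification in the previous step. It requires that smoothness of homotopy horn maps, tested through $h$ before hypercompletion, gives component epimorphisms on the Kan-enriched site and not merely on $\operatorname{Ho}(\CT)$. I would handle this by using local sections together with Lemma~\ref{lem:enriched-relative-matching} from the appendix. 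All Brown axioms then hold, with no bound imposed on outer amplitude.
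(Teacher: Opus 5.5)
Your proposal is correct and follows the paper's proof essentially step for step. The structural calculus and outer paths are reused without a bound, and smooth-surjective relative horns give local sections and hence local surjections. Theorem~\ref{lem:derived-hypercomplete-base-change} on the enriched presentation of $\Aff_\kappa$, combined with \eqref{eq:derived-diagonal-realization}, then gives the base-change formula and stability of acyclic-fibration pullbacks. Your additional appeal to Lemma~\ref{lem:enriched-relative-matching} at the end is unnecessary. The promotion from component surjectivity on $\operatorname{Ho}(\CT)$ to strict local lifts is already carried out inside the proof of that base-change theorem.
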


\begin{proof}
The finite-matching, fibration and outer-path constructions
already work degree by degree without an outer bound.
A geometric relative horn is a smooth surjection, so its
represented image has local sections and is an effective
epimorphism. Thus the original Reedy diagrams in
$\CM_{\mathrm{sh}}$ satisfy the positive relative
homotopy-horn hypothesis of
Theorem~\ref{lem:derived-hypercomplete-base-change}.

The ordinary Reedy pullback computes the levelwise
homotopy pullback before hypercompletion. Apply
that theorem on the small Kan-enriched presentation
of $\Aff_\kappa$, and then
\eqref{eq:derived-diagonal-realization} to obtain the
displayed realization comparison. It implies stability
of acyclic-fibration pullbacks for $W_{\mathrm{hyp}}$.
The remaining Brown axioms follow from the structural
calculus and the explicit outer-path contraction.
\end{proof}

Geometry and weak representability are tested before
$L^\wedge$, in $\CM_{\mathrm{sh}}$, using the fully
faithful image of $h$. The composite $L^\wedge h$
serves only to define hypercompleted realization.
Realization always induces the canonical functor
\[
 \mathscr R_\infty[W_{\mathrm{hyp}}^{-1}]
 \longrightarrow\widehat{\mathcal X};
\]
every finite-bound category, and their union, also has
this hypercomplete Brown structure: the existing
finite-matching and prism constructions preserve each
bound, and a finite construction in the union has a
common maximum bound. Determining the mapping spaces
and essential image of the realization functors is a
further localization problem, including their relation
to the full image $L^\wedge(\dSt_n)$ at finite bounds.

\begin{question}[The ordinary-sheaf unbounded axiom]
\label{question:derived-unbounded-ordinary}
If a fibration $X\to Y$ in $\mathscr R_\infty$ is
an ordinary realization equivalence, is
$X\times_Y Z\to Z$ an ordinary realization
equivalence for every $Z\to Y$?
\end{question}

This is the precise remaining acyclic-pullback
condition $\mathbf{AP}_\infty$ for
$(\mathscr R_\infty,W_{\mathrm{sh}},F)$.
Full unbounded realization base change in
$\mathcal X$ would suffice, but is stronger than
this bare Brown axiom.
The ordinary and hypercomplete weak-equivalence classes
are defined by their respective realization functors.
Effectivity of all hypercovers characterizes
hypercompleteness
\cite[Theorem~6.5.3.12 and Corollary~6.5.3.13,
p.~680]{LurieHTT2009}, whereas the definition of
$\mathcal X$ requires covering-family descent.

\subsection{Geometric examples and the boundary of the repair}

\begin{example}[Quasi-smooth objects are insufficient]
Let $A=\R[\epsilon_{-1}]$, with zero differential.
A semifree resolution of its augmentation is
$A[\eta_{-2}]$ with $d\eta=\epsilon$.
The negative-degree pairs contract because
$d(\eta^k)=k\epsilon\eta^{k-1}$ in characteristic
zero. Therefore
\[
 \R\otimes_A^{\mathbf L}\R\simeq\R[\eta_{-2}].
\]
Its cotangent degree is $-2$, outside the
quasi-smooth interval $[-1,0]$.
Thus the required finite-limit closure is genuinely
broader than quasi-smooth derived zero loci.
\end{example}

\begin{prop}\label{prop:derived-cech}
Let $p:U\to X$ be a covering surjective etale map
of the specified derived manifolds; a finite actual
open atlas is an example. Then
\[
 C_k=\underbrace{U\times_X^h\cdots\times_X^h U}_{k+1}
\]
is a represented derived etale Lie $1$-groupoid,
its augmentation is an etale hypercover, and
$|h(C_\bullet)|\simeq h(X)$ in $\mathcal X$.
Its hypercomplete realization is $L^\wedge h(X)$.
\end{prop}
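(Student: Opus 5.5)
The plan is to prove the three assertions in order: the Lie $1$-groupoid structure of $C_\bullet$, then the hypercover property of the augmentation, then the realization statements. Each is reduced to finite derived limits and to effectivity of covering families in $\mathcal X$.

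\emph{Levels and horns.} By Lemma~\ref{lem:derived-finite-limits}, each iterated homotopy fibre product $C_k$ exists in $\dMfd$, and $h$ preserves it. Hence $h(C_\bullet)$ is the Cech nerve of $h(p)$ in $\mathcal X$. We strictify to a Reedy-fibrant model, as in Lemma~\ref{lem:derived-strictification}. Since etale maps retain the pulled-back structure sheaf and are stable under derived base change, every face map is etale, being a base change of $p$.

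For the horn maps, we use the Cech-nerve identification. For $k\ge2$, every horn $\Lambda^i[k]$ contains all vertices, so the homotopy matching object $\{\Lambda^i[k],C\}$ is $C_k$ itself and the horn map is an equivalence. For $k=1$, the horn maps are the projections $U\times^h_XU\to U$, which are base changes of $p$. They are therefore etale, and surjective because $p$ is. Since an etale surjection is in particular smooth surjective, this gives a derived etale Lie $1$-groupoid.

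\emph{Augmentation.} For the augmented diagram over the constant diagram $cX$, the relative boundary maps are computed the same way. In degree $0$ the relative boundary map is $p$ itself. For $k\ge1$, the relative matching object over $\partial\Delta[k]$ is again $C_k$, again because the boundary contains all vertices, so the boundary map is an equivalence. Thus the augmentation is an etale hypercover, in the sense of Definition~\ref{def:derived-geometric-groupoids} with etale in place of smooth.

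\emph{Realization.} The main point is that $h(p)$ is an effective epimorphism in $\mathcal X$. Covering surjective etale maps admit local sections on basic-open covers: on an open atlas $p$ literally restricts to open immersions covering $X$, and in general etale maps are local diffeomorphisms of underlying spaces carrying the full structure sheaf. Such a map therefore induces an epimorphism on $\pi_0$-sheaves for the basic-open covering topology.

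In an $\infty$-topos, an effective epimorphism is the geometric realization of its Cech nerve (the Giraud axiom, HTT 6.2.3). This gives $|h(C_\bullet)|\simeq h(X)$ in $\mathcal X$. This step only uses covering-family descent, which is what $\mathcal X$ encodes, so no hypercompleteness is needed.

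Finally, $L^\wedge$ is a left adjoint and preserves colimits, so \eqref{eq:derived-diagonal-realization} gives the hypercomplete realization $L^\wedge|h(C_\bullet)|\simeq L^\wedge h(X)$.

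\emph{Expected obstacle.} The delicate step is the local-section claim, namely that a covering etale map of broad derived manifolds gives an effective epimorphism of sheaves on $\Aff_\kappa$. For this we would invoke Nuiten's local structure of etale and smooth maps (Lemma~5.2.10) and argue on truncations, since a local section of the underlying classical map lifts uniquely along an etale map.
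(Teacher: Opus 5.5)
Your proposal is correct and follows essentially the paper's proof. The paper's steps are: the Cech nerve is the relative $0$-coskeleton of $U$ over $X$, so the relative boundary maps are $p$ in degree zero and equivalences above it, the degree-one horns are base changes of $p$, and effectivity of $h(p)$ in $\mathcal X$ together with colimit preservation by $L^\wedge$ gives both realization statements.

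One justification should be tightened. For the absolute horns in degrees $k\ge2$, containing all vertices is not by itself enough. The absolute matching object over $\partial\Delta[k]$ also contains all vertices, yet it picks up the factor $\{\partial\Delta[k],cX\}$, which is in general not $X$. You also need contractibility of $\Lambda^i[k]$: it gives $\{\Lambda^i[k],cX\}\simeq X$, and hence $\{\Lambda^i[k],C\}\simeq U^{k+1}\times^h_{X^{k+1}}X\simeq C_k$, as the paper notes.
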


\begin{proof}
The derived pullbacks are represented, and their
projections are etale by base change.
In the infinity-slice over $X$, this is the
relative $0$-coskeleton of $U$.
Its relative matching object depends only on the
vertices of the weight. Boundaries in degrees
$k\ge1$ have the same vertices as $\Delta[k]$,
so the relative boundary maps are equivalences;
at zero the map is $p$.

Horns are contractible, so the constant-$X$ horn
matching object is $X$. At degree one the horn
maps are base changes of $p$; in higher degrees
they are equivalences.
The functor $h$ preserves these pullbacks and
makes the cover an effective epimorphism.
Its Cech realization is $h(X)$.
Hypercompletion preserves that colimit.
\end{proof}

The resulting Brown structures live on ordinary Reedy
models with weakly represented levels. Strictification
relates these models to coherent diagrams, while the
geometric and realization conditions specify fibrations
and weak equivalences separately.

%% file: represented-criterion.tex
\section{A reusable represented-horn criterion}
\label{sec:represented-criterion}

The preceding proofs separate a geometric closure problem
from a realization problem. The criterion below packages
the Brown--Reedy construction in terms of a finite-limit
geometric image and a realization functor. Attaching the
weak-equivalence class to the realization functor allows
the structural argument to be reused across geometries.

\begin{defn}[Representability data]
\label{def:representability-data}
Let $\CM$ be a combinatorial simplicial model presentation
of an infinity-category $\mathcal X$, and let
\[
 h:\mathcal D\hookrightarrow\mathcal X
\]
be fully faithful and finite-limit preserving, with
$\mathcal D$ admitting finite limits.
Let $\mathsf C$ be an equivalence-invariant class of
maps of $\mathcal D$, containing equivalences and
stable under composition and homotopy base change.
All model limits below are between fibrant objects
and along model fibrations, so that they compute
the corresponding homotopy limits.

Let $\mathscr R_{\mathsf C}$ consist of ordinary outer
Reedy-fibrant diagrams in $\CM$ with levels weakly in
the image of $h$ and positive homotopy horns in $\mathsf C$.
Its fibrations are Reedy model fibrations with positive
relative homotopy horns in $\mathsf C$.
For finite $n$, impose that the absolute horns above
$n$ are equivalences to obtain $\mathscr R_{\mathsf C,n}$.
\end{defn}

\begin{prop}[Geometric realization criterion]
\label{prop:geometric-realization-criterion}
Suppose the representability data of
Definition~\ref{def:representability-data} are given.
Let $\mathsf{Real}$ be a functor from these strict
diagrams to an infinity-category with finite limits,
and suppose that:
\begin{enumerate}
\item it inverts levelwise model equivalences and the
constant outer-path maps $X\to PX$;
\item for a fibration $X\to Y$ of these diagrams and
any $Z\to Y$, the natural comparison
\[
 \mathsf{Real}(X\times_Y Z)\longrightarrow
 \mathsf{Real}(X)\times_{\mathsf{Real}(Y)}
                          \mathsf{Real}(Z)
\]
is an equivalence.
\end{enumerate}
Then $\mathscr R_{\mathsf C}$ is a full Brown CFO
for these fibrations and the weak maps detected by
$\mathsf{Real}$. Each finite-bound subcategory and the union
over all finite bounds have the same conclusion whenever
the two realization hypotheses hold on the categories in question.
\end{prop}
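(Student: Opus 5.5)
The proof will be a transcription of the arguments behind Lemma~\ref{lem:derived-reedy-calculus}, Lemma~\ref{lem:derived-prism-filtration}, Proposition~\ref{prop:derived-outer-path} and Theorem~\ref{thm:derived-reedy-finite}. The derived-manifold input is replaced by the abstract data $h:\mathcal D\hookrightarrow\mathcal X$ and the class $\mathsf C$, and the two realization hypotheses replace the Nuiten and enriched base-change theorems. The argument splits into a structural part, which uses only the representability data, and a homotopical part, which uses only (1) and (2).

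\emph{Structural part.} The terminal diagram is represented by $h(*)$. Products of Reedy-fibrant diagrams are Reedy fibrant. Their levels are weakly in the image of $h$ because $h$ preserves finite limits, and their horn maps are products of maps in $\mathsf C$. Such a product is a composite of homotopy base changes, so it lies in $\mathsf C$.

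By Remark~\ref{rem:generic-weighted-matching}, every finite weighted limit $\{K,X\}$ is fibrant, computes the homotopy weighted limit, and is weakly in the image of $h$. For composition of fibrations I will reuse the factorization
\[
 R_\Lambda(qp)=\{\Lambda,X\}\times_{\{\Lambda,Y\}}R_\Lambda(q),
\]
which exhibits the relative horn map of $qp$ as $\lambda_p$ followed by a homotopy base change of $\lambda_q$.

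For a fibration $X\to Y$ and any $Z\to Y$, the ordinary Reedy pullback is Reedy fibrant. Its levels are homotopy pullbacks and hence weakly represented. Its relative horns over $Z$ are homotopy base changes of those of $p$, so they lie in $\mathsf C$. Composing with the absolute horns of $Z$ gives the absolute condition. Since every object maps to the terminal object by a fibration, these arguments also cover absolute horns. Equivalence-invariance of $\mathsf C$ ensures that none of this depends on the chosen weak representatives.

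\emph{Paths.} Define $(PX)_k=\{\Delta[k]\times\Delta[1],X\}$. Reedy fibrancy of $PX$, and the Reedy-fibration property of the endpoint map, follow from boundary-cell filtrations as in Proposition~\ref{prop:derived-outer-path}.

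The main obstacle is the geometric horn condition on the endpoint map $PX\to X\times X$. I will apply Lemma~\ref{lem:derived-prism-filtration}, which is purely simplicial. It writes each relative horn map of the endpoint map as a finite composite of homotopy base changes of absolute horn maps of $X$, of dimensions $m$ and $m+1$. These base changes are computed by model pullbacks along Reedy matching fibrations, and the care needed is to check that these really compute homotopy base changes in $\mathcal D$. Granting that, the relative horn maps lie in $\mathsf C$. Composing with the horn maps of $X\times X$ shows that $PX$ is an object.

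\emph{Homotopical part and bounds.} Weak equivalences are the maps inverted by $\mathsf{Real}$, so two-out-of-three and the fact that isomorphisms are weak are immediate. Hypothesis (1) makes the constant-path map $X\to PX$ weak. For an acyclic fibration $X\to Y$, hypothesis (2) identifies $\mathsf{Real}(X\times_Y Z)\to\mathsf{Real}(Z)$ with a base change of an equivalence in an $\infty$-category with finite limits, which is again an equivalence. The remaining axioms were established above.

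For $\mathscr R_{\mathsf C,n}$, the proof of Lemma~\ref{lem:derived-reedy-calculus} applies with equivalences in place of maps in $\mathsf C$ above degree $n$. In degree $k>n$ every prism attachment has dimension greater than $n$, so each bound is preserved. For the union, any finite diagram has a common maximal bound, so every construction stays inside the union, provided hypotheses (1) and (2) hold there.
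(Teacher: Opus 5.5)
Your proposal is correct and takes the same route as the paper's proof. The steps match: cell-induction for weighted matching, the relative-horn factorizations of Lemma~\ref{lem:derived-reedy-calculus} for composition and pullbacks (absolute horns coming via fibrations to the terminal diagram), the purely simplicial prism filtration for the endpoint fibration and the preservation of bounds, and hypotheses (1) and (2) for the constant-path equivalence and acyclic pullback. The only differences are cosmetic: you check finite products directly as composites of base changes rather than as fibration pullbacks over the terminal object, and the check you flag about model pullbacks computing homotopy base changes is the standing convention already built into Definition~\ref{def:representability-data}.
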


\begin{proof}
Finite weighted matching objects are computed by the
cell-induction proof of
Lemma~\ref{lem:derived-weighted-matching}.
For these data its geometric input is precisely
finite-limit closure of the full image of $h$.
The induction therefore applies to each geometry below.

The relative-horn factorizations in
Lemma~\ref{lem:derived-reedy-calculus} involve only
composition and base change in $\mathsf C$.
They give terminal objects, finite products,
composition of fibrations, and all their ordinary
pullbacks in $\mathscr R_{\mathsf C}$.
Every object maps fibrationally to the terminal diagram:
the Reedy condition is its object fibrancy, and the
positive relative horns are its absolute horns.
In particular the absolute horn of a pullback
factors through a base change of a relative horn
and then of an absolute horn. Thus the pullback retains
the absolute horn conditions as well as represented levels.

Lemma~\ref{lem:derived-prism-filtration} is purely
simplicial. The proof of
Proposition~\ref{prop:derived-outer-path} therefore
gives a Reedy-fibrant, weakly represented $PX$
and an endpoint fibration for these data.
The first hypothesis makes its constant-path map
a weak equivalence. Above a finite bound $n$,
the prism uses only dimensions $m,m+1>n$,
so paths retain that bound. The same bound is
retained by products and fibration pullbacks.

The weak maps have two-out-of-three because
equivalences in the realization target do.
The second hypothesis makes the pullback of
an acyclic fibration a weak equivalence.
The other Brown axioms were already proved by
the structural and path arguments.
Every finite collection of objects in the union over finite bounds
has a common maximum bound; those same constructions stay within
that bound.
\end{proof}

\subsection{Realization hypotheses}

The second hypothesis concerns realization weak maps,
whereas ambient right properness concerns levelwise
model weak equivalences. The applications use the
following realization comparisons.

For finite derived-smooth presentations, the second hypothesis
is Nuiten's ordinary-sheaf realization theorem.
For the unbounded derived-smooth application it is the
property $\mathbf{PB}_\tau(\mathcal T)$ proved in
Theorem~\ref{lem:derived-hypercomplete-base-change}.
The Banach hypersheaf application uses
$\mathbf{SO}(\mathcal C)$ from
Theorem~\ref{lem:banach-hypercover-refinement}, to supply
its represented hypersheaf geometry, and then
$\mathbf{PB}_\tau(\mathcal C)$ for realization base change.

For the associative prestack applications, geometric maps
have homotopy sections. Evaluation gives pointwise
relative-horn surjectivity, and
Lemma~\ref{lem:enriched-pointwise-diagonal} supplies the
diagonal-fibration conclusion under the stated Reedy hypotheses.
Here both realization and pullbacks in the presheaf
category are computed pointwise.

In each case the first hypothesis follows from
derived realization and the explicit outer
simplicial contraction. The realization target remains
part of the data: ordinary sheaves for the finite
derived-smooth application, hypersheaves for the
unbounded derived-smooth and Banach applications,
and prestacks for the associative applications.

The criterion is a form of the represented-hypergroupoid
method \cite{Pridham2013}. Geometric finite limits are
essential to its hypotheses. Their failure in the
ordinary manifold category leads instead to the iCFO
construction of Theorem~\ref{thm:geometric-icfo}, which
treats existence and stability of pullbacks separately.

%% file: banach-geometry.tex
\section{Banach pregeometry and represented hypercomplete groupoids}
\label{sec:banach}

This section develops a Banach-valued structured geometry
for the represented-horn criterion of
Section~\ref{sec:represented-criterion}.
Classical structured charts and a valuewise hypercompletion
coreflection lead to hyp-affine charts. Open-local algebraic
representatives then give geometric finite limits through
the structured-spectrum and effective strict-open-gluing
theorems of \cite{LurieDAGV2009}.

We prove both clauses of enriched split-open descent
$\mathbf{SO}(\mathcal C)$ and deduce hypersheaf membership
of the represented functors. Combining this with
$\mathbf{PB}_\tau(\mathcal C)$ gives the represented
Brown structure. Geometric finite limits, hyperdescent
and realization base change play distinct roles in
this construction.

The analytic input is the Banach calculus of
\cite{Schmeding2021}. The finite matching, Reedy and prism
arguments are those of Section~\ref{sec:derived}, organized
by Proposition~\ref{prop:geometric-realization-criterion}.
Every outer bound uses the same hypercompleted realization
weak-equivalence class.

We write $\mathrm{Str}_T$ for the infinity-category of
$T$-structured spaces in geometric (space) variance, called
\emph{physical variance} below, and $\mathrm{Str}_T^{\mathrm{vh}}$
for its full subcategory with hypercomplete structure values.
These notions are defined below. Valuewise hypercompleteness
does not require the underlying infinity-topos to be hypercomplete.

\subsection{The Banach pregeometry}
\label{subsec:banach-pregeometry}

Fix a Grothendieck universe and a small category $T$ of Banach
domains: nonempty and empty open subsets of Banach spaces, with the
smooth maps of the Frechet-derivative calculus
\cite{Schmeding2021}. We require $T$ to be closed under
finite products and under passage to arbitrary open subsets; the
empty domain is included.

All Banach domains quantified over below are objects of this
fixed chart category. A Banach space used as a whole-space
chart is required to have that domain in $T$.
When ordinary split Banach submersions are compared with the
generated smooth class, their required local product charts,
including the fibre domains, must belong to the chosen
category. Domains may be infinite-dimensional, and open
covers are arbitrary small families.

\begin{defn}[Admissibles and covers]
\label{defn:banach-pregeometry}
The \emph{admissible} morphisms of $T$ are \emph{exactly} the open
embeddings. A \emph{covering family} of $U\in T$ is an open covering
family $\{U_i\hookrightarrow U\}$, including the empty family when
$U=\emptyset$. Since the pullback of an open embedding
along an arbitrary smooth map is an open subset of the source, the
admissible class is stable under base change and $T$ is a
pregeometry in the sense of
\cite[Definition~3.1.1]{LurieDAGV2009}.
\end{defn}

Noninjective local diffeomorphisms are excluded from the
admissible class: their pullbacks along smooth maps can leave the class of
domains for general Banach manifolds. Locality of structures is
therefore tested only against open embeddings. The smooth product
projections used later for the geometric groupoid model
(Subsection~\ref{subsec:banach-cfo}) form a separate class, not part
of the pregeometry.

\begin{defn}[Physical structured spaces]
\label{defn:banach-structured}
A \emph{$T$-structure} on an infinity-topos $X$ is a functor
$\mathcal O\colon T\to X$ preserving finite products, pullbacks
along admissibles, and effective covering families. The
\emph{infinity-category} $\mathrm{Str}_T$ has objects the pairs
$(X,\mathcal O)$; a morphism $(Y,\mathcal P)\to(X,\mathcal O)$ is a
geometric morphism with inverse image $f^{*}\colon X\to Y$ together
with a natural transformation $f^{*}\mathcal O\to\mathcal P$ that is
\emph{local}, i.e.\ cartesian on every admissible open map. This is
the physical variance of
\cite[Definitions~3.1.4 and~3.1.9,
pp.~82--85]{LurieDAGV2009}.
A \emph{strict open} of $(X,\mathcal O)$ is
$(X/a,\mathcal O|_a)$ for a subterminal object $a$ of $X$, with
$\mathcal O|_a$ the composite of $\mathcal O$ with $X\to X/a$; a
family of strict opens \emph{covers} if the subterminals jointly
cover $1_X$. Write $\mathrm{Str}_T^{\mathrm{vh}}\subset\mathrm{Str}_T$
for the full subcategory of objects whose structure values
$\mathcal O(U)$ are all hypercomplete.
\end{defn}

By \cite[Definition~3.4.1 and Lemma~3.4.3,
pp.~96--97]{LurieDAGV2009} the geometric envelope $j\colon T\to\E$
(Subsection~\ref{subsec:banach-envelope}) preserves products and
admissible pullbacks, and $\E$ has finite limits; a $T$-structure on
$X$ extends canonically to an $\E$-structure, and on each fixed
topos the categories of $T$- and $\E$-structures together with their
local morphisms agree \cite[Proposition~3.4.5,
pp.~97--98]{LurieDAGV2009}. This comparison takes place over a
\emph{fixed} topos. Finite limits of geometric structured spaces
will instead be constructed from spectra and open gluing in
Theorem~\ref{thm:banach-geometric-limits}.

Actual strict-open base change and effective strict-open gluing of
structured spaces are the open special cases of the etale-slice and
gluing results
\cite[Proposition~2.3.5(3)--(5), p.~51 (opening on
p.~50), and Remark~2.3.20, p.~57]{LurieDAGV2009}. Concretely, open
restriction along $f\colon X\to Y$ satisfies
\begin{equation}
\label{eq:banach-open-restriction}
 X\times_Y (Y/b,\mathcal O_Y|_b)
   =(X/f^{*}b,\mathcal O_X|_{f^{*}b}),
\end{equation}
and coherent open-overlap data glue to an actual structured space
whose atlas and overlaps are effective, the gluing being a colimit
in $\mathrm{Str}_T$. These open special cases apply directly
to structured spaces.

\subsection{Classical charts and the chart-evaluation formula}
\label{subsec:banach-charts}

For a Banach domain $U$, let
$\Sh(U)=\Sh_\infty(\operatorname{Opens}(U))$ be the
space-valued localic infinity-topos of its open subsets.
Define the classical chart by
\begin{equation}
\label{eq:banach-chart}
 \operatorname{Chart}(U)=(\Sh(U),\mathcal O_U),
 \qquad
 \mathcal O_U(V)(W)=C^{\infty}(W,V),
\end{equation}
for $V\in T$ and $W$ an open subset of $U$. The universal property
of smooth maps into a product, together with the fact that the
inverse images of an open cover of a target form an open cover
of the source, shows that $\mathcal O_U$ preserves finite
products, admissible pullbacks, and effective covering families, so
$\operatorname{Chart}(U)\in\mathrm{Str}_T$. The \emph{global} functor
$V\mapsto C^{\infty}(U,V)$ need not carry open covers to
epimorphisms of spaces. The chart structure is therefore
sheaf-valued on $U$, with Banach-valued smooth maps as its
basic data.

\begin{lem}[Chart evaluation]
\label{lem:banach-chart-eval}
For $(X,\mathcal O)\in\mathrm{Str}_T$ and $U\in T$ there is a
natural equivalence of mapping spaces
\begin{equation}
\label{eq:banach-chart-eval}
 \Map_{\mathrm{Str}_T}\!\bigl((X,\mathcal O),\operatorname{Chart}(U)\bigr)
 \;\simeq\;
 \Gamma_X\mathcal O(U).
\end{equation}
In particular classical charts are fully faithful and preserve
products and admissible pullbacks.
\end{lem}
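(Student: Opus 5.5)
The plan is to reduce \eqref{eq:banach-chart-eval} to the universal property of the localic topos $\Sh(U)$ together with the locality condition on structure morphisms. A morphism $(X,\mathcal O)\to\operatorname{Chart}(U)$ consists of a geometric morphism $f\colon X\to\Sh(U)$ and a local transformation $f^*\mathcal O_U\to\mathcal O$. Since $\Sh(U)$ is localic, geometric morphisms $X\to\Sh(U)$ correspond to frame maps $\operatorname{Opens}(U)\to\operatorname{Sub}(1_X)$, i.e.\ to open-indexed families of subterminals $W\mapsto f^*W$ preserving finite meets and arbitrary joins. The first step is to show that a point of $\Gamma_X\mathcal O(U)$, i.e.\ a global section $\phi\colon 1\to\mathcal O(U)$, determines such a frame map by
\[
 f^*W=\phi^{-1}\mathcal O(W)\subseteq 1_X ,
\]
using that $\mathcal O(W)\to\mathcal O(U)$ is a monomorphism for an open embedding $W\hookrightarrow U$. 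Admissible pullback preservation gives $\mathcal O(W\cap W')=\mathcal O(W)\times_{\mathcal O(U)}\mathcal O(W')$, so meets are preserved, and preservation of effective covering families gives joins. The second step is to produce the local transformation $f^*\mathcal O_U\to\mathcal O$. Its component at $V$ is built from the tautological section: a smooth map $W\to V$ on an open $W$ is sent, via $\mathcal O(W)\to\mathcal O(V)$, to a section over $f^*W$. This is assembled into a map of sheaves by descent along open covers. Locality, i.e.\ cartesianness on admissibles $V'\hookrightarrow V$, follows because both sides send open embeddings to the corresponding subobject restrictions.

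The third step is the inverse direction and the identification of homotopy types. A morphism $(f,\alpha)$ gives $\alpha_U(\mathrm{id}_U)\in\Gamma\mathcal O(U)$. I would then show that $(f,\alpha)$ is recovered from this section. Locality forces $f^*W$ to be the pullback of $\mathcal O(W)\subseteq\mathcal O(U)$ along $\alpha_U(\mathrm{id})$, because the square for $W\hookrightarrow U$ is cartesian and $\mathcal O_U(W)\to\mathcal O_U(U)$ is the subobject $W$ over the identity section. This fixes $f$, and hence fixes $f^*\mathcal O_U\to\mathcal O$ on every $V$ by naturality along smooth maps $W\to V$, which locally generate $\mathcal O_U(V)$. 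To pass from a bijection on components to an equivalence of mapping spaces, I would rather package this cleanly. The cleanest route is to invoke Lurie's characterization \cite[Definitions~3.1.4,~3.1.9]{LurieDAGV2009}: the spectrum of $U$ relative to the pregeometry is the localic $T$-structured space $\operatorname{Chart}(U)$. Mapping into it is then computed by the relative spectrum adjunction
\[
 \Map_{\mathrm{Str}_T}((X,\mathcal O),\operatorname{Spec}U)\simeq\Map_{\mathrm{Str}_T(X)}(\text{free on }U,\mathcal O)\simeq\Gamma\mathcal O(U),
\]
so the main check is that the explicit localic chart satisfies that universal property, which is what steps one and two establish.

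I expect the main obstacle to be precisely this identification of $\operatorname{Chart}(U)$ with the abstract spectrum. Two points in it need care. One is that the explicit formula for $f^*W$ yields a genuine frame map for arbitrary small covers in infinite dimensions. The other is that the space of choices of $f$ is discrete, since geometric morphisms into a localic topos form a set-level datum indexed by subterminals. That discreteness, combined with the homotopy-invariance of the section $\alpha_U(\mathrm{id})$, should give the equivalence of full mapping spaces rather than merely of components. The final assertions are then formal from the Yoneda lemma applied to \eqref{eq:banach-chart-eval}. Taking $(X,\mathcal O)=\operatorname{Chart}(U')$ gives $\Gamma\mathcal O_{U'}(U)=C^\infty(U',U)$, which is full faithfulness. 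Products and admissible pullbacks follow because $\mathcal O\mapsto\Gamma\mathcal O(-)$ preserves them, by the defining properties of a $T$-structure.
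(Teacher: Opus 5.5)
Your argument is essentially the paper's proof. The frame map $W\mapsto 1_X\times_{\mathcal O(U)}\mathcal O(W)$ comes from admissible pullbacks and the covering axiom, and the local transformation is assembled from the generating pairs $(W,g\colon W\to V)$ via $\mathcal O(g)$. The inverse is evaluation on the identity section, with locality recovering $f$ and naturality recovering the components. The mapping-space upgrade rests on a geometric morphism into a localic topos being determined by its frame map, and the Yoneda consequences are then formal.

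The detour through the spectrum adjunction adds nothing and is circular as phrased. Identifying $\operatorname{Chart}(U)$ with $\Spec_{\E}(jU)$ is exactly the universal property being proved, unless you compute $\E^{\mathrm{ad}}_{/jU}$ and its sheafified structure directly. Your steps one to three should therefore carry the proof on their own, as they do in the paper.
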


\begin{proof}
A section $\alpha$ of $\mathcal O(U)$ assigns to each open
$V\subseteq U$ the subterminal
$1_X\times_{\mathcal O(U)}\mathcal O(V)$, using the monomorphism
$\mathcal O(V)\to\mathcal O(U)$ classifying the admissible
$V\hookrightarrow U$. Admissible pullbacks compute finite
intersections of these subterminals, and the effective-cover axiom
applied to a family covering its union computes arbitrary unions.
Thus $\alpha$ determines a frame homomorphism
$\mathrm{Opens}(U)\to\mathrm{Sub}(1_X)$ and hence a geometric
morphism $X\to\Sh(U)$ with the prescribed inverse image.

The sheaf $\mathcal O_U(V)$ is generated by the local pairs
$(W\subseteq U,\ g\colon W\to V)$ with $g$ smooth. Restricting
$\alpha$ to the subterminal attached to $W$ and applying
$\mathcal O(g)$ produces a section of $\mathcal O(V)$ over that
subterminal; these agree on overlaps and assemble to a natural
transformation $f^{*}\mathcal O_U\to\mathcal O$. For an admissible
$V'\subseteq V$, its preimage under $g$ is an open of $W$, and the
admissible-pullback axiom makes the comparison square cartesian, so
the transformation is local. The construction uses the frame
of opens and the Banach-valued generating pairs.

Conversely, evaluate a local structure map on the identity section
of $U$: locality recovers the frame homomorphism, and naturality on
every generating pair recovers all structure components. Both
assignments are natural in the space of sections $\alpha$, because a
geometric morphism to a localic topos is determined by its frame map
and a local transformation by its values on the generating pairs and
their descent. The two constructions are therefore mutually inverse
with contractible spaces of choices over each $\alpha$, hence give
an equivalence of mapping spaces, not merely a bijection on
$\pi_0$.
\end{proof}

We record a structured test object that separates a genuinely
derived intersection from the ordinary point. On the topos of
spaces define, for an open $U$ of a Banach space $V$,
\begin{equation}
\label{eq:banach-square-zero}
 F(U)=\coprod_{u\in U}K(V_{\mathrm{add}},1),
 \qquad
 F(f)(u,\xi)=(f(u),\,df_u(\xi)),
\end{equation}
where $V_{\mathrm{add}}$ denotes the underlying additive group,
regarded as discrete, and $K(V_{\mathrm{add}},1)$ is its
Eilenberg--Mac Lane space. The coproduct is indexed by the
underlying set of $U$; $df_u(\xi)$ denotes the map of these
spaces induced by the derivative. The chain rule and additivity
of the derivative
\cite[Lemma~1.2.9 and Proposition~1.2.10, p.~17]{Schmeding2021}
make $F$ functorial and
product-preserving. An open embedding is, after an isomorphism of
charts, a literal open inclusion whose derivative identifies the
tangent fibres, so $F$ carries it to the inclusion of the
corresponding components and preserves admissible pullbacks. Open
covers are epimorphic on $\pi_0=U$, and $F(\emptyset)=\emptyset$,
$F(*)=*$. Hence $(\mathcal S,F)$ is a $T$-structured space with
$1$-truncated values. We also write $F$ for this structured space.
The constant and projection maps
between $F$ and the classical point structure are local precisely
because only open embeddings are admissible; were every projection
$U\to*$ declared admissible, locality would force every structure
morphism to be an equivalence on each sort, and $F$ would collapse.

\subsection{Valuewise hypercompletion}
\label{subsec:banach-hypercompletion}

Write $L_X$ for hypercompletion followed by the inclusion of
hypercomplete objects back into $X$. Its unit is infinity-connective
and $L_X$ preserves finite limits
\cite[pp.~668--669]{LurieHTT2009}. Define
\begin{equation}
\label{eq:banach-H}
 H(X,\mathcal O)=(X,L_X\mathcal O),
\end{equation}
leaving the underlying topos unchanged.

That $L_X\mathcal O$ is again a $T$-structure follows sortwise.
Finite products and admissible pullbacks are preserved because
$L_X$ is left exact. Covers are preserved because the effective
epimorphism
$\coprod_i\mathcal O(U_i)\to\mathcal O(U)\to L_X\mathcal O(U)$ factors
through $\coprod_i L_X\mathcal O(U_i)$, so the latter family is
effective epi by right cancellation; preservation of coproducts by
the inclusion is not needed. For an admissible $V\hookrightarrow U$
the map $\mathcal O(V)\to\mathcal O(U)$ is a monomorphism, whose
classifying map lands in the hypercomplete
$\Omega_X$, so left exactness gives
$\mathcal O(V)=\mathcal O(U)\times_{L_X\mathcal O(U)}L_X\mathcal O(V)$
and the unit $\mathcal O\to L_X\mathcal O$ is local. Thus $H$ sends
$\mathrm{Str}_T$ into $\mathrm{Str}_T^{\mathrm{vh}}$ and fixes
classical charts and the structure $F$ of
\eqref{eq:banach-square-zero}, whose values are already
hypercomplete.

\begin{thm}[Coreflection]
\label{thm:banach-coreflection}
The inclusion
$\mathrm{Str}_T^{\mathrm{vh}}\hookrightarrow\mathrm{Str}_T$ admits a
right adjoint $H$: for $Z\in\mathrm{Str}_T^{\mathrm{vh}}$ and
$X\in\mathrm{Str}_T$,
\begin{equation}
\label{eq:banach-coreflection}
 \Map_{\mathrm{Str}_T}(Z,X)
 \;\simeq\;
 \Map_{\mathrm{Str}_T^{\mathrm{vh}}}(Z,HX),
\end{equation}
with counit $HX\to X$ the unit $\mathcal O\to L_X\mathcal O$.
Moreover $H$ commutes with restriction to strict opens.
\end{thm}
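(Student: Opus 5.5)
We prove the adjunction directly. For $Z=(Y,\mathcal P)$ with hypercomplete values and $X=(X,\mathcal O)$, we show that composing with the counit $\varepsilon\colon HX\to X$ gives an equivalence
\[
 \Map_{\mathrm{Str}_T}(Z,HX)\longrightarrow\Map_{\mathrm{Str}_T}(Z,X).
\]
The counit is the identity geometric morphism on $X$ together with the unit $u\colon\mathcal O\to L_X\mathcal O$. The discussion preceding the theorem shows that $u$ is a local transformation, so $\varepsilon$ is a morphism of $\mathrm{Str}_T$.

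Both mapping spaces fibre over the space of geometric morphisms $f\colon Y\to X$, and composition with $\varepsilon$ does not change $f$. It therefore suffices to fix $f$ and compare the fibres. These are, respectively, the spaces of local transformations $f^*L_X\mathcal O\to\mathcal P$ and $f^*\mathcal O\to\mathcal P$, and the comparison map is precomposition with $f^*u$.

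The key step is that $f^*$ carries $u_U\colon\mathcal O(U)\to L_X\mathcal O(U)$ to a map inverted by hypercompletion of $Y$. The unit $u_U$ is $\infty$-connective, and $f^*$ is left exact and preserves colimits, so it preserves $n$-connective maps for every $n$. Thus $f^*u_U$ is $\infty$-connective. Since each $\mathcal P(U)$ is hypercomplete, mapping into $\mathcal P(U)$ inverts $\infty$-connective maps. Hence restriction along $f^*u$ is an equivalence on natural transformations of functors $T\to Y$: this can be checked objectwise in the functor category, for instance by writing the transformation space as an end and comparing it term by term.

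It remains to see that locality corresponds on the two sides. If $\beta\colon f^*L_X\mathcal O\to\mathcal P$ is local, then $\beta\circ f^*u$ is local, because cartesian squares compose and $f^*u$ is local: $u$ is local and $f^*$ preserves pullbacks. For the converse, suppose $\alpha=\beta\circ f^*u$ is local, and fix an admissible $V\hookrightarrow U$. The outer rectangle and the square for $f^*u$ are both cartesian. Left cancellation then shows that the square for $\beta$ is cartesian after pulling back along $f^*u_U$. Since $f^*u_U$ is $\infty$-connective, in particular an effective epimorphism, and pullback along an effective epimorphism detects equivalences, the square for $\beta$ is cartesian. This gives the equivalence \eqref{eq:banach-coreflection} fibrewise over $f$, and hence globally.

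\textbf{Strict opens.} For a subterminal $a$ in $X$, the restriction $X\to X/a$ is étale, so its pullback functor commutes with hypercompletion. Indeed, hypercomplete objects of $X/a$ are exactly the maps to $a$ whose source is hypercomplete in $X$, and $\infty$-connectivity is preserved and reflected by étale pullback. Therefore $(L_X\mathcal O)|_a\simeq L_{X/a}(\mathcal O|_a)$, naturally in $a$. This is compatible with the counit and with \eqref{eq:banach-open-restriction}, so $H(X/a,\mathcal O|_a)\simeq HX/a$.

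The main obstacle is making the ``fibrewise over $f$'' reduction rigorous in the $\infty$-categorical setting. One has to exhibit $\mathrm{Str}_T$ as a subcategory of a cartesian fibration over $\mathrm{RTop}$, so that mapping spaces decompose over the space of geometric morphisms. I would handle this using the presentation in \cite[Definition~3.1.4]{LurieDAGV2009}, where the local-transformation condition is imposed fibrewise.
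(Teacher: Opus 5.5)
Your proposal is correct and follows essentially the same route as the paper. The paper also fixes the underlying geometric morphism and uses $\infty$-connectivity of $f^*$ of the unit, so that maps into the hypercomplete $\mathcal P(U)$ factor through $f^*L_X\mathcal O$. It then proves locality of the factorised map by pulling back along the effective epimorphism $f^*\mathcal O(U)\to f^*L_X\mathcal O(U)$, and treats strict opens by identifying hypercomplete objects of $X/a$ with those whose total object is hypercomplete. Your justification that $f^*$ preserves $\infty$-connective maps, and your end-and-locality bookkeeping, spell out details the paper leaves implicit; the paper likewise handles the fibrewise reduction simply as naturality over the space of geometric morphisms.
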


\begin{proof}
Fix an inverse image $f^{*}\colon X\to Y$ underlying a morphism with
hypercomplete target structure $\mathcal P$ on $Y$. The maps
$f^{*}\mathcal O(U)\to f^{*}L_X\mathcal O(U)$ are
infinity-connective, so mapping into the hypercomplete
$\mathcal P(U)$ yields equivalences of mapping spaces, natural in
$U$; these assemble in the end computing natural transformations and
give a factorisation $\beta\colon f^{*}L_X\mathcal O\to\mathcal P$.
Its locality on $V\hookrightarrow U$ is the comparison
\[
 f^{*}L_X\mathcal O(V)\longrightarrow
 f^{*}L_X\mathcal O(U)\times_{\mathcal P(U)}\mathcal P(V),
\]
which becomes an equivalence after pulling back along the effective
epimorphism $f^{*}\mathcal O(U)\to f^{*}L_X\mathcal O(U)$, by
unit-locality and locality of the original map. Effective-epimorphic
pullback is conservative, so $\beta$ is local; conversely
composition with the local unit preserves locality. The equivalence
is natural over the space of underlying geometric morphisms, giving
\eqref{eq:banach-coreflection}. For restriction to strict opens, an object
$E\to a$ over a subterminal $a$ is hypercomplete in $X/a$ iff its
total object is hypercomplete in $X$, since slice mapping spaces are
fibres over maps into the hypercomplete $a$; left exactness then
identifies the slice hypercompletion with the restriction of
$L_X$.
\end{proof}

If $L=\lim_i X_i$ exists in $\mathrm{Str}_T$, the adjunction
\eqref{eq:banach-coreflection} identifies $HL$ with
$\lim_i HX_i$ in $\mathrm{Str}_T^{\mathrm{vh}}$.
For formal spectrum cones, existence follows from the spectrum
adjunction, so this observation applies. Finite limits of
geometric objects in $\mathrm{Str}_T^{\mathrm{vh}}$ are constructed
by strict-open gluing in Theorem~\ref{thm:banach-geometric-limits}.

\begin{lem}[Open restriction, gluing, monomorphic-cover locality]
\label{lem:banach-open-gluing}
Hypercompletion commutes with strict-open restriction, and effective
strict-open gluing of hypercomplete-valued structures stays in
$\mathrm{Str}_T^{\mathrm{vh}}$. Moreover, if subobjects
$A_i\hookrightarrow A$ in a topos form an effective epimorphic cover
and each $A_i$ is hypercomplete, then $A$ is hypercomplete.
\end{lem}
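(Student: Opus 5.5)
The lemma has three assertions, and I would prove them in the order that lets each feed the next.

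\emph{Compatibility with strict-open restriction.} This is the last clause of Theorem~\ref{thm:banach-coreflection}, and I would only recall its argument. For a subterminal $a$ of $X$, an object $E\to a$ of $X/a$ is hypercomplete exactly when its total object is hypercomplete in $X$. The reason is that mapping spaces in the slice are fibres of mapping spaces into $a$, and $a$ is $(-1)$-truncated, hence hypercomplete. Left exactness of $L_X$ then gives $L_{X/a}(E)\simeq L_X(E)$ over $a$, so $H(X,\mathcal O)|_a\simeq H\bigl((X,\mathcal O)|_a\bigr)$.

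\emph{Monomorphic-cover locality.} I would prove this next, since gluing reduces to it. Let $A_i\hookrightarrow A$ be monomorphisms with $\coprod_i A_i\to A$ effective epi, and each $A_i$ hypercomplete. We must show that every $\infty$-connective map $g\colon B\to C$ induces an equivalence $\Map(C,A)\to\Map(B,A)$.

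\begin{enumerate}
\item A map $C\to A$ is determined by the subterminals $C_i=C\times_A A_i$ of $C$. These are $(-1)$-truncated over $C$, since each $A_i\to A$ is mono, and they cover $C$. It is determined further by maps $C_i\to A_i$ agreeing on overlaps.
\item Because $A_i\to A$ is a monomorphism, the \v{C}ech nerve of the cover consists of iterated intersections $A_{i_0}\times_A\cdots\times_A A_{i_k}$. Each is a subobject of the hypercomplete $A_{i_0}$, and a subobject of a hypercomplete object is hypercomplete because it is a limit of hypercomplete objects. Then $A\simeq\colim$ of this nerve.
\item Colimits do not preserve hypercompleteness in general, so I would avoid computing $\Map(-,A)$ from the colimit. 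Instead, $\infty$-connective maps are stable under base change, and monomorphisms pull back to monomorphisms. So I would decompose $C\to A$ along the pulled-back cover, $C\simeq\colim C_{i_0\cdots i_k}$.
\item Lift each restriction $B_{i_0\cdots i_k}\to A_{i_0\cdots i_k}$ uniquely along the $\infty$-connective map $B_{i_0\cdots i_k}\to C_{i_0\cdots i_k}$, using hypercompleteness of the intersections.
\end{enumerate}

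The main obstacle is that the subobjects $C_i$ are not given a priori from a map $B\to A$. I would handle this by noting that the map $\mathrm{Sub}(C)\to\mathrm{Sub}(B)$ induced by $\infty$-connective $g$ is a bijection, since $(-1)$-truncated objects are hypercomplete and $g$ is an equivalence on $\tau_{\le -1}$-localizations. Then $B\to A$ determines the same pulled-back subterminals $B_i=g^{*}C_i$. Matching the descent data over both \v{C}ech nerves gives the equivalence of mapping spaces as a limit of equivalences.

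\emph{Gluing stays in $\mathrm{Str}_T^{\mathrm{vh}}$.} Let a glued structured space $(X,\mathcal O)$ have an effective strict-open atlas $\{a_i\}$ with every $\mathcal O|_{a_i}$ hypercomplete-valued. For each $U\in T$, the objects $\mathcal O(U)\times a_i\hookrightarrow\mathcal O(U)$ are monomorphisms, because the $a_i$ are subterminal. They cover effectively, because $\coprod_i a_i\to 1_X$ is effective epi and base change preserves this. By the first clause, each is hypercomplete in $X$ exactly when $\mathcal O|_{a_i}(U)$ is hypercomplete in $X/a_i$, which holds by assumption. The locality just proved then makes $\mathcal O(U)$ hypercomplete, so the glued object lies in $\mathrm{Str}_T^{\mathrm{vh}}$.
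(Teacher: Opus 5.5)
Your first and third parts follow the paper's argument. The first is the slice identification over a subterminal, recalled from Theorem~\ref{thm:banach-coreflection}. The third applies locality sortwise to the monos $\mathcal O(U)\times a_i\hookrightarrow\mathcal O(U)$.

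For monomorphic-cover locality you take a genuinely different and longer route. The paper works through the hypercompletion unit:
\begin{itemize}
\item monos are classified by the $0$-truncated, hence hypercomplete, $\Omega$, so the unit square is cartesian, $A_i\simeq A\times_{LA}LA_i$;
\item right cancellation makes $\coprod_i LA_i\to LA$ effective epi;
\item the base changes of $A\to LA$ along this cover are the units $A_i\to LA_i$, which are equivalences;
\item pullback along an effective epimorphism is conservative, so $A\to LA$ is an equivalence.
\end{itemize}
You instead verify orthogonality against each $\infty$-connective $g\colon B\to C$ by descent along covers that depend on the map. This avoids using $L$ and its left exactness, but it needs two repairs.

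First, the bijection $g^{*}\colon\mathrm{Sub}(C)\to\mathrm{Sub}(B)$ is true, but not for the reason you give. The agreement $\tau_{\le-1}B\simeq\tau_{\le-1}C$ holds for every effective epimorphism. For example, for the fold map $1\amalg1\to1$ pullback of subobjects is the diagonal $\mathrm{Sub}(1)\to\mathrm{Sub}(1)^2$, which is not surjective. The correct reason is $\mathrm{Sub}(C)=\pi_0\Map(C,\Omega)$ with $\Omega$ hypercomplete. This is the same input the paper uses as unit-locality. Also, your $C_i$ are subobjects of $C$, not subterminals.

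Second, the \v{C}ech nerve depends on the map, so a matching of individual maps does not yet give an equivalence of mapping spaces. To get one:
\begin{itemize}
\item split $\Map(C,A)$ as a coproduct, over the discrete set $\mathrm{Sub}(C)^I$ of preimage patterns $\sigma=(C_i)$, of the subspaces of maps $f$ with $f^{*}A_i=C_i$;
\item identify each summand with a union of components of $\Tot\prod\Map(C_{i_0\cdots i_k},A_{i_0\cdots i_k})$;
\item use that $g^{*}$ is injective and preserves joins, so $\sigma$ covers $C$ exactly when $g^{*}\sigma$ covers $B$. This also shows that the $C_i$ you obtain from a map $B\to A$ actually cover $C$.
\end{itemize}

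With these additions your argument is a correct alternative. The paper's version gets the same conclusion in three lines from left exactness of $L$ and descent.
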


\begin{proof}
Commutation with restriction is the last statement of
Theorem~\ref{thm:banach-coreflection}. For the monomorphic-cover
fact, unit-locality on the monos $A_i\hookrightarrow A$ gives
$A_i=A\times_{L A}L A_i$. The coproduct $\coprod_i L A_i\to L A$ is
effective epi, and the unit $A\to L A$ becomes an equivalence after
this cover, hence is an equivalence. For gluing, apply this
sortwise: the structure values of the glued object, restricted to
the covering subterminals, are hypercomplete total objects by the
slice identification, so their effective-epi assembly is
hypercomplete.
\end{proof}

\subsection{The geometric envelope and hyp-affines}
\label{subsec:banach-envelope}

Let $j\colon T\to\E$ be the geometric envelope and let
$\E_{\mathrm{cell}}\subseteq\E$ be the full subcategory generated
from $j(T)$ by finite limits and equivalences. This specifies the
cell class $\E_{\mathrm{cell}}$ inside $\E$; retract closure is
a separate operation.

The absolute spectrum in physical variance is the right adjoint
\begin{equation}
\label{eq:banach-spec-adjunction}
 \Spec_{\E}\colon\operatorname{Pro}(\E)\longrightarrow\mathrm{Str}_T
\end{equation}
to physical global sections
\cite[Notation~2.2.1, p.~43]{LurieDAGV2009}; there is no additional
$\op$ on $\operatorname{Pro}(\E)$. It yields, for $u\in\E$ and
$Z\in\mathrm{Str}_T$,
\begin{equation}
\label{eq:banach-spec-formula}
 \Map_{\mathrm{Str}_T}(Z,\Spec_{\E}u)
 \simeq
 \Map_{\operatorname{Pro}(\E)}(\Gamma^{\mathrm{phys}}Z,u)
 \simeq\Gamma_Z\mathcal O_Z(u),
\end{equation}
where $\mathcal O_Z$ is the canonical extension of the structure to
$\E$. The underlying admissible site of $\Spec_{\E}u$ is
$\E^{\mathrm{ad}}_{/u}$ \cite[Definition~2.2.9,
p.~46]{LurieDAGV2009}, and its structure value at $v$ is the
\emph{ordinary sheafification} of
\begin{equation}
\label{eq:banach-eval-presheaf}
 (w\to u)\longmapsto\Map_{\E}(w,v)
\end{equation}
\cite[Proposition~2.2.11 and Theorem~2.2.12,
p.~47]{LurieDAGV2009}. Ordinary sheafification is part of the
spectrum construction, regardless of whether the admissible site
is subcanonical. Set
\begin{equation}
\label{eq:banach-hyp-affine}
 a(u)=H\,\Spec_{\E}(u)\qquad(u\in\E_{\mathrm{cell}}),
\end{equation}
the \emph{hyp-affines}.

\begin{lem}[Admissibles are monic and preserve the cell class]
\label{lem:banach-admissible-mono}
Every $\E$-admissible is a monomorphism, and every $\E$-admissible
with target in $\E_{\mathrm{cell}}$ has its source in
$\E_{\mathrm{cell}}$.
\end{lem}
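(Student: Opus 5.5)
The plan is to use Lurie's description of the admissible class of the geometric envelope. Recall \cite[Definition~3.4.1, Remark~3.4.2]{LurieDAGV2009}. The envelope $\E$ is built from $\operatorname{Pro}$ or finite-limit completion data, and there the $\E$-admissible morphisms are the smallest class that contains (the images under $j$ of) the $T$-admissibles, is stable under equivalence, composition, retracts and base change along arbitrary maps of $\E$, and satisfies the two-out-of-three-type cancellation (if $g$ and $gf$ are admissible, so is $f$). More concretely, every $\E$-admissible with target $u$ is, up to equivalence, a retract of a base change $u\times_{j(U)}j(V)\to u$ of some $j(V\hookrightarrow U)$ with $U,V\in T$, or a finite composite of such. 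So the proof reduces to checking that both properties (being monic, and preserving the cell class on sources) hold for the generators and survive these closure operations.

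\textbf{Monomorphy.} For generators, an open embedding $V\hookrightarrow U$ in $T$ is a monomorphism in $T$: the square with $V\times_U V=V$ is a pullback of an admissible along an admissible, hence exists in $T$ and is the identity. Since $j$ preserves admissible pullbacks \cite[Lemma~3.4.3]{LurieDAGV2009}, the diagonal of $j(V)\to j(U)$ is an equivalence, so $j(V)\to j(U)$ is monic in $\E$. Monomorphisms in an infinity-category with finite limits are closed under base change, composition, equivalence and retracts, since the diagonal criterion is preserved by each. Monomorphisms also satisfy left cancellation: if $gf$ is monic then $f$ is monic. Hence the whole generated class consists of monomorphisms.

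\textbf{Cell class.} Let $w\to u$ be admissible with $u\in\E_{\mathrm{cell}}$. For a basic base change $w=u\times_{j(U)}j(V)$, the object $w$ is a finite limit of $u$, $j(U)$ and $j(V)$, so it lies in $\E_{\mathrm{cell}}$ by definition of that class. Composites are handled by induction on length, since each stage has a target already in the cell class. The main obstacle is retract closure, because the paper explicitly says $\E_{\mathrm{cell}}$ is not retract-closed. Here monomorphy is what helps. Suppose $w\to u$ is a retract, over $u$ (or over a retract $u'$ of the target), of a monic $w'\to u'$ with $w'$ cellular. I would first reduce to the case where the retraction is over a fixed target, by base-changing along $u\to u'$. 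In that case the idempotent on $w'$ is an endomorphism over $u$ of a monomorphism, hence equivalent to the identity, so $w\simeq w'$. For the cancellation closure ($g$ and $gf$ admissible), $f$ is a base change of $gf$ along the diagonal-type map. Since $g$ is monic, $w\simeq w\times_{v}w$ shows $f$ is itself the pullback of $gf$ along $g$, so its source is again a finite limit of cellular objects.

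I expect the main work to be in pinning down precisely which generating description of $\E^{\mathrm{ad}}$ Lurie gives, and in ensuring that retracts arise only over the target, where the monomorphism argument applies. If his description instead allows retracts in $\operatorname{Fun}(\Delta[1],\E)$ with varying targets, the fallback is the base-change reduction sketched above.
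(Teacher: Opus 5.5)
Your monomorphy argument is correct. Monomorphisms contain $j$ of the $T$-opens and are closed under base change, composition, equivalence, retracts and left cancellation, so they contain the whole generated admissible class. This is slightly more direct than the paper, which first shows that the class $A_0$ generated by base change, finite composition and equivalence alone consists of monos, and then identifies $A_0$ with the admissible class.

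The gap is in the cell-class half. You try to carry the property ``cellular target implies cellular source'' through the closure operations, but retracts and cancellation do not inherit it.

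\emph{Retract.} Let $w\to u$ be a retract of $w'\to u'$. Then $u$ is only a retract of $u'$, and $\E_{\mathrm{cell}}$ is not retract-closed. So nothing makes $u'$ or $w'$ cellular, and $u\times_{u'}w'$ is not known to be cellular.

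\emph{Cancellation.} Write $f\colon w\to v$ and $g\colon v\to z$. The identity $w\simeq w\times_z v$ expresses $w$ through itself and through $z$, which need not be cellular. So ``a finite limit of cellular objects'' is circular.

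Your ``more concretely'' description of admissibles, as finite composites of retracts of base changes of generators, would rescue the retract step. But it is asserted, not proved, and proving it, in particular its closure under cancellation, is exactly the content of the lemma.

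The missing idea is to track membership in $A_0$ rather than cellularity. Your two monomorphy observations are precisely what is needed. The retract $w\to u$ is the base change of $w'\to u'$ along the section $u\to u'$. The map $f$ is the base change of $gf$ along the mono $g$. Hence $A_0$ is closed under retracts and cancellation, satisfies the admissibility axioms, and therefore equals the admissible class.

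Since a base change of a finite composite is a finite composite of successive base changes, an admissible into a cellular $u$ is a finite tower of base changes of generators $j(V)\to j(U)$. Each intermediate source is a pullback of objects already in the finite-limit-closed $\E_{\mathrm{cell}}$. This is the paper's proof.
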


\begin{proof}
For a $T$-open embedding $i\colon U\hookrightarrow V$ the diagonal
$U\to U\times_V U$ is an equivalence; this admissible pullback is
preserved by $j$, so $j(i)$ is mono. Let $A_0$ be the class obtained
from the maps $j(i)$ by base change, finite composition, and
equivalence; all its maps are monos, and it satisfies triangle
cancellation because if $g$ and $h=gf$ lie in $A_0$ then $f$ is the
pullback of the mono $h$ along $g$.

$A_0$ is closed under arrow retracts. Let $f\colon X\to Y$ be an
arrow retract of $g\colon U\to V$ with sections $i_X,i_Y$ and
retractions $r_X,r_Y$, and put $P=Y\times_V U$ along $i_Y$. The maps
$(f,i_X)\colon X\to P$ and $r_X\circ\mathrm{pr}_U\colon P\to X$ over
$Y$ (well defined since $f r_X=r_Y g$ and $r_Y i_Y=\id$) exhibit $X$
and $P$ as the same subobject of $Y$, because monos are closed under
retract and base change and subobjects of $Y$ form a poset. Thus the
retract is a base change of $g$ along the section $i_Y$, coherently.
Hence $A_0$ satisfies the admissibility axioms and, being the
smallest such class containing $j$ of the $T$-opens, is the
admissible class of the coarsest compatible geometry on $\E$.

Finally, a base change of a finite composite is the finite composite
of successive base changes, so an $A_0$-map into $u\in
\E_{\mathrm{cell}}$ is a finite tower of admissible base changes;
each intermediate source is a finite pullback of objects already in
the finite-limit-closed $\E_{\mathrm{cell}}$, hence stays in
$\E_{\mathrm{cell}}$.
\end{proof}

\begin{cor}[Strict-open basis of a hyp-affine]
\label{cor:banach-affine-basis}
For $u\in\E_{\mathrm{cell}}$ the underlying topos of $\Spec_{\E}u$
is generated by subterminals, with a basis of strict opens
$\Spec_{\E}w$ for admissibles $w\to u$ with $w\in\E_{\mathrm{cell}}$.
The same strict opens $a(w)$ form a basis for $a(u)$.
\end{cor}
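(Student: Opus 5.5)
The plan is to use Lurie's description of the spectrum \cite[Definition~2.2.9, Proposition~2.2.11]{LurieDAGV2009}: the underlying topos of $\Spec_{\E}u$ is the sheaf topos on the admissible site $\E^{\mathrm{ad}}_{/u}$. By Lemma~\ref{lem:banach-admissible-mono}, every object $w\to u$ of this site is a monomorphism, and $w$ lies in $\E_{\mathrm{cell}}$. A map $w\to w'$ over $u$ between admissibles is again admissible; it is monic by triangle cancellation in $A_0$. Hence the admissible site is equivalent to a poset, namely a full subposet of $\mathrm{Sub}(u)$.

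For the first assertion, I would use the fact that the sheaf topos on a poset site with a Grothendieck topology is localic. The representables $\y(w)$ for $w\to u$ are sheafified, and each becomes subterminal, because $\y(w)\to\y(u)=1$ is a monomorphism and sheafification is left exact. Every object is a colimit of representables, so these subterminals generate. To identify the slice $\Sh(\E^{\mathrm{ad}}_{/u})/a(w)$ with the underlying topos of $\Spec_{\E}w$, I would use the étale-slice identification $\E^{\mathrm{ad}}_{/u}{}_{/w}\simeq\E^{\mathrm{ad}}_{/w}$; admissibles compose and cancel, so this holds. The structure sheaf restricts correctly because both sides are sheafifications of the same presheaf~\eqref{eq:banach-eval-presheaf} restricted along the slice. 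This exhibits $\Spec_{\E}w$ as the strict open of $\Spec_{\E}u$ at the subterminal $\y(w)$, compatibly with \eqref{eq:banach-open-restriction}. These subterminals form a basis: every subterminal is a union, meaning the image of a coproduct, of representable ones lying below it. Finite intersections are again basic, since $w\times_u w'$ is admissible over $u$ and lies in $\E_{\mathrm{cell}}$.

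For the hyp-affine assertion, recall that $a(u)=H\Spec_{\E}u$ has the same underlying topos. By the last clause of Theorem~\ref{thm:banach-coreflection}, $H$ commutes with strict-open restriction. Thus the strict open of $a(u)$ at the subterminal attached to $w$ is $H\Spec_{\E}w=a(w)$, and the basis property transfers unchanged.

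I expect the main obstacle to be the generation claim as stated: the topos is built from $\Spec_{\E}u$, with possibly non-hypercomplete objects, and one must make sure it is the full ordinary sheaf topos on a poset. A poset site gives a localic topos, so this should be fine. To be safe, I would cite Lurie's identification of sheaves on a $0$-truncated site with sheaves on the associated locale rather than reprove it. A smaller point to verify is that admissible maps over $u$ are determined up to a contractible space of choices, which follows from their being monomorphisms.
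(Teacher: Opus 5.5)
Your proposal is correct and follows essentially the paper's route. The admissible site is a subposet of $\mathrm{Sub}(u)$; the sheafified representables are subterminal generators whose slices are $\Spec_{\E}w$ with $w\in\E_{\mathrm{cell}}$; and $H$ transfers the basis because it keeps the topos and commutes with strict-open restriction. The only difference is that you sketch the \'etale-slice identification and closure under finite intersections directly, where the paper cites Lurie's Example~2.3.8 and Remark~2.3.20 and notes that sheafification may harmlessly identify site objects.
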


\begin{proof}
The site $\E^{\mathrm{ad}}_{/u}$ is a full subcategory of the poset
of subobjects of $u$; its representables are subterminal presheaves
whose sheafifications are subterminal and generate the sheaf topos.
The slice at such a generator is $\Spec_{\E}w$
\cite[Example~2.3.8, p.~51, and Remark~2.3.20,
p.~57]{LurieDAGV2009}, and
$w\in\E_{\mathrm{cell}}$ by
Lemma~\ref{lem:banach-admissible-mono}. Sheafification may identify
site objects; this is harmless, as the generators need not embed
fully faithfully. Since $H$ keeps the topos and commutes with
strict-open restriction (Theorem~\ref{thm:banach-coreflection}), the
same subterminals give the basis for $a(u)$.
\end{proof}

\begin{lem}[Universal hyp-affines]
\label{lem:banach-hyp-affine-mapping}
For $u\in\E_{\mathrm{cell}}$ and
$Z\in\mathrm{Str}_T^{\mathrm{vh}}$,
\begin{equation}
\label{eq:banach-hyp-affine-mapping}
 \Map_{\mathrm{Str}_T^{\mathrm{vh}}}(Z,a(u))
 \simeq\Gamma_Z\mathcal O_Z(u).
\end{equation}
The functor $a$ sends finite $\E_{\mathrm{cell}}$ limit diagrams to
cones universal in $\mathrm{Str}_T^{\mathrm{vh}}$, and
$a(jU)=\operatorname{Chart}(U)$.
\end{lem}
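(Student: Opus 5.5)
The mapping formula \eqref{eq:banach-hyp-affine-mapping} comes from composing the two adjunctions already available: the coreflection of Theorem~\ref{thm:banach-coreflection} and the spectrum adjunction \eqref{eq:banach-spec-formula}. For $Z\in\mathrm{Str}_T^{\mathrm{vh}}$ and $u\in\E_{\mathrm{cell}}$ we get
\[
 \Map_{\mathrm{Str}_T^{\mathrm{vh}}}(Z,H\Spec_{\E}u)
 \simeq\Map_{\mathrm{Str}_T}(Z,\Spec_{\E}u)
 \simeq\Gamma_Z\mathcal O_Z(u).
\]
Both equivalences are natural in $Z$ and in $u$. We must check that the canonical $\E$-extension $\mathcal O_Z$ of \cite[Proposition~3.4.5]{LurieDAGV2009} is the one appearing in \eqref{eq:banach-spec-formula}. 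This holds by construction, because the extension is taken over the fixed topos of $Z$ and is compatible with local morphisms.

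\emph{Finite limits.} Let $u=\lim_k u_k$ be a finite limit diagram in $\E_{\mathrm{cell}}$. The right-hand side $\Gamma_Z\mathcal O_Z(-)$ preserves finite limits in $u$: the $\E$-structure $\mathcal O_Z\colon\E\to X_Z$ is left exact by \cite[Definition~3.4.1 and Lemma~3.4.3]{LurieDAGV2009}, and global sections $\Gamma_Z$ is a right adjoint. So for every $Z\in\mathrm{Str}_T^{\mathrm{vh}}$,
\[
 \Map(Z,a(u))\simeq\lim_k\Map(Z,a(u_k)).
\]
By Yoneda in $\mathrm{Str}_T^{\mathrm{vh}}$, the cone $a(u)\to a(u_k)$ is therefore a limit cone there. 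Equivalently, $\Spec_{\E}$ is a right adjoint and sends the limit to a limit in $\mathrm{Str}_T$, and the remark after Theorem~\ref{thm:banach-coreflection} then identifies $H$ of that limit with the limit in $\mathrm{Str}_T^{\mathrm{vh}}$. Either argument works; I would give the Yoneda one, since it is self-contained.

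\emph{Charts.} It remains to show $a(jU)=\operatorname{Chart}(U)$. Lemma~\ref{lem:banach-chart-eval} gives
\[
 \Map(Z,\operatorname{Chart}(U))\simeq\Gamma_Z\mathcal O_Z(U)=\Gamma_Z\mathcal O_Z(jU),
\]
naturally in $Z$, including for $Z\in\mathrm{Str}_T^{\mathrm{vh}}$. The chart has values $C^\infty(W,V)$, which are discrete and hence hypercomplete, so $\operatorname{Chart}(U)$ lies in $\mathrm{Str}_T^{\mathrm{vh}}$. Both $a(jU)$ and $\operatorname{Chart}(U)$ therefore corepresent the same functor on $\mathrm{Str}_T^{\mathrm{vh}}$, and Yoneda identifies them. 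For this to give a canonical equivalence rather than an abstract one, the two identifications must be induced by the same universal section of $\mathcal O(U)$. So I would check that the identity section of $\mathcal O_U(U)$ corresponds, under \eqref{eq:banach-spec-formula}, to the unit of $\Spec_{\E}$ at $jU$.

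\emph{Main obstacle.} The hard step is making sure Lemma~\ref{lem:banach-chart-eval} and \eqref{eq:banach-spec-formula} are really compatible as mapping-space equivalences natural in $Z$. Concretely, the spectrum's structure sheaf at $jU$ is an ordinary sheafification of $\Map_{\E}(w,v)$ over $\E^{\mathrm{ad}}_{/jU}$, while the chart uses the site of opens of $U$; these two descriptions have to be matched. My first attempt avoids comparing the sites directly and relies only on corepresentability plus hypercompleteness of the chart, with naturality in $Z$ supplied by the two lemmas.
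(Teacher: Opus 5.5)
Your proposal is correct and follows the paper's proof. You compose the coreflection with the spectrum adjunction, deduce the limit cones from left exactness of $\mathcal O_Z$ and $\Gamma_Z$ together with Yoneda, and identify $a(jU)$ with $\operatorname{Chart}(U)$ by chart evaluation and Yoneda. The paper additionally records that extending a structure to $\E$ commutes with valuewise hypercompletion; this is used later and is not needed here. Your proposed universal-section check is also unnecessary, since Yoneda applied to a natural equivalence already produces a specific equivalence.
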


\begin{proof}
Apply the coreflection \eqref{eq:banach-coreflection} to the
spectrum formula \eqref{eq:banach-spec-formula}; the extended
structure $\mathcal O_Z$ and global sections $\Gamma_Z$ are left
exact, so \eqref{eq:banach-hyp-affine-mapping} carries each finite
$\E_{\mathrm{cell}}$ limit cone to a limit of mapping spaces for
every $Z$, establishing the limit cone in
$\mathrm{Str}_T^{\mathrm{vh}}$. For $u=jU$ the right-hand
side is $\Gamma_Z\mathcal O_Z(U)$, so
Lemma~\ref{lem:banach-chart-eval} and Yoneda identify $a(jU)$ with
$\operatorname{Chart}(U)$. Extension of a structure to $\E$ commutes
with valuewise completion, since $L_X\mathcal O_{\E}$ is again an
$\E$-structure restricting to $L_X\mathcal O_T$, identified with the
completed extension by \cite[Proposition~3.4.5,
pp.~97--98]{LurieDAGV2009}.
\end{proof}

\begin{lem}[Local algebraic representatives]
\label{lem:banach-local-representative}
For $u,v\in\E_{\mathrm{cell}}$ and a map
$f\colon a(v)\to a(u)$ there are a strict-open cover of $a(v)$ by
$a(v_i)$ with $v_i\to v$ admissible and $v_i\in\E_{\mathrm{cell}}$,
and $\E$-maps $f_i\colon v_i\to u$, such that $f|_{a(v_i)}$ is
homotopic to $a(f_i)$. The representatives and homotopies
are local choices on this cover.
\end{lem}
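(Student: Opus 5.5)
The plan is to reduce the map $f$ to a single global section and then use the explicit description of the structure sheaf of a spectrum. By the universal property \eqref{eq:banach-hyp-affine-mapping} of Lemma~\ref{lem:banach-hyp-affine-mapping}, applied with $Z=a(v)\in\mathrm{Str}_T^{\mathrm{vh}}$, the map $f$ corresponds to a global section
\[
 \sigma_f\in\Gamma_{a(v)}\mathcal O_{a(v)}(u).
\]
Here $\mathcal O_{a(v)}$ is the extended valuewise hypercompleted structure $L\,\mathcal O_{\Spec_\E v}$. The same formula, applied to each admissible $w\to v$ with $w\in\E_{\mathrm{cell}}$, identifies maps $a(w)\to a(u)$ with sections over the strict open $a(w)$ of Corollary~\ref{cor:banach-affine-basis}. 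Under this identification, restriction of maps corresponds to restriction of sections, by naturality of the spectrum formula in $Z$ together with compatibility of $H$ with strict-open restriction (Theorem~\ref{thm:banach-coreflection}). So it suffices to show that $\sigma_f$ is, locally on a strict-open cover, the image of an $\E$-map $v_i\to u$ under the canonical map $\Map_\E(v_i,u)\to\Gamma\mathcal O(u)|_{a(v_i)}$.

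The first step handles the unhypercompleted structure. By \eqref{eq:banach-eval-presheaf}, $\mathcal O_{\Spec_\E v}(u)$ is the ordinary sheafification on $\E^{\mathrm{ad}}_{/v}$ of the presheaf $(w\to v)\mapsto\Map_\E(w,u)$. Sheafification is effective epimorphic from the presheaf, and covering sieves are generated by admissibles. Hence every global section of the sheafified object, as a point of the global-section space, is locally on an admissible cover $\{v_i\to v\}$ represented by points of $\Map_\E(v_i,u)$, together with homotopies witnessing agreement. Lemma~\ref{lem:banach-admissible-mono} puts each $v_i$ in $\E_{\mathrm{cell}}$. To make this rigorous I would use that the topos is generated by the subterminals of Corollary~\ref{cor:banach-affine-basis}. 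A map $1\to S$ into a sheaf, with $S$ receiving an effective epimorphism from $\coprod_w \Map(w,u)\times h_w$, lifts locally after refining $1$ by the subterminal generators $h_{v_i}$.

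The second step, which I expect to be the main obstacle, passes through the hypercompletion unit $\mathcal O(u)\to L\mathcal O(u)$. This unit is $\infty$-connective, hence in particular an effective epimorphism, so a global section of $L\mathcal O(u)$ lifts locally to $\mathcal O(u)$. Concretely, the fibre product $1\times_{L\mathcal O(u)}\mathcal O(u)\to 1$ is effective epi, and it is covered by the subterminal basis $a(v_i)$. On each $a(v_i)$ we obtain a section of $\mathcal O(u)$ together with a homotopy to $\sigma_f|_{a(v_i)}$. We then refine further using the first step. Two refinements compose, and the cover members remain admissible over $v$ and in $\E_{\mathrm{cell}}$.

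Finally, translate back through \eqref{eq:banach-hyp-affine-mapping}. The image of $f_i\in\Map_\E(v_i,u)$ is the section classifying $a(f_i)$, by functoriality of $a$ and naturality of the spectrum formula in $u$. The chosen homotopies of sections therefore become homotopies $f|_{a(v_i)}\simeq a(f_i)$. No coherence between different $i$ is claimed, matching the statement that the choices are local.
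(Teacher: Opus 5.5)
Your proposal is correct and follows essentially the same route as the paper. Like the paper, you identify $f$ with a global section of $L_{X_v}\mathcal O_v(u)$ via \eqref{eq:banach-hyp-affine-mapping}, lift it locally through the $\infty$-connective, hence effective epimorphic, hypercompletion unit using the subterminal generators of Corollary~\ref{cor:banach-affine-basis}, then lift again through the effective epimorphism from generators labelled by $\E$-maps onto the sheafification of \eqref{eq:banach-eval-presheaf}, and finally transport the chosen paths back to homotopies of structured maps.
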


\begin{proof}
There are two successive local lifting steps. First,
\eqref{eq:banach-hyp-affine-mapping} identifies $f$ with a section
of $L_{X_v}\mathcal O_v(u)$ on the unchanged topos $X_v$ of
$\Spec_{\E}v$. The unit
$\mathcal O_v(u)\to L_{X_v}\mathcal O_v(u)$ is
infinity-connective, hence effective epi; pulling it back along $f$
gives an effective epimorphism $E\to 1$ in $X_v$. Since $X_v$ is
generated by the subterminal basis of
Corollary~\ref{cor:banach-affine-basis}, the object $E$ is
effectively covered by \emph{maps} from those generators; these maps
into $E$ need not be monos, but their composites to $1$ cover $1$ by
composition of effective epimorphisms and thus give a strict-open
cover on which the section lifts to $\mathcal O_v(u)$ with a chosen
homotopy. Second, use the presheaf in
\eqref{eq:banach-eval-presheaf}, whose ordinary sheafification is
$\mathcal O_v(u)$. The canonical map from the coproduct of basis
generators labelled by $\E$-maps $w\to u$ to $\mathcal O_v(u)$
is effective epi. After a further open-basis refinement, the
section therefore lifts to actual $\E$-maps $v_i\to u$, again
up to chosen paths in the sheafification. Combining the refinements
gives admissibles $v_i\to v$ with sources in $\E_{\mathrm{cell}}$
(Lemma~\ref{lem:banach-admissible-mono}), and
\eqref{eq:banach-hyp-affine-mapping} transports the chosen homotopies
back to homotopies of structured maps. Thus the two local lifting
steps retain the chosen paths in the structured mapping spaces.
\end{proof}

Call $\mathcal G$ the full subcategory of
$\mathrm{Str}_T^{\mathrm{vh}}$ of objects admitting a strict-open
atlas by hyp-affines $a(u)$, $u\in\E_{\mathrm{cell}}$. Say a
monomorphism $h_P\to \Phi$ of presheaves on
$\mathrm{Str}_T^{\mathrm{vh}}$ is \emph{strict-open representable} if
its pullback along every $h_Z\to\Phi$ is the Yoneda image of a
strict open of $Z$.

\begin{lem}[Local representability]
\label{lem:banach-local-representability}
Let $\Phi$ be an ordinary sheaf for the strict-open topology on
$\mathrm{Str}_T^{\mathrm{vh}}$ equipped with a locally covering
family of strict-open-representable monos $h_{P_i}\to\Phi$ with each
$P_i\in\mathcal G$. Then $\Phi\simeq h_Y$ for the actual structured
gluing $Y\in\mathcal G$ of the $P_i$ along their open overlaps, and
the equivalence holds against every $\mathrm{Str}_T^{\mathrm{vh}}$
test.
\end{lem}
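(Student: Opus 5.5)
The plan is to build $Y$ by the effective strict-open gluing of \eqref{eq:banach-open-restriction} and Remark~2.3.20 of \cite{LurieDAGV2009}, then compare $h_Y$ with $\Phi$ by descent. First, the overlap data. For each pair $i,k$, the fibre product $h_{P_i}\times_\Phi h_{P_k}$ is the pullback of the strict-open-representable mono $h_{P_k}\to\Phi$ along $h_{P_i}\to\Phi$. It is therefore the Yoneda image of a strict open $P_{ik}\subseteq P_i$, and symmetrically of a strict open of $P_k$. Both identify with the same subpresheaf of $\Phi$. Full faithfulness of Yoneda on $\mathrm{Str}_T^{\mathrm{vh}}$ then gives canonical equivalences of structured spaces between these opens. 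Higher overlaps are likewise iterated strict opens, cut out by iterated pullbacks of monos.

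These equivalences are coherent for a simple reason: all the maps involved are monomorphisms into $\Phi$. The space of compatibilities is therefore contractible, and the data form a coherent open-overlap diagram indexed by the \v{C}ech nerve of the family. Effective strict-open gluing produces an actual structured space $Y$ with strict-open atlas $\{P_i\}$ and effective overlaps; the gluing is a colimit in $\mathrm{Str}_T$. Lemma~\ref{lem:banach-open-gluing} keeps $Y$ in $\mathrm{Str}_T^{\mathrm{vh}}$. Since each $P_i\in\mathcal G$ has a hyp-affine strict-open atlas, restricting those atlases gives a hyp-affine atlas of $Y$, so $Y\in\mathcal G$. The colimit property also yields a compatible cone $h_Y\leftarrow h_{P_i}$. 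Meanwhile the maps $h_{P_i}\to\Phi$, together with the overlap identifications, induce a map $h_Y\to\Phi$: evaluate on $Y$ and descend along its atlas, which uses the sheaf property of $\Phi$.

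The main obstacle is showing that $h_Y\to\Phi$ is an equivalence against an arbitrary test $Z\in\mathrm{Str}_T^{\mathrm{vh}}$, not just against $\mathcal G$. Fix $z\colon h_Z\to\Phi$. The pullbacks of the $h_{P_i}$ are strict opens $Z_i\subseteq Z$. By the local covering hypothesis, the corresponding subterminals jointly cover $1_Z$, and their overlaps are the $Z_{ik}$ obtained by pulling back the $P_{ik}$. On $Z_i$ the point $z$ factors uniquely through $P_i\subseteq Y$, and these factorizations agree on overlaps. Since $Y$ is a sheaf for the strict-open topology, the local maps glue to a map $Z\to Y$. By \eqref{eq:banach-open-restriction}, maps $Z\to Y$ restrict along the opens $Y/b$ of the atlas, and effective gluing makes $\Map(-,Y)$ satisfy strict-open descent in the source. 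The space of lifts of $z$ to $h_Y(Z)$ is thus the limit, over the \v{C}ech nerve of $\{Z_i\}$, of the spaces of lifts to the $P$'s. Each of these is contractible because the maps into $\Phi$ are monos.

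Applying the same descent to $\Phi(Z)$, using that $\Phi$ is a strict-open sheaf, shows that $h_Y(Z)\to\Phi(Z)$ is an equivalence of spaces, natural in $Z$. I expect the delicate point to be checking that the representability of $h_Y$ on all of $\mathrm{Str}_T^{\mathrm{vh}}$ really satisfies strict-open descent in the source variable, that is, that maps out of $Z$ glue along strict-open covers. To settle it I would try first to show that $Z$ is the colimit of its strict-open \v{C}ech nerve in $\mathrm{Str}_T$, again by the effective-gluing result, together with the coreflection $H$ to stay inside $\mathrm{Str}_T^{\mathrm{vh}}$.
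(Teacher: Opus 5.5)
\textbf{There is a gap in the equivalence step.} Your construction of $Y$ matches the paper's proof. So do the coherence of the overlap data extracted from $\Phi$, the membership $Y\in\mathcal G$, and the map $h_Y\to\Phi$ obtained by descent for $\Phi$.

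The problem comes after this. You reduce to showing that, for each multi-index $I$, the fibre of $h_Y(Z_I)\to\Phi(Z_I)$ over $z|_{Z_I}$ is contractible. You justify this by saying that the maps into $\Phi$ are monos. That only shows that the space of lifts of $z|_{Z_I}$ into $h_{P_{i_0}}(Z_I)$ is contractible. A point of the fibre you need is a map $Z_I\to Y$ lying over $z|_{Z_I}$. Nothing you have said forces such a map to factor through the strict open $P_{i_0}\subseteq Y$.

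So you get existence of lifts, but not uniqueness: your argument does not show that $h_Y\to\Phi$ is a monomorphism. The missing statement is that $P_i\subseteq Y$ is the full preimage of $h_{P_i}\subseteq\Phi$, i.e.\ $h_Y\times_\Phi h_{P_i}\simeq h_{P_i}$.

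\textbf{How the paper closes it.} This identification is the crux of the paper's argument. Strict-open representability gives $h_Y\times_\Phi h_{P_i}=h_{Y_i'}$ for a strict open $Y_i'\subseteq Y$. Pasting pullbacks, the restriction of $Y_i'$ to each atlas member $P_j$ is $h_{P_j}\times_\Phi h_{P_i}$. This is exactly the overlap $P_j\cap_Y P_i$ that the gluing realized. Since subobjects are determined locally on the atlas, $Y_i'=P_i$.

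Concretely, take $w\colon W\to Y$ whose image in $\Phi$ lies in $h_{P_i}$, and put $W_j=w^{-1}(P_j)$. Then $w|_{W_j}$ lands in $P_j$, and its image lies in $h_{P_j}\times_\Phi h_{P_i}$. Hence $w|_{W_j}$ factors through $P_j\cap_Y P_i\subseteq P_i$. Factoring through the subterminal $P_i$ is a local condition, so $w$ factors through $P_i$.

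\textbf{After the fix.} With this in place, your fibrewise descent is a pointwise unpacking of the paper's final step. There, $h_Y\to\Phi$ becomes an equivalence after pullback along the effective epimorphism $\coprod_i h_{P_i}\to\Phi$, and such pullback is conservative in the ordinary sheaf category.

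The point you flag as delicate, source-variable descent of $h_Y$ against arbitrary $Z\in\mathrm{Str}_T^{\mathrm{vh}}$, is not where the difficulty lies. Effective strict-open gluing makes $Z$ the colimit of its strict-open Cech nerve. The paper already records that every represented $h_Y$ is a strict-open sheaf for this reason.
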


\begin{proof}
The pairwise and higher overlaps are strict opens of the $P_i$ by
representability, and their coherent identifications and cocycles are
extracted from the single sheaf $\Phi$. Effective strict-open gluing
\eqref{eq:banach-open-restriction} produces $Y$ with atlas $\{P_i\}$
and exactly those overlaps; the glued structure is valuewise
hypercomplete by Lemma~\ref{lem:banach-open-gluing}. Descent for
$\Phi$ on this effective atlas assembles compatible elements of
$\Phi(P_i)$ into $\Phi(Y)$ and yields $h_Y\to\Phi$ by Yoneda.

To see this is an equivalence, note $h_Y\times_\Phi h_{P_i}=h_{Y_i'}$
for a strict open $Y_i'\subseteq Y$ by representability; pulling back
to each atlas piece $P_j$ gives the specified overlap
$P_j\times_\Phi P_i$, so the strict opens $Y_i'$ and $P_i$ of $Y$
have equal restriction to every $P_j$ and hence coincide, subobjects
being determined locally. Thus $h_Y\to\Phi$ is an equivalence after
the effective epi $\coprod_i h_{P_i}\to\Phi$ in the ordinary sheaf
category; effective-epimorphic pullback is conservative, so
$h_Y=\Phi$ as sheaves and as presheaves, i.e.\ against every test.
Refining the $P_i$ by hyp-affine atlases exhibits $Y\in\mathcal G$.
\end{proof}

\subsection{Geometric finite limits}
\label{subsec:banach-g1}

Every represented $h_Y$ with $Y\in\mathcal G$ is already an ordinary
sheaf for the strict-open topology: an effective strict-open atlas
is a colimit in $\mathrm{Str}_T^{\mathrm{vh}}$, so mapping out of its
Cech diagram gives descent, using only
\eqref{eq:banach-open-restriction} and effective gluing.

\begin{thm}[Geometric limits over the Banach site]
\label{thm:banach-geometric-limits}
\ \begin{enumerate}
\item $\mathcal G$ contains the classical charts fully faithfully and
is closed under strict opens and coherent strict-open gluing.
\item Every finite diagram in $\mathcal G$ has a limit in
$\mathcal G$, and the limit cone is universal against every test
object of $\mathrm{Str}_T^{\mathrm{vh}}$.
\item Let $\mathcal C$ be the full finite-limit and strict-open
closure of the classical charts inside $\mathcal G$. Every
hyp-affine lies in $\mathcal C$, every $\mathcal C$-object has a
hyp-affine atlas, and $\mathcal G$ is exactly the strict-open
$\mathcal C$-gluing class.
\end{enumerate}
These are limits in the infinity-category of geometric structured
spaces, with the mapping-space universality in \textup{(2)}.
The construction uses hyp-affines and effective strict-open gluing.
\end{thm}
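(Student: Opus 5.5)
We prove the three parts in order, the main work being (2). For (1), full faithfulness of the classical charts is Lemma~\ref{lem:banach-chart-eval}, and by Lemma~\ref{lem:banach-hyp-affine-mapping} we have $a(jU)=\operatorname{Chart}(U)$, so each chart is a hyp-affine with a one-piece atlas. Closure under strict opens uses Corollary~\ref{cor:banach-affine-basis}. Restricting a hyp-affine atlas of $Y\in\mathcal G$ to a strict open $Y/b$ via \eqref{eq:banach-open-restriction} gives strict opens of hyp-affines. Each of these is covered by basic opens $a(w)$ with $w\in\E_{\mathrm{cell}}$, and $H$ commutes with the restriction. Closure under coherent gluing follows from Lemma~\ref{lem:banach-open-gluing}: the glued object stays valuewise hypercomplete, and composing atlases gives a hyp-affine atlas.

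For (2) it suffices to treat the terminal object and pullbacks. The terminal object is $\operatorname{Chart}(*)=a(j*)$, which is terminal in $\mathrm{Str}_T^{\mathrm{vh}}$ by \eqref{eq:banach-hyp-affine-mapping}, since $\Gamma_Z\mathcal O_Z(*)\simeq*$. For a cospan $X\to S\leftarrow Y$ in $\mathcal G$, form the presheaf $\Phi=h_X\times_{h_S}h_Y$ on $\mathrm{Str}_T^{\mathrm{vh}}$. It is a strict-open sheaf, because each represented functor is one (by the remark opening Subsection~\ref{subsec:banach-g1}) and limits of sheaves are sheaves. We then verify the hypotheses of Lemma~\ref{lem:banach-local-representability}. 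Choose hyp-affine atlases $a(s_k)$ of $S$. Pull them back to strict opens of $X$ and $Y$, and refine those by hyp-affines $a(x_i)$ and $a(y_l)$ using the basis corollary. Lemma~\ref{lem:banach-local-representative} then lets us refine further, so that the maps $a(x_i)\to a(s_k)$ and $a(y_l)\to a(s_k)$ become $a$ of actual $\E$-maps $x_i\to s_k$ and $y_l\to s_k$ in $\E_{\mathrm{cell}}$. Set $P_{ikl}=a(x_i\times_{s_k}y_l)$, which is a pullback in $\E_{\mathrm{cell}}$. By Lemma~\ref{lem:banach-hyp-affine-mapping} it represents $h_{a(x_i)}\times_{h_{a(s_k)}}h_{a(y_l)}$. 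This functor equals $h_{a(x_i)}\times_{h_S}h_{a(y_l)}$, because $a(s_k)\to S$ is a strict-open mono. Each $h_{P_{ikl}}\to\Phi$ is therefore a product of strict-open-representable monos, hence is itself strict-open representable. The family is locally covering: given a test $Z\to\Phi$, pull back the atlas of $S$ to a strict-open cover of $Z$, then pull back the refined covers of $X$ and $Y$ and intersect. Lemma~\ref{lem:banach-local-representability} now gives $\Phi\simeq h_L$ with $L\in\mathcal G$. This is precisely universality against every test of $\mathrm{Str}_T^{\mathrm{vh}}$, at the level of mapping spaces.

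The main obstacle is coherence. The local representatives of Lemma~\ref{lem:banach-local-representative} are chosen only up to homotopy, so the identification of $P_{ikl}$ with the corresponding open of $\Phi$ must be made by universality in $\mathrm{Str}_T^{\mathrm{vh}}$, not by the chosen $\E$-maps. The remedy is to use the representatives only to establish that each subfunctor $h_{a(x_i)}\times_{h_S}h_{a(y_l)}$ is representable by a hyp-affine. Such a statement is invariant under homotopy, so the overlaps and the cocycle data then come directly from $\Phi$, as Lemma~\ref{lem:banach-local-representability} requires, and no strict compatibility of the choices is needed.

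For (3), every hyp-affine $a(u)$ with $u\in\E_{\mathrm{cell}}$ is obtained from the charts $a(jU)$ by finite $\E_{\mathrm{cell}}$-limits. Lemma~\ref{lem:banach-hyp-affine-mapping} and the limit construction of (2) agree on these, so $a(u)$ is a finite limit of charts computed in $\mathcal G$, and hence lies in $\mathcal C$. Since $\mathcal G$ contains the charts and is closed under finite limits and strict opens, $\mathcal C\subseteq\mathcal G$, so every object of $\mathcal C$ has a hyp-affine atlas. Finally, every object of $\mathcal G$ is a gluing of hyp-affines, which lie in $\mathcal C$. Conversely, gluings of $\mathcal C$-objects lie in $\mathcal G$ by (1). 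Hence $\mathcal G$ is exactly the strict-open $\mathcal C$-gluing class.
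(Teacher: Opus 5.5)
Your proposal is correct and follows essentially the paper's proof. For (2) you reduce to the terminal object $a(j*)$ and pullbacks, refine atlases of the cospan via Lemma~\ref{lem:banach-local-representative}, use the pieces $a(x_i\times_{s_k}y_l)$ as strict-open-representable covering monos into $\Phi$, and conclude with Lemma~\ref{lem:banach-local-representability}; (1) and (3) go through the basis corollary and the stepwise limit description of hyp-affines. The differences are only cosmetic: you get gluing closure in (1) directly from Lemma~\ref{lem:banach-open-gluing} plus composition of atlases rather than citing Lemma~\ref{lem:banach-local-representability}, and you omit the paper's remark on essential smallness of $\mathcal C$, which the statement does not require.
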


\begin{proof}
\emph{(1)} Strict-open restrictions of hyp-affines have hyp-affine
bases (Corollary~\ref{cor:banach-affine-basis}), so restricting and
refining atlases shows $\mathcal G$ is closed under strict opens;
effective strict-open gluing of $\mathcal G$-objects returns to
$\mathcal G$ by Lemma~\ref{lem:banach-local-representability}. Full
faithfulness of $\operatorname{Chart}$ is
Lemma~\ref{lem:banach-chart-eval}.

\emph{(2)} The terminal object is $a(j*)=\operatorname{Chart}(*)$,
terminal against all tests by
\eqref{eq:banach-hyp-affine-mapping}. For $X\to Z\leftarrow Y$ in
$\mathcal G$ form the presheaf fibre product
$\Phi=h_X\times_{h_Z}h_Y$, an ordinary sheaf by the remark above.
Choose a hyp-affine atlas $a(u_k)\to Z$; its strict-open preimages
in $X$ and $Y$ exist by \eqref{eq:banach-open-restriction} and lie
in $\mathcal G$, and we choose hyp-affine atlases of them. Applying
Lemma~\ref{lem:banach-local-representative} to their actual maps to
$a(u_k)$ yields, after refinement, hyp-affine pieces $a(v_{ki})$,
$a(w_{kj})$ strict open in $X$, $Y$ and $\E_{\mathrm{cell}}$-maps
$v_{ki}\to u_k\leftarrow w_{kj}$ representing the given maps up to
chosen homotopies. Set
\[
 P_{kij}=a\!\left(v_{ki}\times_{u_k}w_{kj}\right),
\]
which exists with the universal local-pullback property by
Lemma~\ref{lem:banach-hyp-affine-mapping}. The maps
$h_{P_{kij}}\to\Phi$ are strict-open representable: after any test
$h_Q\to\Phi$ the pullback is the intersection of the preimages in
$Q$ of the chosen source opens, computed by
\eqref{eq:banach-open-restriction}, and the compatibility over $Z$,
including its homotopy, is retained. These maps cover $\Phi$: given
$Q\to\Phi$, pull the atlas of $Z$ back along the common composite
using its homotopy, then pull the $X$- and $Y$-atlases back and
intersect; no finiteness of atlases is used.
Lemma~\ref{lem:banach-local-representability} represents $\Phi$ by an
actual $P\in\mathcal G$, whence for every
$Q\in\mathrm{Str}_T^{\mathrm{vh}}$
\[
 \Map(Q,P)\simeq
 \Map(Q,X)\times^h_{\Map(Q,Z)}\Map(Q,Y).
\]
This is the required geometric pullback; with the terminal object it
gives finite-limit closure and universality.

\emph{(3)} The closure $\mathcal C$ exists inside $\mathcal G$ by
parts (1)--(2) and is essentially small once the universe is fixed:
the initial charts, their mapping spaces, finite diagrams and
subterminal lattices are small, and iterating finitary operations
and equivalences preserves essential smallness. Every $a(u)$,
$u\in\E_{\mathrm{cell}}$, is built from $j(T)$ by finite limits;
applying $a$ and Lemma~\ref{lem:banach-hyp-affine-mapping} stepwise
places it in $\mathcal C$. Conversely each $\mathcal C$-object lies
in $\mathcal G$ and has a hyp-affine atlas, so $\mathcal G$ is the
strict-open $\mathcal C$-gluing class.
\end{proof}

Give $\mathcal C$ the topology generated by its intrinsic
strict-open covers. Effective structured gluing shows that
$h_Y(X)=\Map_{\mathrm{Str}_T^{\mathrm{vh}}}(X,Y)$ is an ordinary
sheaf on $\mathcal C$ for every
$Y\in\mathrm{Str}_T^{\mathrm{vh}}$; in particular the open topology
is subcanonical before hypercompletion. Subcanonicity here
follows from effective structured gluing.

\begin{samepage}
\subsection{Hyperdescent and representability}
We prove coherent split-open refinement and its
set-sized enriched localization consequence.
\label{subsec:banach-g2}
Together these assertions supply hyperdescent for the
represented functors. Their proofs are in
Appendix~\ref{app:split-open-hyperdescent}.
\end{samepage}

Let
\[
 \mathcal P_{\mathcal C}
 =\Fun(\mathcal C^{\op},\mathcal S)
\]
be the original enriched presheaf infinity-category.
All matching objects in the following refinement theorem
are augmented homotopy matching objects over the fixed
representable $h_X$, formed in $\mathcal P_{\mathcal C}$
before establishing hyperdescent of the represented functors.
Write
\[
 M_n^X(V)
 =
 h_X\times^h_{\{\partial\Delta[n],c h_X\}^h}
             \{\partial\Delta[n],V\}^h.
\]
Here $\{-,-\}^h$ denotes the homotopy weighted limit.
In particular,
\[
 M_0^X(V)=h_X,\qquad
 M_1^X(V)=V_0\times^h_{h_X}V_0.
\]

\begin{thm}[Split-open refinement and enriched localization]
Fix the small Kan-enriched presentation of $\mathcal C$
with its strict-open topology $\tau$.
\label{lem:banach-hypercover-refinement}
The following two assertions, denoted
$\mathbf{SO}(\mathcal C)$, hold.

First, for every test $X\in\mathcal C$, every
semi-representable augmented hypercover of $h_X$
in the original enriched presheaves admits
a coherent split refinement whose nondegenerate
components are represented by strict opens of $X$.
Their chosen full-component models give a strict
augmented diagram with all face and degeneracy data.

Second, choose an infinite small $\alpha$ bounding
the objects and all mapping-space simplices of the
fixed site and $\aleph_0$. Refinements can be chosen
with at most $\alpha$ total nondegenerate components
and total source data of size at most $\alpha$.
Normalized source augmentations form a set
$\mathscr S$ of at most $2^\alpha$ maps, including
$0\to h_\varnothing$, with
\[
 L_{\mathscr S}\mathbf P_{\mathrm{inj}}
       =\mathbf P_{\mathrm{inj},\tau}.
\]
Here $\mathbf P_{\mathrm{inj}}$ is the global injective
model on the original enriched presheaves, and
$\mathbf P_{\mathrm{inj},\tau}$ is its hypersheaf localization.
This is localization at all enriched semi-representable
hypercovers in the same universe. An auxiliary regular small
cardinal $\lambda_{\mathrm{SO}}>2^\alpha$ may be chosen
in the standing presheaf universe. These uniform bounds
concern augmentation sources. The sizes of target
replacements and refinement maps are constrained only
by the standing universe, and covers may be infinite.
\end{thm}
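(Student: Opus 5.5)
The argument has two stages. The first builds coherent split refinements degree by degree, using the local structure of $\mathcal C$. The second bounds their size and identifies the resulting Bousfield localization with the hypersheaf localization.

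\emph{Split refinement.} Let $V_\bullet\to h_X$ be a semi-representable augmented hypercover in $\mathcal P_{\mathcal C}$. We work by induction on $n$. Assume that degrees $<n$ have already been replaced by coproducts of strict opens of $X$. These are chosen with full-component models and carry strict face and degeneracy data.

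The degree-$n$ homotopy matching object $M_n^X$ of this partial refinement is then computed from strict opens. By Theorem~\ref{thm:banach-geometric-limits}(2), and since strict opens are monic and intersect by \eqref{eq:banach-open-restriction}, its components are represented by strict opens of $X$. More precisely, each component is an intersection of strict opens, indexed by the nondegenerate boundary data. The map $V_n\to M_n^X(V)$ is an effective epimorphism of sheaves of components. Hence, over each representable component $h_W$ of the matching object, there is a strict-open cover of $W$ on which sections lift to $V_n$. Lemma~\ref{lem:banach-local-representative} supplies these local lifts, together with the strict-open basis of Corollary~\ref{cor:banach-affine-basis}.

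Take the new nondegenerate components in degree $n$ to be the disjoint union of these covering opens. Then add the degenerate part by the usual split (latching) construction, so that degeneracies are strict. The refinement map to $V_n$ comes from the chosen lifts and their homotopies. Chosen homotopies are rectified by strictly replacing each open by its full-component model, using that maps between strict opens of a fixed $X$ over $X$ form contractible or empty spaces, since they are monomorphisms. This contractibility is what makes the coherent data strict.

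\emph{Cardinality.} Bound the degree-$n$ refinement as follows. Any strict-open cover of a $\mathcal C$-object by basis opens can be refined to one indexed by at most $\alpha$ opens. The reason is that the subterminal lattice of each object is small and bounded by $\alpha$, given our choice of $\alpha$ bounding objects and mapping-space simplices. Summing over countably many degrees and $\alpha$ components keeps the total at most $\alpha$, because $\alpha\ge\aleph_0$ is infinite. Normalizing the source augmentations up to isomorphism, a source is a countable diagram of coproducts of at most $\alpha$ strict opens. There are therefore at most $2^\alpha$ of them, which gives the set $\mathscr S$; the empty cover contributes $0\to h_\varnothing$.

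\emph{Localization.} For $L_{\mathscr S}\mathbf P_{\mathrm{inj}}=\mathbf P_{\mathrm{inj},\tau}$ we show two inclusions of local-equivalence classes.
\begin{itemize}
\item Every map in $\mathscr S$ is a $\tau$-hypercover. Hence every hypersheaf, i.e.\ every $\tau$-local object, is $\mathscr S$-local.
\item Conversely, let $F$ be $\mathscr S$-local and let $V\to h_X$ be an arbitrary semi-representable hypercover. By the first assertion, $V\to h_X$ admits a refinement $V'\to V$ in $\mathscr S$. Since the refinement maps $V'\to V$ are themselves hypercovers (effective on matching objects), the comparison $\Map(V,F)\to\Map(V',F)$ can be analyzed by a two-out-of-three and retract argument. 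The argument runs by comparing the refined common refinement of $V$ and $V'$ again in $\mathscr S$, so $F$ satisfies descent for $V$ as well.
\end{itemize}
Finally, cite the HAG~I identification of hypersheaf localization with localization at semi-representable hypercovers (as in Lemma~\ref{lem:derived-model-dictionary}). The auxiliary cardinal $\lambda_{\mathrm{SO}}>2^\alpha$ is used for the small object argument.

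The main obstacle is the converse inclusion. The step to be secured is that refinement maps are detected by $\mathscr S$-local objects without presupposing hyperdescent. My first attempt would be to iterate refinement into a cofinal system inside the small set and use the Reedy-cofibrancy of injective models to pass to homotopy colimits.
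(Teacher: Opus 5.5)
Your refinement and cardinality stages follow the paper's outline. The converse inclusion of the localization, which you yourself flag, is a genuine gap, and the two-out-of-three/retract route you sketch cannot close it.

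First, the refinement $V'\to V$ is not a hypercover. The construction only arranges that the new degree-$n$ opens cover the augmented matching object $M_n^X(V')$ over $h_X$, with \emph{chosen} lifts into $V_n$. So $V'_n\to M_n^X(V')\times^h_{M_n^X(V)}V_n$ need not be a local cover; already in degree zero, $V'_0$ may land in only some components of $V_0$. The paper says explicitly that the refinement need not be levelwise covering.

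Second, even for a genuine hypercover of the nonrepresentable $|V|_{\mathcal P}$, descent along it is not among the $\mathscr S$-local conditions; it is what is to be proved. $\mathscr S$-locality of $F$ only makes the composite $\Map(h_X,F)\to\Map(|V|_{\mathcal P},F)\to\Map(|V'|_{\mathcal P},F)$ an equivalence. That exhibits $\Map(h_X,F)$ as a homotopy retract of $\Map(|V|_{\mathcal P},F)$, nothing more. Iterating refinements does not help: two-out-of-six would again need $\Map(|V|_{\mathcal P},F)\to\Map(|V''|_{\mathcal P},F)$ to be an equivalence, i.e. descent along a refinement map.

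The missing idea is to use refinement only to transport the \emph{existence} of lifts, never descent for the unrefined hypercover. Switching to the paper's notation ($U$ the given hypercover, $V$ its refinement), the argument runs as follows.
\begin{enumerate}
\item By the Dugger--Hollander--Isaksen comparison criterion, it suffices to show the following. A fibration $f\colon F\to G$ of $L_{\mathscr S}\mathbf P_{\mathrm{inj}}$ between $\mathscr S$-fibrant objects which is a local $\pi_*$-equivalence must be an objectwise trivial fibration.
\item A boundary problem is a vertex $b\colon h_X\to B$, where $A=F^{\Delta[m]}$ and $B=F^{\partial\Delta[m]}\times_{G^{\partial\Delta[m]}}G^{\Delta[m]}$. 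The map $p\colon A\to B$ is an injective fibration between fibrant $\mathscr S$-local objects. It is locally weak because the left exact $a_{\mathrm{hyp}}$ preserves finite cotensors.
\item The framed form of HAG~I, Lemma~3.4.3, gives $H_\bullet\to R(h_X)^{\Delta[\bullet]}$ with a lift. Pulling back along the constant extension from $c\,h_X$ yields a semi-representable augmented hypercover $U\to h_X$ with a coherent lift $|U|_{\mathcal P}\to A$ over $b$.
\item Restrict along the bounded refinement $V\to U$, and strictify using injective cofibrancy of $|V|_{\mathcal P}$. Since $V\in\mathscr S$, the map $\Map(h_X,-)\to\Map(|V|_{\mathcal P},-)$ is an equivalence on both $A$ and $B$. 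Hence the fibre over $b$ is equivalent to the nonempty fibre over $b|_V$, and $b$ lifts.
\end{enumerate}
Only the one-directional map $V\to U$ is used, which is exactly why noncovering refinements suffice. The two localizations then share weak equivalences and cofibrations, hence coincide.

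In the refinement stage, note also that Lemma~\ref{lem:banach-local-representative} concerns maps of hyp-affines and is not the relevant input. The paper does the following instead:
\begin{itemize}
\item it Reedy-fibrantly replaces $U$ in the injective slice over $h_X$;
\item it lifts strictly by the HAG local-cover criterion with path lifting;
\item it extends from $h_W$ to $O_X(W)$ by Lemma~\ref{lem:banach-open-lift-extension}.
\end{itemize}
Contractible open factorizations strictify the source diagram. The lifts into $U_n$, however, remain choices, and it is the Reedy extension that makes them strict.
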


\begin{proof}
Strict opens are monomorphisms before sheafification
and form a covering basis. Their full-component
models $O_X(W)\subset h_X$ retain the entire enriched
action and compute intersections literally
(Lemma~\ref{lem:banach-open-monomorphic-models}).
For blockwise maps of indexed open models, finite
slice limits retain compatible index tuples and
their intersections, including labelled empty
intersections. These diagrams are Reedy fibrant
in the global projective slice, whose matching
computes the displayed augmented homotopy matching
(Lemma~\ref{lem:banach-open-slice-matching}).

Strictify $U$ and replace it Reedy-fibrantly in
the global injective slice over $h_X$. The
induction starts with an open cover of $X$
refining its degree-zero local lifting problem.
Given a split partial refinement, its
augmented matching components are finite intersections
of the chosen opens over $X$, with their component
indices retained. The required lifting map is
\begin{equation}
\label{eq:banach-relative-matching}
 M_n^X(V)\times^h_{M_n^X(U)}U_n
 \longrightarrow M_n^X(V).
\end{equation}
Its strict model is an injective fibration and
local cover over the constructed matching object.
Strict representable lifts on open covers extend
to the $O_X(W)$ models by the weak equivalence of
Kan slice mapping spaces in
Lemma~\ref{lem:banach-open-lift-extension}.
The Reedy latching extension retains these full
matching lifts and all target coherence, with the
degree-zero, -one and -two checks given in
Lemma~\ref{lem:banach-bounded-open-refinement}.
Lifts into $U_n$ remain choices even though
factorizations between opens are contractible.

Lemma~\ref{lem:banach-split-open-size} bounds and
normalizes only the augmentation sources.
Theorem~\ref{lem:banach-split-open-localization}
then identifies the localized model structures using
the framed augmented HAG pullback,
boundary lifting between local fibrant objects,
and the abstract model-comparison criterion. This
step turns refinement into a generating-set description
of enriched hypersheaf localization.
\end{proof}

\begin{thm}[Represented hyperdescent]
Every $h_Y$, $Y\in\mathcal G$, is hypercomplete.
\label{thm:banach-representability}
The functor
\[
 h:\mathcal G\longrightarrow\Sh(\mathcal C)^\wedge
\]
is fully faithful and preserves finite limits.
Effective strict-open descent gives ordinary-sheaf
full faithfulness, and geometric finite limits give
finite-limit preservation. Split-open refinement and
enriched localization supply hypersheaf membership.
\end{thm}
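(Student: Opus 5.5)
The plan has three parts, taken in this order: ordinary-sheaf full faithfulness, finite-limit preservation, and hypersheaf membership. Only the last uses $\mathbf{SO}(\mathcal C)$.

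\emph{Ordinary-sheaf full faithfulness.} I would use the remark preceding Theorem~\ref{thm:banach-geometric-limits}. For every $Y\in\mathrm{Str}_T^{\mathrm{vh}}$, effective structured gluing makes $h_Y(X)=\Map(X,Y)$ an ordinary strict-open sheaf on $\mathcal C$. By Theorem~\ref{thm:banach-geometric-limits}(3), every object of $\mathcal G$ is a strict-open gluing of $\mathcal C$-objects, and that gluing is a colimit in $\mathrm{Str}_T^{\mathrm{vh}}$. Hence $\Map_{\mathcal G}(Y',Y)$ is the limit over the Cech diagram of an atlas $\{C_i\}$ of $Y'$ of the values $h_Y(C_{i_0\cdots i_k})$. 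The sheaf condition for $\Map(h_{Y'},h_Y)$ in $\Sh(\mathcal C)$ gives the same limit: $h_{Y'}$ is the colimit of the representables of its atlas Cech nerve, because the atlas maps form a strict-open cover and so are an effective epimorphism of sheaves, and then Yoneda on $\mathcal C$ applies. Comparing the two limits gives full faithfulness into $\Sh(\mathcal C)$.

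\emph{Finite-limit preservation.} This follows from Theorem~\ref{thm:banach-geometric-limits}(2), whose limit cones are universal against every test in $\mathrm{Str}_T^{\mathrm{vh}}$, in particular against $\mathcal C$-objects. Limits in $\Sh(\mathcal C)$ are computed sectionwise, so the cone is preserved. Once hypercompleteness is known, the full subcategory $\Sh(\mathcal C)^\wedge\subseteq\Sh(\mathcal C)$ inherits both full faithfulness and limit preservation, since it is closed under limits.

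\emph{Hypersheaf membership.} This is the main step. The second clause of $\mathbf{SO}(\mathcal C)$ in Theorem~\ref{lem:banach-hypercover-refinement} identifies the hypersheaf localization $\mathbf P_{\mathrm{inj},\tau}$ with $L_{\mathscr S}\mathbf P_{\mathrm{inj}}$. It therefore suffices to show that $h_Y$ is local with respect to each normalized augmentation $|V_\bullet|\to h_X$ in $\mathscr S$, that is, that
\[
 h_Y(X)\to\lim_{\Delta}\Map(V_\bullet,h_Y)
\]
is an equivalence.

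By the first clause, I may replace an arbitrary semi-representable hypercover by a coherent split refinement whose nondegenerate components are strict opens of $X$. For such a split diagram the totalization reduces to a limit over the opens themselves. The opens are actual subobjects with literal intersections (Lemma~\ref{lem:banach-open-monomorphic-models}), so the hypercover is an iterated sequence of open covers of intersections. Descent for $h_Y$ along each stage is ordinary strict-open descent, already established, and an induction on skeleta gives descent along the whole split hypercover. The empty augmentation $0\to h_\varnothing$ is handled by $h_Y(\varnothing)\simeq *$, since $\varnothing$ is initial in $\mathrm{Str}_T^{\mathrm{vh}}$.

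The hard point is the passage from refinements back to the original hypercovers in $\mathscr S$. Refinement alone only gives that $h_Y$ sends the refined augmentation to an equivalence. To conclude the same for the original, I would show that a hypercover refinement induces an equivalence after $L_{\mathscr S}$, so that locality transfers between the two. I would try the factorization through the relative matching maps \eqref{eq:banach-relative-matching} first: each is a local cover, so an $\mathscr S$-local object sees the comparison map as an equivalence. If that fails, I would use the model identification directly, since $L_{\mathscr S}$-local objects are exactly the fibrant hypersheaves. Representables $h_Y$ of $\mathcal C$-objects are then handled first, and the general $Y\in\mathcal G$ follows by gluing, using closure of hypersheaves under limits and the Cech descent above.
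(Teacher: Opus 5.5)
Your full-faithfulness and finite-limit steps are essentially the paper's: the Cech nerve of an atlas with Yoneda applied on the intersections $C_{i_0\cdots i_n}\in\mathcal C$, and universality of the cones of Theorem~\ref{thm:banach-geometric-limits}. The genuine gap is in the hypersheaf step. You deduce descent of $h_Y$ along the split-open generators in $\mathscr S$ from ordinary strict-open descent at each stage together with an induction on skeleta. Stagewise Cech descent only gives descent along bounded (eventually coskeletal) hypercovers, or along all hypercovers when the target is truncated. The refinements in $\mathscr S$ are built by countable induction through all degrees. Passing from finite stages to the full totalization is exactly the convergence that hypercompleteness supplies.

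Your argument uses only the ordinary sheaf property of $h_Y$ and the literal intersection behaviour of opens. It would therefore prove that every ordinary strict-open sheaf is a hypersheaf, i.e.\ that $\Sh(\mathcal C)$ is hypercomplete. The same reasoning on the open-set site of the Hilbert cube would give a false conclusion. A related symptom is that you never use valuewise hypercompleteness. Your argument would apply verbatim to $X\mapsto\Gamma_X\mathcal O_X(U)$ over non-hypercompleted spectra $\Spec_{\E}u$, where the paper's construction needs $H$ precisely because hyperdescent is not automatic there.

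The missing idea is to use the hypercomplete \emph{structure values} of the test rather than the sheaf property of $h_Y$. For a chart target $Y=\operatorname{Chart}(U)$, chart evaluation gives $h_Y(W)\simeq\Map_{\mathcal X_X}(a_W,\mathcal O_X(U))$. Replacing the open labels of a generator by subterminals gives an internal hypercover of $1_X$. The whole cosimplicial descent diagram, not just its terms, is then identified with internal descent for $\mathcal O_X(U)$ (Lemma~\ref{lem:banach-internal-open-hypercover}). That descent holds because $\mathcal O_X(U)$ is hypercomplete, even though the underlying topos need not be. This makes $h_Y$ $\mathscr S$-local, and the localization clause of $\mathbf{SO}(\mathcal C)$ makes it a hypersheaf.

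The paper then propagates this in three steps:
\begin{itemize}
\item to finite limits, using closure of hypersheaves under limits;
\item to strict opens, via $h_W=h_C\times_\Omega 1$ with $\Omega$ $0$-truncated and hence hypercomplete;
\item to $Y\in\mathcal G$, via the monomorphic-cover half of Lemma~\ref{lem:banach-open-gluing}.
\end{itemize}
Gluing is a colimit, so your appeal to closure of hypersheaves under limits does not cover that last step.

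Finally, the ``hard point'' you isolate is not one. The generators in $\mathscr S$ are already the normalized split-open augmentations, so no passage back from refinements to the original hypercovers is needed. Your fallback, that $\mathscr S$-local objects are exactly the fibrant hypersheaves, is the conclusion you want rather than a means of proving that $h_Y$ is local.
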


\begin{proof}
\emph{Chart targets and finite closure.}
For $Y=\operatorname{Chart}(U)$, chart evaluation
identifies $h_Y(X/a)$ with
$\Map_X(a,\mathcal O_X(U))$.
Lemma~\ref{lem:banach-internal-open-hypercover}
replaces all indexed open components by internal
subterminals and proves the internal hypercover
condition. Its natural equivalence of the entire
open-poset diagram identifies the cosimplicial
descent diagram, not just its terms. Hypercompleteness
of $\mathcal O_X(U)$, not of the underlying topos,
therefore gives descent. The generating-set clause
of Theorem~\ref{lem:banach-hypercover-refinement}
makes $h_Y$ a hypersheaf.

Restricted Yoneda preserves limits in its target.
All these functors are already ordinary sheaves
by effective structured gluing. We therefore take
subobjects in $\Sh(\mathcal C)$, whose $0$-truncated
subobject classifier $\Omega$ is hypercomplete.
For a strict open $W\to C$, its mono is classified
there and
$h_W=h_C\times_\Omega1$. Thus if $h_C$ is
hypercomplete, so is $h_W$, by finite-limit closure.
Induction over finite limits and strict opens
gives the assertion for every $C\in\mathcal C$.

\emph{Locally affine targets.} We first work in the \emph{ordinary}
topos $\Sh(\mathcal C)$, before membership in the hypercomplete
subcategory is known. For a strict-open atlas $C_i\to Y$, $h_Y$ is
already a sheaf by
Subsection~\ref{subsec:banach-g1}. The $h_{C_i}\to h_Y$ are monos
and form an effective epi cover, since pulling the atlas back along
any test map yields an open cover of the test. Their sources are
hypercomplete by the affine case, so the monomorphic-cover half of
Lemma~\ref{lem:banach-open-gluing} applies. Explicitly,
for hypercompletion $L$ and a mono $A_i\to A$,
classification by $\Omega$ makes the unit square
cartesian:
\[
 A_i\simeq A\times_{LA}LA_i.
\]
The hypercomplete atlas members have equivalence
units. Their images cover $LA$, and effective-epi
descent detects that $A\to LA$ is an equivalence.
Apply this to $A=h_Y$ and $A_i=h_{C_i}$.
The atlas index set is any permitted small set,
not necessarily bounded by the generator cardinal
$\alpha$. Thus $h_Y$ lies in $\Sh(\mathcal C)^\wedge$.

\emph{Full faithfulness.} Let $C_i\to X$ be a hyp-affine atlas with
arbitrary small index set $I$; no finite subcover is assumed. Every
intersection $C_{i_0\cdots i_n}$ is a strict open of a hyp-affine,
hence in $\mathcal C$, and effective structured descent gives
\[
 \Map_{\mathcal G}(X,Y)=
 \operatorname*{Tot}_{[n]}\ \prod_{(i_0,\dots,i_n)\in I^{n+1}}
 \Map_{\mathcal G}(C_{i_0\cdots i_n},Y).
\]
In $\Sh(\mathcal C)$ the Cech nerve of the effective cover
$\coprod_i h_{C_i}\to h_X$ has degree-$n$ term
\[
 \coprod_{i_0,\dots,i_n}h_{C_{i_0\cdots i_n}}.
\]
These coproducts need not be single affines. Mapping the nerve into $h_Y$ produces a
totalisation of \emph{products} of mapping spaces, and Yoneda
applies componentwise to each intersection
$C_{i_0\cdots i_n}\in\mathcal C$, identifying those factors with
the display above. This proves full mapping-space
faithfulness in $\Sh(\mathcal C)$, hence in its full hypercomplete
subcategory; finite-limit preservation follows from the universal
property of represented mapping spaces.
\end{proof}

The full subcategory of
hypersheaves equivalent to some $h_X$ is equivalent to
$\mathcal G$. For every finite indexing shape $K$, full
faithfulness then identifies whole coherent diagrams:
\[
 \Fun(K,\mathcal G)\simeq\Fun(K,h(\mathcal G)).
\]
Together with
Theorem~\ref{thm:banach-geometric-limits}, this supplies
the finite weighted matching and prism objects needed
for the hypercomplete groupoid construction.
The same diagram-lifting statement already holds in
the ordinary fully faithful represented image.

\subsection{A genuinely derived Banach intersection}
\label{subsec:banach-intersection}

\begin{prop}[Derived Banach self-intersection]
\label{prop:banach-derived-intersection}
For a Banach space $H$ whose whole-space domain belongs to $T$,
the finite limit
\begin{equation}
\label{eq:banach-DH}
 D_H=\operatorname{Chart}(*)
 \times^h_{\operatorname{Chart}(H)}\operatorname{Chart}(*)
\end{equation}
at the origin exists in $\mathcal C$, is universal against
$\mathrm{Str}_T^{\mathrm{vh}}$, and satisfies
\[
 \Map(F,D_H)\simeq H_{\mathrm{add,disc}},\qquad
 \Map(\operatorname{Chart}(S),D_H)\simeq *
\]
for the structured test $F$ of \eqref{eq:banach-square-zero} and
every ordinary chart with $S\in T$. For $H\neq0$ the first space
is noncontractible,
so $D_H$ is not the ordinary point despite being invisible to all
ordinary chart tests; this holds for infinite-dimensional $H$ as
well as for $H=\R$.
\end{prop}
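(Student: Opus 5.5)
Existence, membership in $\mathcal C$ and universality come directly from the earlier results. The cospan consists of classical charts: $\operatorname{Chart}(*)=a(j*)$ and $\operatorname{Chart}(H)=a(jH)$, using $a(jU)=\operatorname{Chart}(U)$ from Lemma~\ref{lem:banach-hyp-affine-mapping}. The origin is the smooth map $0\colon *\to H$ in $T$, so the pullback $u=j*\times_{jH}j*$ lies in $\E_{\mathrm{cell}}$. Lemma~\ref{lem:banach-hyp-affine-mapping} then says $a(u)$ is a limit cone universal against every test in $\mathrm{Str}_T^{\mathrm{vh}}$. Theorem~\ref{thm:banach-geometric-limits}(3) places it in $\mathcal C$, so we set $D_H=a(u)$.

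The mapping spaces are computed by universality together with chart evaluation (Lemma~\ref{lem:banach-chart-eval}). For any $Z\in\mathrm{Str}_T^{\mathrm{vh}}$ this gives
\[
 \Map(Z,D_H)\simeq \Gamma_Z\mathcal O_Z(*)\times^h_{\Gamma_Z\mathcal O_Z(H)}\Gamma_Z\mathcal O_Z(*),
\]
with both maps induced by the origin. The structure value $\mathcal O_Z(*)$ is terminal, so this is the path space of $\Gamma_Z\mathcal O_Z(H)$ from the origin section to itself, namely $\Omega_0\Gamma_Z\mathcal O_Z(H)$.

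For $Z=\operatorname{Chart}(S)$, the global sections of $\mathcal O_S(H)$ form the discrete set $C^\infty(S,H)$, since the chart structure is a sheaf of sets on $\operatorname{Opens}(S)$. Its based loop space is therefore contractible. For $S=\emptyset$ both sides are contractible as well.

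For $Z=F$ on the topos $\mathcal S$, global sections are the values themselves: $F(H)=\coprod_{h\in H}K(H_{\mathrm{add}},1)$. The origin lands in the $h=0$ component at its basepoint, because $d$ of a constant map is zero. The loop space at that point is $\Omega K(H_{\mathrm{add}},1)\simeq H_{\mathrm{add,disc}}$. Before using this, we must confirm that $F$ really is a test object. It lies in $\mathrm{Str}_T^{\mathrm{vh}}$ because its values are $1$-truncated, hence hypercomplete in $\mathcal S$, and Subsection~\ref{subsec:banach-hypercompletion} already notes that $H$ fixes $F$.

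For $H\neq 0$, the set $H_{\mathrm{add,disc}}$ has more than one point, so $\Map(F,D_H)$ is noncontractible. Suppose $D_H$ were equivalent to the terminal object $\operatorname{Chart}(*)$. Then $\Map(F,D_H)$ would be contractible, a contradiction. Nothing in the argument uses finite-dimensionality, so infinite-dimensional $H$ works as well as $H=\R$.

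The main obstacle is the middle identification, specifically the claim that the canonical extension $\mathcal O_Z$ to $\E$ sends the finite limit $u$ to the corresponding homotopy pullback of values. This requires the extension to be left exact on $\E_{\mathrm{cell}}$, which holds by \cite[Proposition~3.4.5]{LurieDAGV2009}. The remaining care is the explicit evaluation of $F$ at $*$ and at the origin map, tracking the basepoint as above, so that the fibre product is genuinely a based loop space.
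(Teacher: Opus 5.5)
Your proof is correct and follows the paper's argument: universality of the limit cone together with chart evaluation reduces both mapping spaces to $*\times^h_{\Gamma_Z\mathcal O_Z(H)}*$. This is contractible for the discrete chart values and equals $\Omega K(H_{\mathrm{add}},1)\simeq H_{\mathrm{add,disc}}$ for $F$. You differ in two small ways: you obtain existence directly as $a(j*\times_{jH}j*)$ via Lemma~\ref{lem:banach-hyp-affine-mapping} instead of citing Theorem~\ref{thm:banach-geometric-limits}(2), and the left-exactness ``obstacle'' you flag is unnecessary, since universality plus chart evaluation on the three chart corners already gives the displayed fibre product.
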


\begin{proof}
Existence and universality are
Theorem~\ref{thm:banach-geometric-limits}(2)--(3). Chart evaluation
(Lemma~\ref{lem:banach-chart-eval}) and the coreflection
(Theorem~\ref{thm:banach-coreflection}) compute
$\Map(F,D_H)\simeq *\times^h_{F(H)}*\simeq H_{\mathrm{add,disc}}$
from \eqref{eq:banach-square-zero}: the component of
$F(H)=\coprod_{u\in H}K(H_{\mathrm{add}},1)$ selected by the origin
is $K(H_{\mathrm{add}},1)$. For an ordinary chart $S$ the structure
values are discrete, so the same computation gives
$\Map(\operatorname{Chart}(S),D_H)=*$. The test $F$ need not itself
lie in $\mathcal G$: it is an object of the larger test category
$\mathrm{Str}_T^{\mathrm{vh}}$ against which the displayed limit is
universal. This is genuinely more than adjoining the ordinary
classifying prestack $S\mapsto K(C^{\infty}(S,H),1)$ to a Banach
sheaf topos, which only adds ordinary stackiness and leaves the
self-intersection of two points over the ordinary representable $\y H$ a
point. The computation applies to every admitted $H$.
\end{proof}

\subsection{Represented Banach groupoids and the hypercomplete CFO}
\label{subsec:banach-cfo}

Define the generated smooth and cover classes on $\mathcal G$
and its represented image. Let $\mathrm{Smooth}_{\mathrm{gen}}$
be the smallest class containing equivalences, strict opens and
the projections $Y\times\operatorname{Chart}(V)\to Y$ for
$V\in T$, closed under composition and homotopy base change,
with membership local on both source and target for strict-open
covers. Explicitly, a map belongs to the class if its restrictions
to a strict-open cover of its source do; it also belongs to the
class if its base changes to a strict-open cover of its target do.
A map $f:X\to Y$ belongs to $\mathrm{Cov}_{\mathrm{gen}}$ if it
belongs to $\mathrm{Smooth}_{\mathrm{gen}}$ and there is a
strict-open cover $\{Y_i\to Y\}$ with maps $s_i:Y_i\to X$ and
homotopies $f s_i\simeq(Y_i\to Y)$. For the empty target use
the empty covering family. These covers are distinct from the
pregeometry's admissibles. The closure definition of
$\mathrm{Smooth}_{\mathrm{gen}}$ applies to derived objects through
homotopy base change and strict-open locality; ordinary
local-product charts provide examples below.

\begin{lem}[Cover interface]
\label{lem:banach-cover-interface}
$\mathrm{Cov}_{\mathrm{gen}}$ is stable under composition and
homotopy base change and is target-open local.
Its represented maps are effective epimorphisms in
$\Sh(\mathcal C)$, and their hypercompletions are
effective epimorphisms in $\Sh(\mathcal C)^\wedge$.
By Theorem~\ref{thm:banach-representability}, these
are already the hypersheaf represented maps.
\end{lem}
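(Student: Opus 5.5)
The proof has three parts, taken in order: stability of $\mathrm{Cov}_{\mathrm{gen}}$ under composition and homotopy base change, target-open locality, and effective epimorphy of the represented maps.

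\emph{Composition and base change.} Let $f\colon X\to Y$ and $g\colon Y\to Z$ lie in $\mathrm{Cov}_{\mathrm{gen}}$. The composite $gf$ is in $\mathrm{Smooth}_{\mathrm{gen}}$ by its closure under composition, so only local sections need to be built. Choose a strict-open cover $\{Z_j\}$ with sections $t_j\colon Z_j\to Y$. Pull the cover $\{Y_i\}$ of $Y$ back along $t_j$; by \eqref{eq:banach-open-restriction} this gives strict opens $Z_{ji}=t_j^{-1}Y_i\subseteq Z_j$ covering $Z_j$, hence covering $Z$ together. On $Z_{ji}$ take the section $s_i t_j$. Its homotopy to the inclusion is the pasting of $gs_it_j\simeq g(Y_i\to Y)t_j$ with $gt_j\simeq(Z_j\to Z)$.

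For base change along $Z'\to Y$, the pullback $X\times^h_YZ'\to Z'$ lies in $\mathrm{Smooth}_{\mathrm{gen}}$ by closure. Restricting the cover $\{Y_i\}$ to $Z'$ gives strict opens $Z'_i$ covering $Z'$. The pair $(s_i,\mathrm{incl})$ with the chosen homotopy defines a section of the pullback over $Z'_i$, by the universal property of Theorem~\ref{thm:banach-geometric-limits}(2). The empty target uses the empty family.

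\emph{Target-open locality.} Suppose every base change of $f$ to a strict-open cover $\{Y_k\}$ of $Y$ is a cover. Then $f\in\mathrm{Smooth}_{\mathrm{gen}}$ by the target-locality clause in its definition. For the sections, compose the sections over the opens of each $Y_k$ with the projection of the pullback to $X$, and use that strict opens of strict opens are strict opens of $Y$.

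\emph{Effective epimorphy.} Let $f\colon X\to Y$ be a represented cover and take a test $h_Q\to h_Y$. Pull the cover $\{Y_i\}$ back to a strict-open cover $\{Q_i\}$ of $Q$. Over each $Q_i$ the sections $s_i$ give lifts to $h_X$, up to the chosen homotopies. So $h_X\to h_Y$ is surjective on $\pi_0$ locally, meaning $\pi_0$-sheaf epi, and hence an effective epimorphism in $\Sh(\mathcal C)$. I expect this to be the delicate point, because one must check that the topology on $\mathcal C$ generated by strict opens is the one whose covers arise here. This holds because strict opens of $\mathcal C$-objects stay in $\mathcal C$ (Theorem~\ref{thm:banach-geometric-limits}(3)). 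Hypercompletion $L^\wedge$ is a left exact left adjoint, so it preserves effective epimorphisms, since these are detected by colimits of Čech nerves. By Theorem~\ref{thm:banach-representability} the $h_X$ are already hypercomplete, so $L^\wedge h_f=h_f$; this identifies the two statements.
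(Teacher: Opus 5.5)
Your proposal is correct and follows essentially the same route as the paper: composite sections $s_it_j$ over the pulled-back opens, sections carried along the base change, composed covering families for target locality, and effective epimorphy from the sections. The paper phrases the last step as the effective open cover $\coprod_i h_{Y_i}\to h_Y$ factoring through $h_X$, which is the same local $\pi_0$-lifting argument. One small slip: in the homotopy for the composite you mean $gfs_it_j\simeq g(Y_i\to Y)t_j$, with $t_j$ restricted to $Z_{ji}$ so that it factors through $Y_i$.
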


\begin{proof}
For $f\colon X\to Y$ and $g\colon Y\to Z$ in
$\mathrm{Cov}_{\mathrm{gen}}$ choose sections $t_j\colon Z_j\to Y$ of
$g$ over a target-open cover, and $s_i\colon Y_i\to X$ of $f$ over a
cover of $Y$. Pulling $Y_i$ back along each $t_j$, the composites
$s_i t_j$ are sections of $gf$ over the resulting opens, and $gf$
lies in $\mathrm{Smooth}_{\mathrm{gen}}$ by closure; a pullback of a
section is a section of the base-changed map, giving base-change
stability, and composing covering families gives target locality.
Finite products of cover maps are composites of base changes of
those cover maps, hence covers. For representables, the effective
open cover $\coprod_i h_{Y_i}\to h_Y$ factors through $h_X$ via the
sections, so $h_X\to h_Y$ is effective epi; hypercompletion
preserves effective epimorphisms.
\end{proof}

The target-section condition may not be weakened to source-local
smoothness: $\emptyset\to Y$ is a cover only when $Y$ is initial. An
ordinary split Banach submersion lies in
$\mathrm{Smooth}_{\mathrm{gen}}$ through its local product charts and
open gluing \cite[pp.~28--30]{Schmeding2021}; if surjective, its
target charts cover, and choosing \emph{any} point of each nonempty
fibre domain (not necessarily the origin) supplies the required
local sections. This use of projection coordinates concerns
ordinary split Banach submersions with the stipulated local charts.
A bounded linear quotient with no bounded linear right inverse
has no local smooth section, since differentiating such a section
would supply one. Such a quotient is therefore not a cover.

Theorem~\ref{thm:banach-representability} supplies
the fully faithful hypersheaf functor of points
for the groupoid construction.
The realization comparison is
Theorem~\ref{lem:derived-hypercomplete-base-change}
on this same fixed Banach test site.
Fix the
Kan-enriched local hypersheaf presentation
$\CM_{\mathrm{hyp}}$ of $\Sh(\mathcal C)^{\wedge}$ from
Lemma~\ref{lem:derived-model-dictionary}, with $\mathcal C$ as its
enriched site. Applying the represented-horn criterion,
Proposition~\ref{prop:geometric-realization-criterion}, we take the
representability data of
Definition~\ref{def:representability-data} to be
\[
 \begin{gathered}
 \CM=\CM_{\mathrm{hyp}},\qquad
 \mathcal D=\mathcal G,\\
 h\colon\mathcal G\hookrightarrow\Sh(\mathcal C)^{\wedge}
 \ \text{of Theorem~\ref{thm:banach-representability}},\qquad
 \mathsf C=\mathrm{Cov}_{\mathrm{gen}},
 \end{gathered}
\]
and the realization functor
$\widehat{\mathsf{Real}}(X)=L^{\wedge}|X|$. Here $|X|$ denotes
realization in $\Sh(\mathcal C)$ after viewing the hypercomplete
level objects in that ordinary infinity-sheaf category. The
model dictionary identifies $L^{\wedge}|X|$ with their colimit
in $\Sh(\mathcal C)^\wedge$. The weak equivalences are
$W_{\mathrm{hyp}}=\{f:\ L^{\wedge}|f|\text{ is an equivalence}\}$.
Thus $\mathcal G$ has finite limits with $h$ fully faithful and
finite-limit preserving
(Theorem~\ref{thm:banach-representability}),
and $\mathrm{Cov}_{\mathrm{gen}}$ contains equivalences and is stable
under composition and homotopy base change
(Lemma~\ref{lem:banach-cover-interface}). Let
$\mathscr R_{\mathrm{Cov}}$ (resp.\ $\mathscr R_{\mathrm{Cov},n}$)
be the strict outer Reedy-fibrant diagrams in $\CM_{\mathrm{hyp}}$
with levels weakly represented by $\mathcal G$ and positive
homotopy horns in $\mathrm{Cov}_{\mathrm{gen}}$ (resp.\ with those
horns equivalences above $n$); fibrations are Reedy model fibrations
whose positive relative homotopy horns lie in
$\mathrm{Cov}_{\mathrm{gen}}$. The geometric requirement concerns
positive relative horns; degree zero retains only the Reedy
model-fibration requirement.

\begin{thm}[Represented hypercomplete Banach CFO]
For\label{thm:banach-cfo} the fixed Kan-enriched Banach test site above,
$(\mathscr R_{\mathrm{Cov}},W_{\mathrm{hyp}},F)$
is a full Brown category of fibrant objects, without
a bound on an object's outer amplitude.
Each finite-bound category
$\mathscr R_{\mathrm{Cov},n}$ and the union over all
finite bounds,
\[
 \bigcup_{n<\infty}\mathscr R_{\mathrm{Cov},n},
\]
have the same Brown structure with the same
hypercompleted realization weak equivalences
$W_{\mathrm{hyp}}$.
\end{thm}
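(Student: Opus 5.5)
The plan is to apply Proposition~\ref{prop:geometric-realization-criterion} to the representability data fixed just before the statement: $\CM=\CM_{\mathrm{hyp}}$, $\mathcal D=\mathcal G$, $h$ from Theorem~\ref{thm:banach-representability}, $\mathsf C=\mathrm{Cov}_{\mathrm{gen}}$, and $\mathsf{Real}=\widehat{\mathsf{Real}}=L^\wedge|-|$. The structural inputs have already been established. $\mathcal G$ has finite limits by Theorem~\ref{thm:banach-geometric-limits}(2). The functor $h$ is fully faithful and finite-limit preserving into $\Sh(\mathcal C)^\wedge$ by Theorem~\ref{thm:banach-representability}, so Remark~\ref{rem:generic-weighted-matching} gives weakly represented finite weighted matching objects. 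The class $\mathrm{Cov}_{\mathrm{gen}}$ contains equivalences, is equivalence-invariant, and is stable under composition and homotopy base change by Lemma~\ref{lem:banach-cover-interface}. What remains is to verify the two realization hypotheses of the criterion, uniformly for $\mathscr R_{\mathrm{Cov}}$, for each $\mathscr R_{\mathrm{Cov},n}$, and for the finite-bound union.

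For hypothesis (1), I would first show that levelwise model equivalences in $\CM_{\mathrm{hyp}}$ are inverted. Via the diagonal formula \eqref{eq:derived-diagonal-realization} of Lemma~\ref{lem:derived-model-dictionary}, $L^\wedge|X|$ is the colimit over $\Delta^{\op}$ in $\Sh(\mathcal C)^\wedge$ of the hypersheafified levels, and a levelwise equivalence induces an equivalence of these colimits. Next, the constant-path map $X\to PX$ is handled exactly as in Proposition~\ref{prop:derived-outer-path}. The map $\min\colon[1]\times[1]\to[1]$ gives an outer simplicial homotopy $je_0\Rightarrow\id$, which becomes an ordinary simplicial homotopy on the objectwise diagonal. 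Hence $|j|$, and therefore $L^\wedge|j|$, is an equivalence.

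Hypothesis (2) is the main step. I would invoke Theorem~\ref{lem:derived-hypercomplete-base-change}, with $\mathcal T$ the fixed small Kan-enriched presentation of $\mathcal C$ and its strict-open topology $\tau$. Its hypotheses require that, for a fibration $X\to Y$ in $\mathscr R_{\mathrm{Cov}}$, the canonical relative homotopy horn maps be local surjections in every positive degree. By definition these horns lie in $\mathrm{Cov}_{\mathrm{gen}}$ after identifying levels through $h$. By Lemma~\ref{lem:banach-cover-interface}, their represented images are effective epimorphisms in $\Sh(\mathcal C)$ (target-open sections factor an effective open cover), hence epimorphisms on $\pi_0$-sheaves. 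The Reedy model-fibration condition in every degree, including zero, is part of the definition of fibration. Theorem~\ref{lem:derived-hypercomplete-base-change} then gives
\[
a_{\mathrm{hyp}}\operatorname{diag}(X\times_YZ)\simeq a_{\mathrm{hyp}}\operatorname{diag}X\times_{a_{\mathrm{hyp}}\operatorname{diag}Y}a_{\mathrm{hyp}}\operatorname{diag}Z,
\]
and \eqref{eq:derived-diagonal-realization} converts this into the $\widehat{\mathsf{Real}}$ comparison. The ordinary Reedy pullback computes the levelwise homotopy pullback because $\CM_{\mathrm{hyp}}$ is a localization of the global injective model with the same cofibrations, so Reedy-fibrant pullbacks along Reedy fibrations are homotopy pullbacks.

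The obstacle I anticipate is the compatibility between three things: the hypersheaf model $\CM_{\mathrm{hyp}}$ in which the diagrams live, the ordinary $\Sh(\mathcal C)$ in which horns are tested, and the site on which $\mathbf{PB}_\tau$ is proved. I would check that $\mathbf{SO}(\mathcal C)$, in the generating-set clause of Theorem~\ref{lem:banach-hypercover-refinement}, identifies $\CM_{\mathrm{hyp}}$ with the localization of the global injective enriched model used in Theorem~\ref{lem:derived-hypercomplete-base-change}. I would also check that local surjectivity is insensitive to $L^\wedge$, since hypercompletion preserves effective epimorphisms and $\pi_0$-sheaves. Once this is in place, the remaining claims are inherited from the criterion. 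The weak-equivalence class $W_{\mathrm{hyp}}$ is the same for every bound, because it is defined by the single functor $\widehat{\mathsf{Real}}$, and both hypotheses restrict to the subcategories. Products, pullbacks and the prism paths preserve finite bounds by Lemma~\ref{lem:derived-prism-filtration}, and finite diagrams in the union share a maximal bound.
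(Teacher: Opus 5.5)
Your proposal is correct and follows the paper's proof. Both feed the geometric finite-limit and represented-hyperdescent theorems and Lemma~\ref{lem:banach-cover-interface} into Proposition~\ref{prop:geometric-realization-criterion}, check hypothesis (1) by the diagonal-realization dictionary and the $\min$ contraction, get hypothesis (2) from Theorem~\ref{lem:derived-hypercomplete-base-change} on $\mathcal C$ because $\mathrm{Cov}_{\mathrm{gen}}$ horns are effective epimorphisms, and treat finite bounds and the union via the prism filtration and a common maximum bound. The extra model-compatibility check you plan via $\mathbf{SO}(\mathcal C)$ is unnecessary: $\mathbf{PB}_\tau(\mathcal C)$ is already stated for the hypersheaf localization of the global injective model, which is $\CM_{\mathrm{hyp}}$ by Lemma~\ref{lem:derived-model-dictionary}, so $\mathbf{SO}(\mathcal C)$ enters only through Theorem~\ref{thm:banach-representability}, where it gives hypersheaf membership of the represented functors.
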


\begin{proof}
The geometric finite-limit and represented hyperdescent
theorems supply the representability
data for
Proposition~\ref{prop:geometric-realization-criterion}.
Lemma~\ref{lem:banach-cover-interface} supplies the
stable cover class and its effective-epimorphism
property.

Hypercompleted realization inverts levelwise model
equivalences and the constant outer-path maps by
the diagonal-realization dictionary and the
explicit outer simplicial contraction.
For a fibration, its positive relative homotopy
horns become local effective epimorphisms.
Theorem~\ref{lem:derived-hypercomplete-base-change},
specialized to $\mathcal C$, therefore identifies
the realization of the ordinary Reedy pullback
with the homotopy pullback of realizations.
These are the two realization hypotheses of the
represented-horn criterion.

The criterion gives the Brown structure.
Its finite-matching and prism arguments preserve
each finite outer bound. Every finite Brown
construction in the union over finite bounds has
a common maximum bound. The weak-equivalence
class remains $W_{\mathrm{hyp}}$ throughout.
\end{proof}

\begin{rem}[Geometry and realization]
\label{rem:banach-nonclaims}
Theorem~\ref{thm:banach-cfo} combines the geometric
finite-limit and open-gluing construction with enriched
realization and hyperdescent. Its charts are hyp-affines,
and its geometric horns use the generated-smooth,
target-open-local-section class.

The fully faithful hypersheaf functor of points represents
geometric spaces. Realization of unbounded higher-groupoid
presentations defines a further functor on the relative
localization; determining its mapping spaces and essential
image is a separate comparison problem. Every outer bound
uses hypercompleted realization.

The finite-limit construction uses the spectrum adjunction
and open-local algebraic representatives in place of the
sufficient precanonicity criterion of
\cite[Proposition~2.3.21, p.~58]{LurieDAGV2009}.
Valuewise hypercompletion acts on structure values and
keeps the underlying topoi fixed, so hyp-affines supply
the charts directly. Comparisons with dg Banach,
Fredholm, Frechet or convenient-calculus models require
functors compatible with these structures and with
realization. The Fredholm problem is formulated in
Section~\ref{sec:comparisons}.
\end{rem}

%% file: noncommutative.tex
\section{Associative noncommutative geometric groupoids}
\label{sec:noncommutative-geometry}

This section carries the represented-groupoid programme of the
preceding sections into associative noncommutative geometry.
The represented part of the development is affine: it builds
categories of fibrant objects whose objects are outer simplicial
diagrams of \emph{affine} associative pieces, and whose weak
equivalences are realization equivalences of those diagrams.
For these represented groupoids, the ambient categories
are presheaf categories on associative affines.
Pridham's geometric classes and finite effectivity
results \cite{PridhamNC2023,Pridham2013}, together with the
represented-horn criterion of
Section~\ref{sec:represented-criterion}, give the geometric
categories of fibrant objects below.

The section contains three related constructions.
\begin{itemize}
\item The \emph{opposite affine} category of fibrant objects
      (Proposition~\ref{prop:nc-affine-cfo}) uses cofibrant
      associative dg algebras with the all-degree fibration
      convention; its weak equivalences are opposite
      quasi-isomorphisms.
\item The \emph{compact split-horn prestack} category of fibrant
      objects (Theorem~\ref{thm:nc-prestack-cfo}) uses represented
      outer diagrams whose positive horn maps have homotopy
      sections.
\item The \emph{all-connective geometric} Artin and DM
      prestack categories of fibrant objects
      (Theorem~\ref{thm:nc-geometric-cfo}) use Pridham's
      homotopy-submersive and homotopy-etale classes. Their
      horn maps satisfy his geometric lifting, finiteness
      and split classical truncation conditions.
\end{itemize}
Throughout, weak algebra maps are \emph{quasi-isomorphisms}, not
Morita equivalences. The associative theory
distinguishes $k$ from a matrix algebra $M_n(k)$, and testing is
by algebra maps, not by scalar commutative points. The following
remark records why.

\begin{rem}[A scalar-versus-matrix boundary]
\label{rem:nc-scalar-matrix}
Fix a characteristic-zero field $k$ and let $E_{ij}$ be the
matrix units of $M_2(k)$. Then
\[
 [E_{12},E_{21}]^2=(E_{11}-E_{22})^2=1 .
\]
A unital map to a nonzero commutative algebra would send the
commutator to zero and its square to one, a contradiction; by
simplicity of $M_n(k)$ the same nonexistence holds for every
$n\ge2$. In particular, for $k=\R$ there is no unital graded
algebra map from degree-zero $M_2(\R)$ to
$\Omega^\bullet(M;\R)$ for a nonempty smooth manifold $M$.
These scalar tests are empty on $M_2(\R)$ and nonempty on $\R$,
although the two algebras are Morita equivalent. Thus these
scalar tests do not descend to Morita equivalence. Associative
algebra maps distinguish the two affines and are the tests
used below.
\end{rem}

\subsection{Associative affine homotopy theory}

For this subsection use \emph{unbounded homological} dg
associative unital $k$-algebras, with $k$ of characteristic zero.
The homological grading may be unbounded in both directions.
The associative operad carries the transferred model
structure on these algebras, in which weak equivalences are
quasi-isomorphisms and fibrations are surjective in every degree;
this is the associative case of the characteristic-zero transfer
theorem for operadic algebras \cite{Hinich2015}.

\begin{prop}[The opposite affine CFO]\label{prop:nc-affine-cfo}
The opposite of the full category of cofibrant associative
dg algebras is a full Brown category of fibrant objects. Its weak
equivalences are opposite quasi-isomorphisms, its fibrations are
opposite algebra cofibrations, and its localization is the
opposite associative-algebra infinity-category.
\end{prop}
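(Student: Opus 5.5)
The plan is to dualize the model structure on associative dg algebras. Write $\mathsf{Alg}$ for unbounded homological dg associative unital $k$-algebras with the transferred model structure of \cite{Hinich2015}. Let $\mathsf{Alg}_c$ be the full subcategory of cofibrant objects. The cofibrant objects of any model category form a category of cofibration objects, which is the formal dual of Brown's axioms. This duality, applied to $\mathsf{Alg}_c$, gives the statement: opposite quasi-isomorphisms become the weak equivalences and opposite cofibrations become the fibrations. I would therefore check Brown's axioms of Definition~\ref{def:brown-conventions} for $\mathsf{Alg}_c^{\op}$ one at a time, translating each into a statement about cofibrant algebras.

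The elementary axioms come first. Isomorphisms are cofibrations and quasi-isomorphisms, two-out-of-three holds for quasi-isomorphisms, and cofibrations compose. The terminal object of the opposite is the initial algebra $k$, which is cofibrant. Every map $k\to A$ with $A$ cofibrant is a cofibration, so every map to the terminal object of the opposite is a fibration. Finite products in the opposite are coproducts $A\ast B$ (free products over $k$) of cofibrant algebras; these are cofibrant because $k\to A\ast B$ is a pushout of $k\to A$ followed by a pushout of $k\to B$. Pullbacks along fibrations in the opposite are pushouts $B\ast_A C$ along a cofibration $A\to B$. The pushout exists in $\mathsf{Alg}$, and $C\to B\ast_A C$ is a cofibration as a cobase change of one. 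The object $B\ast_A C$ is then cofibrant by composition with $k\to C$. For the acyclic clause, trivial cofibrations are stable under cobase change in any model category, so no left-properness input is needed.

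The path object of the opposite is a cylinder. Factor the fold map $A\ast A\to A$ as a cofibration $A\ast A\to CA$ followed by a trivial fibration $CA\to A$. Then $CA$ is cofibrant, since $A\ast A$ is, and $A\to CA$ (either inclusion) is a quasi-isomorphism by two-out-of-three. Read in the opposite category, this gives $A\to CA\to A\times A$ with the first map weak and the second a fibration. Functoriality follows if one uses the functorial factorizations of the cofibrantly generated transferred model.

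For the localization, the inclusion $\mathsf{Alg}_c\subseteq\mathsf{Alg}$ together with a functorial cofibrant replacement $Q$ gives relative functors. The natural weak equivalences $Q\Rightarrow\id$ induce an equivalence $\mathsf{Alg}_c[W^{-1}]\simeq\mathsf{Alg}[W^{-1}]$, and passing to opposites commutes with $\infty$-localization. The step I expect to need the most care is the precise form of the transfer: that $\mathsf{Alg}$ is a cofibrantly generated model category with all objects fibrant and functorial factorizations, so that cofibrant objects are closed under free products. In characteristic zero this is covered by Hinich's theorem for unbounded complexes; the associative operad is $\Sigma$-split in any characteristic, which gives a backup argument. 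Everything else is formal dualization.
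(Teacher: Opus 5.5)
Your proposal is correct and follows the same route as the paper. The paper reads the statement as the fibrant-object structure of the opposite model category: pullbacks along fibrations become pushouts along cofibrations, with acyclic cofibrations stable under cobase change; the path object comes from the cylinder factorization of the fold map $A\ast A\to A$; and the localization follows from cofibrant replacement. You additionally spell out the elementary axioms (terminal object $k$, finite products as free products) that the paper leaves to ``the remaining axioms.''
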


\begin{proof}
This is the category of fibrant objects in the opposite model
category. Explicitly its terminal object is $\Spec k$. If
$A\to B$ is a cofibration and $C$ is cofibrant, then
$C\to B\amalg_A C$ is a cofibration; the pushout is cofibrant.
If $A\to B$ is acyclic, so is its pushout. These are precisely
the ordinary pullback and acyclic-pullback axioms in the opposite
category. Factoring the fold map
\[
 A\amalg_k A\hookrightarrow\operatorname{Cyl}(A)
                  \overset{\sim}{\twoheadrightarrow}A
\]
gives its path object, with cofibrant cylinder. The remaining
axioms and the localization follow from the model structure and
cofibrant replacement.
\end{proof}

The algebra pushout here is an amalgamated \emph{free} product,
not a commutative tensor product. Within the stated CFO the
objects are cofibrant and the pushout has a cofibration leg, so
the cofibrant gluing lemma makes that strict pushout a homotopy
pushout. For arbitrary diagrams outside these hypotheses one
instead uses homotopy pushouts or cofibrant diagram replacements.

For a connective derived-function geometry, instead use
connective associative algebras: nonnegative homological degrees,
equivalently nonpositive cohomological degrees. Let
$\mathcal A_{\ge0}$ denote their infinity-category and put
\begin{equation}\label{eq:nc-compact-affines}
 \mathcal T=(\mathcal A_{\ge0}^{\omega})^{\op},
\end{equation}
where compactness means preservation of filtered colimits by the
derived mapping-space functor.

\begin{lem}\label{lem:nc-affine-finite-limits}
After a universe choice, $\mathcal T$ has a small presentation,
finite limits, and terminal object $\Spec k$.
\end{lem}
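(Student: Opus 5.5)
To prove Lemma~\ref{lem:nc-affine-finite-limits}, I will pass to the algebraic side, where it becomes a statement about compact connective associative algebras and their finite colimits. Since $\mathcal T=(\mathcal A_{\ge0}^{\omega})^{\op}$, finite limits in $\mathcal T$ are finite colimits in $\mathcal A_{\ge0}^{\omega}$, and the terminal object $\Spec k$ corresponds to the initial algebra $k$.

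\textbf{Step 1: smallness.} First I fix a universe and note that $\mathcal A_{\ge0}$ is presentable. It is the infinity-category of the transferred model structure on connective associative dg algebras in characteristic zero (Hinich), or equivalently algebras over the associative operad in connective $k$-modules. Presentable infinity-categories are $\kappa$-compactly generated, so the compact objects form an essentially small subcategory. Concretely, each compact object is a retract of a finite cell algebra, meaning a finite iterated pushout of free algebras $k\langle x\rangle$ on generators of nonnegative degree along boundary inclusions. Choosing one cofibrant representative in each equivalence class, together with the simplicial (Dwyer--Kan) mapping spaces of the model category, gives a small Kan-enriched presentation.

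\textbf{Step 2: closure under finite colimits.} The initial object $k$ is compact, because $\Map(k,-)$ is contractible and therefore commutes with filtered colimits. For a span $B\leftarrow A\to C$ of compact algebras, the pushout satisfies
\[
 \Map(B\amalg_A C,-)\simeq\Map(B,-)\times_{\Map(A,-)}\Map(C,-).
\]
Filtered colimits of spaces commute with finite limits, so the pushout is again compact. Connectivity is also preserved. Inside all dg algebras, the homotopy pushout of connective algebras is computed as a strict amalgamated free product after cofibrant replacement, and this stays in nonnegative degrees. Equivalently, $\mathcal A_{\ge0}$ is itself presentable and hence cocomplete, so the colimit may be taken there directly.

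\textbf{Step 3: conclusion and main obstacle.} Initial objects together with pushouts generate all finite colimits. Passing to opposites gives finite limits in $\mathcal T$ with terminal object $\Spec k$. The step I expect to need the most care is the smallness claim: I must check that compactness in $\mathcal A_{\ge0}$ is detected by finite cell algebras up to retract, and that the chosen representatives stay within the fixed universe. The tool I would use for this is the standard description of compact objects in a compactly generated presentable category as retracts of finite colimits of compact generators, applied here with the free algebras $k\langle x_n\rangle$, $n\ge0$, as generators.
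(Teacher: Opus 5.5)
Your proposal is correct and follows essentially the same route as the paper. Both arguments use presentability and compact generation by free algebras to get an essentially small compact subcategory, show that finite colimits of compact objects stay compact because mapping out turns them into finite limits of spaces, note that connective algebras are closed under these pushouts, and use the compact initial algebra $k$ to give the terminal object $\Spec k$.

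The finite-cell description of compact objects, which you flag as the main obstacle, is not actually needed. Essential smallness already follows from accessibility, since every compact object is a retract of a $\kappa$-compact one.
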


\begin{proof}
The connective associative algebra category is presentable.
Free algebras on finite perfect connective complexes are compact,
by the free--forgetful adjunction and preservation of filtered
colimits by the forgetful functor. Free resolutions give compact
generation, so the compact objects form an essentially small
subcategory. Finite colimits of compact objects remain compact:
mapping out converts them to finite limits of spaces, which
commute with filtered colimits. Thus the opposite has finite
limits. Connective algebras are closed under these finite
homotopy colimits in the unbounded algebra infinity-category, so
the two computations agree. The initial algebra $k$ is compact
and gives the stated terminal affine object.
\end{proof}

Write $\mathscr A_{\mathrm{cof}}$ for the full category of
cofibrant unbounded unital associative dg $k$-algebras in
Proposition~\ref{prop:nc-affine-cfo}, with $W$ its
quasi-isomorphisms. That proposition gives
\[
 \mathscr A_{\mathrm{cof}}^{\op}[W^{-1}]
       \simeq(\operatorname{Alg}^{E_1}_k)^{\op}.
\]
The category in \eqref{eq:nc-compact-affines} is obtained by
selecting the compact objects of the connective full subcategory
before taking the opposite. A Kan-enriched presentation may be
selected from a simplicial localization of the algebra model.
The next construction uses finite limits in this affine
infinity-category, with geometric horn fibrations defined
through homotopy sections. The opposite affine CFO of
Proposition~\ref{prop:nc-affine-cfo} instead uses opposite
algebra cofibrations. These differ from algebra-model
fibrations: the latter are surjective in every degree
in the unbounded model, and in positive degrees in a
connective model.

\subsection{A represented noncommutative prestack CFO}

Use the presheaf infinity-topos
\[
 \mathcal X_{\mathrm{pre}}=\PSh(\mathcal T)
             =\Fun(\mathcal T^{\op},\mathcal S).
\]
Choose a small Kan-enriched presentation of $\mathcal T$ and the
global injective model of enriched simplicial presheaves. Its
weak equivalences are objectwise. Limits and colimits in
$\mathcal X_{\mathrm{pre}}$ are pointwise. Evaluation is a jointly
conservative family of geometric inverse-image functors; an
infinitely connected map is therefore an equivalence because its
evaluations are infinitely connected maps of spaces. Thus this
particular presheaf infinity-topos is hypercomplete.

\begin{lem}\label{lem:nc-split-cover}
A map of representables $\y U\to\y V$ is an effective epimorphism
in $\mathcal X_{\mathrm{pre}}$ exactly when $U\to V$ has a
homotopy section in $\mathcal T$. These maps are stable under
finite composition and homotopy pullback.
\end{lem}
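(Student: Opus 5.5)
The plan is to reduce effective epimorphy in $\mathcal X_{\mathrm{pre}}$ to a pointwise statement and then evaluate at the source representable. Limits and colimits in $\mathcal X_{\mathrm{pre}}$ are pointwise, so the Čech nerve of $\y U\to\y V$ is computed objectwise. A map of presheaves of spaces is therefore an effective epimorphism exactly when each evaluation $\y U(S)\to\y V(S)$ is an effective epimorphism of spaces, that is, surjective on $\pi_0$. This uses only the pointwise description of colimits and of the Čech nerve, and the corresponding fact in $\mathcal S$.

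\emph{Forward direction.} Evaluate at $S=V$. The identity component of $\Map_{\mathcal T}(V,V)$ must lift on $\pi_0$ to some $s\in\Map_{\mathcal T}(V,U)$ with $fs\simeq\id_V$, which is a homotopy section. \emph{Converse.} Given $s$ with $fs\simeq\id_V$, any $g\in\Map(S,V)$ is homotopic to $f\circ(sg)$, so $\pi_0$-surjectivity holds for every $S$. Equivalently, $\y V$ is a retract of $\y U$ over itself, and the identity of $\y V$ factors through $\y f$ up to homotopy. Both directions are just Yoneda; the only care needed is that "homotopy section" means a homotopy in the mapping space of the Kan-enriched presentation, which is what $\pi_0$ of the representable records.

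\emph{Stability.} Composition is immediate: if $s$ and $t$ are homotopy sections of $f\colon U\to V$ and $g\colon V\to W$, then $st$ is a homotopy section of $gf$. For homotopy pullback along $Z\to V$, use Lemma~\ref{lem:nc-affine-finite-limits}: the pullback $U\times^h_VZ$ exists in $\mathcal T$, and $\y$ preserves it. The pair $(s\circ(Z\to V),\id_Z)$, together with the homotopy $f s\simeq\id$ whiskered along $Z\to V$, defines a point of $\Map(Z,U\times^h_VZ)$. This point is a section of the projection.

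Alternatively, effective epimorphisms in an $\infty$-topos are stable under pullback and composition, and the pullback of representables is representable by finite-limit preservation, so the first part transports these closure properties back to $\mathcal T$. I expect the only delicate step to be the pointwise characterization of effective epimorphisms, i.e.\ that $\pi_0$-surjectivity at every object suffices. I would cite the standard $\infty$-topos fact that effective epimorphisms are detected by $\pi_0$-sheaf epimorphisms. In a presheaf topos there is no sheafification, so this reduces to objectwise surjectivity on $\pi_0$.
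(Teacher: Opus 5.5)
Your proposal is correct and follows essentially the same route as the paper. In both, effective epimorphisms in the presheaf topos are detected pointwise as $\pi_0$-surjections. Evaluating at $V$ and lifting the identity component gives the homotopy section, and conversely a homotopy section gives $\pi_0$-surjectivity at every test. Homotopy sections then compose and base change along the finite limits of $\mathcal T$. Your explicit section of $U\times^h_V Z\to Z$, built from the whiskered homotopy, just spells out the paper's one-line remark that sections up to homotopy pull back.
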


\begin{proof}
A homotopy section gives a section up to homotopy on every
evaluated mapping space, hence surjectivity on $\pi_0$. Effective
epimorphisms are detected pointwise. Conversely, evaluate at $V$
and lift the identity component in $\Map(V,V)$; this is a homotopy
section. Sections up to homotopy compose and pull back along the
finite limits in $\mathcal T$.
\end{proof}

Define an ordinary category $\mathscr R_{\mathrm{nc,split}}$ of
outer simplicial enriched presheaves as follows. Its objects are
outer Reedy-fibrant diagrams with every level weakly represented
by an object of $\mathcal T$, and with every positive homotopy
horn map represented by a homotopy split epimorphism in
$\mathcal T$. Its fibrations are outer Reedy model fibrations with
the same condition on positive relative horn maps. The geometric
condition concerns positive degrees; the Reedy requirement
applies in every degree, including zero. Weak
equivalences are equivalences of outer realizations in
$\mathcal X_{\mathrm{pre}}$.

\begin{thm}[Split-horn NC prestack CFO]\label{thm:nc-prestack-cfo}
With these weak equivalences and fibrations,
$\mathscr R_{\mathrm{nc,split}}$ is a full Brown category of
fibrant objects, without a bound on an object's outer amplitude.
\end{thm}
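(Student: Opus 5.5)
The plan is to apply the represented-horn criterion, Proposition~\ref{prop:geometric-realization-criterion}. We choose the following representability data: $\CM$ is the global injective model of enriched simplicial presheaves on the small Kan-enriched presentation of $\mathcal T$, and $\mathcal D=\mathcal T$. The functor $h$ is the Yoneda embedding $\mathcal T\hookrightarrow\mathcal X_{\mathrm{pre}}$; it is fully faithful and preserves all limits that exist, in particular the finite limits supplied by Lemma~\ref{lem:nc-affine-finite-limits}. We take $\mathsf C$ to be the class of homotopy split epimorphisms. Lemma~\ref{lem:nc-split-cover} shows that this class contains equivalences, is equivalence-invariant, and is stable under composition and homotopy base change. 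With these choices, $\mathscr R_{\mathsf C}$ is literally $\mathscr R_{\mathrm{nc,split}}$. The structural half of the criterion then gives the remaining data, with no outer bound: terminal object, finite products, composition of fibrations, all ordinary pullbacks along fibrations, and Reedy-fibrant represented outer paths $(PX)_k=\{\Delta[k]\times\Delta[1],X\}$ with endpoint fibration. This rests on finite weighted matching (Remark~\ref{rem:generic-weighted-matching}) and on the purely simplicial prism filtration of Lemma~\ref{lem:derived-prism-filtration}.

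The realization functor is $\mathsf{Real}(X)=|X|$, the outer colimit in $\mathcal X_{\mathrm{pre}}$. It is computed pointwise by the bisimplicial diagonal, as in the global-injective part of Lemma~\ref{lem:derived-model-dictionary}; sheafification plays no role here. Hypothesis (1) of the criterion is routine. A levelwise objectwise equivalence gives a diagonal equivalence. For the constant-path map $j:X\to PX$, we have $e_0j=\id$, and the $\min$ homotopy gives an outer simplicial homotopy $je_0\Rightarrow\id$. This induces a simplicial homotopy on each evaluated diagonal, so $|j|$ is an equivalence.

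The main obstacle is hypothesis (2), realization base change for a fibration $X\to Y$ along an arbitrary $Z\to Y$. We reduce it to pointwise statements. Limits and colimits in $\mathcal X_{\mathrm{pre}}$ are computed by evaluation at each $V\in\mathcal T$, so it suffices to show that for each $V$ the map of bisimplicial sets $X(V)\to Y(V)$ has a diagonal that is a Kan fibration up to the relevant homotopy. Then the diagonal of the strict pullback computes the homotopy pullback of diagonals.

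To obtain this, evaluate the positive relative homotopy horn maps at $V$. By Lemma~\ref{lem:nc-split-cover}, homotopy split epimorphisms become $\pi_0$-surjective maps of Kan complexes. Reedy fibrancy also makes the actual relative matching maps Kan fibrations that compute the homotopy ones. Hence each relative horn map is a Kan fibration surjective on $\pi_0$, and therefore surjective on vertices. This is exactly the input for the pointwise diagonal-fibration lemma, Lemma~\ref{lem:enriched-pointwise-diagonal}, which yields the comparison $|X\times_YZ|(V)\simeq|X|(V)\times^h_{|Y|(V)}|Z|(V)$, naturally in $V$. I expect the delicate point to be the comparison between actual and homotopy relative horn objects in the Reedy-fibrant setting, together with promoting $\pi_0$-lifts to strict lifts of finite inner simplices. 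To handle it, I would first invoke the matching identification Lemma~\ref{lem:enriched-relative-matching}, specialized to the trivial topology, and only then apply the diagonal lemma.

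Given (1) and (2), the criterion yields the full Brown CFO structure. Two-out-of-three holds because equivalences in $\mathcal X_{\mathrm{pre}}$ satisfy it. Acyclic pullbacks are acyclic by (2). No outer-amplitude bound is used anywhere.
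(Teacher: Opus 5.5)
Your proposal is correct and is essentially the paper's argument. The paper checks the Brown axioms directly, but Remark~\ref{rem:nc-split-criterion-instance} records that this is exactly the instance of Proposition~\ref{prop:geometric-realization-criterion} with your data, and its key step is the same chain: $\pi_0$-surjectivity, then vertex-surjectivity by path lifting, then Lemma~\ref{lem:enriched-pointwise-diagonal}. The only difference is that you get full realization base change from the diagonal Kan fibration via right properness of simplicial sets, whereas the paper only pulls back the resulting trivial Kan fibration to obtain the acyclic-pullback axiom.
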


\begin{proof}
We use finite limits and split covers in the presheaf category.
Finite homotopy weighted limits remain
representable by Lemma~\ref{lem:nc-affine-finite-limits}. For
Reedy-fibrant models, their strict weighted limits compute those
homotopy limits, by the finite-cell argument of
Lemma~\ref{lem:derived-weighted-matching}. Ordinary pullback
along a Reedy fibration is therefore levelwise a homotopy
pullback and remains weakly represented. Relative horn maps pull
back, and their composition has the relative-horn factorization
of Lemma~\ref{lem:derived-reedy-calculus}.
Lemma~\ref{lem:nc-split-cover} preserves the geometric
conditions. For $p\ge1$ and $0\le i\le p$, put
$\Lambda=\Lambda^i[p]$ and write $Q_\Lambda=\{\Lambda,Q\}$
for any diagram $Q$. With $P=X\times_Y Z$, its absolute
horn map factors as
\[
 P_p\longrightarrow
      Z_p\times_{Z_\Lambda}P_\Lambda
          \longrightarrow P_\Lambda.
\]
The first map is a base change of the relative horn of $X\to Y$,
and the second of the absolute horn of $Z$. Both split up to
homotopy, so $P$ is itself an object. The terminal presheaf is
represented by $\Spec k$, and every object is fibrant by
definition.

Use the outer path object
\[
 (PX)_p=\{\Delta[p]\times\Delta[1],X\}.
\]
The finite prism filtrations of
Lemma~\ref{lem:derived-prism-filtration} and
Proposition~\ref{prop:derived-outer-path} use only finite
pullbacks and composites of positive horn maps. They therefore
preserve the split condition and weak representability here as
well. The endpoints give a fibration with the prescribed positive
relative horns. Absolute horns of $PX$ satisfy the same factorization
over $X\times X$, whose absolute horns split by finite-product
stability. These absolute horns make $PX$ an object.
Constant paths are a realization weak equivalence by the
simplicial contraction of $\Delta[1]$.

For acyclic pullback, evaluate a fibration $f:X\to Y$ at
$T\in\mathcal T$. The relative outer horn maps are inner Kan
fibrations, by the Reedy model fibration condition. Their
represented homotopy sections make them surjective on $\pi_0$,
hence on vertices by path lifting. Lifting the anodyne inclusion
$\Delta[0]\to\Delta[q]$ then makes them surjective in every inner
degree. Together with the evaluated Reedy fibration,
vertex-surjectivity is exactly the hypothesis of
Lemma~\ref{lem:enriched-pointwise-diagonal}; its explicit
two-step horn factorization makes the actual diagonal
$f(T)$ a Kan fibration. This is the pointwise form of
the realization mechanism in
\cite[Proposition~1.19]{Pridham2013}, with the horn
factorization given in Appendix~\ref{app:enriched-realization}.
The general local comparison is
Theorem~\ref{lem:derived-hypercomplete-base-change}.
Applying the pointwise lemma separately to the
object-to-terminal maps, using their positive
absolute split horns, also makes the diagonal objects
Kan. Thus the absolute horns ensure object Kan-ness,
while the Reedy condition and relative-vertex-surjectivity
together give the diagonal fibration.

If $f$ is also a weak equivalence, its diagonal is a trivial Kan
fibration, and every ordinary pullback remains one. Diagonal
commutes with strict pullbacks. The diagonal computes outer
realization because the global injective cofibrations are
monomorphisms and outer simplicial objects are Reedy cofibrant.
Here $\mathcal T$ carries the trivial topology, and the global
injective model presents $\mathcal X_{\mathrm{pre}}$ with
objectwise weak equivalences. Sheafification and hypercompletion
in \eqref{eq:derived-diagonal-realization} are therefore identities,
and the formula reduces in the underlying infinity-category to
$\operatorname{diag}Z_\bullet\simeq
\operatorname*{colim}_{\Delta^{\op}}Z_k$. Thus the
diagonal-realization formula is evaluated in the associative
presheaf model. Evaluation is jointly conservative, proving the
acyclic-pullback axiom. The other Brown axioms follow from the
fibration calculus and two-out-of-three for
realization.
\end{proof}

\begin{rem}[Instance of the general criterion]
\label{rem:nc-split-criterion-instance}
The verification above is exactly an instance of the
representability data of
Proposition~\ref{prop:geometric-realization-criterion}: take
$\mathcal D=\mathcal T$, $h$ the derived Yoneda embedding,
$\mathsf C$ the class of homotopy split epimorphisms, and
$\mathsf{Real}$ the outer realization in
$\mathcal X_{\mathrm{pre}}$. Its first realization hypothesis is
the simplicial contraction of $\Delta[1]$, and its second is the
pointwise diagonal-fibration computation just given. The direct
proof records how both absolute and relative split horns enter
these conclusions.
\end{rem}

The cover condition in this prestack construction is precisely
the existence of a homotopy section. The Artin/DM construction below uses Pridham's formal
lifting and finiteness conditions on all connective
affines; formal smoothness and splitness play different roles.

\begin{example}[An associative affine additive group]
\label{ex:nc-additive-nerve}
Let $T_k$ be the free unital associative algebra functor. The
compact affine object $G_{\mathrm{add}}=\Spec T_k(k[0])$ is a
group object: contravariantly, addition sends $t$ to $t_1+t_2$ in
the free product, inversion sends $t$ to $-t$, and the unit sends
$t$ to zero. The free algebra adjunction identifies its derived
points with the additive underlying mapping space of a connective
dg algebra. Its outer nerve has levels
\[
 (NG_{\mathrm{add}})_p
       =\Spec T_k(k^p[0]).
\]
For $p\ge2$ these coordinate algebras have noncommuting
generators; replacing them by commutative polynomial algebras
would give a different example. For $p=1$ the one-generator
algebra is $k[t]$; noncommutativity here begins with its NC affine
products. The free-algebra cofaces and codegeneracies form a
strict cosimplicial algebra diagram, since the cogroup identities
hold on its generators. Its localized nerve is a coherent
simplicial prestack; Lemma~\ref{lem:derived-strictification},
applied in the global presheaf model, supplies an ordinary outer
model before Reedy-fibrant replacement. The one-dimensional horn
maps have the zero section, and the higher homotopy horn maps of
the groupoid object are equivalences. For example the degree-two
additive coordinate changes are $(u,v)\mapsto(u,u+v)$ and
$(u,v)\mapsto(u+v,v)$, with inverse changes using subtraction, not
inverses of free-algebra generators. The matching objects are
represented by finite derived affine limits. After Reedy-fibrant
replacement, weak representability and these homotopy conditions
persist. It is therefore an object of
$\mathscr R_{\mathrm{nc,split}}$, giving a split-horn
presentation of associative addition.
\end{example}

\subsection{A geometric subclass of the split-cover criterion}

The split-prestack argument has a useful parameterized form.
For a geometric subclass, the closure and path properties
must hold for its chosen horn class.

\begin{prop}[Represented prestack criterion]
\label{prop:nc-prestack-criterion}
Let $\mathcal T_0$ be a small infinity-category with finite
limits, and let $\mathsf C$ be an equivalence-invariant class of
maps containing all equivalences, stable under composition and
homotopy pullback. Suppose every map in $\mathsf C$ has a homotopy
section. Equivalently, its derived Yoneda map is an effective
epimorphism, or its evaluation on $\Map(T,-)$ is surjective on
$\pi_0$ for every $T\in\mathcal T_0$, as in
Lemma~\ref{lem:nc-split-cover}.

In the global injective enriched-presheaf model for
$\PSh(\mathcal T_0)$, let $\mathscr R_{\mathsf C}$ consist of
outer Reedy-fibrant diagrams with weakly represented levels and
positive absolute homotopy horn maps in $\mathsf C$. Equivalence
invariance makes this condition independent of the chosen
representing objects and maps. Take fibrations to be Reedy model
fibrations with positive relative homotopy horns in $\mathsf C$,
and weak equivalences to be objectwise diagonal equivalences. Then
$\mathscr R_{\mathsf C}$ is a full Brown category of fibrant
objects. Its geometric fibration conditions are confined
to positive degrees.
\end{prop}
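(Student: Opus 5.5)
The plan is to deduce the statement from the general machinery already used for Theorem~\ref{thm:nc-prestack-cfo}, replacing ``homotopy split epimorphism'' by an arbitrary class $\mathsf C$ with the listed properties. Concretely, I would apply Proposition~\ref{prop:geometric-realization-criterion} with representability data $\CM$ the global injective enriched-presheaf model of $\PSh(\mathcal T_0)$, $\mathcal D=\mathcal T_0$, $h$ the derived Yoneda embedding (fully faithful and finite-limit preserving), the given class $\mathsf C$, and $\mathsf{Real}$ the outer realization in $\PSh(\mathcal T_0)$. By the diagonal dictionary with trivial topology, as in the proof of Theorem~\ref{thm:nc-prestack-cfo}, this realization is $\operatorname{diag}$. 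Equivalence invariance of $\mathsf C$ makes the horn conditions independent of representatives. The weighted matching, fibration calculus and prism arguments then go through verbatim, because they use only finite limits in $\mathcal T_0$ and composition and base change in $\mathsf C$ (Lemmas~\ref{lem:derived-weighted-matching}, \ref{lem:derived-reedy-calculus}, \ref{lem:derived-prism-filtration}, Remark~\ref{rem:generic-weighted-matching}).

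The first realization hypothesis is routine. A levelwise model equivalence is objectwise a levelwise weak equivalence of bisimplicial sets, hence a diagonal equivalence. The constant-path map $X\to PX$ has retraction $e_0$ and the simplicial homotopy induced by $\min:[1]\times[1]\to[1]$, which passes to an ordinary homotopy on objectwise diagonals, as in Proposition~\ref{prop:derived-outer-path}.

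The main step is the second hypothesis, which I would prove pointwise. Fix $T\in\mathcal T_0$ and a fibration $f:X\to Y$. Evaluating at $T$ gives a Reedy fibration of bisimplicial sets. Each positive relative homotopy horn lies in $\mathsf C$, so it has a homotopy section. The relative horn maps are therefore Kan fibrations surjective on $\pi_0$, hence on vertices by path lifting, and then surjective in every inner degree. This is exactly the hypothesis of Lemma~\ref{lem:enriched-pointwise-diagonal}, which makes $\operatorname{diag}f(T)$ a Kan fibration. Applying the same lemma to $X\to *$, using the absolute horns, makes the diagonals Kan. Since $\operatorname{diag}$ commutes with strict pullbacks and Kan fibrations between Kan complexes have pullbacks that are homotopy pullbacks, the pointwise comparison is an equivalence. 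Because limits in $\PSh(\mathcal T_0)$ are pointwise, this gives hypothesis~(2). In particular, pullbacks of acyclic fibrations are acyclic: a trivial Kan fibration pulls back to one.

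I expect the main obstacle to be this pointwise diagonal step. One must check carefully that the homotopy-section property of $\mathsf C$ (the stated equivalent forms via Lemma~\ref{lem:nc-split-cover}) yields strict vertex surjectivity of the actual relative matching maps. This needs the Reedy model-fibration condition, which identifies those maps with the homotopy ones and makes them Kan fibrations so that $\pi_0$-surjectivity upgrades. Once that holds, Proposition~\ref{prop:geometric-realization-criterion} yields the full Brown CFO, and by construction the geometric conditions are imposed only in positive degrees.
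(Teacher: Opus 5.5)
Your proposal is correct and follows essentially the paper's argument. The paper re-runs the proof of Theorem~\ref{thm:nc-prestack-cfo} (weighted matching, relative- and absolute-horn factorizations, prism paths) and obtains the acyclic-pullback axiom from Lemma~\ref{lem:enriched-pointwise-diagonal}, applied to the relative horns and, in its terminal-map case, to the absolute horns, exactly as you do. The only difference is packaging: you route through Proposition~\ref{prop:geometric-realization-criterion} and verify full pointwise realization base change by right properness. The paper instead checks the acyclic-pullback axiom directly, noting that a weak equivalence between the diagonal Kan fibrations is a trivial Kan fibration, which is stable under strict pullback.
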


\begin{proof}
The proof of Theorem~\ref{thm:nc-prestack-cfo} uses exactly these
geometric hypotheses. Finite homotopy weighted limits remain
representable by Lemma~\ref{lem:derived-weighted-matching}.
Relative horns of pullbacks and composites remain in $\mathsf C$
by the relative-horn factorization of
Lemma~\ref{lem:derived-reedy-calculus}. For $m\ge1$, choose
$\Lambda=\Lambda^i[m]$. For a pullback $P=X\times_Y Z$,
its absolute horn factors as
\[
 P_m\longrightarrow Z_m\times_{Z_\Lambda}P_\Lambda
       \longrightarrow P_\Lambda,
 \qquad P_\Lambda=\{\Lambda,P\}.
\]
The two maps are base changes of a relative horn of $X\to Y$ and
an absolute horn of $Z$, respectively. Thus the pullback is an
object of the specified category.

The finite outer-prism filtrations of
Lemma~\ref{lem:derived-prism-filtration} use only pullbacks and
composites of positive horns. As in
Proposition~\ref{prop:derived-outer-path}, they give the endpoint
fibration for $PX$ and, by the same absolute factorization over
$X\times X$, its object membership. Constant paths are diagonal
weak equivalences. For acyclic pullback, a represented
$\mathsf C$-map evaluates to a map surjective on components. The
Reedy condition makes the relative horn maps inner Kan fibrations,
hence surjective on vertices and then on every simplex degree.
Together with the evaluated vertical Reedy fibration this is the
pointwise hypothesis of
Lemma~\ref{lem:enriched-pointwise-diagonal}.
The objects' separate absolute $\mathsf C$-horns
give their Kan diagonals by its terminal-map case.
A weak equivalence among the resulting diagonal
fibrations is a trivial Kan fibration, preserved by
strict pullback. This gives the acyclic-pullback axiom. Terminal
objects, composition and two-out-of-three are as in that proof.
All limits and diagonal weak equivalences here are computed
in the presheaf model.
\end{proof}

For the Artin/DM classes below, Pridham's finite effectivity
theorem supplies homotopy sections in
Lemma~\ref{lem:nc-geometric-sections}. The pointwise
diagonal lemma then applies to those sections.

This proposition specializes the realization
hypothesis of
Proposition~\ref{prop:geometric-realization-criterion}: geometric
covers have homotopy sections, giving pointwise surjectivity and a
pointwise diagonal-fibration calculation. The construction is
compatible with the represented-hypergroupoid method
\cite{Pridham2013}, with homotopy sections supplying
the required pointwise cover condition.

\subsection{Pridham's geometric classes on all NC affines}
\label{subsec:nc-geometric-classes}

Pridham develops geometric NC \emph{prestacks} using
homotopy-submersive and homotopy-etale horn conditions
\cite[Introduction and Definitions~1.19--1.23]{PridhamNC2023}.
We recall his geometric classes on all connective affines.

Fix the characteristic-zero field $k$. Choose compatible universes
$\mathcal U\in\mathcal V$ and a $\mathcal V$-small presentation of
\[
 \mathcal T_{\mathrm{all}}
   =\bigl(\operatorname{Alg}_{k,\ge0}^{E_1,\mathcal U}\bigr)^{\op},
 \qquad
 \mathcal P_{\mathrm{all}}=\PSh(\mathcal T_{\mathrm{all}}).
\]
Here all $\mathcal U$-small connective unital associative algebras
are allowed, not merely the compact objects of
\eqref{eq:nc-compact-affines}. Finite affine homotopy limits are
opposite derived amalgamated free products and remain in this
universe. Their mapping spaces are derived algebra mapping spaces;
weak equivalences of algebras are quasi-isomorphisms, not Morita
equivalences.

Pridham's homotopy-presheaf model localizes ordinary
simplicial presheaves at algebra quasi-isomorphisms. Its fibrant
functors are objectwise fibrant and invert those maps
\cite[Definition~1.19 and the following paragraph,
p.~9]{PridhamNC2023}. By the universal property of simplicial
localization, its underlying infinity-category is
$\Fun(\mathcal T_{\mathrm{all}}^{\op},\mathcal S)$. Thus its
associated prestacks are interpreted in the plain presheaf
category $\mathcal P_{\mathrm{all}}$. The corresponding
ambient model is the global enriched-presheaf model on
$\mathcal T_{\mathrm{all}}$.

Recall the following definitions
\cite[Definitions~1.14, 1.16 and 2.31, pp.~7--8 and
23]{PridhamNC2023}. For an algebra $A$ put $A_R=\Map(A,R)$. A map
$A\to B$ is \emph{homotopy locally of finite presentation} if, for
every filtered system of \emph{unbounded} test algebras with
colimit $R$, the canonical comparison
\begin{equation}\label{eq:nc-hlfp}
 \operatorname*{colim}_i B_{R_i}
 \longrightarrow
 \left(\operatorname*{colim}_i A_{R_i}\right)
       \times^h_{A_R}B_R
\end{equation}
is an equivalence. Here a \emph{strict nilpotent surjection}
$R\to S$ means a map of connective dg associative algebras
which is surjective in every homological degree, including
degree zero, and whose actual two-sided dg kernel $I$
satisfies $I^N=0$ for some finite $N\ge1$ depending on the
map. The nilpotence exponent and homological amplitude
may vary with the test map
\cite[Definition~1.16, p.~8]{PridhamNC2023}.
The word ``strict'' distinguishes this literal surjection
from the weaker positive-degree surjectivity condition for
a connective-model fibration. For such a surjection, consider
\begin{equation}\label{eq:nc-formal-lifting}
 \lambda_{B/A}(R,S):
 B_R\longrightarrow B_S\times^h_{A_S}A_R.
\end{equation}
Homotopy formal submersiveness requires surjectivity on $\pi_0$ in
\eqref{eq:nc-formal-lifting}; homotopy formal etaleness requires an
equivalence. Adding \eqref{eq:nc-hlfp} gives homotopy
submersiveness and homotopy etaleness, respectively. The
mapping-space comparison of \eqref{eq:nc-hlfp} includes all
homotopy groups, rather than only $\pi_0$. Its filtered tests
are unbounded, whereas the formal-lifting
tests in \eqref{eq:nc-formal-lifting} are connective.

Let $\mathsf C_{\mathrm{Art}}$ consist of affine maps
$\Spec B\to\Spec A$ for which $A\to B$ is homotopy submersive
and there is an algebra retraction $H_0(B)\to H_0(A)$ of
$H_0(A)\to H_0(B)$. Let $\mathsf C_{\mathrm{DM}}$ use homotopy
etaleness instead. The retraction is the split
\emph{classical affine} truncation condition in Pridham's
definition. It is not
surjectivity of the algebra map on $H_0$, and does not by itself
supply a derived section. For example, the augmentation
$T_k(k[1])\to k$, with zero differential and homological generator
degree one, is an isomorphism on $H_0$. It has no homotopy left
inverse, since it kills the nonzero $H_1$ of its source.

\begin{lem}[Stability of the actual geometric classes]
\label{lem:nc-geometric-stability}
The classes $\mathsf C_{\mathrm{Art}}$ and
$\mathsf C_{\mathrm{DM}}$ are invariant under equivalence, contain
all equivalences, and are stable under composition and arbitrary
derived affine base change. Moreover
$\mathsf C_{\mathrm{DM}}\subseteq\mathsf C_{\mathrm{Art}}$.
\end{lem}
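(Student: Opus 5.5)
The plan is to check each clause directly from the definitions. Two facts do most of the work: the formal-lifting maps \eqref{eq:nc-formal-lifting} and the finite-presentation comparisons \eqref{eq:nc-hlfp} are mapping-space constructions, and the classical truncation condition only involves $H_0$.

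\emph{Equivalence invariance and equivalences.} Both defining conditions are phrased through the derived mapping spaces $A_R=\Map(A,R)$ and through $H_0$. A square of equivalences between $A\to B$ and $A'\to B'$ therefore identifies \eqref{eq:nc-hlfp} and \eqref{eq:nc-formal-lifting} up to equivalence, and it carries an $H_0$-retraction to an $H_0$-retraction. If $A\to B$ is itself an equivalence, then \eqref{eq:nc-formal-lifting} is an equivalence and \eqref{eq:nc-hlfp} is the identity comparison, so the map is homotopy etale. The inverse of the isomorphism $H_0(A)\to H_0(B)$ is the required retraction. The inclusion $\mathsf C_{\mathrm{DM}}\subseteq\mathsf C_{\mathrm{Art}}$ is immediate, since an equivalence is surjective on $\pi_0$.

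\emph{Composition.} For $A\to B\to C$, the map $\lambda_{C/A}(R,S)$ factors as $\lambda_{C/B}(R,S)$ followed by the base change of $\lambda_{B/A}(R,S)$ along $C_S\times^h_{A_S}A_R\to B_S\times^h_{A_S}A_R$. This is the pasting of homotopy pullback squares $C_S\times_{B_S}(B_S\times_{A_S}A_R)$. Surjectivity on $\pi_0$ is stable under base change of spaces only if we also use the long exact sequence. So I would argue with fibres: both maps have nonempty fibres over each point, since homotopy pullback preserves $\pi_0$-surjectivity, the fibre of a base change being the original fibre. Equivalences compose in the same way. Condition \eqref{eq:nc-hlfp} composes by the same pasting, using that filtered colimits in $\mathcal S$ commute with finite homotopy limits. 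For the truncation condition, the retractions $r_1:H_0(B)\to H_0(A)$ and $r_2:H_0(C)\to H_0(B)$ compose to give $r_1r_2$.

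\emph{Base change.} Let $A\to A'$ be arbitrary, and set $B'=B\amalg^{\mathbf L}_A A'$, the derived amalgamated free product, so that $\Spec B'$ is the affine homotopy pullback. Mapping out gives $B'_R\simeq B_R\times^h_{A_R}A'_R$. Substituting this, \eqref{eq:nc-formal-lifting} for $A'\to B'$ becomes a homotopy base change of \eqref{eq:nc-formal-lifting} for $A\to B$, again by pasting. Hence $\pi_0$-surjectivity and equivalence persist. Likewise \eqref{eq:nc-hlfp} for $B'/A'$ is a base change of the one for $B/A$, once more using exactness of filtered colimits in spaces.

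The step I expect to be the main obstacle is the truncation condition under base change. Here one needs $H_0(B')\cong H_0(B)\amalg_{H_0(A)}H_0(A')$, the ordinary amalgamated free product, which holds because $H_0$ is left adjoint from connective algebras to discrete algebras and so preserves homotopy pushouts. The retraction $r:H_0(B)\to H_0(A)$ then induces $r\amalg\mathrm{id}:H_0(B)\amalg_{H_0(A)}H_0(A')\to H_0(A)\amalg_{H_0(A)}H_0(A')=H_0(A')$, which retracts $H_0(A')\to H_0(B')$. I would check the $H_0$-pushout identification carefully, using connectivity to compute $H_0$ of a derived free product as the free product of the $H_0$'s.
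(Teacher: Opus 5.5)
Your proposal is correct and follows essentially the same route as the paper's proof. The formal-lifting and finite-presentation comparisons are handled by pasting homotopy pullbacks, with filtered colimits of spaces commuting with finite limits in the base-change step. Component surjectivity is propagated by the nonempty-homotopy-fibre argument. The classical retraction is transported through $H_0(B*^{\mathbf L}_AA')\cong H_0(B)*_{H_0(A)}H_0(A')$. The paper obtains that isomorphism from a cofibrant strict pushout, where $H_0$ is a strict left adjoint; your appeal to $H_0$ as an $\infty$-categorical left adjoint is equivalent.

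There is one step you use without justification: base change of \eqref{eq:nc-hlfp}. The tests $R_i$ and $R$ there are \emph{unbounded} algebras, whereas $B'$ is formed as a pushout of connective algebras. So $B'_R\simeq B_R\times^h_{A_R}A'_R$ at those tests is not simply the defining universal property. The paper supplies it by noting that the inclusion of connective into unbounded algebras is left Quillen, equivalently $\Map(A,R)\simeq\Map(A,\tau_{\ge0}R)$, and therefore preserves this derived pushout. Add that remark and the argument is complete.

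Your aside that $\pi_0$-surjectivity under base change needs a long exact sequence is unnecessary. The homotopy-fibre argument you then give already suffices.
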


\begin{proof}
For a fixed filtered system of unbounded tests, write
$\overline A=\operatorname*{colim}_i A_{R_i}$, and similarly for
other algebras. For homotopy l.f.p.\ maps $A\to B\to C$, the
actual comparison for their composite is identified with
\[
 \overline C
   \simeq\overline B\times^h_{B_R}C_R
   \simeq
     (\overline A\times^h_{A_R}B_R)\times^h_{B_R}C_R
   \simeq\overline A\times^h_{A_R}C_R.
\]
For base change put $B'=B*^{\mathbf L}_A A'$. The inclusion of the
connective associative model into the unbounded model is left
Quillen \cite[Definition~1.2, p.~4]{PridhamNC2023}. It preserves
this derived pushout, so the pushout mapping property holds also
against the unbounded tests in \eqref{eq:nc-hlfp}. Filtered
colimits of spaces commute with finite homotopy limits, giving
\[
 \overline {B'}
   \simeq\overline B\times^h_{\overline A}\overline {A'}
   \simeq\overline {A'}\times^h_{A_R}B_R
   \simeq\overline {A'}\times^h_{A'_R}B'_R.
\]
These are identifications of the canonical maps in
\eqref{eq:nc-hlfp}, so they establish its full mapping-space
condition, including the higher homotopy data.

For a nilpotent connective test $R\to S$, the lifting map of a
composite factors as
\[
 C_R\longrightarrow C_S\times^h_{B_S}B_R
       \longrightarrow C_S\times^h_{A_S}A_R.
\]
The first map is $\lambda_{C/B}$, and the second is a homotopy
pullback of $\lambda_{B/A}$. The lifting map after $A\to A'$ base
change is likewise a homotopy pullback of
\eqref{eq:nc-formal-lifting}, by the mapping property of
$B*^{\mathbf L}_A A'$. Both equivalences and surjectivity on
components are stable under these homotopy pullbacks and
compositions. For the latter pullback assertion, if $X\to Y$ is
surjective on components and $z\in Z$ maps to $Y$, choose $x\in X$
and a path from its image to that of $z$. The resulting triple is
a point of $X\times_Y^h Z$ over $z$. This is a homotopy-pullback
assertion, not a strict-pullback assertion for arbitrary maps of
spaces. This proves the two formal assertions.

Finally, classical retractions compose. Choose a cofibrant
connective span $A_c\to B_c$, $A_c\to A'_c$ representing the given
span, with both legs cofibrations. Its strict pushout computes
$B*^{\mathbf L}_A A'$. On connective dg algebras, $H_0$ is left
adjoint to inclusion of discrete associative algebras and
therefore preserves this strict pushout. The replacements are
quasi-isomorphisms, giving the natural isomorphism
\[
 H_0(B*^{\mathbf L}_A A')
       \cong H_0(B)*_{H_0(A)}H_0(A').
\]
A retraction $H_0(B)\to H_0(A)$, followed by
$H_0(A)\to H_0(A')$, together with the identity of $H_0(A')$,
defines a retraction of the base-changed map by this
amalgamated-free-product universal property. This computation
takes $H_0$ after forming the cofibrant pushout. Equivalence invariance
and inclusion of equivalences follow directly from the
definitions. Formal etaleness implies formal submersiveness,
proving the last assertion.
\end{proof}

\begin{lem}[Derived sections from the finite Cech theorem]
\label{lem:nc-geometric-sections}
Every map in $\mathsf C_{\mathrm{Art}}$, and hence in
$\mathsf C_{\mathrm{DM}}$, has a homotopy section in
$\mathcal T_{\mathrm{all}}$. Its derived Yoneda map is a pointwise
effective epimorphism.
\end{lem}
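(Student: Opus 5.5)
The plan is to reduce the existence of a homotopy section to the classical splitting, and then lift that splitting through the Postnikov tower of $B$ by formal submersiveness. The lift is controlled by the finite Cech effectivity theorem of \cite{PridhamNC2023}, as the introduction indicates.

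Fix $\Spec B\to\Spec A$ in $\mathsf C_{\mathrm{Art}}$. A homotopy section of $\Spec B\to\Spec A$ in $\mathcal T_{\mathrm{all}}$ is the same as a homotopy retraction $B\to A$ of $A\to B$ in connective algebras, that is, a point of the fibre of $\Map(B,A)\to\Map(A,A)$ over $\id_A$. The hypothesis supplies an algebra retraction $r_0:H_0(B)\to H_0(A)$. Composing with $A\to H_0(A)$ gives a point of $\Map(A,H_0(A))$, and $r_0$ gives a compatible point of $B_{H_0(A)}$.

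The inductive step lifts this along the Postnikov tower $A\to\cdots\to P_{n}A\to P_{n-1}A\to\cdots\to P_0A=H_0(A)$. Each stage $P_nA\to P_{n-1}A$ is a square-zero extension by $H_n(A)[n]$. To stay within Pridham's test class, I would model each such stage by a strict nilpotent surjection $R\to S$ of connective dg algebras: choose a cofibrant model and pass to good truncations, whose kernel is surjective in every degree with vanishing square. Formal submersiveness then says that \eqref{eq:nc-formal-lifting} is surjective on $\pi_0$. So a compatible pair consisting of a point of $B_{P_{n-1}A}$ and the identity-induced point of $A_{P_nA}$ lifts to a point of $B_{P_nA}$ over the given ones, preserving the compatibility path. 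Iterating gives a compatible system of points in $B_{P_nA}$.

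The main obstacle is passing to the limit $A\simeq\lim_n P_nA$. Surjectivity on $\pi_0$ at each stage gives a compatible tower of points, but choices of paths must be made coherently. The limit $\Map(B,A)=\lim_n\Map(B,P_nA)$ then receives a point, because a sequential limit of spaces is nonempty once compatible points and paths are chosen successively. The Milnor $\lim^1$ term affects only the uniqueness of components, not existence. I would therefore choose each lift over the previously chosen point together with its path, which is exactly what $\pi_0$-surjectivity of the homotopy fibre product \eqref{eq:nc-formal-lifting} provides. This is where I expect to cite Pridham's finite effectivity (Cech) theorem instead of redoing the tower argument; his theorem packages exactly this section-existence for homotopy-submersive maps with split classical truncation. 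The homotopy locally finite presentation condition \eqref{eq:nc-hlfp} is not needed for existence. For $\mathsf C_{\mathrm{DM}}$ the conclusion follows from the inclusion in Lemma~\ref{lem:nc-geometric-stability}.

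Finally, a homotopy section gives, for every $T\in\mathcal T_{\mathrm{all}}$, $\pi_0$-surjectivity of $\Map(T,\Spec B)\to\Map(T,\Spec A)$. By the argument of Lemma~\ref{lem:nc-split-cover}, which uses only evaluation and works verbatim on $\mathcal T_{\mathrm{all}}$, the derived Yoneda map is an effective epimorphism in $\mathcal P_{\mathrm{all}}$, detected pointwise.
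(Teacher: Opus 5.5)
Your argument is correct in substance but takes a different route from the paper. The paper never lifts $r_0$ by hand. It represents $u$ by a cofibration and forms the strict Cech nerve $X_\bullet\to cV$. It checks that this nerve is a trivial NC Artin $1$-hypergroupoid: the relative boundary map is $u$ in degree zero and an equivalence above. After Reedy replacement, Pridham's finite effectivity theorem gives $|X_\bullet|\simeq\y V$ on all connective tests. Evaluating at $R$ then shows that $B_R\to A_R$ is surjective on $\pi_0$, and at $R=A$ the component of $1_A$ lifts to the retraction $r$.

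You instead run the obstruction theory directly, and your lifting step is sound:
\begin{itemize}
\item Since $\lambda_{B/A}(P_nA,P_{n-1}A)$ is a map over $A_{P_nA}$, $\pi_0$-surjectivity is equivalent to $\pi_0$-surjectivity on homotopy fibres over the canonical point. So the fibres $F_n$ of $\Map(B,P_nA)\to\Map(A,P_nA)$ form a tower with $\pi_0$-surjective transitions.
\item $F_0\neq\varnothing$, because $H_0(A)$ is discrete and $r_0\circ H_0(A\to B)=\id$.
\item The Milnor sequence and $A\simeq\lim_nP_nA$ then give a point over $\id_A$.
\end{itemize}
Your route is elementary and self-contained. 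It shows that only formal submersiveness and the classical splitting are used, so \eqref{eq:nc-hlfp} really is irrelevant for the section. The paper's route instead identifies the Cech nerve of $u$ as a trivial geometric hypergroupoid, in keeping with its framework. The Postnikov passage is then hidden inside Pridham's theorem, as the paper itself remarks.

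Two corrections are needed.

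\textbf{The first stage of your tower.} Your good-truncation model fails at $n=1$. The kernel of $\tau_{\le1}A\to\tau_{\le0}A=H_0(A)$ contains $dA_1\subseteq A_0$ in degree zero. For cofibrant $A$, the algebra $A_0$ is a retract of a free algebra, so this ideal is not nilpotent unless it vanishes. Hence this map is not a strict nilpotent surjection in Pridham's sense. For $n\ge2$ the good truncation does work: at $n=2$ one has $dx\cdot dy=d(x\,dy)\in dA_3$, and for $n\ge3$ degree reasons force $I^2=0$.

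Replace the first stage by a strict square-zero model. Take a cofibrant $\tilde S\simeq H_0A$ and a strict representative $\tilde S\to H_0A\oplus H_1A[2]$ of the classifying derivation. Form the strict pullback of the degreewise surjection $H_0A\oplus\mathrm{Cone}(H_1A[1])\to H_0A\oplus H_1A[2]$ along it. The resulting map to $\tilde S$ is surjective in every degree, and its kernel is $H_1A[1]$ with zero square.

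\textbf{The citation.} Pridham's finite effectivity theorem does not ``package section-existence.'' It is a statement about realizations of trivial hypergroupoids. Extracting a section from it requires exactly the Cech-nerve construction and the evaluation at $R=A$ described above. Once your tower argument is completed, you need no such citation, and the hedge should be removed.
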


\begin{proof}
Let $u:U=\Spec B\to V=\Spec A$ belong to
$\mathsf C_{\mathrm{Art}}$. First represent it by a fibration
between fibrant objects of the opposite connective algebra model:
take $A$ cofibrant and replace $A\to B$ by a cofibration with
cofibrant target. Form the strict Cech nerve of this
representative. Its pullbacks compute the derived ones, so it
represents
\[
 X_m=U^{\times^h_V(m+1)},\qquad X_\bullet\longrightarrow cV.
\]
All levels are affine. This is the relative homotopy
$0$-coskeleton of $u$. For a finite simplicial set $L$, right Kan
extension from outer degree zero gives
\begin{equation}\label{eq:nc-cech-weighted}
 \{L,X_\bullet\}\simeq
 U^{L_0}\times^h_{V^{L_0}}\{L,cV\}.
\end{equation}
Here the powers indexed by $L_0$ are finite products. For
$L=\partial\Delta[m]$ and $m\ge1$, all $m+1$ vertices are present.
Pulling \eqref{eq:nc-cech-weighted} back along $V\to\{L,cV\}$
therefore gives $X_m$. In degree one this is explicitly
$V\times^h_{V\times V}(U\times U)=U\times^h_V U$. For higher
boundaries we do not replace $\{L,cV\}$ by $V$: the relative
pullback, not connectedness of $L$, gives the asserted
equivalence. Its relative boundary map in degree zero is $u$; the
relative boundary maps in all degrees $m\ge1$ are equivalences.
Thus the augmentation is a trivial derived homotopy NC Artin
$1$-hypergroupoid, with Pridham's boundary convention
\cite[Definitions~3.1--3.2, p.~16]{Pridham2013}. Its absolute
degree-one horns are base changes of $u$. Higher horns contain all
vertices and are contractible, so \eqref{eq:nc-cech-weighted}
makes their absolute maps equivalences. The constant target is a
homotopy $0$-hypergroupoid.

Choose a Reedy fibrant replacement $cV\overset{\sim}{\to}Y'$ in
simplicial objects of the affine model category. Factor
$X_\bullet\to Y'$ as a Reedy trivial cofibration
$X_\bullet\overset{\sim}{\to}X'_\bullet$ followed by a Reedy
fibration $X'_\bullet\to Y'$. The replacement levels remain
affine. Homotopy matching data are unchanged by these levelwise
weak equivalences, by their definition through Reedy replacement
\cite[Definition~2.30, p.~23]{PridhamNC2023}; the geometric classes
are equivalence-invariant. Both diagrams are now Reedy fibrant and
the arrow is a Reedy fibration. Reedy replacement is important
even for a constant diagram, whose first matching map is the diagonal
$V\to V\times V$.

Pridham's finite effectivity result now applies:
trivial finite derived homotopy NC Artin hypergroupoids give
equivalences of associated prestacks on \emph{all} connective dg
tests \cite[Remarks~2.34, pp.~24--25]{PridhamNC2023}.
The passage from bounded-homology to all connective tests uses
the nilpotent-test fibration property along the connective
Postnikov tower. Consequently $|X_\bullet|\to\y V$ is an equivalence.

Evaluation at a connective algebra $R$ gives the Cech nerve of
$B_R\to A_R$, since derived Yoneda preserves the finite affine
homotopy pullbacks and evaluation preserves colimits in a presheaf
infinity-category. Its realization is the $(-1)$-truncated image of
that map of spaces \cite[Proposition~6.2.3.4]{LurieHTT2009}. The
augmentation being an equivalence therefore makes $B_R\to A_R$ an
effective epimorphism, equivalently a map surjective on components
\cite[Corollary~7.2.1.15]{LurieHTT2009}. At $R=A$, lift the
component of $1_A$ to obtain $r:B\to A$ with
$r\circ(A\to B)\simeq1_A$. This is the required homotopy section.
It is specified up to homotopy rather than as a strict dg
algebra retraction.
\end{proof}

\subsection{Represented NC Artin and DM categories of fibrant objects}

\begin{thm}[Geometric NC prestack CFOs]
\label{thm:nc-geometric-cfo}
In the global injective model for $\mathcal P_{\mathrm{all}}$, take
ordinary outer Reedy-fibrant diagrams with weakly affine levels and
positive absolute homotopy horns in $\mathsf C_{\mathrm{Art}}$,
respectively $\mathsf C_{\mathrm{DM}}$. With objectwise
diagonal/realization weak equivalences and Reedy model fibrations
satisfying the corresponding positive relative geometric horns,
they form full Brown categories of fibrant objects
\[
 \mathscr R_{\mathrm{nc,Art}},\qquad
 \mathscr R_{\mathrm{nc,DM}}.
\]
Outer amplitude is unrestricted, and geometric horn
conditions are imposed only in positive degrees.
\end{thm}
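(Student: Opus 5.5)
The plan is to deduce Theorem~\ref{thm:nc-geometric-cfo} from the represented prestack criterion, Proposition~\ref{prop:nc-prestack-criterion}. We apply it with $\mathcal T_0=\mathcal T_{\mathrm{all}}$ and $\mathsf C=\mathsf C_{\mathrm{Art}}$, respectively $\mathsf C=\mathsf C_{\mathrm{DM}}$. The criterion has four hypotheses.
\begin{enumerate}
\item $\mathcal T_0$ is small with finite limits.
\item $\mathsf C$ is equivalence invariant and contains all equivalences.
\item $\mathsf C$ is stable under composition and homotopy base change.
\item Every map of $\mathsf C$ has a homotopy section.
\end{enumerate}
Once these hold, the conclusion of the proposition is exactly the displayed Brown structure. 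It has objectwise diagonal weak equivalences, Reedy fibrations with positive relative horns in $\mathsf C$, no outer bound, and geometric conditions only in positive degrees.

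Hypotheses (1)--(4) are checked as follows.
\begin{itemize}
\item For (1), $\mathcal T_{\mathrm{all}}$ is $\mathcal V$-small after the universe choice. It has finite limits because finite affine homotopy limits are opposite derived amalgamated free products of $\mathcal U$-small connective algebras, as recorded in Subsection~\ref{subsec:nc-geometric-classes}. The terminal object is $\Spec k$.
\item For (2) and (3), use Lemma~\ref{lem:nc-geometric-stability}. Base change in $\mathcal T_{\mathrm{all}}$ is derived pushout of algebras, which is the arbitrary derived affine base change covered there.
\item For (4), use Lemma~\ref{lem:nc-geometric-sections}. It gives a homotopy section for each map of $\mathsf C_{\mathrm{Art}}$, and hence for each map of $\mathsf C_{\mathrm{DM}}\subseteq\mathsf C_{\mathrm{Art}}$. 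Lemma~\ref{lem:nc-split-cover} then identifies this with pointwise effective epimorphy of the derived Yoneda map.
\end{itemize}

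The proposition is stated for a small $\infty$-category with a Kan-enriched presentation. I would note that its proof only uses the following ingredients.
\begin{itemize}
\item Weighted-limit representability, via Lemma~\ref{lem:derived-weighted-matching} in the form of Remark~\ref{rem:generic-weighted-matching} with the derived Yoneda embedding, which is fully faithful and finite-limit preserving.
\item The relative-horn factorizations of Lemma~\ref{lem:derived-reedy-calculus} and the prism filtration of Lemma~\ref{lem:derived-prism-filtration}. These are purely simplicial and use only composition and base change in $\mathsf C$.
\item The pointwise diagonal Lemma~\ref{lem:enriched-pointwise-diagonal}.
\end{itemize}
All of these transfer verbatim to the $\mathcal V$-small presentation of $\mathcal T_{\mathrm{all}}$ and its global injective model.

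We must also check that the ambient model is the right one. Pridham's homotopy-presheaf model has underlying $\infty$-category $\mathcal P_{\mathrm{all}}$, so working in the global injective enriched model loses nothing. Weak equivalences there are objectwise, so the diagonal-realization identification of the proof of Theorem~\ref{thm:nc-prestack-cfo} holds with trivial topology. This makes "objectwise diagonal/realization weak equivalences" a single class.

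The main obstacle is the acyclic-pullback axiom. Its input is the pointwise claim that an evaluated relative horn map of a fibration is vertex-surjective. In the split case this came directly from homotopy sections. Here it has to come from Lemma~\ref{lem:nc-geometric-sections}, whose proof rests on Pridham's finite effectivity theorem on all connective tests.

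The worry is that Pridham's horn classes are defined via homotopy matching objects in his Reedy replacement. I would first verify that these agree with the strict weighted limits $\{\Lambda^i[m],X\}$ of our Reedy-fibrant diagrams, using Lemma~\ref{lem:derived-weighted-matching} and equivalence invariance. Then $\mathsf C$-membership of a horn map is unambiguous. Once that holds, each evaluated relative horn is surjective on $\pi_0$. The Reedy fibration condition then upgrades this to surjectivity in all inner degrees. Lemma~\ref{lem:enriched-pointwise-diagonal} makes the diagonal a Kan fibration, trivial when $f$ is weak, and strict pullback preserves trivial Kan fibrations. The remaining Brown axioms then follow exactly as in Theorem~\ref{thm:nc-prestack-cfo}.
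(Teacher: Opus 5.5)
Your proposal is correct and follows the paper's proof. The paper likewise feeds Lemma~\ref{lem:nc-geometric-stability} (stability) and Lemma~\ref{lem:nc-geometric-sections} (homotopy sections from Pridham's finite effectivity) into Proposition~\ref{prop:nc-prestack-criterion}, whose proof then runs the pointwise diagonal argument you describe. Beyond this, it adds only a strictification paragraph relating these strict Reedy models to coherent affine diagrams, which the Brown axioms do not need. Your concern about Pridham's matching conventions is already settled, because $\mathsf C_{\mathrm{Art}}$ is a class of affine maps that the theorem imposes directly on the represented homotopy horns.
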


\begin{proof}
The affine infinity-category has the required finite limits.
Lemmas~\ref{lem:nc-geometric-stability} and
\ref{lem:nc-geometric-sections} supply exactly the geometric
hypotheses of Proposition~\ref{prop:nc-prestack-criterion}. Apply
that proposition to the two classes.

The objects here are ordinary presheaf models. Derived Yoneda is
fully faithful and preserves the finite limits used by matching, so
a coherent affine homotopy diagram gives the corresponding
coherent represented presheaf diagram. Conversely a coherent
diagram with representable values factors through this full Yoneda
image. Strictification in the global presheaf model, followed by
outer Reedy-fibrant replacement, supplies ordinary models of these
diagrams and preserves their homotopy horn conditions as in
Lemma~\ref{lem:derived-strictification}. This comparison
localizes strict diagrams at levelwise equivalences.
\end{proof}

\begin{cor}[Finite outer bounds]\label{cor:nc-geometric-finite}
For each finite $n\ge0$, impose in either class that the absolute
horns in outer degrees $m>n$ are equivalences. The resulting full
subcategories, and their unions over finite $n$, are also full
Brown categories of fibrant objects.
\end{cor}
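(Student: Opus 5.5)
The plan is to deduce Corollary~\ref{cor:nc-geometric-finite} from Proposition~\ref{prop:nc-prestack-criterion}, as was done for Theorem~\ref{thm:nc-geometric-cfo}, by checking that each Brown construction used there preserves the bound. The class of weak equivalences stays the objectwise diagonal/realization class, and the fibrations are the restricted ones. The first observation I would make, modelled on the proof of Corollary~\ref{cor:ordinary-geometric-finite}, is this: between two objects of bound $n$, every positive relative homotopy horn in outer degree $m>n$ is automatically an equivalence. The relative horn target is $Y_m\times^h_{\{\Lambda,Y\}}\{\Lambda,X\}$. The absolute horn $Y_m\to\{\Lambda,Y\}$ is an equivalence, so this target is equivalent to $\{\Lambda,X\}$. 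The relative horn then composes with an equivalence to give the absolute horn of $X$, which is also an equivalence, and two-out-of-three finishes the argument. I will use this identification repeatedly. It requires nothing beyond equivalence invariance of the homotopy weighted limits.

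With that in hand I would go through the axioms in turn. The terminal object $c\Spec k$ has contractible horns, so it satisfies every bound. For a fibration pullback $P=X\times_Y Z$, the two-step factorization of the absolute horn of $P$ from the proof of Proposition~\ref{prop:nc-prestack-criterion} writes it as a base change of a relative horn of $X\to Y$ followed by a base change of an absolute horn of $Z$. Above $n$ the first is an equivalence by the observation and the second by the bound on $Z$. Homotopy base changes of equivalences are equivalences, so $P$ has bound $n$, and the same argument covers finite products. For the path object I would use Lemma~\ref{lem:derived-prism-filtration}. In outer degree $m>n$, the endpoint relative horn of $PX\to X\times X$ is a finite composite of base changes of horn maps of $X$ in dimensions $m$ and $m+1$, all of which exceed $n$ and so are equivalences. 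Hence the endpoint relative horns are equivalences above $n$. Composing with the absolute horns of $X\times X$, which are equivalences, gives the bound for $PX$. Composition of fibrations and two-out-of-three need no further work. Acyclic pullback is inherited because the pullback it produces is the same ordinary Reedy pullback, which has just been shown to lie in the subcategory. The weak class and its pointwise diagonal argument via Lemma~\ref{lem:enriched-pointwise-diagonal} do not change. Equivalence invariance of $\mathsf C_{\mathrm{Art}}$ and $\mathsf C_{\mathrm{DM}}$ (Lemma~\ref{lem:nc-geometric-stability}) makes the bound condition independent of the chosen models. Since every class contains all equivalences, the finite-bound objects are genuinely objects of $\mathscr R_{\mathrm{nc,Art}}$ and $\mathscr R_{\mathrm{nc,DM}}$.

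For the union, each Brown axiom involves only finitely many objects. I would take their common maximal bound $N$ and run the construction inside $\mathscr R_{\cdot,N}$. The inclusions $\mathscr R_{\cdot,n}\subseteq\mathscr R_{\cdot,n+1}$ are full and respect the classes, so the resulting pullbacks and path objects are the same in the union.

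The step I expect to need the most care is the path-object bound. Lemma~\ref{lem:derived-prism-filtration} handles the relative inclusion $U\to\Delta[m]\times\Delta[1]$, but the argument also needs its outer homotopy weighted limits to turn each horn attachment into a homotopy base change of a horn map of $X$ of exactly that dimension. I would verify this with the finite-cell argument of Lemma~\ref{lem:derived-weighted-matching}, carried out in the global injective presheaf model.
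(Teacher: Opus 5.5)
Your proposal is correct and follows essentially the same route as the paper's proof. The shared steps are: two-out-of-three shows that relative horns between bounded objects are equivalences above $n$; the two-step absolute-horn factorization preserves the bound for pullbacks; the prism filtration uses only horn dimensions $m$ and $m+1$, so paths keep the bound; and finitely many objects in the union have a common maximum bound. Your explicit checks of the terminal object, products and inheritance of acyclic pullback are routine details that the paper leaves implicit.
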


\begin{proof}
Above $n$, a map between two such objects automatically has
equivalent relative horns: both absolute horn maps are
equivalences, and their relative comparison is obtained by
homotopy pullback and two-out-of-three. The absolute pullback
factorization of Lemma~\ref{lem:derived-reedy-calculus} therefore
preserves the bound. For paths, the prism filtration of
Lemma~\ref{lem:derived-prism-filtration} for a degree-$m$ horn
uses cells in dimensions $m$ and $m+1$. If $m>n$, all
corresponding horn maps are equivalences. Thus paths also preserve
the bound, as in Proposition~\ref{prop:derived-outer-path}. In
the union over all finite bounds, each Brown construction involves
only finitely many objects; take the maximum of their bounds.
\end{proof}

In particular, Pridham's strict finite NC hypergroupoids give
objects of these represented models, by derived Yoneda and the
preceding strictification. Their realizations are the
strongly quasi-compact geometric prestacks of
\cite[Definitions~2.30--2.33]{PridhamNC2023}. The finite
CFO construction appears in
\cite[Remark~3.28, p.~21]{Pridham2013}.
Three constructions remain distinct: these finite geometric
realizations, Pridham's varying-finite-degree filtered-colimit
``infinity-geometric'' objects, and the category allowing
individually unbounded presentations in
Theorem~\ref{thm:nc-geometric-cfo}.
Determining the essential image and mapping spaces of realization
for the last category is a further comparison problem.
The weak maps used here are defined as realization equivalences;
refinements carry additional geometric conditions.

For the following external example, specialize to $k=\R$.
Let $X$ be an ordinary real manifold and let $\Omega^\bullet(X)$
be its ordinary commutative de Rham algebra. In
\cite[Example~3.15, pp.~34--35]{PridhamNC2023}, Pridham considers
\[
 B\longmapsto D_*\mathsf{Proj}
          \bigl(\Omega^\bullet(X)\otimes_\R B\bigr)
\]
for connective unital associative real dg algebras $B$, together
with the perfect-complex variant. Here $\mathsf{Proj}$ is the
finite-projective-module moduli functor used by Pridham and $D_*$
its stacky extension.
The stacky extension keeps the cohomological de Rham degree
separate from the homological degree of $B$.
This gives a moduli prestack over connective
associative affines. A version in a noncommutative sheaf
geometry would require a specified topology and descent
theorem; a Morita-invariant version would require compatible
weak equivalences and a comparison of the algebraic mapping spaces.

%% file: comparisons.tex
\section{Comparisons and further applications}
\label{sec:comparisons}

Comparisons between these homotopy theories involve functors
of points, choices of realization, and changes of geometry.
We summarize the comparisons furnished by the preceding
constructions and formulate further geometric applications.

\subsection{Functors of points and realization}

Levelwise ordinary smooth Yoneda preserves the specified
classes and existing categorical fibration pullbacks
(Proposition~\ref{prop:geometric-yoneda}).
The constant-test and represented-pullback arguments give
exactness for the iCFO of Theorem~\ref{thm:geometric-icfo}.
This exactness concerns the underlying categories;
localized mapping spaces and essential images form
a further comparison problem.

The ordinary split-Banach iCFO and its finite unique-horn
versions use the fixed-site chart compatibility of
Theorem~\ref{thm:ordinary-banach-icfo}. The ordinary
Banach sheaf CFO instead applies to any plot site with
the stated opens and products, with germ tests taken
on that fixed site. The point-only-site counterexample
shows why the geometric theorem needs additional
chart compatibility.

Internal Kan replacement identifies the localization of
locally Kan sheaves with the hypercomplete local homotopy
theory (Theorem~\ref{thm:smooth-localization}).
This is a sheaf-theoretic equivalence: Kan replacement
supplies local fillers, whereas geometric representability
and hyperdescent-fibrant replacement require further
constructions.

Using Nuiten's finite realization theorem,
Theorem~\ref{thm:derived-reedy-finite} identifies the
localization with the corresponding finite-geometric stack
category. Theorem~\ref{lem:derived-hypercomplete-base-change}
proves $\mathbf{PB}_\tau(\mathcal T)$ and gives the
unbounded hypercomplete Brown and base-change results.
Theorem~\ref{lem:banach-hypercover-refinement} proves
both clauses of $\mathbf{SO}(\mathcal C)$; together
with PB on the same fixed site, they give Banach
hypersheaf representation and the full represented CFO.
Every Banach outer bound retains the same hypercompleted
weak-equivalence class. Ordinary $\mathbf{AP}_\infty$
is the question posed in
Question~\ref{question:derived-unbounded-ordinary}.
The mapping spaces and geometric essential image of
unbounded realization are further localization questions.

For the associative models, strictification compares strict
and coherent diagrams through levelwise equivalences.
The pointwise Brown deductions use
Lemma~\ref{lem:enriched-pointwise-diagonal} with the
Reedy and relative-vertex-surjectivity hypotheses.
Their separate absolute horns give Kan diagonal
objects. The Artin/DM applications use Pridham's
finite effectivity theorem to obtain the derived
sections needed for this pointwise argument.
Weak maps are realization equivalences; describing
them by refinements is a further geometric comparison.
Affine tests remain localized at quasi-isomorphisms.

\subsection{Geometric tests}

The examples expose different failure modes.
The crossing axes show that ordinary manifold
representability can obstruct a full Brown
structure even when the horn conditions hold.
For a smaller chart class this conclusion requires
admission of the crossing-axes cospan.
The separate cubic/product example shows that insufficient
ordinary plot tests can make an acyclic geometric fibration
whose mandatory categorical pullback does not exist.
The cubic is not weak on the full Euclidean site.
The derived-smooth self-intersection
$\R\otimes_{\R[\epsilon_{-1}]}^{\mathbf L}\R$
has a degree-$-2$ generator, showing why a
quasi-smooth restriction is not the required
finite-limit closure.

For a Banach space $H$ admitted by the chosen chart category,
the self-intersection $D_H$ is distinguished from the ordinary
point by the structured square-zero test $F$, although ordinary
chart tests do not distinguish them. This includes
infinite-dimensional choices of $H$. The conclusion is
nonordinariness. Finite-dimensional derived presentations,
and the tangent and cotangent objects of this intersection,
require separate analysis.

Finally, associative affine products use
derived amalgamated free products, not
commutative tensor products. The explicit
associative affine group and geometric
cover calculations in
Section~\ref{sec:noncommutative-geometry}
keep the noncommutative test category visible.
These examples isolate the representability and testing
issues that further moduli or deformation applications
must address.

\subsection{Changes of geometry}

\begin{construction}[Compatibility of geometric realizations]
\label{problem:change-of-geometry}
For a specified change of geometry, construct
the induced comparison of represented higher
groupoids and identify the hypotheses under
which it preserves the chosen weak equivalences.
Determine its effect on mapping spaces and
geometric essential images.
\end{construction}

Finite-limit and cover preservation are
important inputs, but the realization
compatibility must also be supplied.
In particular an inclusion of classical
charts does not establish an equivalence
of complete derived geometries.
A comparison of the finite-dimensional
derived-smooth and Banach constructions
must retain their local structures
and their distinct realization functors.
A commutative/associative comparison must
retain the algebraic mapping spaces,
rather than replace them by scalar points.

Ordinary algebraic base change is a separate issue.
For finite-dimensional Hausdorff second-countable manifolds
without boundary, the smooth-function pullback of a nonempty
submersion with positive-dimensional fibres and target
dimension at least three is nonflat as a map of ordinary rings
\cite[Corollary~7.7]{ZengSubmersion2026}.
This obstruction concerns ordinary algebraic tensor products.
The structured derived pullbacks above and completed
topological tensor products use different constructions;
applications passing between them must specify the
extension-of-scalars functor and its coefficient class.

\subsection{Fredholm and noncommutative applications}

\begin{construction}[Derived Fredholm local models]
\label{problem:fredholm}
Specify a class of smooth Fredholm sections
of Banach bundles whose derived zero loci
belong to the constructed geometry.
Establish finite-dimensional local
reduction, independence of reduction data,
and compatibility with the chosen
geometric higher-groupoid presentations.
\end{construction}

Here Fredholm means that the relevant bounded
linearized operator has closed range and
finite-dimensional kernel and cokernel.
A comparison of these derived zero loci with
finite-dimensional obstruction models would connect
the geometry exemplified by $D_H$ to Fredholm
moduli problems. Such a comparison should specify
its relation to Kuranishi, polyfold or
convenient-calculus models.

The categorical Penrose--Ward comparison of Block and Zeng
\cite[Theorem~1.1]{BlockZengWard2026} supplies a concrete test problem.
On a closed connected oriented spin self-dual four-manifold,
it compares the ordered-pair deformation complexes of
self-dual bundles with Dolbeault complexes on twistor space.
An application to the geometry here would require compatible
Banach completions, Fredholm local presentations, and a
comparison of the resulting derived moduli objects.

\begin{construction}[An associative geometric comparison]
\label{problem:nc-comparison}
For a specified class of associative
moduli or deformation prestacks, compute
the realization and mapping spaces of
their geometric hypergroupoid models.
Alternatively, prove a refinement or
essential-image theorem beyond the
finite geometric results cited above.
\end{construction}

Any Morita-invariant extension needs its
own weak-equivalence class and comparison
argument. The smooth-kernel example of
\cite[Theorem~5.4]{SmoothKernelKTheory2026} makes this distinction
concrete: the full tangent foliation on the circle and
the point foliation are connected-fibre Morita equivalent,
but the raw integral nonconnective algebraic $K$-theory
spectra of their AS convolution algebras differ in
degree minus two. Geometric foliation-Morita equivalence
and the quasi-isomorphism-based algebra tests here
therefore address different invariance questions.

A noncommutative Banach theory
would additionally require analytic
algebra objects, specified completed operations,
and compatible descent and weak-equivalence data.

The transport-compatible chart quotients for locally finitely
geometrically resolvable singular foliations form simplicial
groupoid objects in smooth stacks, with outer horn equivalences
in degrees at least two \cite[Theorem~E]{ZengHigherHolonomy}.
Their relation levels provide a concrete representability problem
for the smooth and Banach geometries above. Applications to
coefficient integration, Riemann--Hilbert correspondences and
Tannakian duality further require functors relating these
geometric realizations to algebroids and their representations.

%% file: enriched-realization.tex
\section{Enriched realization base change}
\label{app:enriched-realization}

This appendix proves the two lifting mechanisms used in
Theorem~\ref{lem:derived-hypercomplete-base-change}, including
the pointwise lemma needed by the associative applications.
Fix a small Kan-enriched site $(\CT,\tau)$ and write
\[
 \mathbf P=\Fun_{\sSet}(\CT^\op,\sSet),\qquad
 h_t(s)=\Map_{\CT}(s,t).
\]
The topology is on $\operatorname{Ho}(\CT)$; presheaves and
their actions remain enriched. Use the global injective model
or its ordinary-sheaf or hypersheaf localization.
Localized fibrations are global injective fibrations, since
global trivial cofibrations remain locally trivial.
Global injective fibrations are objectwise Kan fibrations:
the maps $h_t\otimes\Lambda^i[q]\to h_t\otimes\Delta[q]$
are objectwise trivial cofibrations, and enriched Yoneda
detects the corresponding lifts.

A square $h_t\otimes K\to E$, $h_t\otimes L\to D$ for
$K\hookrightarrow L$ has \emph{local strict lifts} if a
covering sieve of $t$ makes its restriction along every
actual arrow representing a class in that sieve strictly
liftable. Fillers need not agree, and each finite problem
may use its own sieve.

\subsection{Relative matching and diagonal lifting}

\begin{lem}[Relative matching and strict local lifts]
Let $f:X\to Y$ be a Reedy model fibration between
Reedy-fibrant strict outer diagrams. For a finite inclusion
$A\hookrightarrow B$, the map
\[
 \{B,X\}\longrightarrow
 \{A,X\}\times_{\{A,Y\}}\{B,Y\}
\]
is a model fibration and its target computes the indicated
homotopy relative target.
\label{lem:enriched-relative-matching}
If a positive relative homotopy horn of $f$ is locally
surjective on components, every inner simplex of its
actual relative horn target has local strict lifts.
\end{lem}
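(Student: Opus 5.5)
The argument has two parts. The fibration assertion is a cell induction on the inclusion $A\hookrightarrow B$. The lifting assertion combines enriched Yoneda with the definition of local surjectivity on $\pi_0$.

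\emph{Fibration and homotopy target.} Filter $A\hookrightarrow B$ by finitely many boundary-cell attachments $A=B_0\subset B_1\subset\cdots\subset B_r=B$, with $B_{j}=B_{j-1}\amalg_{\partial\Delta[m_j]}\Delta[m_j]$. The displayed map then factors as a composite of the maps
\[
 \{B_j,X\}\times_{\{B_j,Y\}}\{B,Y\}
 \longrightarrow
 \{B_{j-1},X\}\times_{\{B_{j-1},Y\}}\{B,Y\}.
\]
Each of these maps is a base change of the relative Reedy matching map
\[
 X_{m_j}\longrightarrow M_{m_j}X\times_{M_{m_j}Y}Y_{m_j},
\]
which is a model fibration because $f$ is a Reedy model fibration. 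The map in the statement is therefore a composite of model fibrations, hence a model fibration.

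For the target, run the same filtration with $X$ alone, as in Lemma~\ref{lem:derived-weighted-matching}. This shows that $\{A,X\}\to\{A,Y\}$ and $\{B,Y\}\to\{A,Y\}$ are fibrations between fibrant objects. The latter uses Reedy fibrancy of $Y$, by the absolute version of the same induction. The ordinary pullback $\{A,X\}\times_{\{A,Y\}}\{B,Y\}$ therefore computes the homotopy pullback. The weighted limits themselves compute homotopy weighted limits, so the target is the homotopy relative target. These pullbacks are formed objectwise in the enriched presheaf category, and the argument is the same in the global injective model and in its localizations, since localized fibrations are global injective fibrations.

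\emph{Strict local lifts.} Take $A=\Lambda^i[n]\hookrightarrow B=\Delta[n]$ with $n\ge1$, and let $R$ denote the actual relative horn target. By the first part, $\lambda:X_n\to R$ is a global injective fibration, hence an objectwise Kan fibration. Moreover $R$ is a homotopy model of the relative homotopy horn target. Consider a square $h_t\otimes K\to X_n$, $h_t\otimes\Delta[q]\to R$ with $K\subseteq\partial\Delta[q]$; the case $K=\varnothing$ is the lifting of an inner $q$-simplex. By enriched Yoneda this is a $q$-simplex $\sigma$ of $R(t)$, together with a partial lift on $K$.

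The procedure is as follows.
\begin{enumerate}
\item Restrict $\sigma$ to its last vertex. This is a point of $R(t)$.
\item Local surjectivity on components supplies a covering sieve of $t$. For each actual arrow $s\to t$ representing a class in that sieve, the restricted vertex lies in the $\pi_0$-image of $X_n(s)\to R(s)$, after composing with the enriched action on $\Map_{\mathcal T}(s,t)$.
\item Since $\lambda(s)$ is a Kan fibration, a vertex of $R(s)$ path-connected to the image of $X_n(s)$ lifts strictly: lift the path, using the anodyne inclusion $\Delta[0]\to\Delta[1]$.
\item Extend to the whole $q$-simplex, relative to the given lift on $K$. When $K=\varnothing$, lift against the anodyne inclusion $\Delta[0]\to\Delta[q]$. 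When $K$ is nonempty, it already contains a lifted vertex, and $K\cup\{\text{vertex}\}\to\Delta[q]$ is not always anodyne. For that reason I only claim lifts for inner simplices, i.e.\ $K=\varnothing$, exactly as stated.
\end{enumerate}

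\emph{Main obstacle.} The key step is the identification in step 2. The component statement concerns the homotopy relative horn, computed in $\pi_0$ of the local sheaf. It must be transferred to actual vertices of $R(s)$ on the original Kan-enriched site, so I must check that restriction along a representing arrow is well defined up to homotopy and compatible with the Kan fibration lifting. I would handle this with the equivalence invariance of $\pi_0$ of the objectwise fibrant $R$, and with the fact that different representing arrows of the same class differ by a path in $\Map_{\mathcal T}(s,t)$. A path in the source yields a path in $R(s)$, which the fibration $\lambda(s)$ lifts.
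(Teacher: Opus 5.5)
Your proposal is correct and follows the paper's proof essentially step for step. For the fibration claim, you use a cell induction that writes the relative weighted map as a composite of base changes of relative Reedy matching fibrations; the absolute case makes the objects fibrant, so the strict pullback computes the homotopy target. For the lifts, you pass to a covering sieve via local $\pi_0$-surjectivity, correct the chosen vertex exactly by path lifting along the objectwise Kan fibration, and then lift along the anodyne vertex inclusion $\Delta[0]\hookrightarrow\Delta[q]$. Your handling of different representatives of a test-arrow class, through paths supplied by the enriched action, matches the paper's final remark. Using the last vertex instead of vertex $0$ makes no difference.
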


\begin{proof}
Attach the finitely many nondegenerate simplices of $B$
relative to $A$ along their boundaries. Each extension
map is a pullback of a Reedy matching fibration, so their
finite composite is a fibration. Applying the same
induction to a Reedy-fibrant object's terminal map shows
that finite weighted limits are fibrant and compute
homotopy weighted limits. In particular
$Y_n\to\{\Lambda^i[n],Y\}$ is a fibration between
fibrant objects, so the strict target
\[
 H_{n,i}(f)=
 \{\Lambda^i[n],X\}
 \times_{\{\Lambda^i[n],Y\}}Y_n
\]
represents precisely the specified homotopy horn target.
The strict map $q:X_n\to H_{n,i}(f)$ is induced by the
same face restrictions and $f_n$ as the homotopy
horn comparison.

These are also global fibrations and are objectwise
Kan fibrations between Kan complexes. For an inner
simplex $b:\Delta[q]\to H_{n,i}(f)(t)$, local component
surjectivity gives a covering sieve on which a vertex
$b_0$ lifts up to homotopy
\cite[Proposition~3.1.4 and Remark~3.1.5,
PDF leaf~19]{HAGI2005}. At each actual restriction
$v:s\to t$, path lifting corrects the chosen vertex
to lie exactly over $v^*b_0$. Lifting the anodyne
inclusion $\{0\}\hookrightarrow\Delta[q]$ then lifts
the whole simplex, with all source horn data fixed.
Changing the representative of the test-arrow class
gives a homotopy by the enriched action, and the same
path correction supplies a strict lift for that
representative as well.
\end{proof}

\begin{lem}[Pointwise diagonal fibration]
Let $f:X\to Y$ be a Reedy fibration of bisimplicial sets,
with the first simplicial direction outer.
\label{lem:enriched-pointwise-diagonal}
If every positive horizontal relative horn map
\[
 X_n\longrightarrow
 \{\Lambda^i[n],X\}
 \times_{\{\Lambda^i[n],Y\}}Y_n
\]
is surjective on inner vertices, then
$\operatorname{diag}f$ is a Kan fibration.
The assertion also applies objectwise to enriched
presheaves. For an object, the terminal-map case requires
the corresponding positive absolute horn conditions
and yields a Kan diagonal.
\end{lem}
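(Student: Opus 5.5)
The plan is to recast the diagonal lifting problem as a bisimplicial lifting problem and factor it through the outer horn in two steps. Write $\delta_!$ for the left adjoint of $\operatorname{diag}$, so that $\delta_!\Delta[m]=\Delta[m]\boxtimes\Delta[m]$; here the first factor is outer and the second inner. A lifting square for $\operatorname{diag}f$ against $\Lambda^i[n]\hookrightarrow\Delta[n]$ is then the same as a lifting square for $f$ against
\[
 \delta_!\Lambda^i[n]\longrightarrow\Delta[n]\boxtimes\Delta[n].
\]
This is the \emph{two-step horn factorization} referred to in the proof of Theorem~\ref{thm:nc-prestack-cfo}. I would factor the inclusion as
\[
 \delta_!\Lambda^i[n]\longrightarrow\Lambda^i[n]\boxtimes\Delta[n]
 \longrightarrow\Delta[n]\boxtimes\Delta[n],
\]
and solve the two lifting problems over $Y$ in turn. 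The enriched-presheaf version then follows objectwise, and the terminal-map case is the special case $Y=*$.

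\emph{Second step.} By adjunction in the outer direction, lifting against $\Lambda^i[n]\boxtimes\Delta[n]\to\Delta[n]\boxtimes\Delta[n]$ over $Y$ is exactly lifting one inner $n$-simplex of
\[
 H_{n,i}=\{\Lambda^i[n],X\}\times_{\{\Lambda^i[n],Y\}}Y_n
\]
through the relative horn map $q\colon X_n\to H_{n,i}$. As in Lemma~\ref{lem:enriched-relative-matching}, the Reedy fibration hypothesis presents $q$ as a finite composite of base changes of relative matching maps, so $q$ is a Kan fibration of inner simplicial sets. It is surjective on vertices by hypothesis. A Kan fibration that is surjective on vertices is surjective on all simplices: lift vertex $0$, then lift the anodyne inclusion $\{0\}\hookrightarrow\Delta[n]$. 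This settles the second step.

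\emph{First step.} I would induct over the nondegenerate simplices $\sigma\colon\Delta[k]\to\Lambda^i[n]$ in increasing dimension. At each stage one extends a map defined on
\[
 \partial\Delta[k]\boxtimes\Delta[n]\;\cup\;\Delta[k]\boxtimes\Delta[k]
\]
to $\Delta[k]\boxtimes\Delta[n]$, where the inner $\Delta[k]\to\Delta[n]$ is the face spanned by the vertices of $\sigma$. By outer adjunction this is a lifting problem for the relative matching fibration
\[
 X_k\longrightarrow M_kX\times_{M_kY}Y_k,
\]
a Kan fibration by Reedy fibrancy, against the inner face inclusion $\Delta[k]\hookrightarrow\Delta[n]$.

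This is the step I expect to be the main obstacle, because a face inclusion is not anodyne. My first attempt would exploit the monotone retraction $r\colon\Delta[n]\to\Delta[k]$. Given a partial lift $g$ on $\Delta[k]$ over $b\colon\Delta[n]\to B$, I would:
\begin{enumerate}
\item take $g\circ r$ as a provisional extension;
\item connect $b\circ r$ to $b$ (viewing $r$ as a map $\Delta[n]\to\Delta[n]$ through the face) by the comparison homotopy $\Delta[n]\times\Delta[1]\to\Delta[n]$ between $r$ and the identity, whenever that homotopy fixes $\Delta[k]$;
\item correct the provisional lift by lifting against the anodyne inclusion $\Delta[n]\times\{0\}\cup\Delta[k]\times\Delta[1]\hookrightarrow\Delta[n]\times\Delta[1]$.
\end{enumerate}
Such a boundary-fixing comparison exists at least when the face is an initial or final segment. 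For general faces I would first reorder the induction so that each new face meets the previously treated part along a segment of that kind. Failing that, I would replace the factorization through $\Lambda^i[n]\boxtimes\Delta[n]$ by one through a last-vertex or prism subdivision. That keeps every inner extension anodyne, at the cost of tracking which outer horn index is used.

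Finally, applying the argument with $Y$ the terminal diagram uses only the absolute positive horns, and shows that $\operatorname{diag}X$ is Kan.
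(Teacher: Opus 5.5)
Your second step is correct. By outer adjunction it asks for an inner $n$-simplex of $H_{n,i}$ to lift through $q$, and a Kan fibration that is surjective on vertices is surjective on all simplices. The gap is in the first step, which is both misassembled and left unfinished. Your cells are attached along the wrong subcomplex. The object $\delta_!\Lambda^i[n]=\bigcup_{j\ne i}F_j\boxtimes F_j$ already contains $\sigma\boxtimes F_j$ for every $j\ne i$ with $j\notin\sigma$. So over the top outer simplex of $\sigma$ the map is prescribed on $C'_\sigma=\bigcup_{j\notin\sigma,\,j\ne i}F_j$, not just on the face $\Delta[\sigma]$. For example, take $n=2$, $i=0$, $\sigma=\{0\}$: the data on $\{0\}\boxtimes\Lambda^0[2]$ are already given. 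An extension from $\{0\}\boxtimes\{0\}$ chosen as you describe need not restrict to them; $C'_\sigma$ reduces to $\Delta[\sigma]$ only when $\sigma$ is a facet $F_j$. Moreover, your ``main obstacle'' is not real, yet you do not resolve it. Every inclusion $\Delta[k]\hookrightarrow\Delta[n]$ is a monomorphism between contractible simplicial sets, hence anodyne. You use exactly this for $\{0\}\hookrightarrow\Delta[n]$ in your second step, and the proof of Lemma~\ref{lem:ordinary-geometric-matching} writes any nonempty face inclusion as a finite composite of horn pushouts. The retraction-and-correction scheme, the proposed reordering and the fallback to subdivisions are therefore unnecessary, and as written they leave the step unproved.

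The repair is short. Take a nondegenerate outer simplex with vertex set $S\in\Lambda^i[n]$. The relative latching inclusion of $\delta_!\Lambda^i[n]\hookrightarrow\Lambda^i[n]\boxtimes\Delta[n]$ there is $C'_S\hookrightarrow\Delta[n]$; degenerate outer simplices already lie in the latching object. Since $S\in\Lambda^i[n]$, some $F_j$ with $j\ne i$ lies in $C'_S$, and $C'_S$ is a cone on any vertex of $S$. So the first map is a Reedy trivial cofibration, lifted by $f$ at once, and your factorization then proves the lemma. It runs in the opposite order to the paper's. The paper adjoins $\Delta[n]\boxtimes\{i\}$ first, using vertex-surjectivity at the single inner vertex $i$ without needing $q$ to be Kan. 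It then shows that $A=L\cup(\Delta[n]\boxtimes\{i\})\hookrightarrow\Delta[n]\boxtimes\Delta[n]$ is Reedy trivial through the cones $C_S=\{i\}\cup\bigcup_{j\notin S,\,j\ne i}F_j$. Your route instead uses the Reedy hypothesis twice: once for the trivial cofibration and once to make $q$ a Kan fibration.
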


\begin{proof}
Let $\delta_!$ be left adjoint to the diagonal.
For a diagonal horn in degree $n\ge1$, let $F_j$
be the face of $\Delta[n]$ omitting $j$. Adjunction on
representables and preservation of their face-intersection
colimits give
\[
 D=\delta_!\Delta[n]=\Delta[n]\boxtimes\Delta[n],
 \qquad
 L=\delta_!\Lambda^i[n]=\bigcup_{j\ne i}F_j\boxtimes F_j.
\]
Put $A=L\cup(\Delta[n]\boxtimes\{i\})$. Since
\[
 L\cap(\Delta[n]\boxtimes\{i\})
       =\Lambda^i[n]\boxtimes\{i\},
\]
$L\to A$ is one horizontal horn attachment at inner
vertex $i$, and the assumed vertex-surjectivity lifts it.

Over an outer simplex $\alpha:[r]\to[n]$ with image $S$,
the inner component of $A$ is
\[
 C_S=\{i\}\cup
       \bigcup_{\substack{j\notin S\\j\ne i}}F_j
       \ \subseteq\ \Delta[n].
\]
Adjoining $i$ to any simplex stays in $C_S$; hence it
is a nonempty cone at $i$, including the case
$C_S=\{i\}$. Its inclusion is a monomorphic weak
equivalence. The relative outer latching map supplies
every entire degenerate target component; on a
nondegenerate component it is $C_S\hookrightarrow\Delta[n]$.
Indeed, if a source simplex is degenerate in the target,
a face recovers its antecedent in the source. Thus the
relative latching object is exactly the union of the
source with the target's degenerate simplices.
It follows that $A\to D$ is a Reedy trivial cofibration.
The Reedy fibration $f$ lifts this second step.
Adjunction gives the required diagonal filler.

The description includes all outer degeneracies: their
images $S$ do not change, and for $r>n$ every target
component is degenerate. For $n=1$, the two choices of
$i$ give respectively the vertices $(0,0)$ and $(1,1)$
and the horizontal intervals at inner endpoints $0$
and $1$, so both endpoint horns are included.

The same factorization can be tensored with an enriched
representable. Writing $(h_t\odot B)_r=h_t\otimes B_{r,*}$,
evaluation multiplies each inner component by $h_t(s)$.
Products of simplicial sets preserve these monomorphisms
and weak equivalences, and latching colimits are
objectwise. Thus $h_t\odot A\to h_t\odot D$ is a
global injective Reedy trivial cofibration.
\end{proof}

\begin{lem}[Enriched finite local anodyne lifting]
Under the Reedy and local-component horn hypotheses of
Theorem~\ref{lem:derived-hypercomplete-base-change},
$\operatorname{diag}f$ has local strict lifts against
every finite anodyne inclusion.
\label{lem:enriched-diagonal-local-lifting}
\end{lem}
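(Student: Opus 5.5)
The plan is to run the proof of Lemma~\ref{lem:enriched-pointwise-diagonal} with local strict lifts in place of pointwise vertex lifts. First I treat the generating inclusions $\Lambda^i[n]\hookrightarrow\Delta[n]$. The case of a general finite anodyne inclusion then follows by a standard retract and cell argument. A finite anodyne map is a retract of a finite composite of pushouts of horns, obtained through the small-object argument on a finite simplicial set. At each successive stage one passes to a common refinement of the finitely many covering sieves. Finiteness keeps these refinements finite in number, so the composite still has a covering sieve, and the retract adds nothing.

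Now fix a square $h_t\otimes\Lambda^i[n]\to\operatorname{diag}X$, $h_t\otimes\Delta[n]\to\operatorname{diag}Y$. By enriched Yoneda and the adjunction $\delta_!\dashv\operatorname{diag}$, this is a square $h_t\odot\delta_!\Lambda^i[n]\to X$ over $h_t\odot\delta_!\Delta[n]\to Y$. Write $L=\delta_!\Lambda^i[n]$, $A$ and $D$ as in that lemma, and factor $L\to A\to D$.

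The second step, $h_t\odot A\to h_t\odot D$, was shown there to be a global injective Reedy trivial cofibration. The Reedy model fibration $f$ therefore lifts it strictly, globally, with no refinement needed.

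The first step, $L\to A$, is a single horizontal horn attachment at the inner vertex $i$. The datum to lift is the inner vertex of the actual relative horn target $H_{n,i}(f)(t)$ at the simplex $\Delta[n]\boxtimes\{i\}$, together with its $h_t$-variation. Here I use Lemma~\ref{lem:enriched-relative-matching}. The actual relative horn target computes the homotopy one, and local component surjectivity gives a covering sieve on $t$. On each actual arrow $v:s\to t$ of that sieve, the restricted inner simplex (a vertex, after composing with $v$) has a strict lift, with the source horn data fixed by the path correction. Restricting the whole square along $v$, I lift the first step strictly and then compose with the global second-step lift. Note that $h_s\odot A\to h_s\odot D$ is again a Reedy trivial cofibration.

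The main obstacle I expect is making the first-step lift strict as a map from $h_s\odot(\Delta[n]\boxtimes\{i\})$, rather than merely on the vertex. Also, different representatives of the same test-arrow class give different strict fillers. The paper's definition of local strict lifts asks only for some lift for each actual representing arrow in the sieve, with no compatibility required. So I will invoke the representative-change clause of Lemma~\ref{lem:enriched-relative-matching} per arrow, and not try to glue.

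For the $h_s$-variation I use enriched Yoneda. A map $h_s\otimes\Delta[0]\to H_{n,i}(f)$ is the same as a vertex of $H_{n,i}(f)(s)$. So a vertex lift over $v^*b_0$ is exactly a strict map on $h_s\odot(\Delta[n]\boxtimes\{i\})$ relative to its horn part $h_s\odot(\Lambda^i[n]\boxtimes\{i\})$. This is the point to check carefully. The horn attachment is horizontal in the outer direction, and the relative horn map is precisely the outer matching comparison at inner vertex $i$ tensored with $h_s$.
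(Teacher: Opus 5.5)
Your single-horn argument is the paper's. You factor $L\subset A\subset D$ and lift the horizontal horn at inner vertex $i$ locally by Lemma~\ref{lem:enriched-relative-matching}; Yoneda correctly turns a vertex of $X_n(s)$ into the strict map on $h_s\odot(\Delta[n]\boxtimes\{i\})$. You then lift the Reedy trivial cofibration $h_s\odot A\to h_s\odot D$ globally. The retract step is also fine.

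The gap is in passing to finite composites of horn attachments. First, ``a common refinement of the finitely many covering sieves'' is not the right operation. The second-stage problem exists only after restricting along an arrow $v:s\to t$ of the first sieve, it depends on the filler you chose at $v$, and its sieve lives on $s$. You need transitivity of the topology on $\operatorname{Ho}(\CT)$, generating a sieve on $t$ from the classes $[v][w]$. Second, and more seriously, you adopt the paper's definition: the sieve is on $\operatorname{Ho}(\CT)$, and a strict lift is required at \emph{every} actual arrow representing one of its classes. The generated sieve then contains classes whose actual representatives $u$ are only homotopic to the strict composites $v\circ w$ at which you built fillers. At such a $u$ you do have a first-stage filler, since $[u]$ lies in the first sieve. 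But nothing guarantees that the second-stage problem posed with that filler is solvable at $u$ itself. The representative-change clause of Lemma~\ref{lem:enriched-relative-matching} does not help. It only lifts a vertex for a single horizontal horn over a fixed square at $t$. It does not transport a composite filler whose later stages were built over other objects.

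The missing idea is homotopy invariance of lift existence, which the paper proves explicitly. Since $\delta_!$ preserves monomorphisms, $h_s\odot\delta_!K\to h_s\odot\delta_!L$ is a Reedy cofibration for every finite inclusion $K\hookrightarrow L$. A path between test arrows $v_0,v_1:s\to t$ gives an inner homotopy of the entire restricted squares. A filler at $v_0$ is transported to $v_1$ by lifting $f$ against the pushout product of that cofibration with $\{0\}\hookrightarrow\Delta[1]$, which is a Reedy trivial cofibration. Together with strict precomposition, this makes existence of a lift a sieve on $\operatorname{Ho}(\CT)$. Only then does transitivity compose the successive stages.

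You should also justify the finiteness claim rather than assert it. A section $L\to W$ relative to $K$ of the small-object factorization lands in finitely many cells of one finite stage. Closing under the cells that support their attaching horns gives a finite relative horn-cell subcomplex, and $K\hookrightarrow L$ is a retract of it.
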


\begin{proof}
For a single diagonal horn, use the factorization
$L\subset A\subset D$ just proved.
Lemma~\ref{lem:enriched-relative-matching} lifts the
first horizontal horn locally. The second step lifts
against the global Reedy fibration. This constructs
the filler directly, before testing Kan-ness of
either diagonal.

We must retain homotopies of test arrows when composing
covering sieves. The functor $\delta_!$ preserves
monomorphisms: it takes a boundary inclusion to the
union of the corresponding external-square faces, and
every monomorphism is built by boundary attachments.
Consequently $h_s\odot\delta_!K\to h_s\odot\delta_!L$
is a Reedy cofibration for a finite inclusion $K\to L$.
A path between test arrows $v_0,v_1:s\to t$ induces
an inner homotopy of their entire restricted squares.
If a lift at $v_0$ is given, extend it across
\[
 \bigl((h_s\odot\delta_!L)\otimes\{0\}\bigr)
 \mathop{\cup}_{(h_s\odot\delta_!K)\otimes\{0\}}
 \bigl((h_s\odot\delta_!K)\otimes\Delta[1]\bigr)
 \longrightarrow (h_s\odot\delta_!L)\otimes\Delta[1].
\]
The last tensor is inner. This is the pushout product
of a Reedy cofibration with an endpoint trivial
cofibration, hence is Reedy trivial. Lifting against
$f$ transports the filler to $v_1$. The other endpoint
and finite chains of paths give invariance under the
test-arrow class. Together with strict precomposition,
this makes \emph{existence} of a lift a sieve on
$\operatorname{Ho}(\CT)$. Only existence is required
locally; compatibility of the chosen fillers is
unnecessary.

For a finite composite of horn attachments, choose
successive local fillers. The composites of their
covering sieves generate a covering sieve by transitivity.
Homotopy invariance handles actual representatives of
composite classes that are not the selected strict
composites.

Finally a finite anodyne inclusion $K\hookrightarrow L$
is a retract of a finite relative horn-cell complex.
To see the finite support, factor it by the countable
horn small-object construction as
$K\to W\to L$, with the second map a Kan fibration.
Its lifting property gives a section $s:L\to W$
relative to $K$. Finiteness of $L$ puts $s(L)$ in
one finite stage and finitely many cells. Include the
finitely many predecessor cells supporting their finite
attaching horns. Recursing strictly decreases the
stage number, so it terminates with a finite relative
horn-cell subcomplex containing $s(L)$. Restrict the
retraction to that subcomplex. Lifting there proves
the assertion for $K\to L$.
Each problem uses only finitely many refinements,
and its covering sieve may depend on that problem.
\end{proof}

\subsection{Enriched replacement and the local path comparison}

\begin{lem}[Enriched finite-limit-preserving Kan replacement]
The functor $R=\Ex^\infty$ acts on enriched presheaves,
with enriched natural unit, objectwise Kan values, and
enriched finite-limit comparisons.
\label{lem:enriched-ex-replacement}
It preserves the property of having local strict lifts
against every finite anodyne inclusion.
\end{lem}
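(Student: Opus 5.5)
The plan is to apply $\Ex$ objectwise and keep track of the enriched action. For an enriched presheaf $E$ define $(\Ex E)(t)=\Ex(E(t))$, with $(\Ex E)(t)_n=\Hom(\Sd\Delta[n],E(t))$. The enriched action $h_t(s)\times E(t)\to E(s)$ induces $h_t(s)\times\Ex E(t)\to\Ex E(s)$ by sending $(\phi,\sigma)$ to $\Sd\Delta[n]\xrightarrow{(\phi\circ\mathrm{last},\sigma)}h_t(s)\times E(t)\to E(s)$. Here $\mathrm{last}:\Sd\Delta[n]\to\Delta[n]$ is the last-vertex map, and a simplex $\phi\in h_t(s)_n$ is viewed as $\Delta[n]\to h_t(s)$. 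The last-vertex map is natural in $\Delta[n]$, so associativity and unit follow from those of the original action. The unit $E\to\Ex E$ is precomposition with $\mathrm{last}$, and compatibility with the action is checked directly. For $R=\Ex^\infty$, pass to the sequential colimit objectwise. The action commutes with this colimit, because filtered colimits commute with the finite product $h_t(s)\times-$. Kan values are Kan's classical theorem applied objectwise. Each $\Ex$ stage is a right adjoint, so it preserves limits, and filtered colimits of simplicial sets commute with finite limits; hence the finite-limit comparison maps are isomorphisms objectwise, and so enriched.

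For the lifting clause, suppose $E\to D$ has local strict lifts against finite anodyne inclusions. Take a square $h_t\otimes K\to RE$, $h_t\otimes L\to RD$ with $K\hookrightarrow L$ finite anodyne. By finiteness of $L$ and compactness it factors through a finite stage $\Ex^r$. By adjunction $\Sd^r\dashv\Ex^r$, with the action transported as above, this becomes a square $h_t\otimes\Sd^rK\to E$, $h_t\otimes\Sd^rL\to D$. Two checks are needed. First, the adjunction should respect the enriched tensor, so that $h_t\otimes\Sd^r(-)$ corresponds to maps into $\Ex^r$ evaluated at $t$. This reduces by enriched Yoneda to the objectwise statement at $t$, followed by restriction along test arrows, which is compatible with the action. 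Second, $\Sd^rK\hookrightarrow\Sd^rL$ should again be a finite anodyne inclusion. This holds because $\Sd$ preserves monomorphisms and weak equivalences, since $\Sd X\to X$ is a weak equivalence.

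The hypothesis then gives a covering sieve on which the restricted squares lift strictly. Transporting these lifts back through the adjunction gives lifts into $\Ex^rE$, and composing with $\Ex^r E\to RE$ gives the required local lifts. Restriction along test arrows commutes with this transport, by naturality of the transported action.

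I expect the main obstacle to be making the adjunction transport compatible with restriction along actual test arrows, and with their homotopies, as in the sieve formalism of Lemma~\ref{lem:enriched-diagonal-local-lifting}. Without this, existence of a lift would not form a sieve on $\operatorname{Ho}(\CT)$. I would handle it by noting that the transport is a natural bijection of strict squares for each fixed $s$ and arrow $v:s\to t$. Homotopy invariance under changing the representative of a test-arrow class can then be imported directly from the hypothesis on $E\to D$, because the paths between test arrows are themselves tensored squares of the same shape.
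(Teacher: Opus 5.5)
Your proposal is correct and follows the paper's proof. Your action $(\phi,\sigma)\mapsto\mathrm{act}\circ(\phi\circ\mathrm{last},\sigma)$ is exactly the paper's composite of $\eta\times1$, the product isomorphism for $\Ex$, and $\Ex$ of the original action. Your lifting argument is the same as the paper's: factor the square through a finite stage, transpose along $\Sd^rK\hookrightarrow\Sd^rL$, and use that restriction along an actual arrow $v$ is $\Ex^r(E(v))$.

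The homotopy-invariance worry in your last paragraph is unnecessary. The definition of local strict lifts already quantifies over every actual representative of a class in the sieve, and your transposition commutes with each such restriction, so the hypothesis on $E\to D$ covers every representative directly.
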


\begin{proof}
At stage $R_m=\Ex^m$, with product-compatible unit
$\eta_m$, define the action by
\[
\begin{split}
 \Map_{\CT}(s,t)\times R_mF(t)
 &\longrightarrow
 R_m\Map_{\CT}(s,t)\times R_mF(t)\\
 &\cong R_m(\Map_{\CT}(s,t)\times F(t))
 \longrightarrow R_mF(s).
\end{split}
\]
The first arrow is $\eta_m\times1$. Both iterated
actions on composable arrows factor through
$R_m(\Map(r,s)\times\Map(s,t)\times F(t))$.
Naturality of $\eta_m$ with respect to composition,
product compatibility, and the original associativity
identify them as simplicial maps. Units follow from
$R_m(*)=*$. The same naturality proves that maps of
enriched presheaves and $F\to R_mF$ are enriched natural.

The standard transitions $\tau_m:R_m\to R_{m+1}$
are natural, product compatible, and satisfy
$\tau_m\eta_m=\eta_{m+1}$. They therefore intertwine
these actions. Their filtered colimit is an enriched
$RF$. Restriction along an actual arrow $v:s\to t$
is exactly $R_m(F(v))$ at a stage and $R(F(v))$ in
the colimit. Each $\Ex$ preserves finite limits,
and filtered colimits of simplicial sets commute with
finite limits. The comparisons respect the displayed
actions, giving in particular an enriched isomorphism
\[
 R(E\times_D C)\cong RE\times_{RD}RC.
\]
The ordinary simplicial Kan-replacement theorem makes
the unit objectwise weak and $RF$ objectwise Kan.

A finite lifting problem for $R_mp$ is adjoint to
one for $p$ along $\Sd^mK\hookrightarrow\Sd^mL$.
Subdivision preserves finite simplicial sets,
monomorphisms and weak equivalences, so a finite
anodyne map stays finite anodyne. Transpose its local
lifts; the actual-arrow formula verifies restriction
compatibility. A finite problem in $Rp$ factors through
one finite stage. Enlarge the stage if necessary until
its finitely many commutativity equations hold.
Thus each finite problem uses a cover chosen at a
finite replacement stage.
\end{proof}

\begin{lem}[Enriched boundary criterion]
Let $u:S\to H$ be a map of objectwise Kan enriched
presheaves.
\label{lem:enriched-boundary-criterion}
Suppose every representable-test boundary square for
$\partial\Delta[m]\hookrightarrow\Delta[m]$, $m\ge0$,
has local homotopy lifts that agree with its prescribed
$S$-boundary exactly and whose images are homotopic to
the given $H$-simplex relative to that entire boundary.
Then $u$ is an enriched local $\pi_*$-equivalence.
\end{lem}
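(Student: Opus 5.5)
We must prove Lemma~\ref{lem:enriched-boundary-criterion}: a map $u:S\to H$ of objectwise Kan enriched presheaves whose boundary squares admit local homotopy lifts, exact on the boundary and relative-homotopic to the $H$-simplex, is an enriched local $\pi_*$-equivalence. The proof is the sheaf-theoretic version of the classical fact that a Kan map lifting all boundary problems up to homotopy rel boundary is a weak equivalence.

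\emph{Reformulation.} Local $\pi_*$-equivalence means two things. First, $\pi_0S\to\pi_0H$ must be a local epimorphism and a local monomorphism. Second, for every test $t$ and basepoint $x\in S(t)_0$, the maps of homotopy sheaves $\pi_n(S,x)\to\pi_n(H,ux)$ on $\operatorname{Ho}(\CT)_{/t}$ must be local isomorphisms, as in \cite[Definition~3.3.2]{HAGI2005}. By enriched Yoneda, a representable boundary square is a pair consisting of an element of $S(t)$ on $\partial\Delta[m]$ and an element of $H(t)_m$ with matching boundary. Both Kan properties hold objectwise, so homotopy groups can be computed by simplices with prescribed boundaries.

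\emph{Surjectivity.} Take $m=0$: every vertex of $H(t)$ lifts locally up to a path, which gives local epimorphy on $\pi_0$. For $\pi_n$ with $n\ge1$, represent a class by an $n$-simplex of $H(t)$ whose boundary is totally degenerate at $ux$. This is a square with $S$-boundary the degenerate sphere at $x$. A local lift agreeing exactly with that boundary, and homotopic rel boundary to the given simplex, represents a preimage class on each member of the covering sieve.

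\emph{Injectivity.} For $\pi_0$, two vertices $x_0,x_1$ of $S(t)$ joined by a path in $H$ form the $m=1$ square. A local lift is a path in $S$ joining them exactly. For $\pi_n$, suppose a sphere $\sigma$ in $S$ at $x$ has $u\sigma$ null. Use the Kan property of $H(t)$ to write the nullhomotopy as an $(n+1)$-simplex of $H$ with boundary $(u\sigma,ux,\ldots,ux)$. The $S$-boundary here is $(\sigma,x,\ldots,x)$, so the $m=n+1$ square lifts locally, and the lift kills $\sigma$ in $S$. Restrictions along the actual arrows of the sieve commute with these constructions, because the enriched action is strictly compatible with face data.

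\emph{Main obstacle.} The hard step is coherence: the lifts must be defined after restriction along actual arrows representing classes in the sieve, while the homotopy sheaves are defined on $\operatorname{Ho}(\CT)$. The plan is to argue as in Lemma~\ref{lem:enriched-diagonal-local-lifting}. A path between representatives $v_0,v_1:s\to t$ transports lifts through the objectwise Kan fibrations. Hence the existence of a lift, and the induced class, depends only on the homotopy class of the arrow, so the lifts really do define sieves on $\operatorname{Ho}(\CT)$. Finally, we must check that local injectivity at all basepoints, including basepoints that exist only locally, is enough. For that we invoke the HAG~I criterion that local epimorphy on $\pi_0$ together with local isomorphy on $\pi_n$ at all local basepoints is the local weak equivalence notion.
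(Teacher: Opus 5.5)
Your argument is correct, but it follows a different route from the paper.

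\textbf{The paper's route.} The paper never unwinds homotopy sheaves. It forms $B_m=S^{\partial\Delta[m]}\times_{H^{\partial\Delta[m]}}H^{\Delta[m]}$, which is an objectwise Kan model of the derived relative boundary target because $H^{\Delta[m]}\to H^{\partial\Delta[m]}$ is an objectwise fibration. It then notes that a vertex of $B_m(t)$ is exactly a boundary square, and that a local homotopy rel boundary is a path in $B_m(s)$ with fixed $S^{\partial\Delta[m]}$-coordinate. From this it deduces that every $S^{\Delta[m]}\to B_m$ is a covering in the sense of HAG~I, Proposition~3.1.4. Finally it invokes the boundary-cover characterization of local equivalences, HAG~I, Lemma~3.3.3.

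\textbf{Your route.} You check the homotopy-sheaf definition directly. The case $m=0$ gives local epi on $\pi_0$ and $m=1$ gives local mono on $\pi_0$. For $n\ge1$, $m=n$ gives local epi on $\pi_n$ and $m=n+1$ gives local mono. In effect you reprove the needed direction of that HAG lemma in this special case.

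\textbf{Comparison.} Your version is more elementary and shows which boundary dimension controls which homotopy sheaf. The paper's version is shorter, and the cited lemma absorbs sheafification, composition of covering sieves and base-point transport. You must do these yourself.

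\textbf{Corrections for a full write-up.}
\begin{itemize}
\item The homotopy sheaves live on the enriched comma sites $\operatorname{Ho}(\CT/t)$, not on the slice $\operatorname{Ho}(\CT)_{/t}$. The base point $w^*x$ depends on the actual arrow $w$, and morphisms carry paths $wv\Rightarrow w'$ that act by transport.
\item The coherence you actually need is that this transport commutes with $u$, which follows from enriched naturality of $u$.
\item Existence of lifts along every representative of a class in a covering sieve is already part of the meaning of ``local'' in the hypothesis, as in the paper's definition of local strict lifts. You do not need to derive it.
\item $u$ is not a fibration. If you nonetheless transport lifts between homotopic arrows, the fibrations to use are the restrictions $S(s)^{\Delta[m]}\to S(s)^{\partial\Delta[m]}$ and $H(s)^{\Delta[m]}\to H(s)^{\partial\Delta[m]}$, together with a rel-boundary correction of the resulting homotopy.
\end{itemize}
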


\begin{proof}
The object
\[
 B_m=S^{\partial\Delta[m]}
       \times_{H^{\partial\Delta[m]}}H^{\Delta[m]}
\]
is an objectwise Kan model of the global derived
boundary target, since the cotensor restriction of
$H$ is an objectwise fibration. A vertex
$b\in B_m(t)_0$ is precisely a boundary square.
After restriction along each arrow $v:s\to t$ in a
suitable covering sieve, the local homotopy is a path
in $B_m(s)$ with the restricted
$S^{\partial\Delta[m]}$-coordinate fixed, from $v^*b$
to the image of a vertex of $S^{\Delta[m]}(s)$.
Thus $S^{\Delta[m]}\to B_m$ is a covering by
\cite[Proposition~3.1.4]{HAGI2005}. These are the
derived boundary-cover conditions of
\cite[Lemma~3.3.3, PDF leaf~24]{HAGI2005};
constant extension $S\to S^{\Delta[m]}$ is
objectwise weak and gives the same condition if the
criterion is expressed with source $S$.

Objectwise Kan is not injective fibrancy. These
derived cotensors and pullbacks are computed in the
global \emph{projective} enriched-presheaf model,
which has the same objectwise weak equivalences and
presents the same presheaf infinity-category.
The HAG criterion uses the original enriched comma
sites $\operatorname{Ho}(\CT/t)$ and their base-point
transport. Our tests include every boundary and base
point, and the homotopies fix the whole boundary.
Covering sieves pull back to those comma sites.
This applies the enriched criterion on those comma
sites, retaining the mapping spaces and base-point
transport.
\end{proof}

\begin{lem}[Canonical local pullback comparison]
Let $p:E\to D$ have local strict lifts against every
finite anodyne inclusion of simplicial sets.
\label{lem:enriched-local-homotopy-pullback}
For every $C\to D$, the canonical comparison
\[
 a_{\mathrm{hyp}}(E\times_D C)
 \longrightarrow
 a_{\mathrm{hyp}}E
 \times_{a_{\mathrm{hyp}}D}a_{\mathrm{hyp}}C
\]
is an equivalence.
\end{lem}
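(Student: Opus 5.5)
Apply the enriched Kan replacement $R$ of Lemma~\ref{lem:enriched-ex-replacement} to the cospan $E\to D\leftarrow C$, and then reduce the comparison to an enriched local $\pi_*$-equivalence detected by Lemma~\ref{lem:enriched-boundary-criterion}. Since $R$ preserves finite limits enriched-naturally, $R(E\times_D C)\cong RE\times_{RD}RC$, and the unit $F\to RF$ is an objectwise weak equivalence. So $a_{\mathrm{hyp}}$ of the strict pullback is unchanged, and we may assume $E,D,C$ are objectwise Kan. By the same lemma, $Rp$ still has local strict lifts against every finite anodyne inclusion. Factor the global homotopy pullback through a path object:
\[
 E\times_D C\xrightarrow{\ u\ }
 S:=E\times_D D^{\Delta[1]}\times_D C .
\]
Here $u$ is induced by constant paths. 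The target $S$ is objectwise Kan and computes the homotopy pullback in the global projective model, since $D^{\Delta[1]}\to D\times D$ is an objectwise fibration. Left exactness of $a_{\mathrm{hyp}}$ then identifies $a_{\mathrm{hyp}}S$ with $a_{\mathrm{hyp}}E\times_{a_{\mathrm{hyp}}D}a_{\mathrm{hyp}}C$ compatibly with the canonical comparison. The claim therefore reduces to showing that $a_{\mathrm{hyp}}u$ is an equivalence.

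To prove this, verify the hypothesis of Lemma~\ref{lem:enriched-boundary-criterion} for $u$. Take a representable boundary square consisting of a strict boundary $h_t\otimes\partial\Delta[m]\to E\times_D C$ and an $S$-simplex $h_t\otimes\Delta[m]\to S$. Its data are an $E$-simplex $x$, a $C$-simplex $z$, and a path $\gamma$ in $D$ from $p(x)$ to the image of $z$, with $\gamma$ constant on the boundary. Form the lifting problem for $p$ against the finite anodyne inclusion
\[
 (\Delta[m]\times\{0\})\cup(\partial\Delta[m]\times\Delta[1])
 \hookrightarrow\Delta[m]\times\Delta[1].
\]
On the source put $x$ on the bottom and the constant boundary path; in the base put $\gamma$. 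A local strict lift over a covering sieve transports $x$ to an $E$-simplex $x'$ with $p(x')$ equal to the image of $z$ and with the same boundary. Then $(x',z)$ is a strict simplex of $E\times_D C$ agreeing exactly with the prescribed boundary. The lifted prism gives the required homotopy in $S$ from the given simplex to $u(x',z)$ relative to the whole boundary. This is obtained by contracting $\gamma$ along the lifted path, for instance using the connection $\min:[1]\times[1]\to[1]$ to build the second path coordinate. The resulting homotopy is built from finite anodyne pushout-products, possibly requiring one more local lift.

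The boundary criterion then gives that $u$ is an enriched local $\pi_*$-equivalence. Such maps are exactly the $\tau$-hypercover local equivalences of the HAG~I model identified in Lemma~\ref{lem:derived-model-dictionary}, so $a_{\mathrm{hyp}}u$ is an equivalence. I expect the main obstacle to be the relative-boundary homotopy in the middle step. The lift only controls the $E$-coordinate, so the homotopy in $S$ must also move the path coordinate while keeping both endpoints compatible and the boundary fixed. It also has to be compatible with restriction along every arrow of the sieve, including different representatives of test-arrow classes. I would handle this by packaging the entire homotopy as one finite anodyne lifting problem against $p$ over $h_t\otimes(\Delta[m]\times\Delta[1]\times\Delta[1])$, relative to the boundary. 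I would then invoke the test-arrow homotopy invariance from the proof of Lemma~\ref{lem:enriched-diagonal-local-lifting}, so that only existence of lifts, not their compatibility, is needed on the sieve.
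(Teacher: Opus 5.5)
Your proposal is essentially the paper's proof. Replace the cospan by $R$; the source $RE\times_{RD}RC\cong R(E\times_D C)$ is objectwise Kan because it is an $R$-image, which is what Lemma~\ref{lem:enriched-boundary-criterion} needs, since $Rp$ itself need not be a fibration. Then map it to the path model by constant paths, verify the boundary criterion with a single local lift $\tilde\gamma$ of $Rp$ against $(\Delta[m]\times\{0\})\cup(\partial\Delta[m]\times\Delta[1])\hookrightarrow\Delta[m]\times\Delta[1]$, and conclude by left exactness of $a_{\mathrm{hyp}}$.

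The one correction is that the connection must be $\max$, not $\min$; with $\min$ the path coordinate would end at $\gamma(-,t)$ rather than at the image of $z$. The explicit homotopy $t\mapsto\bigl(\tilde\gamma(-,t),z,[u\mapsto\gamma(-,\max(t,u))]\bigr)$ is the given simplex at $t=0$ and the constant-path image of $(x',z)$ at $t=1$, and it is fixed on the boundary. So no further lift, no lifting problem over $\Delta[m]\times\Delta[1]\times\Delta[1]$, and no compatibility across the sieve is needed, since the criterion only asks for existence at each arrow.
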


\begin{proof}
Put $E'=RE$, $D'=RD$, $C'=RC$, $p'=Rp$, and
$S=R(E\times_D C)=E'\times_{D'}C'$.
Since $S$ is an $R$-image, it is objectwise Kan.
The endpoint fibration of $D'$ gives
the objectwise Kan global homotopy-pullback model
\[
 H=E'\times_{D',\ev_0}(D')^{\Delta[1]}
                         \times_{D',\ev_1}C'.
\]
Let $j:S\to H$ insert constant paths.

A boundary problem for $j$ consists of simplices
$e:\Delta[m]\to E'$, $c:\Delta[m]\to C'$ and
a path $\gamma:\Delta[m]\times\Delta[1]\to D'$
from $p'e$ to the image of $c$. On the prescribed
boundary, $\gamma$ is constant and $(e,c)$ is the
given map to $S$. The inclusion
\[
 (\Delta[m]\times\{0\})
 \cup(\partial\Delta[m]\times\Delta[1])
 \hookrightarrow\Delta[m]\times\Delta[1]
\]
is finite anodyne, being the boundary--endpoint
pushout product. Prescribe $e$ at time zero and the
constant boundary lift. By
Lemma~\ref{lem:enriched-ex-replacement}, $Rp$ lifts
this problem locally to $\widetilde\gamma$.
Its endpoint gives
$(\widetilde\gamma(-,1),c):\Delta[m]\to S$
with the exact given boundary.

The remaining paths give an actual simplicial
homotopy in $H$:
\[
 t\longmapsto
 \bigl(\widetilde\gamma(-,t),c,\,
           [u\mapsto\gamma(-,\max(t,u))]\bigr).
\]
The maximum map $[1]\times[1]\to[1]$ is monotone.
At parameter $t$ the path begins at
$p'\widetilde\gamma(-,t)$ and ends at the image of
$c$. At $t=0$ it is the original simplex; at $t=1$
it is the constant-path image of the endpoint lift.
Everything is fixed on the entire prescribed boundary.
For $m=0$ the lifting inclusion is
$\{0\}\hookrightarrow\Delta[1]$: a point of $H$
already includes its starting point in $E'$, which
supplies the initial lift.

Lemma~\ref{lem:enriched-boundary-criterion} makes
this fixed $j$ a local $\pi_*$-equivalence.
Hypersheaf localization inverts it and the natural
units into $R$. Its established left exactness,
\cite[Proposition~3.4.10, PDF leaf~34]{HAGI2005},
identifies $a_{\mathrm{hyp}}H$ with the pullback
of the localized cospan. The cone projections and
the natural finite-limit comparison for $R$ identify
this map with the original canonical comparison,
naturally in the entire cospan. The local fillers
establish that this canonical comparison is an
equivalence.
\end{proof}

If $E$ or $C$ is the initial presheaf, both models in
the last proof are objectwise empty. Emptiness of a
strict pullback at an individual test, however, need
not imply emptiness of its path model. The result is
local, not an objectwise strengthening. An allowed
empty covering sieve has its usual vacuous local
meaning. The final Whitehead criterion is
hypersheaf-local; ordinary $\mathbf{AP}_\infty$
remains the separate question in
Section~\ref{sec:derived}.

References to HAG in this appendix use
arXiv:math/0207028v4, with PDF page locators.

%% file: split-open-hyperdescent.tex
\section{Split-open hyperdescent}
\label{app:split-open-hyperdescent}

We prove the refinement and localization assertions used in
Theorem~\ref{lem:banach-hypercover-refinement}, and the
chart-target descent used in
Theorem~\ref{thm:banach-representability}.
Fix the small Kan-enriched presentation $\mathcal C$
of the geometric test category in Section~\ref{sec:banach}.
Write
\[
 \mathbf P=\Fun_{\sSet}(\mathcal C^\op,\sSet),
 \qquad \mathcal P=\PSh(\mathcal C),\qquad
 h_X(T)=\Map_{\mathcal C}(T,X).
\]
All constructions initially take place in these original
enriched presheaves; hypersheaf membership will follow
from localization.
The site and all permitted index sets are small in the
standing compatible presheaf universe $\mathbb V$.
Global realization is denoted $|V|_{\mathcal P}$;
for the strict models below it is computed by
$\operatorname{diag}V$.

For a test $X\in\mathcal C$, a hypercover of $h_X$ is
\emph{semi-representable} when every level is equivalent
in $\mathcal P$ to a small coproduct of representables
\cite[Definition~3.4.8(1), PDF leaf~33]{HAGI2005}.
Semi-representability is imposed up to equivalence
in the global infinity-category. Strict indexed
coproduct models are constructed below.

\subsection{Monomorphic opens and exact slice matching}

\begin{lem}[Full-component models for strict opens]
Strict opens of a test $X$ give monomorphisms of
represented presheaves, and the strict-open families
form a basis for the specified topology.
\label{lem:banach-open-monomorphic-models}
For each strict open $W\to X$ there is a literal enriched
subpresheaf $O_X(W)\subset h_X$ with an objectwise weak
equivalence $h_W\to O_X(W)$ over $h_X$.
These models depend only on the open subobject,
respect inclusions, and satisfy the literal equality
\[
 O_X(W)\cap O_X(W')=O_X(W\cap_X W').
\]
Their inclusions into $h_X$ are objectwise Kan fibrations.
\end{lem}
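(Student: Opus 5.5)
We need to prove Lemma~\ref{lem:banach-open-monomorphic-models}: strict opens give monomorphisms, form a basis, admit literal full-component subpresheaf models $O_X(W)\subset h_X$ with intersections computed literally, and these inclusions are objectwise Kan fibrations.

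First we show that a strict open $W=(X/a,\mathcal O|_a)$ gives a monomorphism $h_W\to h_X$ in $\mathcal P$. By the open restriction formula \eqref{eq:banach-open-restriction}, a map $T\to X$ factors through $W$ exactly when the pulled-back subterminal $f^*a$ is all of $1_T$. When it factors, the space of factorizations is the space of maps $1_T\to f^*a$ over $1_T$, and this space is contractible because $f^*a$ is subterminal. So $\Map(T,W)\to\Map(T,X)$ is the inclusion of a union of path components, namely those $f$ with $f^*a=1$. This condition depends only on the homotopy class of $f$, since a homotopic map induces an equivalent geometric morphism and hence the same pulled-back subterminal. Strict opens are stable under base change, again by \eqref{eq:banach-open-restriction}. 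Strict-open covers of tests generate the topology by definition of $\tau$. Because the subterminal lattice is closed under finite meets, the families form a basis.

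Next we define the models. Let $O_X(W)(T)\subset\Map_{\mathcal C}(T,X)$ be the full simplicial subset on those path components whose vertices $f$ satisfy $f^*a=1_T$. This is a union of whole components of a Kan complex, so it is a sub-Kan complex, and its inclusion is a Kan fibration: full components have the right lifting property against horns. The enriched action preserves $O_X(W)$ because composition is functorial in pullback of subterminals: $(fv)^*a=v^*f^*a=v^*1=1$. The component-level argument above gives $h_W\to O_X(W)$ over $h_X$, and this map is objectwise an equivalence.

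The model depends only on the subterminal $a$, that is, on the open subobject. Monotonicity under inclusions is immediate. For intersections, $f^*a=1$ and $f^*a'=1$ together hold if and only if $f^*(a\wedge a')=1$, because the inverse image preserves finite meets. The strict open $W\cap_X W'$ corresponds to $a\wedge a'$, so the literal equality follows.

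The main obstacle is the first step: verifying that the factorization space is contractible, and not merely nonempty, inside the infinity-category $\mathrm{Str}_T^{\mathrm{vh}}$. For this we use the slice description $X/a$, which requires maps into $X/a$ to be equivalent to maps into $X$ together with a lift of $1$ to $f^*a$. Here we invoke the etale-slice universal property underlying \eqref{eq:banach-open-restriction}, together with the locality requirement on structure maps. We also check that homotopy invariance of the condition $f^*a=1$ holds on the Kan-enriched presentation.
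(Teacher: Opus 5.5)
Your argument is correct and produces the same models as the paper: unions of complete components of $h_X(T)$. The difference is how you select the components. You use an intrinsic criterion, $f^*a=1_T$ on the underlying geometric morphism. The paper takes the components meeting the image of $h_W(T)$ and stays entirely inside represented presheaves:
\begin{itemize}
\item monomorphy comes from the diagonal equivalence $W\times_X W\simeq W$ supplied by \eqref{eq:banach-open-restriction};
\item stability under the enriched action is proved by pushing a connecting path through the action;
\item independence of the representative and the intersection formula come from identifying maps that factor through both opens with points of the homotopy intersection of the two factorizations.
\end{itemize}

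Your criterion makes the following one-line consequences of functoriality and left exactness of inverse images on subterminals: homotopy invariance, action stability via $(fv)^*a\simeq v^*f^*a$, independence of the representative, monotonicity, and the literal intersection formula via $f^*(a\wedge a')=f^*a\wedge f^*a'$.

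The price is that you invoke the \'etale-slice universal property (maps into $X/a$ are maps into $X$ together with a global section of the pulled-back subterminal), which is more than the paper uses. The step you flag as the main obstacle in fact already follows from \eqref{eq:banach-open-restriction} alone. We have $T\times_X W\simeq T/f^*a$. Its projection to $T$ is an equivalence when $f^*a=1_T$, so the factorization space is contractible. Conversely, a section of a strict-open inclusion $T/b\to T$ forces $b=1_T$: compare inverse images and use that $b\times b\to b$ is terminal over $b$.

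Two points need tightening.

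First, closure of subterminals under finite meets is not what makes strict-open families a basis. You need the pretopology axioms:
\begin{itemize}
\item a family with $\bigvee_i a_i=1$ pulls back to a covering family, because inverse image preserves joins and finite meets;
\item composites of covers cover, since $\bigvee_{i,j}a_{ij}=\bigvee_i a_i=1$;
\item identities and the empty cover of the empty object are included.
\end{itemize}
Only with these does every covering sieve of the generated topology contain a strict-open family.

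Second, your action argument checks vertices only. Say explicitly that $O_X(W)(S)$ is a union of complete components. Hence the image of any simplex of $\Map_{\mathcal C}(S,T)\times O_X(W)(T)$ lies in it once one of its vertices does. This is what makes $O_X(W)$ a literal enriched subpresheaf.
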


\begin{proof}
Strict-open base change gives $W\times_X W\simeq W$
with its diagonal equivalence. Mapping out of any
test proves that $h_W\to h_X$ is $(-1)$-truncated.
Finite intersections are represented by the strict-open
intersections already in $\mathcal C$.
For a strict-open cover represented by subterminals
$a_i$, covering is $\bigvee_i a_i=1$.
Inverse image preserves this join and finite intersections.
Composition follows from
$\bigvee_{i,j}a_{ij}=\bigvee_i a_i=1$.
Together with identities and the empty cover of the
empty object, these are the pretopology axioms.
The specified topology is the generated topology,
so every covering sieve contains such a family.

Choose an actual representative $W\to X$ and let
$O_X(W)(T)$ be the union of the complete components
of the Kan complex $h_X(T)$ meeting $h_W(T)$.
A monomorphism of spaces has contractible nonempty
homotopy fibres, so $h_W(T)\to O_X(W)(T)$ is a
weak equivalence. A union of components of a Kan
complex is a Kan subcomplex whose inclusion is a
Kan fibration.

The full action
$\Map_{\mathcal C}(S,T)\times h_X(T)\to h_X(S)$
restricts to these subcomplexes. At each vertex,
a map connected to one factoring through $W$
remains so after composition, by the enriched action
on its connecting path. Every simplex in the product
therefore lands in a selected complete component.
This proves compatibility with every mapping-space
simplex.
Associativity and units are inherited from $h_X$,
and $h_W\to O_X(W)$ is enriched natural.

Equivalent open representatives select the same
components, and open inclusions give inclusions of
the models. A map to $X$ factors up to homotopy
through both opens exactly when its two factorizations,
with their paths to that fixed map, give a point of
the homotopy intersection. This identifies the
selected components and proves the literal intersection
formula. In particular
$O_X(\varnothing)\simeq h_\varnothing$ over $h_X$.
It is not the initial presheaf: at the empty test
it has a contractible nonempty value.
\end{proof}

An \emph{indexed open model} over $h_X$ is
$\coprod_{a\in I}O_X(W_a)$.
A \emph{blockwise} map specifies one target index
for each source index and uses the corresponding
literal open inclusion. The limits below use these
specified blockwise maps.

\begin{lem}[Blockwise limits and augmented matching]
A finite strict limit in $\mathbf P/h_X$ of indexed
open models and blockwise maps is
\[
 \coprod_{\mathbf a\in J}O_X(W_{\mathbf a}),
\]
where $J$ consists of compatible index tuples and
$W_{\mathbf a}$ is their finite open intersection.
\label{lem:banach-open-slice-matching}
Blockwise maps are objectwise Kan fibrations.
A simplicial blockwise diagram $V$ is Reedy fibrant
in the global projective slice, and its strict
slice matching computes
\[
 M_n^X(V)=
 h_X\times^h_{\{\partial\Delta[n],c h_X\}^{h}}
                    \{\partial\Delta[n],V\}^{h}.
\]
In particular $M_0^X(V)=h_X$ and
$M_1^X(V)=V_0\times^h_{h_X}V_0$.
\end{lem}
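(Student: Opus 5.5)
The plan is to compute the finite strict limit componentwise and then reduce Reedy fibrancy and matching to finite iterated limits of this blockwise form. For the first assertion, evaluate at a test $T$. The values $O_X(W_a)(T)$ are unions of complete components of the Kan complex $h_X(T)$, by Lemma~\ref{lem:banach-open-monomorphic-models}, and limits in $\mathbf P/h_X$ are computed objectwise over $h_X(T)$. A finite limit of coproducts indexed over $h_X$ therefore splits as a coproduct over compatible index tuples $\mathbf a$; compatibility means the tuple is consistent with the specified blockwise index maps. Each summand is the intersection of the corresponding subpresheaves of $h_X$. The literal intersection formula $O_X(W)\cap O_X(W')=O_X(W\cap_X W')$ identifies that summand with $O_X(W_{\mathbf a})$. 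Labelled empty intersections appear as the summands $O_X(\varnothing)$, which are not initial. Since all the models are literal enriched subpresheaves, the enriched action is respected automatically.

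For the fibration claim, a blockwise map sends the summand $a$ into the summand $b(a)$ by a literal inclusion $O_X(W_a)\subseteq O_X(W_{b(a)})$. Both sides are unions of components of $h_X(T)$, so each such inclusion is a union of components of a Kan complex and hence a Kan fibration. A coproduct of Kan fibrations is again one, since it is a disjoint union on the source over a disjoint union on the target. The objects themselves are objectwise Kan fibrant over $h_X$ for the same reason.

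For Reedy fibrancy in the global projective slice, I would show that each strict slice matching map $V_n\to M_n^{\mathrm{strict}}V$ is blockwise. By the first part, the strict matching object, which is the finite limit over $\partial\Delta[n]$ formed in $\mathbf P/h_X$, is again an indexed open model, and the map into it is blockwise. The previous paragraph then makes that map an objectwise Kan fibration, which is exactly a projective fibration.

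Finally, I would identify the strict matching object with the homotopy matching object over $h_X$. I plan to run the finite boundary-cell induction of Lemma~\ref{lem:enriched-relative-matching} in the slice. Each cell attachment is a pullback along a map that is an objectwise fibration between objectwise fibrant objects, so the strict limit computes the homotopy weighted limit $\{\partial\Delta[n],V\}^h$ in the slice. Translating slice limits into limits over $h_X$ gives exactly $h_X\times^h_{\{\partial\Delta[n],ch_X\}^h}\{\partial\Delta[n],V\}^h$. At $n=0$ the weight is empty, which gives $h_X$. At $n=1$ the weight is two points, giving $V_0\times^h_{h_X}V_0$; this pullback is strict along fibrations and equals the literal fibre product.

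The main obstacle is this last homotopical identification. The mixture of projective fibrancy in the slice with the literal, non-initial models $O_X(\varnothing)$ must be handled so that every attachment really is a pullback along a projective fibration. I would settle that first by a direct objectwise check at each test $T$.
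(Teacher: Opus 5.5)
Your proposal is correct and follows essentially the paper's route. It computes objectwise, treating the models as unions of complete components of $h_X(T)$; the paper writes them as $\coprod_c B_c\times I(c)$, with blockwise maps equal to $\mathrm{id}$ times a map of label sets. It then uses the literal intersection formula over compatible tuples, gets Reedy fibrancy in the projective slice from the blockwise matching maps, and identifies the Reedy-fibrant slice matching with the $\infty$-slice limit in the standard way.

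There are two small points to tighten. First, when the index map is not injective, the Kan property comes from connectedness of the horns $\Lambda^i[n]$, $n\ge1$, not from a literal coproduct of maps. Second, the final translation needs projective fibrancy of $h_X$ (or right properness), so that the projective slice presents $\mathcal P/h_X$, as the paper notes.
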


\begin{proof}
At a test $T$, decompose $h_X(T)$ into connected
Kan components $B_c$. An indexed open model has value
\[
 \coprod_c B_c\times I(c)
\]
for discrete label sets $I(c)$. A blockwise map is
the identity on $B_c$ times a map of sets.
Such a map is a Kan fibration even when its index
map is not surjective. A finite slice limit takes
compatible tuples of labels over each $B_c$; the
intersection formula of the preceding lemma gives
the claimed model. Incompatible tuples contribute
no summand. Compatible tuples with empty geometric
intersection retain their labelled
$O_X(\varnothing)$-summand.

In the global projective model, $h_X$ is cofibrant
by enriched Yoneda and fibrant by the Kan enrichment.
Its projective slice presents $\mathcal P/h_X$.
The global injective slice has the same underlying
objects, maps and objectwise weak equivalences as a
relative category, so presents the same infinity-slice.
The comparison uses the common objectwise weak
equivalences, while the fibrancy assertion is in
the projective slice.

The matching maps of $V$ are blockwise maps of the
type just calculated. Thus they are fibrations in
the projective slice, proving its Reedy fibrancy.
Ordinary matching there computes derived slice
matching. The displayed formula is the formula
for that limit in an infinity-slice. The empty
matching limit is its terminal object $h_X$, rather
than the terminal presheaf. The argument uses
slice Reedy fibrancy.
\end{proof}

\subsection{Coherent augmented refinement}

\begin{lem}[Extending a representable lift]
Let $q:P\to M$ be a global injective fibration and
let $a:h_W\to O_X(W)$ be an objectwise equivalence
over a prescribed matching target $M$.
\label{lem:banach-open-lift-extension}
A strict lift $\ell:h_W\to P$ over $M$ extends to
a strict map $\bar\ell:O_X(W)\to P$ over $M$,
with a homotopy $\bar\ell a\simeq\ell$ in that slice.
\end{lem}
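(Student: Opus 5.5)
The plan is to treat this as a homotopy-invariance statement for mapping spaces in the slice of the global injective model over $M$. Nothing about strict opens is needed beyond the hypothesis that $a$ is an objectwise weak equivalence over $M$.

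First, work in the over-category $\mathbf P/M$ with the model structure induced from the global injective model on $\mathbf P$. Its cofibrations, fibrations and weak equivalences are created by the forgetful functor to $\mathbf P$. Since injective cofibrations are the objectwise monomorphisms, every object of $\mathbf P/M$ is cofibrant; in particular $h_W\to M$ and $O_X(W)\to M$ are. The fibration $q:P\to M$ makes $P$ a fibrant object of the slice. The global injective model is simplicial, with tensors $F\otimes K$ computed objectwise. This structure passes to the slice: for objects $A\to M$ and $P\to M$, the simplicial mapping object is the fibre
\[
 \Map_{/M}(A,P)=\Map(A,P)\times_{\Map(A,M)}\{A\to M\},
\]
where $\Map(A,P)$ denotes the enriched simplicial hom of the model $\mathbf P$. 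In the case $A=h_W$, enriched Yoneda identifies $\Map(h_W,P)$ with $P(W)$, so the strict lift $\ell$ is a vertex of $\Map_{/M}(h_W,P)$.

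Next, apply axiom SM7 in the slice to the cofibrant objects $h_W$, $O_X(W)$ and the fibrant object $P$. This shows that $\Map_{/M}(h_W,P)$ and $\Map_{/M}(O_X(W),P)$ are Kan complexes. Because $a$ is a weak equivalence between cofibrant objects, precomposition
\[
 a^*:\Map_{/M}(O_X(W),P)\longrightarrow\Map_{/M}(h_W,P)
\]
is a weak equivalence of Kan complexes; this is the standard consequence of SM7 together with Ken Brown's lemma. If one prefers to avoid Ken Brown, factor $a$ in the slice as an injective trivial cofibration $h_W\to Z$ followed by an acyclic fibration $Z\to O_X(W)$. Restriction along the first map is an acyclic Kan fibration. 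For the second, $O_X(W)$ is cofibrant, so the acyclic fibration has a section over $M$; hence restriction along it is a homotopy equivalence of mapping spaces.

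Finally, $a^*$ is surjective on $\pi_0$, so there is a vertex $\bar\ell$, that is, a strict map $O_X(W)\to P$ over $M$, and an edge of $\Map_{/M}(h_W,P)$ joining $\bar\ell a$ to $\ell$. Such an edge is a map $h_W\otimes\Delta[1]\to P$ over the constant homotopy of $h_W\to M$. This is exactly the required homotopy $\bar\ell a\simeq\ell$ in the slice, so the extension property follows.

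The step needing most care is the first: confirming that the slice over the non-fibrant object $M$ is a simplicial model category whose mapping spaces are these fibres. It must also be checked that the homotopy produced lies over $M$ and not merely in $\mathbf P$. I would verify SM7 in the slice directly. The required pullback-corner map is a base change of the corresponding map in $\mathbf P$ along the vertex $\{A\to M\}$ of $\Map(A,M)$. Fibrations and acyclic fibrations of Kan complexes are stable under such base change, so SM7 should pass to the slice without further input.
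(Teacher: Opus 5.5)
Your proposal is correct and follows the paper's own argument: in the simplicial slice model $\mathbf P_{\mathrm{inj}}/M$ both $h_W$ and $O_X(W)$ are cofibrant and $P$ is fibrant, so $a^*\colon\Map_{/M}(O_X(W),P)\to\Map_{/M}(h_W,P)$ is a weak equivalence of Kan complexes, and a preimage of the component of $\ell$ gives $\bar\ell$ together with the homotopy over $M$. Your explicit verification of SM7 in the slice and of the Ken Brown step supplies details that the paper leaves implicit; like the paper, your argument never uses that $a$ is monic.
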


\begin{proof}
In the simplicial model category $\mathbf P_{\mathrm{inj}}/M$
both source objects are cofibrant and $P\to M$ is
fibrant. Therefore precomposition is a weak equivalence
of Kan mapping spaces
\[
 \Map_{/M}(O_X(W),P)\longrightarrow\Map_{/M}(h_W,P).
\]
Choose a preimage of the component of $\ell$.
It supplies the asserted map and homotopy, with all
matching equations strict over $M$.
The map $a$ need not be monic: the mapping-space
equivalence, rather than a trivial-cofibration
lifting property, gives the chosen extension.
\end{proof}

\begin{lem}[Augmented split-open refinement]
Every semi-representable augmented hypercover
$U\to h_X$, $X\in\mathcal C$, has a coherent
refinement by a strict split diagram
\[
 V_n=\coprod_{[n]\twoheadrightarrow[k]}N_k,\qquad
 N_k=\coprod_{a\in I_k}O_X(W_{k,a}),
\]
with blockwise faces and strict opens $W_{k,a}\to X$.
\label{lem:banach-bounded-open-refinement}
The augmentation is a hypercover. The refinement
$V\to U$ need not be levelwise covering, and the
augmentation need not admit a section.
\end{lem}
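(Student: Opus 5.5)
We build $V$ skeleton by skeleton, with a map $V\to U$ over $h_X$. The steps are: induct on outer degree, refine each relative matching lifting problem by a strict-open cover, extend the representable lifts to full-component models, and check that the augmented matching maps are local covers.

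\emph{Degree zero.} First strictify $U$ and replace it Reedy-fibrantly in the global injective slice over $h_X$. This is harmless: semi-representability and the hypercover condition are invariant under objectwise equivalence. The degree-zero condition says that $U_0\to h_X$ is a local surjection. By Lemma~\ref{lem:banach-open-monomorphic-models}, strict opens form a basis for the topology. So the identity section of $h_X(X)$ lifts to $U_0$ locally, after restriction to some strict-open cover $\{W_{0,a}\to X\}_{a\in I_0}$. This gives strict lifts $h_{W_{0,a}}\to U_0$ over $h_X$. Lemma~\ref{lem:banach-open-lift-extension}, applied with $M=h_X$ and the injective fibration $U_0\to h_X$, extends each lift to $O_X(W_{0,a})\to U_0$. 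Set $N_0=\coprod_a O_X(W_{0,a})$.

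\emph{Inductive step.} Suppose $N_0,\dots,N_{n-1}$ have been built, so that the split $(n-1)$-skeleton $V_{\le n-1}$ and its map to $U$ exist. By Lemma~\ref{lem:banach-open-slice-matching}, $M_n^X(V)$ is a strict indexed open model $\coprod_{\mathbf a\in J}O_X(W_{\mathbf a})$. Its summands are indexed by compatible tuples, and each is a finite intersection of previously chosen opens. The lifting map \eqref{eq:banach-relative-matching} is a base change of the relative matching map of $U$. That map is an injective fibration and a local cover, by the hypercover hypothesis for $U$ and Reedy fibrancy.

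So for each $\mathbf a$ we restrict the tautological section over $O_X(W_{\mathbf a})$, and refine locally by strict opens $W_{n,b}\subseteq W_{\mathbf a}$ on which it lifts strictly. Lemma~\ref{lem:banach-open-lift-extension} then extends each lift over the matching target to the model $O_X(W_{n,b})$. Take $N_n=\coprod_b O_X(W_{n,b})$, with its blockwise map to the $\mathbf a$-summand of the matching object; this map is the nondegenerate face data. Extend to $V_n$ by adding degenerate summands indexed by surjections $[n]\twoheadrightarrow[k]$, as in the usual split construction. The latching part maps to $U_n$ through the degeneracies of $U$, so the map $V_n\to U_n$ is strict and compatible with all faces and degeneracies.

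The augmentation is a hypercover because $V_n\to M_n^X(V)$ is a local surjection. On the latching summands it is split by construction. On $N_n$ the chosen opens cover each matching summand, including the labelled $O_X(\varnothing)$ summands, which are covered by the empty family. The remarks that $V\to U$ need not be levelwise covering and that there need be no section are then automatic: we only compare with $U$ through lifts.

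\emph{Main obstacle.} The hard step is the coherence of the latching extension in low degrees. The degenerate summands of $V_n$ must map to $U_n$ by $s_i$ applied to earlier choices, and the face maps of $N_n$ must land exactly in $M_n^X(V)$. In particular, the extended maps $\bar\ell$ must satisfy the matching equations strictly, not just up to homotopy. Lemma~\ref{lem:banach-open-lift-extension} gives strictness over $M$, but only because the extension is taken in the slice over the full relative matching target.

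I would verify this explicitly in degrees $0$, $1$ and $2$. For $n=1$ the simplicial identities $d_id_j=d_{j-1}d_i$ are vacuous. For $n=2$ I would check that the degenerate summands $s_0N_0$, $s_0N_1$ and $s_1N_1$ of $V_2$ are mapped consistently with the relation $d_1s_0=\mathrm{id}$ used in $U$. The general degree would then follow from the Reedy latching pushout, since the split form makes the latching map a coproduct inclusion.
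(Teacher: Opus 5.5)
Your proposal is correct and follows the paper's proof essentially step for step. The shared steps are: Reedy-fibrant replacement $U^{\mathrm f}$ in the global injective slice over $h_X$; induction on degree with the lifting target $P=M\times_{M_U}U^{\mathrm f}_n\to M$, which is an injective fibration and a local cover by base change; strict-open refinement of the local lifts; extension to the full-component models $O_X(W_{n,b})$ by Lemma~\ref{lem:banach-open-lift-extension}; and the split latching extension $V_n=L_nV\amalg N_n$, with explicit checks in degrees $0$, $1$ and $2$.

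The one detail the paper makes explicit and you only gesture at is this: the local lifts must be path-corrected so that they lie strictly over the map $h_{W_{n,b}}\to O_X(W_{n,b})\to M$ induced by the actual composite arrow $W_{n,b}\to W_{\mathbf a}\to X$. Only then does the extension lemma apply over the fixed matching summand.
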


\begin{proof}
Strictify the coherent augmented target in
$\mathcal P/h_X$ and choose a Reedy-fibrant replacement
$U^{\mathrm f}$ in the global injective \emph{slice
over $h_X$}. It is levelwise globally equivalent
to $U$ there, preserving the coherent augmentation,
semi-representability up to equivalence, and the
augmented hypercover condition. Its strict relative
matching maps are injective fibrations. Their targets
are fibrant over the objectwise Kan $h_X$ and hence
are objectwise Kan, as are the source levels.
Fibrancy is taken in the slice over $h_X$, rather
than imposed on the constant diagram $c h_X$.

Suppose the split diagram and its strict map to
$U^{\mathrm f}$ are constructed below degree $n$.
By Lemma~\ref{lem:banach-open-slice-matching},
\[
 M=M_n^{\mathrm{str},X}(V)
   =\coprod_{a\in J_n}O_X(W_a).
\]
The lower map supplies $M\to
M_U=M_n^{\mathrm{str},X}(U^{\mathrm f})$. Form
\[
 P=M\times_{M_U}U^{\mathrm f}_n\longrightarrow M.
\]
This is an injective fibration. Since the pulled-back
leg is an objectwise fibration between objectwise
Kan objects, this strict pullback computes
exactly the homotopy relative lifting target in
\eqref{eq:banach-relative-matching}.
It is a local cover by homotopy-base-change stability
of coverings \cite[Corollary~3.1.6]{HAGI2005}.

For each component choose $h_{W_a}\to O_X(W_a)\to M$.
The local-cover criterion and path lifting give
strict representable lifts on a covering sieve
\cite[Proposition~3.1.4 and Remark~3.1.5]{HAGI2005}.
Choose a strict-open covering family
$\{W_{a,b}\to W_a\}_b$ inside that sieve.
An actual composite $W_{a,b}\to W_a\to X$ gives
an equivalence $h_{W_{a,b}}\to O_X(W_{a,b})$
strictly over the fixed matching component.
Changing an arrow representative only changes
the restriction by an enriched homotopy, which
the objectwise fibration corrects by path lifting.
Lemma~\ref{lem:banach-open-lift-extension} now gives
the lift from $O_X(W_{a,b})$ over $M$.

Set $N_n=\coprod_{a,b}O_X(W_{a,b})$ and
\[
 L_nV=\coprod_{\substack{[n]\twoheadrightarrow[k]\\k<n}}N_k,
 \qquad V_n=L_nV\amalg N_n.
\]
On $L_nV$ the matching map and map to $U^{\mathrm f}_n$
are forced by lower degrees and target degeneracies.
On $N_n$ use the selected matching map and lift to
$P$. Their composite agrees with the canonical
latching-to-matching map. The Reedy extension
construction therefore supplies all faces and
degeneracies and a strict map to $U^{\mathrm f}$.
Faces of a new summand select its matching indices
and are open inclusions; faces of degenerate
summands are blockwise by the simplicial identities.

The restriction of $V_n\to M$ to $N_n$ already
covers each matching open. Thus the degree-$n$
hypercover condition holds. The strict slice
matching is derived matching by
Lemma~\ref{lem:banach-open-slice-matching}.
Countable induction gives the augmented hypercover
and a strict map $V\to U^{\mathrm f}$.
Compose in the augmented diagram infinity-category
with $U^{\mathrm f}\simeq U$. This retains all the
original augmented target coherence.

For clarity, the initial degrees are explicit.
At zero, $M_0=h_X$ and $V_0=N_0$ is an open cover;
the split decomposition refers to degeneracies,
rather than a section $h_X\to V_0$. At one,
\[
 M_1=\coprod_{a_0,a_1}
 O_X(W_{a_0}\cap_XW_{a_1}),\qquad
 V_1=s_0N_0\amalg N_1,
\]
with $d_1$ selecting $a_0$ and $d_0$ selecting $a_1$.
At two, matching indices are compatible triples
$(e_{01},e_{02},e_{12})$, including degenerate edges,
with the threefold open intersection. The latching
object is
\[
 L_2V=s_0N_1\amalg s_1N_1\amalg s_0s_0N_0,
 \qquad s_0s_0N_0=s_1s_0N_0,
\]
where the last summand is counted once. If
$d_1e=a$ and $d_0e=b$, the faces are
\[
 \begin{array}{c|ccc}
       &d_0&d_1&d_2\\ \hline
 s_0e  &e&e&s_0a\\
 s_1e  &s_0b&e&e
 \end{array}.
\]
The totally degenerate vertex has three faces $s_0a$.
New degree-two faces and their target coherence
come from the single matching lift. The Reedy
extension proves the higher identities as well.
Empty-intersection labels remain throughout this
presheaf construction; only incompatible tuples
are omitted.
\end{proof}

\subsection{Uniform source size and enriched localization}

\begin{lem}[A bounded set of augmentations]
Let an infinite $\mathbb V$-small cardinal $\alpha$
bound the objects and all mapping-space simplices
of the fixed site, including $\aleph_0$.
\label{lem:banach-split-open-size}
The refinement above can be chosen with
$|\coprod_n I_n|\le\alpha$ and total source
presheaf data of size at most $\alpha$.
After normalizing indices, its source augmentations
belong to a set $\mathscr S$ of at most $2^\alpha$
maps $|V|_{\mathcal P}\to h_X$.
\end{lem}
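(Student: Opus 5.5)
The plan is to revisit the inductive construction of Lemma~\ref{lem:banach-bounded-open-refinement} and make each choice with a cardinality bound in mind. The sizes of the target $U^{\mathrm f}$ and of the auxiliary pullbacks $P$ are not controlled. What we control is the set of labels $I_n$ and the opens $W_{n,a}$ selected at each stage.

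In degree zero we only need a strict-open covering family of $X$ that refines the sieve on which $U^{\mathrm f}_0\to h_X$ has local lifts. The strict opens of $X$ correspond to subterminals in its localic topos, equivalently to subobjects of a $\mathcal C$-object. Since $\alpha$ bounds the objects and mapping-space simplices of the fixed site, there are at most $\alpha$ isomorphism classes of strict opens of $X$ in $\mathcal C$. We may therefore take the covering family to be indexed by a subset of these classes: for each class that lies in the sieve, choose one representative. Choosing one lift per selected open gives $|I_0|\le\alpha$.

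For the inductive step, Lemma~\ref{lem:banach-open-slice-matching} shows that the matching labels $J_n$ are compatible tuples of lower labels. These are finite tuples from a set of size at most $\alpha$, so $|J_n|\le\alpha$. For each $a\in J_n$ we again choose the refining family $\{W_{a,b}\}_b$ indexed by at most $\alpha$ classes of strict opens of $W_a$. This gives $|I_n|\le\alpha\cdot\alpha=\alpha$, and a countable union over $n$ stays bounded by $\alpha\cdot\aleph_0=\alpha$.

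Next we bound the total source data. Each value $O_X(W)(T)$ is a union of components of $h_X(T)=\Map_{\mathcal C}(T,X)$, so its simplices number at most $\alpha$. The enriched action is a restriction of composition in $\mathcal C$. The degreewise coproduct $V_n=\coprod_{[n]\twoheadrightarrow[k]}N_k$ therefore has at most $\alpha$ simplices at each of at most $\alpha$ tests. The diagonal realization $|V|_{\mathcal P}=\operatorname{diag}V$ then has total size at most $\alpha$.

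The hardest step is the normalization. We transport the index sets $I_n$ injectively into a fixed set of cardinality $\alpha$, and we replace each open by its chosen representative class. After this, the whole source datum $(\{I_n\},\{W_{n,a}\},\text{blockwise faces})$ is an element of a set determined by $\alpha$ and $X$. That set consists of countable families of subsets of $\alpha$, labelled opens, and functions between label sets. Its cardinality is at most $\alpha^{\alpha}=2^\alpha$, and so is the number of maps $|V|_{\mathcal P}\to h_X$ determined by it.

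The augmentation map $V\to h_X$ is induced by the literal inclusions $O_X(W)\subset h_X$, so it is determined by the combinatorial datum. The map $V\to U^{\mathrm f}$ is not part of the augmentation and is not counted. Two points need care here. First, the normalization must preserve the strict face and degeneracy identities, and it does because relabelling is blockwise. Second, the choice of $X$ must range over the at most $\alpha$ objects of the site. Taking the union over all $X$ keeps $|\mathscr S|\le\alpha\cdot2^\alpha=2^\alpha$.

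Finally, we include the empty test explicitly. The case $X=\varnothing$ uses the empty cover, and its normalized augmentation is $0\to h_\varnothing$. This yields the stated set $\mathscr S$.
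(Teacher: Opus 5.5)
Your proposal is correct and follows essentially the paper's proof. You count strict opens of $X$ through the at most $\alpha$ actual arrows into $X$, bound $J_n$ and then $I_n$ degree by degree with source actions and blockwise maps inherited from $h_X$, and normalize labels into a fixed set of size $\alpha$ to get at most $2^\alpha$ codes, with $0\to h_\varnothing$ added. The paper additionally cuts $\mathscr S$ out of the code set by imposing the simplicial identities and the condition that source opens cover every augmented matching open, so that every generator is a hypercover as the later localization theorem needs. It also notes that $V$ is Reedy cofibrant, since its latching maps are summand inclusions, so that $\operatorname{diag}V$ computes $|V|_{\mathcal P}$.
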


\begin{proof}
Open subobjects of a test $X$ are a quotient of a
subset of the actual arrows $W\to X$, so there
are at most $\alpha$. Each $O_X(W)$ is a
subpresheaf of $h_X$ of total size at most $\alpha$.
A covering family can be reduced to at most
$\alpha$ distinct opens by choosing one lift for
each retained member.

Inductively, in a fixed degree there are finitely
many degeneracy surjections and a matching index
is a finite compatible tuple. If the lower
nondegenerate index sets have size at most $\alpha$,
so does $J_n$. Choosing at most $\alpha$ opens
over each matching component gives $|I_n|\le\alpha$,
retaining separate copies at different matching
indices. Countably many degrees still have size
at most $\alpha$. All source actions are inherited
from $h_X$, and the blockwise maps are their fixed
inclusions.

Normalize each $I_n$ to a subset of a fixed set
of size $\alpha$. The test $X$, countably many
index subsets and open labels, and finitely many
face-index functions per degree have at most
$2^\alpha$ codes. Impose the simplicial identities
and the condition that source opens cover every
augmented matching open. These restrictions define
the set $\mathscr S$. A bounded refinement is
reindexed into it by an actual isomorphism over
$h_X$. The diagonal sources still have size
at most $\alpha$.

Generator data consist of source augmentations.
Extending the chosen lifts adds no source cells.
Target replacements and maps into target hypercovers
remain in the original universe but need not be
uniformly bounded. An auxiliary regular small cardinal
$\lambda_{\mathrm{SO}}>2^\alpha$ is available in that same
Grothendieck universe.

The latching maps of $V$ are summand inclusions,
so it is Reedy cofibrant in the global injective
model; its diagonal computes $|V|_{\mathcal P}$.
Each generator is a local $\pi_*$-equivalence by
\cite[Lemma~3.4.2, PDF leaves~27--28]{HAGI2005}.
Include $0\to h_\varnothing$: the zero augmented
diagram has $M_0=h_\varnothing$ and $M_n=0$ for
$n>0$. Its zero-degree cover is the permitted
empty family. This generator forces a local
object's empty-test value to be contractible.
\end{proof}

\begin{thm}[Enriched split-open localization]
The set $\mathscr S$ satisfies
\[
 L_{\mathscr S}\mathbf P_{\mathrm{inj}}
       =\mathbf P_{\mathrm{inj},\tau}
\]
as model localizations under the original enriched
presheaf category.
\label{lem:banach-split-open-localization}
It therefore generates the hypersheaf localization,
equivalently localization at all semi-representable
augmented hypercovers.
\end{thm}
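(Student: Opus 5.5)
The plan is to compare the two left Bousfield localizations of $\mathbf P_{\mathrm{inj}}$ through their local objects. Both have the same cofibrations, the objectwise monomorphisms. So by the standard comparison criterion for left Bousfield localizations (Hirschhorn's characterization by local objects and local equivalences between them) it suffices to show two things. First, every map in $\mathscr S$ is a $\tau$-hyperlocal equivalence. Second, every $\mathscr S$-local fibrant object is a $\tau$-hypersheaf. The first inclusion is already contained in Lemma~\ref{lem:banach-split-open-size}: each normalized augmentation $|V|_{\mathcal P}\to h_X$ is the realization of an augmented semi-representable hypercover, hence a local $\pi_*$-equivalence by \cite[Lemma~3.4.2]{HAGI2005}. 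The empty generator $0\to h_\varnothing$ is the permitted empty cover. Therefore $\mathbf P_{\mathrm{inj},\tau}$-local objects are $\mathscr S$-local, and the identity $L_{\mathscr S}\mathbf P_{\mathrm{inj}}\to\mathbf P_{\mathrm{inj},\tau}$ is left Quillen.

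For the converse, I would use the HAG description of $\mathbf P_{\mathrm{inj},\tau}$ as localization at all semi-representable augmented hypercovers \cite[Theorem~3.4.1 and Definition~3.4.8]{HAGI2005}. This reduces the problem to the following claim: for an $\mathscr S$-local fibrant $F$ and every semi-representable hypercover $U\to h_X$, the map $F(X)\to\Map(|U|,F)$ is an equivalence. Fix such a $U$. Lemma~\ref{lem:banach-bounded-open-refinement} and Lemma~\ref{lem:banach-split-open-size} give a split refinement $V\to U^{\mathrm f}$ over $h_X$ whose augmentation lies in $\mathscr S$ up to isomorphism. The composite
\[
 F(X)\to\Map(|U|,F)\to\Map(|V|,F)
\]
is then an equivalence, so $F(X)$ is a homotopy retract of $\Map(|U|,F)$. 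This gives injectivity of the first map on all homotopy groups, but not surjectivity.

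To upgrade this to an equivalence, I would not argue on $\Map(|U|,F)$ directly. Instead I would use the boundary-lifting formulation: a map $u:F\to G$ of $\mathscr S$-local fibrant objects that is a $\tau$-local $\pi_*$-equivalence should be an objectwise equivalence. By Lemma~\ref{lem:enriched-boundary-criterion} applied in reverse, such a $u$ has local homotopy lifts of every boundary square $h_X\otimes\partial\Delta[m]\to F$, $h_X\otimes\Delta[m]\to G$. These lifts exist on a covering of $X$, refined degree by degree into a semi-representable hypercover; this is the framed augmented HAG pullback of the boundary target $B_m$. Refining that hypercover by Lemma~\ref{lem:banach-bounded-open-refinement} yields an element $|V|\to h_X$ of $\mathscr S$ together with a compatible lift $|V|\to F$ over the square. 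The $\mathscr S$-locality of $F$ and $G$, together with the fibrancy of $u$ in $L_{\mathscr S}$, then descends this to an actual lift over $h_X$ up to boundary-fixed homotopy. This proves $u$ is an objectwise $\pi_*$-isomorphism at every test and base point. With local equivalences between local objects agreeing, the comparison criterion identifies the two localizations, and the generating-set statement follows.

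The main obstacle will be the descent step in the previous paragraph. Lemma~\ref{lem:banach-bounded-open-refinement} does not give a levelwise covering map $V\to U$, and it does not give a section of the augmentation. So the lift produced on $V$ has to be compared with the prescribed boundary data using only the coherent map $V\to U^{\mathrm f}$ in the augmented slice. My first attempt would be to build the refinement relative to the boundary, prescribing the lift on the pullback of $\partial\Delta[m]$ data and using Lemma~\ref{lem:banach-open-lift-extension} to extend the strict representable lifts to the $O_X(W)$ models over the fixed matching target. I would then rely on the Kan slice mapping-space equivalence to keep the whole boundary fixed up to a contractible choice.
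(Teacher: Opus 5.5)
Your architecture matches the paper's. The generators are local $\pi_*$-equivalences by \cite[Lemma~3.4.2]{HAGI2005}. You then show that an $L_{\mathscr S}$-fibration between $L_{\mathscr S}$-fibrant objects which is locally weak is objectwise trivial: produce a hypercover carrying a lift via the framed form of \cite[Lemma~3.4.3]{HAGI2005}, refine it into $\mathscr S$ by Lemma~\ref{lem:banach-bounded-open-refinement}, and descend. The DHI comparison criterion then finishes. You were right to abandon the retract argument. The gap is the descent step, which you leave open as the main obstacle. Your suggested repair, a refinement built relative to the boundary using Lemma~\ref{lem:banach-open-lift-extension}, does not address it. That lemma only extends lifts from $h_W$ to $O_X(W)$ inside the refinement; it says nothing about getting from $|V|_{\mathcal P}$ back to $h_X$. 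Also, Lemma~\ref{lem:enriched-boundary-criterion} runs in the opposite direction from the one you use. The local lifts come from HAG's framed lemma, and that lemma must be applied to a map you have not yet shown to be locally weak.

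The missing idea is to encode the whole boundary square as one vertex $b:h_X\to B$ of the relative cotensor
\[
 p:A=F^{\Delta[m]}\longrightarrow B=F^{\partial\Delta[m]}\times_{G^{\partial\Delta[m]}}G^{\Delta[m]}.
\]
Three facts about $p$ do the work.
\begin{itemize}
\item $p$ is an injective fibration between globally fibrant objects.
\item $A$ and $B$ are $\mathscr S$-local, since local objects are closed under cotensors and homotopy pullbacks.
\item $p$ is locally weak, because $a_{\mathrm{hyp}}$ is left exact and finite cotensors are finite homotopy limits. This is what licenses the framed HAG lemma. Its output lives over $R(h_X)^{\Delta[\bullet]}$ and must be pulled back to $c h_X$ to give a semi-representable augmented hypercover $U$ of $h_X$.
\end{itemize}
Compose the lift $|U|_{\mathcal P}\to A$ with $|V|_{\mathcal P}\to|U|_{\mathcal P}$. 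This gives a lift over $b|_V$ up to a specified homotopy. Make it strict by lifting that path along the Kan fibration $\Map(|V|_{\mathcal P},A)\to\Map(|V|_{\mathcal P},B)$, using that $|V|_{\mathcal P}$ is injective-cofibrant. Now consider the square obtained by applying $\Map(h_X,-)\to\Map(|V|_{\mathcal P},-)$ to $p$. Its horizontal maps are equivalences by $\mathscr S$-locality of $A$ and $B$, and its vertical maps are Kan fibrations. So the strict fibres over $b$ and over $b|_V$ are equivalent. The second is nonempty, hence so is the first, and enriched Yoneda gives a strict filler. Only a map $V\to U$ over $h_X$ in the augmented diagram category is used. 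No levelwise covering property, no section of the augmentation, and no boundary-relative refinement is needed.
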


\begin{proof}
The global injective model is left proper and
combinatorial in the standing universe. The diagram
model and small-set localization use precisely
\cite[Proposition~A.1.3, Theorems~A.2.2 and~A.2.4,
and Proposition~A.2.5, PDF leaves~68--69]{HAGI2005}.
Every $\mathscr S$-map is locally weak, so the
already established hypersheaf model is a further
localization of $\mathbf P_{\mathscr S}
=L_{\mathscr S}\mathbf P_{\mathrm{inj}}$.

Let $f:F\to G$ be a $\mathbf P_{\mathscr S}$-fibration
between $\mathbf P_{\mathscr S}$-fibrant objects
and also a local $\pi_*$-equivalence. It is a
global injective fibration with globally fibrant
source and target. For a boundary problem at $h_X$
put
\[
 A=F^{\Delta[m]},\qquad
 B=F^{\partial\Delta[m]}
       \times_{G^{\partial\Delta[m]}}G^{\Delta[m]},
 \qquad p:A\to B.
\]
The problem is a vertex $b:h_X\to B$.
The simplicial model axiom makes $p$ an injective
fibration. Both $A$ and $B$ are globally fibrant:
the defining pullback uses the fibration
$G^{\Delta[m]}\to G^{\partial\Delta[m]}$.
They are also $\mathscr S$-local. Cotensors
preserve locality by the mapping-space adjunction,
and local objects are closed under homotopy limits.

To see that $p$ is locally weak, apply the left-exact
hypersheaf localization. Cotensors by a finite
simplicial set are built from finitely many
homotopy pullbacks: its finite cell attachments
become pullbacks, and each simplex is contractible.
Thus $a_{\mathrm{hyp}}$ preserves these cotensors.
Since $a_{\mathrm{hyp}}f$ is an equivalence,
so is $a_{\mathrm{hyp}}p$.
At $m=0$, these formulas are $A=F$, $B=G$, $p=f$.

We use the framed version of
\cite[Lemma~3.4.3, PDF leaves~29--32]{HAGI2005}.
Choose a projective fibrant replacement
$h_X\hookrightarrow R(h_X)$ that is a projective
trivial cofibration; here $R$ denotes this choice,
not the $\Ex^\infty$ functor of the preceding appendix.
Extend $b$ across it using projective fibrancy of $B$.
The lemma supplies
\[
 H_\bullet\longrightarrow R(h_X)^{\Delta[\bullet]}
\]
and a lift $|H_\bullet|_{\mathcal P}\to A$ over $B$.
The chosen $H_n$ are globally equivalent to
coproducts of representables, as specified in the
paragraph preceding Definition~3.4.8 of that source
(PDF leaf~33). This semi-representability statement
concerns the levels $H_n$. In the original augmented
diagram infinity-category set
\[
 U_\bullet=
 H_\bullet\times^h_{R(h_X)^{\Delta[\bullet]}}c h_X.
\]
The constant-extension leg from $c h_X$ is a
levelwise global equivalence, because
$h_X\to R(h_X)$ is such an equivalence and the
simplices are contractible. Hence $U_n\simeq H_n$,
so each $U_n$ inherits semi-representability from
$H_n$. The augmented hypercover condition also
survives, and realization gives the coherent
lift over the original $b$.

Choose a bounded refinement $V\to U$ by
Lemma~\ref{lem:banach-bounded-open-refinement}
and normalize it by
Lemma~\ref{lem:banach-split-open-size}.
The coherent lift over $b|_V$ can be made strict:
$|V|_{\mathcal P}$ is injective-cofibrant, and
the map
\[
 \Map(|V|_{\mathcal P},A)\longrightarrow
 \Map(|V|_{\mathcal P},B)
\]
is a Kan fibration,
so lift its specified comparison path.
Now consider
\[
 \begin{tikzcd}
 \Map(h_X,A)\arrow[r]\arrow[d]
   &\Map(|V|_{\mathcal P},A)\arrow[d]\\
 \Map(h_X,B)\arrow[r]
   &\Map(|V|_{\mathcal P},B).
 \end{tikzcd}
\]
All four spaces are Kan, the vertical maps are
fibrations, and the horizontal maps are weak
equivalences by $\mathscr S$-locality. The strict
fibres over $b$ and $b|_V$ therefore model
equivalent homotopy fibres. The right fibre is
nonempty, so the left is nonempty. This gives
an actual lift of $b$ and, by enriched Yoneda,
a strict filler of the original boundary problem.
Thus every $f(X)$ is a trivial Kan fibration.

The abstract comparison criterion
\cite[Lemma~6.3, preprint pp.~16--17]{DHI2004}
now applies. Explicitly, replace a locally weak
map by a map between $\mathscr S$-fibrant objects
and factor it as an $\mathscr S$-trivial
cofibration followed by an $\mathscr S$-fibration.
The latter is locally weak, hence objectwise
weak by the preceding argument. Two-out-of-three
makes the original map $\mathscr S$-weak.
The two models have the same weak equivalences
and cofibrations, and therefore the same fibrations.
The hypercover-localization identification of
\cite[Theorem~3.4.1, Corollaries~3.4.5 and~3.4.7,
and Proposition~3.6.1]{HAGI2005} gives the stated
final description. The DHI comparison criterion
is used at the model-category level; HAG supplies
the enriched hypercover localization.
\end{proof}

\subsection{Coherent internal hypercovers and chart descent}

\begin{lem}[Internal hypercovers and chart targets]
For each generator over a test $X$, replace its
open labels by their subterminals in the underlying
infinity-topos $\mathcal X_X$.
\label{lem:banach-internal-open-hypercover}
This gives an internal hypercover of $1_X$.
The whole coherent chart-evaluation diagram
identifies its descent with descent of $h_Y$
along the generator, for a chart target
$Y=\operatorname{Chart}(D)$.
Consequently every such $h_Y$ is a hypersheaf.
\end{lem}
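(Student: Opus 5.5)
The plan has three stages: translate the generator into the underlying topos, identify the two descent problems, and then invoke the localization theorem.

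\emph{Translation.} Fix a generator $V\to h_X$ in $\mathscr S$. By Lemma~\ref{lem:banach-bounded-open-refinement}, $V_n=\coprod_{[n]\twoheadrightarrow[k]}N_k$ with $N_k=\coprod_{a\in I_k}O_X(W_{k,a})$, and every face map is blockwise. Each strict open $W_{k,a}\to X$ is $(X/a_{k,a},\mathcal O_X|_{a_{k,a}})$ for a subterminal $a_{k,a}$ of $\mathcal X_X$. I would define the internal simplicial object
\[
 \mathcal V_n=\coprod_{[n]\twoheadrightarrow[k]}\ \coprod_{a\in I_k} a_{k,a},
\]
with faces given by the same index functions and the inclusions of subterminals. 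The open-restriction formula \eqref{eq:banach-open-restriction} makes intersections of strict opens correspond to meets of subterminals. Its matching objects over $1_X$ are therefore the coproducts, over compatible index tuples, of meets of the $a$'s; this is the internal analogue of Lemma~\ref{lem:banach-open-slice-matching}. Labelled empty intersections give initial summands, and incompatible tuples contribute nothing. The generator condition that source opens cover every augmented matching open then says exactly that each $\mathcal V_n\to M_n(\mathcal V)$ is an effective epimorphism, because covering by strict opens means the subterminals have join equal to the matching subterminal. Hence $\mathcal V\to 1_X$ is an internal hypercover. The one care point is the generator $0\to h_\varnothing$, where $X=\varnothing$, $\mathcal X_X$ is trivial and the claim is vacuous.

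\emph{Identifying descent.} For $Y=\operatorname{Chart}(D)$, chart evaluation (Lemma~\ref{lem:banach-chart-eval}) together with the slice identification gives $h_Y(X/a)\simeq\Map_{\mathcal X_X}(a,\mathcal O_X(D))$, naturally on the open poset of $X$. I would promote this from a levelwise equivalence to an equivalence of the whole open-poset diagram. Chart evaluation is natural in the structured test, and strict-open restriction is functorial, so the two functors $\operatorname{Opens}(X)^{\op}\to\mathcal S$, namely $W\mapsto h_Y(W)$ and $a\mapsto\Map(a,\mathcal O_X(D))$, are equivalent as functors. The models $O_X(W)$ depend only on the open subobject and satisfy literal intersection (Lemma~\ref{lem:banach-open-monomorphic-models}), and $h_Y$ is local, so $\Map(O_X(W),h_Y)\simeq h_Y(W)$. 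Composing with the index data then identifies the cosimplicial space $\Map(V_\bullet,h_Y)$ with $\Map(\mathcal V_\bullet,\mathcal O_X(D))$, coproducts going to products on both sides, and identifies the augmentation with $\Gamma_X\mathcal O_X(D)$.

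\emph{Conclusion and main obstacle.} The structure value $\mathcal O_X(D)$ is hypercomplete because $X\in\mathrm{Str}_T^{\mathrm{vh}}$. Mapping into a hypercomplete object inverts internal hypercovers, so $\Map(1_X,\mathcal O_X(D))\to\Tot\Map(\mathcal V_\bullet,\mathcal O_X(D))$ is an equivalence, even when $\mathcal X_X$ itself is not hypercomplete. Thus $h_Y$ is local for every map in $\mathscr S$, and Theorem~\ref{lem:banach-split-open-localization} makes it a hypersheaf. I expect the main obstacle to be the coherence of the second step. A termwise identification is not enough, because the coherence must hold across the entire open-poset diagram and through the degeneracy summands. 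I would handle this first by straightening chart evaluation into a natural equivalence of functors on the poset of subterminals, and only then restricting along the index categories. The degenerate summands cause no extra work, since they are literal copies of lower $N_k$.
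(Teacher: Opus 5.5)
Your proposal is correct and follows the paper's proof essentially step for step. You replace the open labels by subterminals to obtain an internal hypercover, identify the whole cosimplicial descent diagram through chart evaluation and the open-poset coherence of the models $O_X(W)$, and conclude from hypercompleteness of the structure value $\mathcal O_X(D)$ together with Theorem~\ref{lem:banach-split-open-localization}. One small correction: the equivalence $\Map(O_X(W),h_Y)\simeq h_Y(W)$ follows from the objectwise equivalence $h_W\to O_X(W)$ and Yoneda, not from locality of $h_Y$, which is the conclusion you are trying to prove.
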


\begin{proof}
Let $a_W\hookrightarrow1_X$ correspond to $W\to X$.
Retain all component indices and their face and
degeneracy functions, and form
\[
 A_n=\coprod_{a\in I(V_n)}a_{W_a}.
\]
These coproducts are permitted by the standing
universe convention. They are $0$-truncated.
Disjointness and universality of coproducts
compute finite matching by compatible index
tuples and intersections of subterminals.
Thus the augmented matching indices are exactly
those already constructed. Each chosen open
family covers its matching open, giving an
effective epimorphism onto each matching summand.
This proves the internal hypercover condition,
including degree zero. An empty open becomes
initial only in this internal interpretation;
its labelled $O_X(\varnothing)$ was retained
in the presheaf matching. For the empty base
topos, initial and terminal coincide, so the
empty augmentation also has the asserted property.

The strict-open subobjects over $X$ form a poset
infinity-category: their factorization spaces
are empty or contractible. The representables
$h_W$ and full-component models $O_X(W)$ define
naturally equivalent diagrams over this open
poset in $\mathcal P/h_X$. Extending by the
recorded indexed coproducts identifies the
\emph{whole} simplicial diagram, including its
face and degeneracy maps. The contractible
factorization spaces of opens supply this coherence.

The natural chart-evaluation formula gives
\[
 h_Y(W)\simeq
 \Map_{\mathcal X_X}(a_W,\mathcal O_X(D)).
\]
Therefore the entire cosimplicial comparison is
\[
\begin{split}
 \Map_{\mathcal P}(|V|_{\mathcal P},h_Y)
 &\simeq \Tot_{[n]}
       \prod_{a\in I(V_n)}h_Y(W_a)\\
 &\simeq \Tot_{[n]}
       \Map_{\mathcal X_X}(A_n,\mathcal O_X(D))\\
 &\simeq \Map_{\mathcal X_X}(1_X,\mathcal O_X(D))
 \simeq h_Y(X).
\end{split}
\]
The penultimate equivalence is internal hypercover
descent for the hypercomplete \emph{structure value}
$\mathcal O_X(D)$. The underlying topos need not
be hypercomplete. Hence $h_Y$ is $\mathscr S$-local,
and Theorem~\ref{lem:banach-split-open-localization}
makes it a hypersheaf.
\end{proof}

The uniform bound concerns augmentations based on
tests in $\mathcal C$, not atlases of arbitrary
larger geometric targets. Their ordinary-sheaf
subobject, atlas and full mapping-space arguments
are supplied in the proof of
Theorem~\ref{thm:banach-representability}.
All HAG locators in this appendix refer to
arXiv:math/0207028v4; the DHI locator refers to
arXiv:math/0205027v2. Page locators follow these
preprint editions.